\documentclass[11pt]{amsart}
\usepackage[breakable]{tcolorbox}
\tcbuselibrary{theorems}
\usepackage[margin=1in]{geometry} 
\usepackage{amsmath,amsthm,amssymb,graphicx,mathtools,tikz,mathrsfs,mathtools,xcolor}
\usepackage[shortlabels]{enumitem}
\usetikzlibrary{positioning}
\usepackage{pgf,tikz,pgfplots}
\pgfplotsset{compat=1.15}
\usepackage{mathrsfs}
\usetikzlibrary{arrows}
\usepackage{bm}
\usepackage{fancyhdr}
\usepackage{tikz-cd}
\usepackage{dsfont}
\usepackage{bbm}
\usepackage{enumitem} 

\newcommand{\Gap}{\mathsf{Gap}}
\newcommand{\sph}{\mathsf{Sp}}

\newcommand{\tv}{\textsc{tv}}
\newcommand{\TV}[2]{d_{\tv}\left({#1,#2}\right)} 

\newcommand{\E}{\mathbb{E}}
\newcommand{\p}{\mathbb{P}}

\newcommand{\abs}[1]{|#1|}
\newcommand{\norm}[1]{\left\lVert#1\right\rVert}

\newcommand{\ol}[1]{\overline{#1}}

\newcommand{\tsym}{t_{\mathsf{mix}}^{\mathsf{sym}}}
\newcommand{\tmix}{t_{\mathsf{mix}}}
 
\newcommand{\tout}{\tau^{\mathsf{out}}}
\newcommand{\cd}{\mathcal{D}}
\newcommand{\cg}{\mathcal{G}}
\renewcommand{\epsilon}{\varepsilon}

\newcommand{\me}{\mathsf{m}_{\textsc{e}}}
\newcommand{\mbe}{\mathsf{m}_{\textsc{be}}}
\newcommand{\mpi}{\mathsf{m}_{\pi}}
\newcommand{\mone}{\mathsf{m}_{1}}
\newcommand{\mtwo}{\mathsf{m}_{2}}
\newcommand{\mhit}{\mathsf{m}_{3}}
\newcommand{\be}{\textsc{be}}

\newtheorem{theorem}{Theorem}[section]
  \newtheorem{proposition}[theorem]{Proposition}
    \newtheorem{definition}[theorem]{Definition}
  \newtheorem{corollary}[theorem]{Corollary}
  \newtheorem{lemma}[theorem]{Lemma}

  \newtheorem{remark}{Remark}[section]

\numberwithin{equation}{section}

\usepackage[colorlinks=true, linkcolor=red, citecolor=blue]{hyperref}

\title[Mixing times of Langevin dynamics for spiked matrix models]{Mixing times of Langevin dynamics for \\ spiked matrix models}

\author{Reza Gheissari, Curtis Grant, and Tianmin Yu}

\address[Reza Gheissari]{Department of Mathematics, Northwestern University}
 \email{gheissari@northwestern.edu}

 \address[Curtis Grant]{Department of Mathematics, Northwestern University}
 \email{curtisgrant2026@u.northwestern.edu}

 \address[Tianmin Yu]{Department of Mathematics, Northwestern University}
 \email{tianmin.yu@northwestern.edu}

\begin{document}

\vspace{-1cm}
\begin{abstract}
    We investigate the Langevin dynamics for Wigner matrices with a spherical spike, in the regime where the signal-to-noise ratio $\theta$ is large, but order one. For large, order-$1$, signal-to-noise,  the (worst-case) mixing time undergoes a sharp transition around the critical inverse temperature $\beta_c(\theta) = \frac{1}{\theta}$. Namely, if $\beta = \alpha/\theta$, and $\alpha<1$ then at large $\theta$ the mixing time is $O(\log N)$, and if $\alpha>1$ it is exponential in $N$.  
    We show that initialized from the uniform-at-random spherical prior, however, the mixing time in the low-temperature $\alpha>1$ regime circumvents the exponential bottleneck and the mixing time is $O(\log N)$. In fact, this fast mixing holds for any initialization that is symmetric with respect to the top eigenvector of the spiked matrix. 
    Using this, we are able to show a low-temperature metastability picture, pinning down the exact exponential rate of the (worst-case initialization) mixing time for low temperatures, showing it is given by the difference of the free energies of the spiked and null models. 
    \end{abstract}

    \maketitle

\vspace{-1cm}
\section{Introduction}
Spiked random matrices serve as a fundamental model in high dimensional statistics and probability and have been studied in great detail since the introduction of spiked covariance matrices in~\cite{johnstone2000distribution}. The spiked Wigner matrix model is the following statistical model: 
\begin{align}\label{eq:spiked-matrix}
    M = G + \frac{\theta}{N} vv^T \, ,
\end{align}
for an $N\times N$ Wigner noise matrix $G$, a hidden ``spike" $v\in \mathbf{S}_N$ where $\mathbf{S}_N = \mathbb S^{N-1}(\sqrt{N})$ is the sphere of radius $\sqrt{N}$ in $\mathbb R^N$, and signal-to-noise parameter $\theta$. 
The goal of the statistician is to recover the vector $v$ from the matrix $M$. 

In this setting of a spherical prior on the spike, it is known that in this scaling, there is a critical signal-to-noise ratio $\theta_c= 1$, such that below $\theta_c$, it is information-theoretically impossible to detect the spike,
and above $\theta_c$, the maximum likelihood estimator is a distinguishing statistic. In this matrix setting, the maximum likelihood estimator is (up to a sign) the top eigenvector, which can be computed in polynomial time
by, e.g., power iteration. The transition above is then reformulated in terms of whether the eigenvector corresponding to the top eigenvalue of $Y$ has vanishing or non-vanishing correlation with $v$ in the $N\to\infty$ limit. 

This transition for the top eigenvalue/eigenvector pair is commonly referred to as the Baik--Ben Arous--Peche (BBP) transition~\cite{Baik2005phase} (shown in the Wigner case by~\cite{Peche06}). 
 Much more detailed information is known about this transition including universality, fluctuations, and large deviations. See, e.g.,~\cite{BGM12,capitaine2009,maida2007} for a small sample of these works. 
 (There is also significant work studying the statistical limits in the case where the prior on the signal is either spherical or consists of i.i.d.\ entries, see e.g.~\cite{ElAlaoui-Krzakala-Jordan,lelarge2017fundamental,BMPW16}.)

Surprisingly despite this being a matrix problem, the performance of out-of-the-box Markov process methods like Langevin dynamics on the sphere are not so well understood. In this context, that means setting up the following stochastic differential equation (SDE) to optimize the natural likelihood function (also called a Hamiltonian per statistical physics analogies), as an approach to finding the MLE: at inverse temperature $\beta>0$, define
\begin{align}\label{eq:langevin-dynamics}
    dX_t = \nabla_{\sph}  H({X_t}) dt + \sqrt{\tfrac{2}{\beta}} dB_t\,,\qquad \text{where} \qquad H(x) = \frac{1}{2} \langle x,Mx\rangle\,,
\end{align}
where $\nabla_{\sph}$ is the spherical gradient on $\mathbf{S}_N$, and $B_t$ is standard spherical Brownian motion on $\mathbf{S}_N$.  
Such gradient methods (and their stochastic approximations) are ubiquitous in high-dimensional non-convex optimization. Langevin dynamics also serves as a model of physical evolution in the spin glass context of a mean-field interacting particle system. And it is commonplace as a sampling scheme from the Gibbs distribution proportional to exponential of $\beta$ times the Rayleigh quotient for $M$, which at a special value of $\beta$ is exactly the posterior distribution for $v$ given $M$. 

The mathematical analysis of such dynamics has generally taken two forms: 
\begin{enumerate}[(a)]
    \item short, i.e., $O(1)$ timescales where dynamical mean field theory gives limiting systems of integro-differential equations describing trajectories of certain observables\,;
    \item longer-time analysis via mixing time or spectral gap bounds to guarantee convergence of the law of $X_t$ to the Gibbs distribution\,.
\end{enumerate}
Both of these have been tremendously fruitful in their own right for obtaining bounds on performance of first-order approaches to a host of high-dimensional optimization problems. In the context of spherical spin glasses (e.g., the $\theta=0$ situation) the approach of item (a) has been developed greatly in~\cite{arous1997symmetric,BADG01,BADG06} 
following the physics works of~\cite{Crisanti1993,CugKur93,sompolinsky1981dynamic,sompolinsky1982relaxational}. This was extended to the present spiked matrix setting in~\cite{LiangSenSur}. However, the coupled integro-differential equations for the autonomous observables are often difficult to study analytically, and importantly in order for $O(1)$ time to be pertinent to the dynamics obtaining non-trivial correlation with the spike, it necessitates a so-called ``warm start" as noted in item (iv) of~\cite[Section 1.1]{LiangSenSur}. This is because correlation zero with $v$ is always a fixed point of the resulting limiting dynamical systems. 

Item (b) has in parallel been developed for spherical spin glass dynamics (e.g.,~\cite{BAJ-low-temp-spin-glass-dynamics-1,GhJa19}) but suffers from the problem that spectral gaps and mixing times concern worst-case initialization, and due to the symmetry of the objective function in this problem, as soon as the task is information theoretically tractable, the Langevin dynamics exhibits exponentially slow mixing time. To be more precise, the objective $\langle x,Mx\rangle$ is symmetric to $ - x \mapsto x$ and this induces a bimodality of its overlap distribution at low-temperatures that takes exponential time for Langevin dynamics to overcome (see  e.g. ~\cite[Sections 9 and 10]{baik2021spherical}, as well as our Theorem~\ref{thm:mixing-time-langevin}). Let us mention the recent work~\cite{huang2024weakpoincareinequalitiessimulated} which gave fast convergence guarantees beyond the dynamical threshold for worst-case initialization mixing for spherical $p$-spin models, but was still restricted to high temperature regimes.  

Several other works have studied first-order methods for spiked matrix models directly. As part of the analysis of the more general problem of spiked tensor models~\cite{montanari2014statistical}, the work~\cite{arous2021online} established $O(1)$ time recovery of the spike for the spiked matrix model at every $\beta>0$ when the signal-to-noise $\theta$ is diverging to infinity with $N$  as $\theta = N^{\delta}$ for some $\delta>0$. The multispike case was more recently considered in~\cite{arous2024langevindynamicshighdimensionaloptimization}, but still the theorems required signal-to-noise $\theta$ that is $N^{\delta}$ larger than the predicted recovery thresholds for Langevin dynamics. 
Convergence properties of online stochastic gradient descent for spiked matrix models 
have been analyzed in~\cite{arous2021online} (and~\cite{arous2025stochasticgradientdescenthigh} for multi-spike variants), though their analysis also suffer polylogarithmic losses from optimal expected recovery thresholds. Finally, in the $\beta = \infty$ limit where~\eqref{eq:langevin-dynamics} degenerates to the deterministic gradient flow, the absence of noise enables sharper analyses~\cite{bodin2021rank}. A similar analysis for the hitting time to the top eigenvector using gradient flow in the no-spike models was performed in \cite{yu2024analyzing}. 

In this paper, we focus on better understanding the recovery and equilibration times of positive-temperature Langevin dynamics from uninformative (cold) initializations on signal-to-noise thresholds on the information-theoretic scaling. Besides their interest from a statistical point of view, these also serve as a prototypical high-dimensional processes whose mixing time from a random initialization should be much faster than that from a worst-case initialization. 

Namely, for fixed $\beta\in (0,\infty)$ and signal-to-noise $\theta = O(1)$, we analyze the time for the Langevin dynamics to equilibrate from a uniform-at-random initialization. Our main result is the following dichotomy for $\beta = \frac{\alpha}{\theta}$ with $\theta = O(1)$ but large: 
\begin{itemize}
    \item If $\alpha<1$ the mixing time is $O(\log N)$ and the Langevin dynamics does not correlate with $v$;
    \item  If $\alpha>1$, initialized uniformly on the sphere, the Langevin dynamics mixes to its stationary distribution in $O(\log N)$ time and obtains $\Omega(1)$ correlation with $v$  (even though from worst-case initialization it takes exponentially long to equilibrate).
\end{itemize}
We also as a consequence obtain sharp asymptotics on the exponential rate of its worst-case initialization mixing time, i.e., $\lim\frac{1}{N} \log \tmix $. This sharply characterizes the exponential rate of the transit time from strictly positive correlation with the top eigenvector to anti-correlation with it, in conjunction with the fast mixing on either half-sphere.

\subsection{Main Results}
In this paper, we study the spiked spherical Sherrington Kirkpatrick model, with Hamiltonian $H_N$ such that for $x \in \mathbf{S}_N$
\begin{align} \label{eq:def-of-H-and-M}
H_N(x) = \frac{1}{2} \langle x , M x \rangle \qquad \text{where} \qquad \  M = G + \frac{\theta}{N} vv^T \, .     
\end{align}
Here $G$ is a standard GOE matrix, i.e., $G_{ij} = G_{ji}  \sim \mathcal N(0,1/N)$ for $i\ne j$ and $\mathcal N(0,2/N)$ for $i=j$, with all distinct pairs independent, and $v$ is sampled independently, uniformly from $\mathbf{S}_N$.  
We work with Gaussian disorder for simplicity, but less is required, see Remark \ref{rem:beyond-Gaussian}.

As the state space is spherically symmetric we may diagonalize $M$ (and thus $H_N)$. If we let the eigenvalues of $M$ be given by $\lambda_1 >\lambda_2> ... > \lambda_N$ with corresponding unit eigenvectors $e_1,...,e_N $ (for notational simplicity w.l.o.g.\ assuming that the eigenvectors are the standard basis so that $x_i = \langle x,e_i\rangle$), then \eqref{eq:def-of-H-and-M} may be rewritten as:  
\begin{align} \label{eq:def-of-m} 
    H_N(x) = \frac{1}{2N}\sum_{i} \lambda_i x_i^2  \, .
\end{align}
In what follows, the correlation with the top eigenvector $e_1$ of $M$ will play a special role, and we shall denote by $m(x)$ the normalized inner product with $e_1$, i.e.
\begin{align}
    m(x) = \frac{x_1}{\sqrt{N}} \, .
\end{align}

Throughout the paper, let $\p_M$ denote the law over $M= (M_{ij})_{i<j}$, and abusing notation slightly, refer to the joint law of the matrices $M$ for all $N$ by $\p_M$. The Gibbs measure at inverse temperature $\beta>0$ induced by $H_N$ is given by:
\begin{align} \label{eq:Gibbs-measure} 
    \pi_{\beta}(A) \propto  \int_{A} \exp(\beta H_N(x) ) dx \, ,
\end{align}
where here and throughout, $d x$ is the Haar probability measure on the sphere. When $\beta$ is understood from context, we drop it from the notation. 

Given an instance of the disorder $M$, we shall study the Langevin dynamics $X_t= (X_{1,t},...,X_{N,t})$ which solves the stochastic differential equation~\eqref{eq:langevin-dynamics}. Existence and uniqueness of strong solutions follow from standard methods as $\nabla H$ is uniformly $O(1)$ Lipschitz with  $\p_M$ probability $1-e^{-cN}$.  As $t \to \infty$, the law of the solution $X_t$ converges to the law of $\pi_{\beta}$. We are interested in quantifying the rate of this convergence from both worst-case, and uniform, initializations. 

\begin{definition} \label{def:mixing-time}
    For a probability measure $\mu$ on $\mathbf{S}_N$, let $P^t_{\mu}(A) = \p( X_t \in A \mid  X_0 \sim \mu )$. The (worst-case) mixing time is defined by:
    \begin{align*}
        \tmix(\epsilon) =  \inf \{ t >0 :\max_{x \in \mathbf{S}_N } d_{\tv} ( P^t_x, \pi_{\beta} ) \leq  \epsilon  \} \, .
    \end{align*}
    where $d_{\tv}$ is the total-variation distance. 
    Given $M$, let $\mathcal{M}_E$ denote the collection of probability measures invariant under the map $e_1 \to - e_1$, then we define the symmetric mixing time to be:
    \begin{align*}
        \tsym (\epsilon) = \inf \{ t > 0 : \max_{\mu \in \mathcal{M}_E} d_{\tv}( P^t_{\mu} ,\pi_{\beta}) \leq \epsilon \} \, .
    \end{align*}
    We denote $\tmix$ and $\tsym$ as the values $\tmix(1/4)$ and $\tsym(1/4)$ respectively. 
\end{definition}

\begin{remark}
    The quantity $1/4$ in the definitions above is  chosen so that the total variation distance decays exponentially in multiples of the mixing time. For the worst case mixing time this is standard, we provide a proof in the symmetric case in  Corollary~\ref{cor:exponential-decay-mixing-time} of Appendix \ref{ap:log-sob-to-tv}. 
\end{remark}

Throughout the remainder of the paper we shall parametrize $\beta$ around the critical threshold for fixed $\theta$, setting $\beta(\alpha) = \frac{\alpha}{\theta}$, so that $\beta_c(\theta):=\beta(1)$ corresponds to the critical (inverse) temperature. Namely, for any fixed $\theta>0$, when $\beta<\beta_c$ a sample from the Gibbs measure $x \sim \pi$ is asymptotically uncorrelated with $e_1$ (and also $v$), whereas when $\beta>\beta_c$  a sample from the Gibbs measure $x \sim \pi$ typically has $|m(x)| =  \Omega(1)$ (and it is also macroscopically correlated with $v$). 

\begin{theorem} \label{thm:mixing-time-langevin}
    Fix $\alpha \in (0,\infty)$, $\beta = \frac{\alpha}{\theta}$,  and $\theta>\theta_0(\alpha)$ large. Then with $\p_M$-probability $1-o(1)$, for large $N$, the following hold:
    \begin{enumerate}
        \item  If $\alpha<1$, then  $\tmix = \Theta(\log N)$.
          \item If $\alpha>1$, then $\tsym = \Theta(\log N)$ even though $\tmix = \exp( \Theta(N))$. 
    \end{enumerate}
\end{theorem}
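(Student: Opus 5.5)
We would prove Theorem~\ref{thm:mixing-time-langevin} by combining functional inequalities for $\pi_\beta$, or for its restrictions to the half-spheres $\{\pm m>0\}$, with a warm-start argument. We diagonalize $M$ as in \eqref{eq:def-of-m}, so that $H_N$ is a quadratic form with eigenvalues $\lambda_1>\lambda_2>\dots$. With $\p_M$-probability $1-o(1)$, standard BBP facts give $\lambda_1\approx\theta+1/\theta$ and $\lambda_2\le 2+o(1)$. For large $\theta$ the bulk part $\frac{1}{2N}\sum_{i\ge2}\lambda_i x_i^2$ is then a perturbation of size $O(1)$ per unit $\beta$, with $\beta=\alpha/\theta$ small. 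The dynamics essentially reduces to a one-dimensional diffusion for $m(X_t)$ coupled to a high-temperature bulk. The upper bounds of order $\log N$ come from a log-Sobolev inequality with constant $O(1)$, since entropy decays as $e^{-ct}$ and the initial relative entropy of a point mass smoothed for unit time is $O(N\log N)$, or even $O(N)$. The lower bounds of order $\log N$ come from a standard argument: a coordinate observable such as $\sum_i X_{i,t}^2\lambda_i$, or the overlap with the initial point, relaxes at rate $O(1)$ and starts $\sqrt N$ standard deviations away.

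\textbf{Case $\alpha<1$.} We would establish a uniform LSI via Bakry--Émery after a change of measure. On $\mathbf S_N$ the Ricci curvature is $(N-2)/N$, while the Hessian of $\beta H_N$ is at most $\beta\lambda_1/N$ times the identity. The curvature has the same $1/N$ normalization, which gives $\mathrm{CD}(1-\beta\lambda_1+o(1))$ with $\beta\lambda_1=\alpha(1+\theta^{-2})<1$ once $\theta>\theta_0(\alpha)$. Bakry--Émery then gives an $O(1)$ LSI constant, so $\tmix=O(\log N)$ holds deterministically on the good event.

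\textbf{Case $\alpha>1$, exponential $\tmix$.} The bottleneck at $\{m=0\}$ gives the lower bound. By the symmetry $x_1\mapsto -x_1$ of $\pi$, we have $\pi(m>0)=1/2$, and the conductance of the set $\{m>0\}$ is $e^{-cN}$. Indeed, by a Laplace/large-deviation computation of the free energy restricted to $|m|\le\delta$ versus the global one, the Gibbs mass near $m=0$ is exponentially smaller than that of $|m|\approx m_*>0$. The upper bound $e^{O(N)}$ is crude: it follows, for instance, from a Poincaré inequality via Holley--Stroock perturbation of the uniform measure, since $\sup H-\inf H=O(N)$.

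\textbf{Case $\alpha>1$, $\tsym=O(\log N)$.} This is the main obstacle. The plan is to prove an $O(1)$ LSI for the conditioned measures $\pi^\pm=\pi(\cdot\mid \pm m>0)$, with reflection at $m=0$, or better for the folded process $|X_{1,t}|$, which is itself Markov because $H_N$ is even in $x_1$. A symmetric initialization stays symmetric, so the law of $X_t$ equals the symmetrization of the law of the folded process. Hence $d_{\tv}(P^t_\mu,\pi)$ equals the TV distance of the folded process to $\pi^+$. For the LSI on the half-sphere we would decompose $\pi^+$ into the marginal of $m$ and the conditional law of the bulk given $m$. Given $m$, the bulk is a quadratic measure on a sphere of radius $\sqrt{N(1-m^2)}$ at effective inverse temperature $\beta\lambda_2/\theta\cdot\theta\le 2\alpha/\theta\ll1$, so Bakry--Émery gives it an $O(1)$ LSI uniformly in $m$ (away from $m=\pm1$). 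The $m$-marginal has density proportional to
\[
\exp\!\Big(N\big[\tfrac{\beta\lambda_1}{2}m^2+\tfrac12\log(1-m^2)+F_{\mathrm{bulk}}(m)\big]\Big).
\]
On $[0,1)$ this is a unimodal, strongly log-concave profile near its peak $m_*$, with no barrier. Its worst region is near $m=0$, where the density is log-convex, but with reflection at $0$ it is a one-dimensional Hardy/Muckenhoupt computation that yields an $O(1/N)$ Poincaré/LSI constant in the variable $m$, that is $O(1)$ in $x_1$. We would glue these using a two-scale (Otto--Reznikoff / Grunewald--Otto--Villani--Westdickenberg) criterion, controlling the mixed term through the smallness of $\partial_m F_{\mathrm{bulk}}$ for large $\theta$. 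The delicate points are the uniformity of the conditional LSI and the behavior of the marginal near $m=0$, where the drift pushes away from the reflecting boundary.

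Finally, we would convert the LSI to TV decay as in Corollary~\ref{cor:exponential-decay-mixing-time}, which gives $\tsym=O(\log N)$. The matching lower bound $\Omega(\log N)$ uses the same bulk observable as in the $\alpha<1$ case, started from a symmetric but atypical law, for instance uniform on $\{x: x_2=\pm\sqrt{N/2}\}$ symmetrized in $x_1$.
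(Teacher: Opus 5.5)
\textbf{The gap is in the case $\alpha>1$, $\tsym=O(\log N)$. An $O(1)$ log-Sobolev inequality for $\pi^+$, equivalently for the folded (reflected) process, is false.}

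The obstruction is the exponentially small neighbourhood of the equator, where the $x_1$-marginal is essentially flat on the unit scale. Take $f=g(x_1)$ with $g=1$ on $[0,1]$, $g(u)=2-u$ on $[1,2]$ and $g=0$ on $[2,\infty)$, and let $p=\pi^+(x_1\le 1)$. On $[0,2]$ the $x_1$-marginal density of $\pi^+$ is within $N$-independent factors of its value at $0$: the factors $(1-u^2/N)^{(N-3)/2}$, $e^{\beta\lambda_1u^2/2}$ and the ratio of bulk partition functions are all $\Theta(1)$ there. Hence $\pi^+(1\le x_1\le 2)\le Cp$. Since $|\nabla x_1|^2=1-x_1^2/N\le 1$,
\[
\frac1\beta\int|\nabla f|^2\,d\pi^+\le\frac{Cp}{\beta}\,,\qquad \mathrm{Ent}_{\pi^+}(f^2)\ge p\log\frac{1}{(1+C)p}-\frac{Cp}{e}\,,
\]
while $\log(1/p)\ge(\Delta_{\alpha,\theta}-o(1))N$ by Lemma~\ref{lem:bottleneck-set}. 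So the log-Sobolev constant is $O(1/N)$.

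In your one-dimensional language, Muckenhoupt does give an $O(1)$ Poincar\'e constant in $x_1$-units. The Bobkov--G\"otze functional, however, carries an extra factor $\log(1/\bar\pi([0,x]))\asymp N$ at $x=O(1)$, which is exactly where the log-density has only $O(1)$ slope. The two-scale criterion cannot repair this, because the bad test function depends on $m$ alone. With log-Sobolev constant $\Theta(1/N)$ and initial relative entropy of order $N$ (e.g.\ for $\mu$ uniform on the equator $\{x_1=0\}$, which is symmetric), entropy decay gives only $\tsym=O(N\log N)$. The Poincar\'e route, with initial $\chi^2$-divergence $e^{\Theta(N)}$, gives only $O(N)$.

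The missing ingredient is a dynamical escape argument, which is how the paper proceeds. It compares a rescaled $\arcsin m_t$ with a mean-repelling Ornstein--Uhlenbeck process (Theorem~\ref{thm:comparison}, Lemmas~\ref{lem:OU-estimate} and~\ref{lem:hitting-time}) to show that $|m_t|$ reaches $\mhit$ within $O(\log N)$ time. It then shows that $|m_t|$ stays above $\mtwo$ for polynomially long times (Corollary~\ref{coro:m(t)-polynomial-time-not-going-out}). Only after that does it invoke an $O(1)$ functional inequality, via Bakry--Emery for the modified Hamiltonian $\tilde H^\pm$ (which equals $-\infty$ below $\mone$) on the cap, coupling $X_t$ to the cap-restricted process. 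On a cap at fixed distance from the equator, the marginal's log-density has slope of order $\sqrt N$ in $x_1$-units and the obstruction above disappears. So your two-scale argument could plausibly replace the paper's Bakry--Emery step there, but only after you add the escape step.

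A minor slip elsewhere: for $\alpha<1$ the spherical Hessian of $H$ is $\mathrm{diag}(\lambda_i)-h(x)I$ with $h(x)\ge\lambda_N$. The curvature bound is therefore $1-\beta(\lambda_1-\lambda_N)$, not $1-\beta\lambda_1$; this is harmless, since it holds for $\theta>\theta_{0,H}(\alpha)$. Your remaining steps are sound: the bottleneck lower bound, the Holley--Stroock upper bound $e^{O(N)}$, and the $\Omega(\log N)$ lower bounds. Your test-function route to the spectral-gap upper bound is simpler than the paper's warped-product/Buser argument.
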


Though our main focus is on the low-temperature regime (2), we comment that as an immediate consequence of~\cite{salez2025cutoff}, when in the high-temperature regime of item (1) above, the process actually exhibits the cutoff phenomenon with an $O(1)$-sized window: see our Corollary~\ref{cor:cutoff}. (We refer the reader to~\cite[Chapter 18]{LP} for more on the cutoff phenomenon in Markov chain mixing, and e.g.,~\cite{BoursierChafaiLabbe} for other examples of high-dimensional diffusions exhibiting cutoff.) On the low-temperature side, our proof can give pre-cutoff for $\tsym$; it would be interesting to show that $\tsym$ exhibits cutoff. 

\begin{remark} \label{rem:theta-thresholds}
    The minimum $\theta$ we require is explicit and given as follows. When $\alpha<1$, fast mixing from worst-case initialization is guaranteed if \begin{align}\label{eq:theta-fast}\theta> \theta_{0,H}(\alpha) := (\alpha^{-1/2}-1)^{-1} \, , \end{align} or equivalently, for any fixed $\theta>1$, fast mixing is guaranteed if $\alpha < \theta/(1+\theta)^2$: see Lemma \ref{lem:Bakry-Emery-whole-sphere}. 
    
    When $\alpha> 1$, the slow mixing $\tmix  = \exp(\Theta(N))$ in fact holds for all $\theta>1$. The fast symmetric mixing holds so long as $\theta >\theta_{0,L} (\alpha)$ where 
    \begin{align}\label{eq:theta-slow}
 \theta_{0,L}(\alpha) \quad \text{ is the largest solution of } \quad \alpha = \frac{(\theta -1 )^2 \theta^2}{(\theta+1)^4}\,;
    \end{align}
    see Lemma \ref{lem:theta-slow-upper-bound}. In particular, $\theta> \max\{ 1 + \frac{24}{\alpha-1},4\}$ is sufficient.
\end{remark}

Item (2) of Theorem~\ref{thm:mixing-time-langevin} is showing that the $\pm e_1$ symmetry of the Hamiltonian $H$ is the \emph{only} exponential obstruction to fast mixing of the Langevin dynamics at low-temperatures. The other critical points in the landscape corresponding to the other $n-1$ eigenvectors, and flat directions connecting them, do not further slow down the dynamics. In particular, because of the definition of $\beta_c$, this implies that as soon as $\alpha>1$ and $\theta>\theta_{0,L}(\alpha)$ for $N$ large, the Langevin dynamics in $O(\log N)$ time obtains and retains the stationary distribution's expected correlation with the planted spike $v$. 

In situations like this one where there are $O(1)$ many metastable basins which dominate the exponentially slow mixing rate, and transit into one of them is rapid, it is of interest to obtain the exact exponential rate of the mixing time, which equivalently is identifying the asymptotically optimal path for the dynamics to travel between the different metastable basins. 

\begin{theorem} \label{thm:metastable-mixing-rate}
    Suppose $\beta = \frac{\alpha}{\theta}$ with $\alpha >1$ and $\theta > \theta_{0,L}(\alpha)$. Then  
    \begin{align*}
       \lim_{N \to \infty} \frac{1}{N} \log \tmix = \Delta_{\alpha,\theta}\,, 
    \end{align*}
    where 
    \begin{align*}
        \Delta_{\alpha,\theta} := \lim_{\epsilon\to 0} \lim_{N\to\infty} -\frac{1}{N} \log \pi_{\beta} (|m|\le \epsilon) \, .
    \end{align*}
    This limit exists $\mathbb P_M$-almost surely, and its value is given explicitly by 
    \begin{align}\label{eq:delta-alpha-theta}
        \Delta_{\alpha,\theta} : = \begin{cases}
            \frac{\alpha}{2} - \frac{(\alpha - 1)^2}{4\theta^2} - \frac{1}{2} \log \alpha - \frac{1}{2} & 1 < \alpha \leq \theta \\ 
            \frac{\alpha}{2}(1-\frac{1}{\theta})^2  - \frac{1}{2}\log \theta - \frac{1}{4\theta^2} + \frac{1}{4} & \alpha >\theta
        \end{cases}\,.
    \end{align}
\end{theorem}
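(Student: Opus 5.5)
The proof splits into three parts: the free-energy computation that identifies $\Delta_{\alpha,\theta}$, a lower bound $\tmix \ge e^{N(\Delta-o(1))}$ from a bottleneck argument, and an upper bound $\tmix \le e^{N(\Delta+o(1))}$ that combines Theorem~\ref{thm:mixing-time-langevin}(2) with a metastability argument.

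\emph{Identifying $\Delta_{\alpha,\theta}$.} By \eqref{eq:def-of-m}, $\pi_\beta$ is a spherical measure with quadratic weight. Conditioning on $m(x)=m$, the remaining coordinates live on a sphere of radius $\sqrt{N(1-m^2)}$, and their contribution is a spherical SK-type partition function. So
\[
\frac1N\log \pi_\beta(|m|\le\epsilon) \to \sup_{|m|\le \epsilon}\Big[\tfrac{\beta\lambda_1 m^2}{2}+\tfrac12\log(1-m^2)+F_0(\beta(1-m^2))\Big]-F(\beta),
\]
where $F_0$ is the free energy of the bulk (the spectrum of $M$ without its top eigenvalue) and $F$ is the full free energy. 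I would compute both with the contour-integral / Laplace-transform representation of spherical integrals, as in Baik--Lee, since the spectrum of $M$ converges to the semicircle law with outlier $\lambda_1\to\theta+1/\theta$. As $\epsilon\to0$, the left side tends to $F_0(\beta)-F(\beta)$. $F_0(\beta)$ is the SSK free energy. The spiked free energy $F(\beta)$ is a variational problem over the Lagrange multiplier $\gamma$, the saddle point of $\frac12\int\log(\gamma-\beta\lambda)\,d\rho_{sc}$ plus the outlier term. This has two cases, $\beta<1$ and $\beta>1$ (that is, $\alpha<\theta$ or $\alpha>\theta$), which gives the two branches of \eqref{eq:delta-alpha-theta}. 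Almost-sure existence of the limit follows from a.s.\ convergence of the empirical spectral measure and of $\lambda_1$, together with Lipschitz dependence of these functionals on the spectrum. This step is computational.

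\emph{Lower bound.} I would use the standard bottleneck (conductance) argument. Let $A=\{m>0\}$. By symmetry $\pi(A)=1/2$, and the boundary layer $\{|m|\le\epsilon\}$ has $\pi$-mass $e^{-N(\Delta+o(1))}$. The Dirichlet form of the indicator-like test function $f=\phi(m)$, where $\phi$ ramps from $0$ to $1$ across $[-\epsilon,\epsilon]$, satisfies $\mathcal{E}(f)\le \frac{C}{\beta\epsilon^2}\,\pi(|m|\le\epsilon)$, because $|\nabla m|\le N^{-1/2}$. This gives a spectral gap at most $e^{-N(\Delta-o(1))}$. The spectral gap controls $\tmix$ from below up to polynomial factors: for a diffusion, $\tmix\ge c/\mathrm{gap}$, and alternatively one can start from $\pi|_A$, which is exited only at rate $e^{-N\Delta}$. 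Letting $\epsilon\to0$ gives the lower bound.

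\emph{Upper bound.} This is the main obstacle. I would decompose the total-variation distance from $P_x^t$ to $\pi$. The symmetric mixing bound, via the log-Sobolev inequality behind Theorem~\ref{thm:mixing-time-langevin}(2), gives fast mixing of the projected chain onto the sign-symmetrized quotient. What remains is to show that the sign of $m$ equilibrates in time $e^{N(\Delta+o(1))}$. Concretely, I would construct a two-state reduction. First, restricted to $\{m>0\}$ with reflecting boundary at $m=0$, the dynamics mixes in $O(\log N)$; I would get this from the same log-Sobolev/Bakry--\'Emery estimates used for $\tsym$, since symmetric-initialization mixing is equivalent to mixing of the reflected chain. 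Second, the capacity between the two basins $\{m>\delta\}$ and $\{m<-\delta\}$ is at least $e^{-N(\Delta+o(1))}$. For this I would use a Thomson/Dirichlet principle lower bound on capacity with a test flow along the $m$-coordinate. It requires that the marginal density of $m$ under $\pi$ is $e^{-N(\Delta+o(1))}$ at $m=0$ and is minimized there among $|m|\le\delta$, which follows from the formula in the first step and the explicit shape of the one-dimensional rate function. Alternatively, and perhaps more cleanly, a spectral-gap decomposition theorem (Jerrum--Son--Tetali--Vigoda or Madras--Randall) applied to the two pieces $\{m\ge0\},\{m\le0\}$ bounds the global gap below by $\min(\text{local gaps})\times$ the projection-chain gap. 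The projection chain's gap is governed by $\pi(|m|\le\epsilon)$ divided by polynomial factors. Converting gap to $\tmix$ costs a factor $\log(1/\pi_{\min})$; because the state space is continuous, I would first run the process for $O(1)$ time to obtain a warm start with density $e^{O(N)}$, which adds only $O(N)$ and so does not change the exponential rate. The delicate part is the uniform-in-$N$ local log-Sobolev constant on each half-sphere near the boundary $m=0$, where the saddle's curvature matters. I expect the fast-$\tsym$ machinery and Remark~\ref{rem:theta-thresholds}'s condition $\theta>\theta_{0,L}$ to be exactly what provides it.
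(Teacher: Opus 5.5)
Your overall plan is sound and your lower bound is complete, but both directions take a different route from the paper. As written, the upper bound has a gap at exactly the step that produces the exponent.

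\textbf{Comparison with the paper.}
\begin{itemize}
\item \emph{Lower bound.} The paper lifts $\mathcal L$ to the Laplacian of the warped product $\mathbf S_N\times\mathbb S^1$ with metric $g_{\mathrm{round}}+e^{2\beta H'}d\theta^2$. It bounds the Ricci curvature below by $-C(N-1)$ and applies Buser's inequality with the half-sphere as isoperimetric set. Your Rayleigh quotient with $f=\phi(m)$ uses $\mathcal E(f)=\beta^{-1}\int|\nabla_{\sph}f|^2\,d\pi$, $|\nabla_{\sph}m|^2\le 1/N$, $\mathrm{Var}_\pi f\ge\pi(m>\epsilon)\pi(m<-\epsilon)$ and Lemma~\ref{lem:bottleneck-set}. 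It gives the same bound $\Gap\le e^{-N(\Delta_{\alpha,\theta}-C\epsilon)}$ directly, with no curvature input, so it is simpler.
\item \emph{Upper bound.} The paper argues probabilistically, in three steps. First, after $O(\log N)$ time the law is close to $p\pi^++(1-p)\pi^-$ (Lemma~\ref{lem:any-initialization-to-mixture-of-phases}). Second, from $\pi^+$ the equator is hit before $e^{(1+\epsilon)\Delta_{\alpha,\theta}N}$. This comes from chaining nearly independent unit-time trials, separated by $N^{100}$ re-equilibration periods of the reflected chain, each succeeding with probability at least $\pi^+(m\le\epsilon)e^{-\eta(\epsilon)N}$ (Lemmas~\ref{lem:sharp-hitting-magnetization-zero} and~\ref{lem:hitting-probability-lower-bound}). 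Third, a point of $\{m=0\}$ is a symmetric initialization, so $\tsym=O(\log N)$ finishes.
\item \emph{What each route buys.} The paper's route needs only total-variation mixing of the reflected chain and a one-dimensional hitting bound. Your spectral route, once repaired, would give the stronger conclusion that the relaxation time itself has exponential rate $\Delta_{\alpha,\theta}$.
\end{itemize}

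\textbf{The gap.} You cannot quote ``global gap $\ge\min(\text{local gaps})\times$ projection gap, with projection gap $\approx\pi(|m|\le\epsilon)$'' from Jerrum--Son--Tetali--Vigoda or Madras--Randall here.
\begin{itemize}
\item Those theorems are about Markov kernels. For a diffusion cut sharply at $\{m=0\}$, the crossing rate $h^{-1}\int_{\{m>0\}}P_h(x,\{m<0\})\,\pi(dx)$ diverges as $h\to0$, so there is no projection chain.
\item Passing to a time-$h$ skeleton replaces the reflected diffusion by a rejection chain whose gap you do not control.
\item The overlapping Madras--Randall version, with pieces $\{m\ge-\epsilon\}$ and $\{m\le\epsilon\}$, does have projection gap $\approx\pi(|m|\le\epsilon)$. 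But it then needs gaps for the diffusion reflected at $m=\mp\epsilon$, which $\tsym$ does not give.
\item Your capacity bound is never converted into a mixing statement.
\end{itemize}

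You can close this by hand with your own ingredients. Write
\[
\mathrm{Var}_\pi f=\tfrac12\mathrm{Var}_{\pi^+}f+\tfrac12\mathrm{Var}_{\pi^-}f+\tfrac14\big(\pi^+(f)-\pi^-(f)\big)^2 .
\]
\begin{itemize}
\item Bound the first two terms by $C_{\mathrm{loc}}\,\mathcal E(f)$, where $C_{\mathrm{loc}}$ is the Poincar\'e constant of the chain reflected at $m=0$.
\item In the last term, insert $\nu^\pm=\pi(\cdot\mid 0<\pm m<\delta)$. Then $(\pi^+(f)-\nu^+(f))^2\le\pi^+(0<m<\delta)^{-1}\mathrm{Var}_{\pi^+}f$.
\item Also $\nu^+(f)-\nu^-(f)=\nu^+(f-f\circ T)$ with $T\colon x_1\mapsto -x_1$. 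This is controlled by integrating $|\nabla_{\sph}f|^2$ along meridian arcs across the band, at cost $N\delta^2e^{O(N\delta^2)}/\pi(0<m<\delta)$.
\item Since $\pi(0<m<\delta)=e^{-N(\Delta_{\alpha,\theta}+o_\delta(1))}$, the Poincar\'e constant is $e^{N(\Delta_{\alpha,\theta}+o(1))}$. Your warm start via Theorem~\ref{thm:heat-kernel-bound} then turns this into the bound on $\tmix$.
\end{itemize}

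The ``delicate'' input you name, a uniform-in-$N$ log-Sobolev constant on the half-sphere, is not what the paper provides. The paper's Bakry--\'Emery estimate covers only the caps $\{|m|>\mone\}$, and the criterion fails on the equator, where $e_1$ is an unstable direction. Such an inequality is in fact false: a smoothed normalized indicator of $\{0<m<N^{-1/2}\}$ has entropy of order $N$ but Dirichlet energy $O(1)$. You only need a subexponential Poincar\'e constant. The bound $C_{\mathrm{loc}}=O(\log N)$ follows from $\tsym=O(\log N)$, since the reflected chain is the folded symmetric process, via the same inequality $\tmix\ge c/\Gap$ you use for the lower bound.

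Finally, a small slip in the first step. The split at $\alpha=\theta$ in~\eqref{eq:delta-alpha-theta} comes from the null spherical SK free energy $F_0(\beta)$ on the equator, which has its transition at $\beta=1$. It does not come from the spiked free energy, which has a single branch for $\alpha>1$.
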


This slow mixing rate $\Delta_{\alpha,\theta}$ is precisely the difference between the limiting free energy of the spiked model, and the free energy of the equator, (which corresponds to a null ($\theta=0$) model). In particular, the phase transition of the spherical spin glass model at the equator (where the spike plays no role) induces the piecewise behavior about $\alpha = \theta$.

\begin{remark}\label{rem:beyond-Gaussian}
    We remark here that the choice of Gaussian disorder for the matrix $M$ is not required for our results to hold. In particular it suffices for $G$ to be a Wigner matrix whose empirical spectral distribution satisfies a strong local semicircle law, and for the eigenvalues of $M$, $\lambda_1,\lambda_2,\lambda_N$ to converge to values $\theta + \frac{1}{\theta},2,-2$ respectively, almost surely. 
    Such convergence is guaranteed if the entries of $G$ have mean zero, with variance $1/N$, and satisfy $\E \abs{G_{ij}}^{5} < \infty$: see \cite{erdHos2012rigidity,gotze2015local1,gotze2015local2,Peche06,pizzo2013finite} . 
\end{remark}

\subsection{Outline and proof ideas}
In this section we overview the key ideas used to prove Theorem ~\ref{thm:mixing-time-langevin}, and along the way, outline the structure of the rest of the paper.

\subsubsection{Slow worst-case mixing for $\alpha>1$.}

When $\alpha>1$, the slow mixing time from worst-case initialization comes from a bottleneck induced by the symmetry of the Hamiltonian \eqref{eq:def-of-H-and-M}. The key step in establishing the lower bound on the mixing time is to show exponential concentration of the measure on a pair of caps centered about $\pm e_1$. 
\begin{lemma} \label{lem:bottleneck-set}
    For $\theta>1$, $\alpha >1$, and small $\epsilon >0$, with $\p_M$-probability $1-o(1)$, for large $N$, the Gibbs measure satisfies:
    \[
  \pi_{\beta,M} ( \{ x \ :  \abs{m(x)}  \leq \epsilon \} ) \leq \exp (- \Delta_{\alpha,\theta} N +  C \epsilon  N) \, ,
    \]
    where $\Delta_{\alpha,\theta}$ is as in \eqref{eq:delta-alpha-theta} and $C$ is a constant. 
\end{lemma}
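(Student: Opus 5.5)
Write $\pi_\beta(\{|m|\le\epsilon\}) = Z_N(\epsilon)/Z_N$, where
\[
Z_N(\epsilon)=\int_{|m(x)|\le \epsilon} e^{\beta H_N(x)}\,dx, \qquad Z_N=\int e^{\beta H_N(x)}\,dx .
\]
The plan is to upper bound the numerator and lower bound the denominator to exponential order, each via the explicit structure $H_N(x)=\frac{1}{2N}\sum_i\lambda_i x_i^2$.

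\textbf{Denominator.} For the lower bound on $Z_N$, restrict to a thin band around the optimal overlap $m_*$ and write $x=(\sqrt N m, \sqrt{1-m^2}\,y)$ with $y$ uniform on the sphere $\mathbf S_{N-1}$ in the coordinates $2,\dots,N$. The Haar density of $m$ is $\propto (1-m^2)^{(N-3)/2}$. Conditionally on $m$, the integral over $y$ is a null spherical SK partition function at effective inverse temperature $\beta(1-m^2)$ for the matrix $\mathrm{diag}(\lambda_2,\dots,\lambda_N)$. Its logarithm, divided by $N$, converges to the explicit spherical SK free energy $F(\gamma)$: $F(\gamma)=\gamma^2/4$ for $\gamma\le1$ and $F(\gamma)=\gamma-\frac12\log\gamma-\frac34$ for $\gamma>1$. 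For $\gamma$ in a compact range this follows from the standard contour-integral (or Laplace) representation $\int e^{\frac{\gamma}{2N}\sum\lambda_ix_i^2}dx$. Rather than passing to the limit, I would prove this with a lower-order error uniformly, using the local semicircle law and rigidity (Remark~\ref{rem:beyond-Gaussian}), $\lambda_2\to2$, and $\lambda_1\to\theta+1/\theta$, all holding with probability $1-o(1)$.

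This gives
\[
\frac1N\log Z_N\ge \sup_m\Big[\tfrac{\beta\lambda_1}{2}m^2+\tfrac12\log(1-m^2)+F\big(\beta(1-m^2)\big)\Big]-o(1).
\]

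\textbf{Numerator.} For $|m|\le\epsilon$, bound $\frac{\beta\lambda_1}{2}m^2\le C\epsilon^2$. Bound the $y$-integral by the same free energy at $\beta(1-m^2)$, which is Lipschitz in $m^2$. This yields
\[
\frac1N\log Z_N(\epsilon)\le F(\beta)+C\epsilon+o(1),
\]
where the null free energy $F(\beta)$ appears at $\beta=\alpha/\theta$, up to the constant convention of the Haar normalization.

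\textbf{Computing the gap.} The exponent is the variational value minus $F(\beta)$. Optimizing over $q=m^2$ gives a one-variable concave problem. The optimal overlap satisfies $\beta(1-q)\le1$ or $>1$ depending on the regime, and in the low-temperature regime the equator value $F(\beta)$ switches branches at $\beta=1$, i.e.\ $\alpha=\theta$. This matches the case split in \eqref{eq:delta-alpha-theta}.

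I would verify that this optimization reproduces both formulas: stationarity in $q$ together with $\lambda_1=\theta+1/\theta$ gives $1-q=\frac{1}{\beta(\theta+1/\theta)-\ldots}$. This step is routine algebra. Since the lemma only asks for a bound with error $C\epsilon N$, I do not need exact limits. It suffices to use the finite-$N$ quantity with the true eigenvalues, plus continuity of the variational value in $(\lambda_1,\mu_{\rm ESD})$, to absorb the $o(1)$ errors into $C\epsilon$ for $N$ large.

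\textbf{Main obstacle.} The hard step is the uniform, quantitative free-energy asymptotics of the null model on the equator at effective temperature $\beta(1-m^2)$. In particular the low-temperature branch $\gamma>1$ is delicate, because there the integral is dominated near the edge $\lambda_2$. I would handle it through the exact representation
\[
\int e^{\frac{\gamma}{2N}\sum \lambda_i x_i^2}\,dx \;\propto\; \oint e^{Nz/2}\prod_i (z-\gamma\lambda_i)^{-1/2}\,dz
\]
and a saddle-point bound. For an upper bound alone, a cruder route suffices: introduce a Lagrange multiplier $z>\gamma\lambda_2$, use $\int e^{\frac{\gamma}{2N}\sum\lambda_ix_i^2}dx\le e^{Nz/2}\,\mathbb E\, e^{-\frac{1}{2}\sum(z-\gamma\lambda_i)g_i^2}$-type Gaussian comparisons, and optimize over $z$. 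The lower bound on $Z_N$ can be done similarly, by restricting to a Gaussian-like region.
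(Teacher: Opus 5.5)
Your proposal is correct. On the numerator it is the paper's argument: the co-area formula at $q=0$ reduces $\int_{|m|\le\epsilon}e^{\beta H}$ to the null spherical SK free energy on the equator, which is the paper's $F(0;\alpha,\theta)$ from Lemma~\ref{lem:free-energy-band}.

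\textbf{Where you differ: the denominator.} The paper proves $\frac1N\log Z_N\to F(\alpha,\theta)$ two-sidedly (Lemma~\ref{lem:free-energy-spiked}) by steepest descent on the contour representation of the full spiked partition function. Its saddle point sits at $\lambda_1+s/N$ with $s\approx(\beta-1/\theta)^{-1}$, so it must be analysed at the $1/N$ scale next to the outlier. You instead only lower bound $Z_N$ by a single band. That is all this lemma needs, and it is more elementary. What the paper's route buys in exchange is exact asymptotics of $Z_N$ and an outlier saddle-point analysis that it reuses for Theorem~\ref{thm:overlap-ground-state}.

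\textbf{Your route is simpler than you suggest.} Stationarity gives $1-q_*=1/(\beta\theta)=1/\alpha$. The variational value there is $\frac{\beta\lambda_1}{2}-\frac12-\frac{1}{4\theta^2}-\frac12\log\alpha=F(\alpha,\theta)$, and subtracting the equator value gives both lines of \eqref{eq:delta-alpha-theta}. Moreover $\beta(1-q_*)=1/\theta<1$ in every regime. Contrary to your remark, the optimizer never has $\beta(1-q)>1$: on that region the variational function is affine with slope $\beta(\lambda_1/2-1)>0$. So the denominator only needs the high-temperature null free energy. The low-temperature branch, and the split at $\alpha=\theta$, enter only through the equator, where only an upper bound is required.

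\textbf{First caution: the effective temperature.} Because you use a band at $q_*\neq 0$, the effective temperature must be $\beta(1-m^2)$, as you have it. Lemma~\ref{lem:free-energy-band} is derived with $\beta\sqrt{1-q^2}$, which agrees with this only at $q=0$. That is harmless for the paper, which only needs $q=0$, but do not import that formula at $q_*$.

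\textbf{Second caution: your ``cruder route'' is not sharp as written.} With a standard Gaussian and multiplier $z>\gamma\lambda_2$, the infimum sits at the boundary. For example, at $\gamma=1$ it gives about $0.48$ instead of $\tfrac14$, so it would not produce $\Delta_{\alpha,\theta}$. The fix is as follows.
\begin{itemize}
\item Let $\Lambda=\mathrm{diag}(\lambda_2,\dots,\lambda_N)$ and $u$ be uniform on the unit sphere. Compare with the centred Gaussian of precision $w-\gamma\Lambda$, for any $w>\gamma\lambda_2$.
\item Use that $r\mapsto \mathbb E_u e^{-\frac{r^2}{2}\langle u,(w-\gamma\Lambda)u\rangle}$ is decreasing and that $(2\pi)^{-(N-1)/2}\mathrm{Leb}(B_{\sqrt N})=e^{N/2+o(N)}$. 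This gives $\frac1N\log\int e^{\frac\gamma2\sum_{i\ge2}\lambda_iy_i^2}dy\le\frac{w-1}{2}-\frac1{2N}\sum_{i\ge2}\log(w-\gamma\lambda_i)+o(1)$.
\item Choose $w=1+\gamma^2$ for $\gamma<1$, or $w=\gamma\lambda_2+\eta$ with $\eta\downarrow0$ for $\gamma\ge1$. This recovers exactly $\gamma^2/4$, respectively $\gamma-\frac34-\frac12\log\gamma$.
\end{itemize}
This uses only the semicircle law and $\lambda_2\to 2$; no local law or rigidity is needed.
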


The proof of Lemma~\ref{lem:bottleneck-set} goes by computing free energies of bands centered at $e_1$ using the techniques of~\cite{baik2021spherical}  for free energy computations in the $\theta=0$ model. 
As a consequence of Lemma~\ref{lem:bottleneck-set} one can quickly establish that the transit time from the spherical cap about $e_1$ to the spherical cap about $-e_1$ is at least exponentially large in $N$. 
This then implies the exponentially slow mixing time. The details are found in Section \ref{sec:work-case-mixing}.

\subsubsection{Fast mixing when $\alpha>1$ from symmetric initialization.}
As seen from the slow mixing, asymmetry in the initialization can cause the dynamics to become trapped near one of the ground states for an exponential amount of time. The fast symmetric mixing time result is equivalent to saying that this is the only obstruction to fast mixing.

By symmetry of~\eqref{eq:langevin-dynamics}, if $X_0$ is symmetric in $e_1$, then so is $X_t$ for all $t$. It is therefore sufficient to bound the mixing time to $\pi (\cdot \mid m(x)\ge 0)$ for Langevin dynamics restricted to the half sphere $\{m(x)\ge 0\}$. The proof of that is divided into the following steps: 
\begin{enumerate}
    \item Show from any initialization $X_0\in \{m \ge 0\}$ that in $O(\log N)$ time the correlation value $m(X_t)$ satisfies $m(X_t)^2 > \me^2$ for a suitable threshold $\me^2$.
    \item Upon entering the region $m(X_t)^2 > \me^2$, the dynamics remains there for any sub-exponential amount of time. 
    \item Mixing in the region $m > \me$ (and symmetrically $m<-\me$) occurs in $O(\log N)$ time, as there exists $\mbe <\me$ for which the cap $\{ m(x) \geq \mbe \}$ has an $\Omega(1)$ log-Sobolev constant. 
\end{enumerate}

Steps (1)--(2) are proven by showing that $m(X_t)$ has a drift of order $m$ up to the hitting time of $\pm \me$, and then comparing to a mean-repelling Ornstein--Uhlenbeck process. In order to estimate the mixing time within the region $m > \me$, it suffices to show that the Bakry-Emery condition is satisfied within the entire region $m > \me$.  If this condition holds, then by using step 2 we may couple the dynamics starting at the hitting time of $(\me + \epsilon)$, to a process $\tilde{X}_t$ that lives exclusively within the cap $\{m \ge \me\}$. With some care for handling the fact that the Bakry--Emery condition only holds on a cap rather than the entire state space,  in a further $O(\log N)$ time, the distribution of $\tilde X_t$ will be close in total variation distance to the measure $\pi (\cdot \mid m \ge \me)$. An application of Lemma ~\ref{lem:bottleneck-set} will demonstrate that this measure is not far from $\pi (\cdot \mid m \ge 0)$. The criterion on $\theta > \theta_{0,L}$ is needed to ensure that the cap on which Bakry--Emery holds contains the correlation $\me$ that is easy to achieve using the drift function for $m$.   
Full details are provided in Section \ref{sec:langevin-fast-mixing-proof}

\subsubsection{The Exact Metastability Rate at $\alpha>1$}

From Theorem ~\ref{thm:mixing-time-langevin} we have a lower bound on the exponential growth for the mixing time in terms of the free energy difference $\Delta_{\alpha,\theta}$ from~\eqref{eq:delta-alpha-theta}. The remaining work is to show the matching upper bound.

Since we showed that the mixing time from any symmetric initialization is fast, it will suffice to show that the hitting time to $\{x: m(x)=0\}$ from \emph{any} initialization is at most $\exp( \Delta_{\alpha,\theta}(1+o(1))N )$ with high probability. This is proved in the following two steps. 
\begin{itemize}
    \item From any initialization, after $O(\log N)$ time, the dynamics approximates a mixture 
    \[
    p \pi(\cdot \mid m \ge 0) + (1-p) \pi(\cdot \mid m\le 0) \, ,
    \] 
    for a $p = p(X_0)$. 
    \item The hitting time of $m=0$ initialized from the restricted Gibbs measure $X_0 \sim \pi(\cdot \mid m \ge 0)$ is $\exp( \Delta_{\alpha,\theta} N (1+o(1))$ (and symmetrically from $X_0 \sim \pi(\cdot \mid m \le 0)$). 
\end{itemize}
The first item above is essentially a consequence of the $O(\log N)$ escape time from the equator and the fast mixing on the half sphere. 

We therefore focus on the second item. We start with the following observation. For a sample $x$ from the metastable measure $\pi^+ = \pi ( \cdot \mid \ m \geq  0 )$, there is a probability  $\exp(- \Delta_{\alpha,\theta}(1+o(1)) N)$ event that $x$ has $m(x) =o(1)$. From such a point, with at least $e^{-o(N)}$ probability, the Brownian motion can quickly push $m$ to hit zero faster than the drift can force growth back to $\me$ in a small time window. 

With this observation in hand we proceed with the following idea. Pick an increasing sequence of times $T_0 < T_1< ... < T_k$ with $ \log N \ll T_i - T_{i-1}\ll e^{o(N)}$ so that between time $T_i$ and $T_{i+1}$, the process on the half-sphere re-equilibrates. At the same time, if $X_0 \sim \pi^+$, conditional on the event that $(X_{t})_{t\in [0,T_i]}$ has not yet hit $m =0$, its law at time $T_i$ is still $\pi^+$. 
Therefore, these form essentially independent trials for the half-sphere dynamics hitting $m =0$, each having probability $e^{- \Delta_{\alpha,\theta}(1-o(1)) N}$. 
The details of the proof of Theorem~\ref{thm:metastable-mixing-rate} are  given in Section ~\ref{sec:metastability-rate}.  

In total, this Theorem~\ref{thm:metastable-mixing-rate} and its proof establishes a sharp metastability picture in the sense of~\cite{BovDen2015}. The Langevin dynamics initialized uniformly at random in $O(\log N)$ time picks one of the two symmetric phases (positive or negative correlation with $e_1$) to (quasi-)equilibrate into, then takes $\exp(\Delta_{\alpha,\theta}(1+o(1)) N)$ time to transit to the other side of the equator.

\subsubsection{Fast mixing for $\alpha<1$.}

In the high-temperature regime where $\alpha<1$, and $\theta$ is sufficiently large, the Bakry-Emery condition is satisfied over the whole sphere, and so worst-case mixing time upper bounds are immediate. The details of this are given in Section \ref{sec:high-temp}.

\subsection*{Acknowledgments}
R.G.\ thanks A.\ Jagannath for conversations related to this problem. C.G.\ thanks H. G. Le for helpful discussions related to this problem. 
The research of R.G. is supported in part by NSF CAREER grant DMS-2440509 and NSF DMS grant 2246780.  The research of C.G. is supported in part by an NSERC PGSD award.   

\section{Some Preliminary SDE Computations }
We begin with some preliminary computations of the stochastic differential equations satisfied by the correlations $\langle X_t, e_i\rangle$ with the different eigenvectors of $M$. For ease of notation, in the rest of the paper, we will work in the eigenbasis $e_1,...,e_N$ of $M$, so that $x_i = \langle x,e_i\rangle$. 

 Recalling the definition of Langevin dynamics from~\eqref{eq:langevin-dynamics}, It\^o's lemma implies that for every function $f$ in $C^\infty(\mathbf{S}_N)$ that, 
\begin{align} \label{eq:ITO-for-spherical-langevin}
    f(X_t)=f(X_0)+\int_0^t \bigg(\frac1\beta \Delta_{\sph}f(X_s)+ \langle \nabla_{\sph}H , \nabla_\sph f\rangle (X_s)\bigg) \  ds +M_t\,,
\end{align}
where $M_t$ is a martingale given by
\begin{align*}
    M_t = \int_0^t \sqrt{\frac2\beta}\norm{\nabla_{\sph}f}_2 d W_s\,,
\end{align*}
where $\{W_t\}_{t\geq 0}$ is an $\mathbb R$-valued Brownian motion.

Of primary interest will be the correlation with the top eigenvector. 
Let $m_t = \frac{X_{1,t}}{\sqrt N}$.
This correlation satisfies the following SDE. 
\begin{lemma}\label{lem:m-SDE}
    The marginal process $m_t$ satisfies the following: 
    \begin{align}\label{eq:m-SDE}
         d m_t= \Big((\lambda_1-h_t)-(1-\frac1N)\frac1\beta \Big)m_t d t+\sqrt{\frac2{\beta N}(1-m_t^2)} d W_t\,,
    \end{align}
    where $h_t$ is the normalized energy, 
    \begin{align}\label{eq:h-t}
    h_t=\frac{2H(X_t)}N=\frac1N\sum_{i=1}^N\lambda_i X_{i,t}^2\,.
    \end{align}
\end{lemma}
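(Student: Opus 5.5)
We apply the It\^o formula \eqref{eq:ITO-for-spherical-langevin} to the function $f(x) = x_1/\sqrt N$, that is, to the restriction to $\mathbf{S}_N$ of a linear function. The plan has three steps: compute the spherical gradient of $f$, compute its spherical Laplacian, and compute the inner product $\langle \nabla_{\sph} H, \nabla_{\sph} f\rangle$. Each is a standard computation using the projection onto the tangent space of the sphere of radius $\sqrt N$.

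\textbf{Gradients.} The tangent projection at $x\in\mathbf{S}_N$ is $P_x = I - \frac{xx^T}{N}$. The Euclidean gradient of $f$ is $e_1/\sqrt N$, so
\begin{align*}
\nabla_{\sph} f = \frac{1}{\sqrt N}\Big(e_1 - \frac{x_1}{N}x\Big), \qquad \norm{\nabla_{\sph} f}_2^2 = \frac1N\Big(1 - \frac{x_1^2}{N}\Big) = \frac{1-m^2}{N}.
\end{align*}
Substituting this norm into the martingale term of \eqref{eq:ITO-for-spherical-langevin} gives the diffusion coefficient $\sqrt{\frac{2}{\beta N}(1-m_t^2)}$.

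\textbf{Drift from the Hamiltonian.} The Euclidean gradient of $H$ is $Mx/N$, with components $\lambda_i x_i/N$ in the eigenbasis. Since $P_x$ is a symmetric projection,
\begin{align*}
\langle \nabla_{\sph} H, \nabla_{\sph} f\rangle = \Big\langle P_x \frac{Mx}{N}, \frac{e_1}{\sqrt N}\Big\rangle = \frac{1}{\sqrt N}\Big(\frac{\lambda_1 x_1}{N} - \frac{x_1}{N}\cdot\frac{\langle x, Mx\rangle}{N}\Big) = (\lambda_1 - h)\,m,
\end{align*}
where $h = \frac1N\sum_i \lambda_i x_i^2$.

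\textbf{Laplacian term.} For a linear function $\ell(x) = \langle a, x\rangle$ on a sphere of radius $R$ in $\mathbb R^N$, the restriction is a degree-one spherical harmonic. It therefore satisfies $\Delta_{\sph}\ell = -\frac{N-1}{R^2}\ell$. One can check this directly from $\Delta_{\sph} g = \Delta g - \frac{N-1}{R}\partial_r g - \partial_r^2 g$, using that $\ell$ is harmonic in $\mathbb R^N$ and $1$-homogeneous. With $R^2 = N$ this gives $\Delta_{\sph} f = -(1 - \frac1N) m$, so the term $\frac1\beta \Delta_{\sph} f$ contributes $-(1-\frac1N)\frac1\beta m$ to the drift.

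Collecting the three contributions yields \eqref{eq:m-SDE}. The main point requiring care is the normalization: one must confirm that the generator in \eqref{eq:ITO-for-spherical-langevin} uses $\frac1\beta\Delta_{\sph}$ with noise $\sqrt{2/\beta}\,dB_t$ on the sphere of radius $\sqrt N$, so that the curvature factor is $(N-1)/N$ rather than $N-1$. Otherwise the argument is routine.
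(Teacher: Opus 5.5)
Your route is the paper's: apply \eqref{eq:ITO-for-spherical-langevin} to $f=x_1/\sqrt N$ and compute $\norm{\nabla_{\sph} f}_2^2$, $\langle \nabla_{\sph}H,\nabla_{\sph}f\rangle$ and $\Delta_{\sph} f$. Getting the Laplacian from the polar decomposition of $\Delta$ instead of the paper's identity $\Delta_{\sph}P=|x|^p\Delta(P/|x|^p)$ is an equivalent variant. Your gradient-norm and Laplacian computations are correct.

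The Hamiltonian drift step, however, contains a false equality. With $\nabla H = Mx/N$, as you take it, your middle expression equals
\[
\frac{1}{\sqrt N}\Big(\frac{\lambda_1 x_1}{N}-\frac{x_1}{N}\cdot\frac{\langle x,Mx\rangle}{N}\Big)=\frac{x_1}{\sqrt N}\cdot\frac1N\Big(\lambda_1-\frac{\langle x,Mx\rangle}{N}\Big)=\frac{(\lambda_1-h)\,m}{N},
\]
not $(\lambda_1-h)m$. You reached the stated drift only because a dropped factor $1/N$ cancelled a wrong normalization of $H$. This is not cosmetic: if $\nabla H$ really were $Mx/N$, the Hamiltonian drift would be $O(1/N)$ and swamped by $-(1-\frac1N)\frac1\beta m$, so \eqref{eq:m-SDE} would be false.

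The fix is to use the normalization the statement itself imposes. The identity $h_t=2H(X_t)/N=\frac1N\sum_i\lambda_iX_{i,t}^2$ in \eqref{eq:h-t} forces $H(x)=\frac12\sum_i\lambda_ix_i^2=\frac12\langle x,Mx\rangle$, as in \eqref{eq:def-of-H-and-M}; the factor $\frac1{2N}$ in \eqref{eq:def-of-m} is a misprint. Then $\nabla H=Mx$, and
\[
\Big\langle P_x Mx,\frac{e_1}{\sqrt N}\Big\rangle=\frac{1}{\sqrt N}\Big(\lambda_1x_1-x_1\frac{\langle x,Mx\rangle}{N}\Big)=(\lambda_1-h)\,m,
\]
which is exactly the paper's computation. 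With this correction your argument is complete.
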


\begin{proof}
    From \eqref{eq:ITO-for-spherical-langevin} it will suffice to compute $\Delta_{\sph}m,\nabla_{\sph} H(m),||\nabla_{\sph}m||_2$.

    Let $\Delta=\sum_{i=1}^N\partial_i^2$ denote the normal Laplacian in $\mathbb R^N$, then for any homogeneous polynomial $P(x)$ of degree $p$, the following equalities hold:
    \begin{align*}
        \Delta_{\sph}P(x)=|x|^p \Delta \left(\frac{P(x)}{|x|^p}\right)\,, \quad \text{and} \quad \nabla_{\sph} P(x) = \nabla P - \frac{p}{N}P(x) x\,.
    \end{align*}
   Consequently a direct computation yields, using that $|x|= \sqrt{N}$, that  
    \begin{align*}
        \Delta_{\sph}m=|x| \Delta \left(\frac{x_1}{\sqrt N|x|}\right) 
        & =\frac{\Delta x_1}{|x|}+2\nabla x_1\cdot\nabla(\frac{1}         {|x|})+x_1\Delta(\frac1{|x|})\nonumber\\
        & =-(1-\frac1N)\frac{x_1}{\sqrt N}=-(1-\frac1N)m\,.
    \end{align*}
    For the other two terms, 
    \begin{align*}
        \nabla_{\sph} \,H(m)=\nabla_{\sph}H\cdot\nabla_{\sph}m 
        =\nabla H\cdot \bigg[(I-\frac{xx^T}{N})  \nabla m \bigg]
        &=\sum_{i=1}^N \lambda_i x_i e_i\cdot \bigg[(I-\frac{xx^T}{N}) \frac{e_1}{\sqrt N} \bigg] 
        \\
        &=(\lambda_1-\frac1N\sum_{i=1}^N\lambda_i x_i^2)m\,,
    \end{align*}
    and 
    \begin{align*}
        \norm{\nabla_{\sph}m}_2^2=\nabla m\cdot \bigg[(I-\frac{xx^T}N) \nabla m \bigg] & = \frac{e_1}{\sqrt N}\cdot \bigg[(I-\frac{xx^T}N)\frac{e_1}{\sqrt N} \bigg] \\
        & =\frac1N(1-m^2)\,.
    \end{align*}
    Combining the calculations above with \eqref{eq:ITO-for-spherical-langevin} completes the proof.
\end{proof}

With the SDE~\eqref{eq:m-SDE}, we obtain an ``easy" band where the drift of $m_t$ is away from $0$, uniformly over points on the sphere of that correlation. 
\begin{corollary}\label{coro:m_easy}
    Let $\me=\sqrt{1-\frac{1}{\beta(\lambda_1-\lambda_2)}}$ and let $h(x) = 2H(x)/N$. Then  for any $\epsilon < \me$, 
  \begin{align*}
        \inf_{\{x : |m(x)|<\epsilon\}} (\lambda_1 - h(x)) - (1-\frac{1}{N}) \frac{1}{\beta} > (\lambda_1-\lambda_2)(\me^2 - \epsilon^2)\,.
    \end{align*}
\end{corollary}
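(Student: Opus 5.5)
Since $|x|^2 = N$, the energy $h(x) = \frac1N\sum_i \lambda_i x_i^2$ is a convex combination of the eigenvalues with weights $x_i^2/N$. The weight on $\lambda_1$ is $m(x)^2$, and the remaining weights sum to $1-m(x)^2$. Bounding every $\lambda_i$ with $i\ge 2$ by $\lambda_2$ gives
\begin{align*}
h(x) \le \lambda_1 m^2 + \lambda_2(1-m^2), \qquad\text{so}\qquad \lambda_1 - h(x) \ge (\lambda_1-\lambda_2)(1-m^2).
\end{align*}
This is the only structural input needed. It holds pointwise for every $x\in\mathbf{S}_N$ and uses only the ordering $\lambda_1>\lambda_2\ge\lambda_i$.

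Next we use the constraint $|m(x)|<\epsilon$, which gives $1-m^2 > 1-\epsilon^2$ strictly. It is harmless here that $\lambda_1-\lambda_2>0$, which holds since the eigenvalues are assumed distinct, and the infimum is over an open band, so the bound is at least $(\lambda_1-\lambda_2)(1-\epsilon^2)$. The definition of $\me$ gives $(\lambda_1-\lambda_2)\me^2 = (\lambda_1-\lambda_2) - \frac1\beta$. Substituting,
\begin{align*}
(\lambda_1-\lambda_2)(1-\epsilon^2) - \Big(1-\frac1N\Big)\frac1\beta = (\lambda_1-\lambda_2)(\me^2-\epsilon^2) + \frac{1}{\beta N}.
\end{align*}
The term $\frac{1}{\beta N}>0$ supplies the strict inequality, independently of whether the infimum over the open band is attained. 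The requirement $\epsilon<\me$ implicitly presumes $\me$ is real, i.e.\ $\beta(\lambda_1-\lambda_2)\ge1$. It only serves to make the right-hand side positive, so that the drift in \eqref{eq:m-SDE} points away from $0$; it is not needed for the inequality itself.

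There is no real obstacle. The one point to get right is that the weights on $\lambda_2,\dots,\lambda_N$ total exactly $1-m^2$, which uses $|x|^2=N$. After that, the strictness comes from the $1/(\beta N)$ slack.
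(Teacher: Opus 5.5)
Your proposal is correct and follows the paper's argument: the same convex-combination bound $h(x)\le \lambda_1 m^2+\lambda_2(1-m^2)$, followed by substituting $(\lambda_1-\lambda_2)\me^2=(\lambda_1-\lambda_2)-\frac1\beta$. Your explicit tracking of the $\frac{1}{\beta N}$ slack is slightly more careful than the paper, whose pointwise display only gives a non-strict bound once one passes to the infimum over the open band.
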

\begin{proof}
    First note we have the following trivial inequality: 
    \[
    h(x)=\sum_{i=1}^N\lambda_i \frac{(x_i)^2}N\leq \lambda_1 m(x)^2+\lambda_2(1-m(x)^2) \, . 
    \]
    Therefore, 
    \begin{align*}
        (\lambda_1-h(x))-(1-\frac1N)\frac1\beta\nonumber
        >(\lambda_1-\lambda_2)(1-m(x)^2)-\frac1\beta
        =(\lambda_1-\lambda_2)(\me^2-m(x)^2)\, ,
    \end{align*}
    completing the proof. 
\end{proof}

\section{Fast symmetric mixing at low temperatures} \label{sec:hitting-GS}

Our aim in this section is to establish the most involved part of Theorem~\ref{thm:mixing-time-langevin}, which is the fast mixing from symmetric initializations at low temperatures. 

\subsection{Obtaining and retaining correlation}
We define three threshold values for $m$ in the regime where $\beta > \frac{1}{\theta}$:  
\begin{align} \label{eq:m-thresholds} 
    \me &:=\sqrt{1-\frac1{\beta(\lambda_1-\lambda_2)}} \\ 
    \mbe &:=\sqrt{\frac{\lambda_1-\lambda_N-\frac1\beta}{2\lambda_1-\lambda_2-\lambda_N}} \\
    \mpi &:=\sqrt{1-\frac1{\beta\theta}} \, . 
\end{align}

Informally, these three quantities have the following properties:
\begin{enumerate}
    \item $\me$ is such that the coefficient of $m_t$ in its SDE is positive whenever $\abs{m_t} < \abs{\me}$. 
    \item $\mbe$ is such that in the region $\abs{m} > \mbe$, the Bakry-Emery criterion is strictly satisfied. 
    \item $\mpi$ is the (asymptotic)  overlap of a sample from stationarity with the ground state.
\end{enumerate}

Our results in this section will apply when $\me>\mbe$. We will work throughout the paper on the asymptotically almost sure  event that $\lambda_1, \lambda_2, \lambda_N$ converge to $\theta + \frac{1}{\theta}, 2$ and $-2$ respectively~\cite{bai1988necessary,Peche06}. In turn, statements are understood to hold with $\mathbb P_M$-probability one for sufficiently large $N$. 
Therefore, we will frequently interchange between $\lambda_i$ and their limiting values, and in particular  $\theta>\theta_{0,L}(\alpha)$ from~\eqref{eq:theta-slow} will ensure that $\me>\mbe$: see Lemma~\ref{lem:theta-slow-upper-bound}. 

We define three equally spaced points in between $\me$ and $\mbe$ as follows
\begin{align}\label{eq:def-of-m-1-2-3}
    \mone=&\mbe+\frac14(\me-\mbe)\,, \qquad \mtwo=\mbe+\frac24(\me-\mbe)\,, \qquad \mhit=\mbe+\frac34(\me-\mbe)\,. 
\end{align}

The values $\mone,\mtwo$ will be used to define an extended Hamiltonian $\tilde H$ which is the same as $H$ for $|m|>\mtwo$, and is infinity for $|m|<\mone$. The value $\mhit$ is used as the threshold up to which $m_t^2$ dominates a mean-repelling Ornstein--Uhlenbeck process. 

The first step in our argument is to upper bound the hitting time for $|m_t|$ to the threshold $\mhit$.  We will frequently use the following standard SDE comparison from \cite[Chapter VI, Theorem 1.1]{ikeda2014stochastic}.
\begin{theorem}\label{thm:comparison}
    Suppose that we are given the following stochastic processes:
    \begin{enumerate}
        \item Two real $(\mathcal F_t)$-adapted continuous processes $x_1(t,\omega)$ and $x_2(t,\omega)$;
        \item A one-dimensional $(\mathcal F_t)$-Brownian motion $W(t,\omega)$ such that $W(0)=0$ a.s.;
        \item Two real $(\mathcal F_t)$-adapted well-measurable processes $\beta_1(t,\omega)$ and $\beta_2(t,\omega)$.
    \end{enumerate}
    If the following conditions hold:
    \begin{enumerate}
        \item $x_i(t)=x_i(0)+\int_0^t(\beta_i(s)\mathrm ds+\sigma(s,x_i(s))\mathrm dW_s$ for $i=1,2$;
        \item $x_1(0)\leq x_2(0)$;
        \item $\sigma(t,x)\in C([0,\infty)\times \mathbb R)$;
        \item There exists a strictly increasing function $\rho$ defined on $[0,\infty)$ with $\rho(0)=0$, such that  
        \begin{align*}
            \int_{0}^{\infty}\rho^{-2}(\xi) d\xi=\infty\,,
        \end{align*}
        and
        \begin{align*}
            &|\sigma(t,x)-\sigma(t,y)|\leq\rho(|x-y|)\,,&&x,y\in\mathbb R,t\geq 0\,.
        \end{align*}
        \item There exists $b_1(t,x),b_2(t,x)\in C([0,\infty)\times \mathbb R)$ satisfy
        \begin{align*}
            \beta_1(t)\leq b_1(t,x)< b_2(t,x)\leq \beta_2(t)&&x\in\mathbb R,t\geq 0\,.
        \end{align*}   
    \end{enumerate}

    Then with probability $1$, $x_1(t)\leq x_2(t)$ for all $t\geq 0$.
\end{theorem}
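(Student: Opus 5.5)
The statement is the classical comparison theorem of Ikeda--Watanabe, and I would prove it by the Yamada--Watanabe approximation of the positive part. Set $\Delta_t = x_1(t) - x_2(t)$, so that $\Delta_0 \le 0$. The goal is to show $\E[\Delta_t^+] = 0$ for every $t$. Continuity of paths then upgrades this to the almost sure statement simultaneously for all $t \ge 0$, by first taking rational $t$.

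\textbf{Step 1: the drift difference has a sign.} Hypothesis (5) says that for each $(t,\omega)$ and every $x$,
\[
\beta_1(t) \le b_1(t,x) < b_2(t,x) \le \beta_2(t).
\]
Evaluating at any single point, say $x = x_1(t)$, gives $\beta_1(t) - \beta_2(t) < 0$ pointwise. So the drift of $\Delta$ is nonpositive, and only the diffusion term needs work.

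\textbf{Step 2: construct smooth approximations of $x^+$.} Because $\int_{0^+} \rho^{-2} = \infty$, I can choose a sequence $1 = a_0 > a_1 > \cdots \downarrow 0$ with $\int_{a_n}^{a_{n-1}} \rho^{-2}(u)\,du = n$. Next choose continuous $\psi_n \ge 0$ with the following properties:
\begin{itemize}
\item $\psi_n$ is supported in $(a_n, a_{n-1})$;
\item $\int \psi_n = 1$;
\item $\psi_n(u) \le 2/(n\rho^2(u))$.
\end{itemize}
Define $\varphi_n(x) = \int_0^{x}\int_0^{y}\psi_n(u)\,du\,dy$ for $x > 0$ and $\varphi_n(x) = 0$ for $x \le 0$. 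Then $\varphi_n \in C^2$, $0 \le \varphi_n' \le 1$, $\varphi_n'' = \psi_n$, and $\varphi_n(x) \uparrow x^+$ as $n \to \infty$.

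\textbf{Step 3: apply It\^o's formula.} Applied to $\varphi_n(\Delta_t)$, It\^o gives
\[
\varphi_n(\Delta_t) = \int_0^t \varphi_n'(\Delta_s)(\beta_1 - \beta_2)(s)\,ds + N_t + \tfrac12 \int_0^t \psi_n(\Delta_s)\bigl(\sigma(s,x_1(s)) - \sigma(s,x_2(s))\bigr)^2 ds,
\]
where $N_t$ is a local martingale.
\begin{itemize}
\item The first integral is $\le 0$ by Step 1, since $\varphi_n' \ge 0$.
\item For the last integral, hypothesis (4) gives $(\sigma(s,x_1) - \sigma(s,x_2))^2 \le \rho^2(|\Delta_s|)$. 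On the support of $\psi_n$ this makes the integrand at most $2/n$, so the term is at most $t/n$.
\end{itemize}

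\textbf{Step 4: localize and pass to the limit.} Introduce stopping times $\tau_k \uparrow \infty$ (exit times of $|x_1|,|x_2|$ from $[-k,k]$) so that the stopped $N$ is a true martingale; continuity of $\sigma$ keeps the integrand bounded there. Taking expectations, and using $\varphi_n(\Delta_0) = 0$, gives $\E\,\varphi_n(\Delta_{t\wedge\tau_k}) \le t/n$. Send $n \to \infty$ by monotone convergence to get $\E[\Delta_{t\wedge\tau_k}^+] = 0$. Then send $k \to \infty$ using path continuity and Fatou's lemma.

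The only genuinely delicate point is that $\sigma$ is merely $\rho$-H\"older rather than Lipschitz, which is exactly why the naive Gronwall argument fails. This is handled by the specific choice of $a_n$ and $\psi_n$ in Step 2, which makes the second-order It\^o correction vanish like $1/n$.
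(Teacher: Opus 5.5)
Your proof is correct and is the standard Yamada--Watanabe argument behind Ikeda--Watanabe, Ch.~VI, Thm.~1.1, which the paper cites in place of a proof. Your Step~1 is simpler than theirs, and legitimately so: the paper's hypothesis (5) holds uniformly in $x$, so it gives $\beta_1<\beta_2$ outright. Ikeda--Watanabe assume only $\beta_1(t)\le b_1(t,x_1(t))$ and $\beta_2(t)\ge b_2(t,x_2(t))$, and therefore need an extra argument to control the sign of the drift term.

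You should, however, say explicitly that you read hypothesis (4) as $\int_{0+}\rho^{-2}(\xi)\,d\xi=\infty$. This is what Ikeda--Watanabe assume and what your choice of $a_n$ requires. The paper's $\int_0^\infty$ is a misprint, because divergence only at infinity does not suffice. For example, take $\sigma(x)=|x|^{1/4}$, $\rho(\xi)=\xi^{1/4}$, $x_2\equiv 0$ with $\beta_2=0$, and $x_1$ a weak solution of $dx_1=|x_1|^{1/4}\,dW-dt$ from $0$, which enters $(0,\infty)$ with positive probability.
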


Using Theorem~\ref{thm:comparison}, we will compare two processes. The lower bounding process will be $Y_t^2$ where $Y_t$ is the mean-repelling Ornstein--Uhlenbeck process 
\begin{align}\label{eq:Y_t-OU}
	dY_t = Y_t dt + \frac{1}{\sqrt{N}} dW_t
\end{align}
The process we will compare to $(Y_t)^2$ is $(Y_t')^2$ where $Y_t'$ is a suitable transformation of $m_t$ such that its volatility is constant (to facilitate the comparison): for a suitable choice of $C_E$, 
\begin{align}\label{eq:Yt'-in-terms-of-m}
	Y_t' = \sqrt{\frac{\beta C_E}{2}} \arcsin m_{t/C_E}\,.
\end{align}

The following lemma provides a hitting time estimate for the mean-repelling OU process. 
\begin{lemma}\label{lem:OU-estimate}
    Let $Y_t$ solve~\eqref{eq:Y_t-OU}
    with $Y_0=0$. Fix a threshold value $R>0$, and let 
    \begin{align*}
        \tau_{R}=\inf\{t>0 \ : \ Y_t^2>R^2\}\,,
    \end{align*}
    then for any $r>0$ we have:
    \begin{align*} 
    \p ( \tau_{R} > T_R +r ) < 2e^{-r}\,,\,\, \p(\tau_R < T_R-r)<\frac12 e^{-2r}\,,\,\,\text{where} \  T_R= \log (R\sqrt{N})\,.
    \end{align*}
\end{lemma}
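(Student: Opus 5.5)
The approach is to solve the OU equation explicitly and reduce both bounds to Gaussian tail estimates. With $Y_0=0$, Duhamel's formula gives
\[
Y_t = \frac{e^{t}}{\sqrt N}\int_0^t e^{-s}\,dW_s .
\]
Set $Z_t:=\int_0^t e^{-s}dW_s$. This is a continuous $L^2$-bounded martingale with $\langle Z\rangle_t=\frac12(1-e^{-2t})$, and it converges a.s.\ to $Z_\infty\sim\mathcal N(0,\tfrac12)$. In these variables,
\[
\{Y_t^2>R^2\}=\{|Z_t|>R\sqrt N e^{-t}\},
\]
so $\tau_R$ is the first time $|Z_t|$ crosses the decreasing boundary $R\sqrt N e^{-t}$. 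At $t=T_R+u$ this boundary equals $e^{-u}$, so the two estimates reduce to statements about $Z$ near this boundary.

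\emph{Upper tail.} If $\tau_R>T_R+r$, then $|Z_{T_R+r}|\le e^{-r}$. Since $Z_{T_R+r}$ is centered Gaussian with variance $\sigma^2=\frac12(1-e^{-2(T_R+r)})$, we have $\sigma^2\ge \frac12(1-e^{-2T_R})=\frac12(1-\frac1{R^2N})$, which is at least, say, $1/4$ in the relevant regime $R^2N\ge 2$. The Gaussian density is at most $1/\sqrt{2\pi\sigma^2}$, so the small-ball estimate gives
\[
\p(|Z_{T_R+r}|\le e^{-r})\le \frac{2e^{-r}}{\sqrt{2\pi\sigma^2}}\le \frac{2}{\sqrt{\pi/2}}\,e^{-r}<2e^{-r}.
\]
For very small $R^2N$ (when $T_R\le 0$), I would instead bound the density using the variance at time $T_R+r$ itself, and possibly adjust constants there.

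\emph{Lower tail.} If $\tau_R<T_R-r$, then $\sup_{t\le T_R-r}|Z_t|e^{t}\ge R\sqrt N$. Since the boundary is decreasing, this implies
\[
\sup_{t\le T_R-r}|Z_t|\ge R\sqrt N e^{-(T_R-r)}=e^{r}.
\]
Write $Z$ as a time-changed Brownian motion $B$ with total clock at most $\frac12$, and apply the reflection principle together with the Gaussian tail:
\[
\p\Big(\sup_{s\le 1/2}|B_s|\ge e^r\Big)\le 4\,\p(B_{1/2}\ge e^r)\le \frac{2}{\sqrt{\pi}}\,e^{-r}\,e^{-e^{2r}}.
\]
Using $e^{2r}\ge 2r$ bounds this by a constant times $e^{-r-2r}$, which beats $\frac12 e^{-2r}$ for most $r$ but perhaps not for very small $r$.

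This final constant-matching is the main obstacle. Rather than lose a factor of $4$, I would sharpen the argument with Doob's exponential martingale inequality applied to $e^{\lambda Z_t}$ on each side: this gives $\p(\sup Z\ge a)\le e^{-a^2}$ with clock $\frac12$, hence $2e^{-e^{2r}}$ for $|Z|$. One still has to check $2e^{-e^{2r}}\le\frac12 e^{-2r}$; this fails near $r=0$, so a precise constant would require either a more careful estimate or restricting to $r$ bounded below. In the application only the exponential form matters, so I would settle for the clean exponential-martingale bound and adjust the constant if needed.
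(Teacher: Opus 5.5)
Your upper-tail argument is the paper's: both bound $\p(\tau_R>T_R+r)$ by the small-ball probability of the Gaussian $Y_{T_R+r}$ (your $Z_{T_R+r}$), using only the maximum of its density. Both arguments need $T_R+r$ bounded away from $0$. The paper gets this from $(e^{2t}-1)^{-1}\le\frac32e^{-2t}$ for $t>2$, and you from $R^2N\ge2$. No adjustment of constants can remove this restriction: if $T_R+r=0$ and $r>\log 2$, then $\p(\tau_R>T_R+r)=1>2e^{-r}$. This is harmless, since $R\sqrt N\asymp\sqrt N$ in the application.

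For the lower tail you take a different route. The obstacle you identify is not real, so as written the proposal stops short of the stated constant unnecessarily. Your reflection-principle bound already gives $\frac12e^{-2r}$ for every $r\ge0$:
\[
\frac{2}{\sqrt\pi}\,e^{-r-e^{2r}}\le\frac12\,e^{-2r}
\iff \frac{4}{\sqrt\pi}\,e^{\,r-e^{2r}}\le 1 .
\]
Since $e^{2r}\ge1+2r$, we have $r-e^{2r}\le-1$, so the left side is at most $4/(e\sqrt\pi)\approx0.83$. Using $e^{2r}\ge2r$ discarded exactly the factor $e^{-1}$ you needed.

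The exponential-martingale ``sharpening'' goes the wrong way. Relative to the reflection bound, it drops the Mills-ratio factor $1/(a\sqrt\pi)\le1/\sqrt\pi$ (with $a=e^r\ge1$), which is precisely why $2e^{-e^{2r}}$ fails near $r=0$. Settling for it with an adjusted constant would prove a weaker statement than the lemma.

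The paper avoids any constant-chasing. $Y_t^2$ is a nonnegative submartingale, since its drift is $2Y_t^2+\frac1N\ge0$, so Doob's $L^1$ maximal inequality gives
\[
\p\Big(\sup_{t\le T}Y_t^2>R^2\Big)\le\frac{\E[Y_T^2]}{R^2}=\frac{e^{2T}-1}{2NR^2}<\frac12\,e^{2(T-T_R)} .
\]
At $T=T_R-r$ this is exactly $\frac12e^{-2r}$. The paper's route is one line and hits the constant on the nose. Yours (time change plus reflection) costs a little more but yields a doubly exponential tail $e^{-r-e^{2r}}$, far stronger than the lemma or its application needs.
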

\begin{proof}

    By a direct computation, the solution to the SDE $ dY_t=Y_t dt+ d W_t$ with $Y_0=0$ is given by 
    \begin{align*}
        Y_t=\frac1{\sqrt N}\int_0^t e^{t-s} d W_s\,,
    \end{align*}
 which is a normal random variable with mean zero and variance 
 \[
\sigma_t = \frac{1}{2N} (e^{2t}-1) \, . 
 \]
    Consequently if $\tau_R$ is the first hitting time to $\pm R$ then:
    \[
 \p( \tau_R \geq T) \leq \p( \abs{Y_T}  < R ) = \frac{1}{\sqrt{2\pi \sigma_T }}\int_{-R}^{R} \exp(- \frac{t^2}{2\sigma_T} ) dt \, ,
    \]
     using the bound $e^{-\frac{x^2}{2\sigma_T} } \leq 1$ and $(e^{2t}-1)^{-1} \leq 3/2e^{-2t} $ for $t>2$ we obtain: 
    \[
\p( \tau_R \geq t) \leq  \frac{3R \sqrt{2N}}{\sqrt{2\pi}} e^{-t} \, ,   
    \]  
    from which the first tail bound follows.

    By applying Doob's maximal inequality to $Y_t^2$,
    \[
    \p(\tau_R<T)=\p(\sup_{t\in[0,T]}Y_t^2>R^2)\leq \frac{\mathbb E[Y_T^2]}{R^2}=\frac{1}{R^2}\times\frac1N \int_0^T e^{2(T-s)}ds<\frac12 e^{2(T-T_R)}\,,
    \]
    we also obtain the second tail bound.
\end{proof}

\begin{lemma}\label{lem:hitting-time} Suppose that $\alpha>1$ and $\theta>\theta_{0,L}$, and let $C_E$ denote the lower bound obtained from Corollary \ref{coro:m_easy} for the region $\abs{m(x)} < \mhit$. Fix any initialization $X_0$ and let $\tau_{m,\epsilon} $ denote the first time $\abs{m_t} \geq \epsilon$, then for any $r>0$ we have 
    \begin{align*}
        \mathbb P[ \tau_{m,\epsilon} > T+r ] \leq  2e^{-r}\,,\quad
        \text{ where } \quad &T=\frac1{C_E}\Big( \log \sqrt{N}+\log(\sqrt{\frac{\beta C_E}2}\arcsin\epsilon)\Big)\,.
    \end{align*}
    In particular, $|m_t|$ hits the threshold $\mhit$ of~\eqref{eq:def-of-m-1-2-3}  with probability at least $1-2e^{-100}$ in time $T_{\text{hit}}$, where $T_{\text{hit} }$ is given by: 
    \begin{align} \label{eq:def-T-hit}
        T_{\text{hit}}=100 \frac{ \tfrac{1}{2} \log N+\log (\sqrt{\frac12\beta (\lambda_1-\lambda_2)(\me^2-\mhit^2)}\arcsin \mhit)}{(\lambda_1-\lambda_2)(\me^2-\mhit^2)}\,.
    \end{align}
    Note that if $\lambda_1 - \lambda_2 = \Omega(1)$, then $T_{hit} = O(\log N)$. 
\end{lemma}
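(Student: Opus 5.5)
The plan is to reduce to Lemma~\ref{lem:OU-estimate} via the comparison Theorem~\ref{thm:comparison}, applied to the time-changed, variance-stabilized process $Y'_t$ of~\eqref{eq:Yt'-in-terms-of-m}.

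\textbf{Step 1: SDE for $\arcsin m_t$.} Set $Z_t=\arcsin m_t$. By It\^o's formula with~\eqref{eq:m-SDE}, using $\frac{d}{dm}\arcsin m=(1-m^2)^{-1/2}$ and $\frac{d^2}{dm^2}\arcsin m=m(1-m^2)^{-3/2}$,
\begin{align*}
dZ_t=\Big[\frac{a_t m_t}{\sqrt{1-m_t^2}}+\frac{1}{\beta N}\frac{m_t}{\sqrt{1-m_t^2}}\Big]dt+\sqrt{\frac{2}{\beta N}}\,dW_t\,,
\end{align*}
where $a_t=(\lambda_1-h_t)-(1-\frac1N)\frac1\beta$. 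The volatility is constant. On $\{|m_t|<\mhit\}$, Corollary~\ref{coro:m_easy} gives $a_t\ge C_E$ with $C_E=(\lambda_1-\lambda_2)(\me^2-\mhit^2)$. The drift therefore has the sign of $Z_t$ and magnitude at least $C_E|m_t|/\sqrt{1-m_t^2}=C_E|\tan Z_t|\ge C_E|Z_t|$.

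\textbf{Step 2: rescale and compare.} After the time change $t\mapsto t/C_E$ and multiplication by $\sqrt{\beta C_E/2}$, the process $Y'_t$ satisfies $dY'_t=b'_t\,dt+N^{-1/2}d\tilde W_t$, where $\mathrm{sign}(b'_t)=\mathrm{sign}(Y'_t)$ and $|b'_t|\ge|Y'_t|$ up to the hitting time of $|m|=\mhit$. Because the drift is only repelling in absolute value, I will compare $(Y'_t)^2$ with $Y_t^2$ rather than the processes themselves. Their It\^o differentials are
\begin{align*}
d(Y'_t)^2=(2Y'_tb'_t+\tfrac1N)dt+\tfrac{2}{\sqrt N}Y'_t\,d\tilde W_t\,,\qquad dY_t^2=(2Y_t^2+\tfrac1N)dt+\tfrac{2}{\sqrt N}Y_t\,dW_t\,.
\end{align*}
Here $2Y'_tb'_t\ge 2(Y'_t)^2$, and the diffusion coefficient is $\sigma(x)=2\sqrt{x/N}$ evaluated at the squared variable, after choosing the driving Brownian motion as $\int\mathrm{sign}(Y')\,d\tilde W$.

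\textbf{Main obstacle.} The comparison requires three things:
\begin{itemize}
\item a H\"older-$1/2$ modulus $\rho(u)=C\sqrt u$, which satisfies $\int\rho^{-2}=\infty$;
\item a common Brownian motion, handled by driving $Y$ with the same $\int\mathrm{sign}(Y')\,d\tilde W$ (Brownian by L\'evy);
\item a strict drift inequality, handled by comparing against $dY=(1-\delta)Y\,dt+N^{-1/2}dW$ and letting $\delta\to0$, or by absorbing the slack since $\tan Z>Z$ strictly for $Z\ne0$.
\end{itemize}
Initial conditions are fine: $(Y'_0)^2\ge0=Y_0^2$. This yields $(Y'_t)^2\ge Y_t^2$ until $|m|$ reaches $\mhit$. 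Up to that time the drift bound holds. After it the claim is already achieved, so the comparison only needs to run up to that stopping time. For $\epsilon\le\mhit$ this gives $\tau_{m,\epsilon}\le C_E^{-1}\tau_R$ with $R=\sqrt{\beta C_E/2}\arcsin\epsilon$.

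\textbf{Step 3: conclude.} Lemma~\ref{lem:OU-estimate} gives $\p(\tau_R>T_R+r)<2e^{-r}$ with $T_R=\log(R\sqrt N)$. Undoing the time change produces the threshold $C_E^{-1}(T_R+r)$, matching $T$ in the statement, with the factor $1/C_E$ on $r$ absorbed (using $C_E\le1$, or reparametrizing $r$). Taking $\epsilon=\mhit$ and $r$ of order $100\,T_R$ gives $T_{\text{hit}}$ as in~\eqref{eq:def-T-hit}, with failure probability at most $2e^{-100}$. Finally, $T_{\text{hit}}=O(\log N)$ because $C_E=\Omega(1)$ when $\lambda_1-\lambda_2=\Omega(1)$ and $\me>\mhit$.
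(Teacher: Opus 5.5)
Your argument is the same as the paper's. You pass to the variance-stabilized, time-changed process $Y'_t=\sqrt{\beta C_E/2}\,\arcsin m_{t/C_E}$. You use Corollary~\ref{coro:m_easy} and $|\tan x|\ge|x|$ to make the drift of $(Y'_t)^2$ dominate that of $Y_t^2$ until $|m|$ reaches $\epsilon$. Then you apply Theorem~\ref{thm:comparison} and Lemma~\ref{lem:OU-estimate}. Your It\^o bookkeeping (the $\frac1N$ correction, the $(1-\frac2N)$ term, stopping at the exit time, the H\"older modulus) is more careful than the paper's, which simply asserts the comparison.

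\textbf{Step 3 gets the $1/C_E$ absorption backwards.} The comparison yields $C_E\,\tau_{m,\epsilon}\le\tau_R$, hence
\[
\mathbb P\big[\tau_{m,\epsilon}>T+r/C_E\big]\le 2e^{-r}\,,\qquad\text{equivalently}\qquad \mathbb P\big[\tau_{m,\epsilon}>T+r\big]\le 2e^{-C_E r}\,.
\]
When $C_E\le 1$ this is \emph{weaker} than $2e^{-r}$, so ``absorbing using $C_E\le 1$'' does not work. The literal bound $2e^{-r}$ follows only if $C_E\ge 1$, and $C_E=(\lambda_1-\lambda_2)(\me^2-\mhit^2)\to 0$ as $\theta\downarrow\theta_{0,L}$. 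The paper's proof is equally loose here: its final display drops the time change altogether and writes the threshold as $\log(R\sqrt N)+r$. You should therefore state the first claim with $r/C_E$ in place of $r$, or with $2e^{-C_E r}$ in place of $2e^{-r}$.

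The ``in particular'' part is unaffected. With $\epsilon=\mhit$ one has $T_{\text{hit}}=100T=T+99T_R/C_E$. Taking $r=99T_R$ in the OU time scale gives failure probability $2e^{-99T_R}\le 2e^{-100}$ once $T_R=\log(R\sqrt N)\ge 100/99$, i.e.\ for large $N$.

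Two smaller technical corrections:
\begin{itemize}
\item Driving $Y$ itself by $B=\int\mathrm{sign}(Y')\,d\tilde W$ gives $d(Y^2)$ the martingale part $\frac{2}{\sqrt N}Y\,dB$. This is not of the form $\sigma(Y^2)\,dB$ required by the comparison theorem. Instead, let $V$ solve $dV=(2V+\frac1N)\,dt+2\sqrt{V^+/N}\,dB$ with $V_0=0$; this is pathwise unique by Yamada--Watanabe and has the law of $Y^2$. Then compare $(Y')^2$ with $V$.
\item Neither of your fixes for strictness works at $V=0$, where both drifts equal $\frac1N$. Use the non-strict comparison instead, which applies because the comparison drift $2x+\frac1N$ is Lipschitz (Yamada's local-time argument plus Gronwall).
\end{itemize}
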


\begin{proof}
   This Lemma will follow from Theorem \ref{thm:comparison} and Lemma \ref{lem:OU-estimate}. 

    Let $Y_t'$ be as in~\eqref{eq:Yt'-in-terms-of-m}. Then by Ito's Lemma, $Y'_t$ satisfies the following SDE: 
    \begin{align*}
         d Y_t'=&\sqrt{\frac{\beta C_E}{2}} \Big(\frac{\mathrm dm_{t/C_E}}{(1-m_{t/C_E}^2)^{\frac12}}+\frac12\frac{m_{t/C_E}\mathrm d\langle m_{t/C_E}, m_{t/C_E}\rangle}{(1-m_{t/C_E}^2)^{\frac32}}\Big)\nonumber\\
         =&\sqrt{\frac{\beta }{2C_E}}\Big((\lambda_1-h_t)-(1-\frac2N)\frac1\beta\Big)\tan \Big(\sqrt{\frac2{\beta C_E}}Y_t'\Big) dt+\frac{1}{\sqrt N} d W_t\,.
    \end{align*}
    By a further application of Ito's Lemma, 
    \begin{align}\label{eq:Y'-squared-Ito}
        dY'^2_t  = \sqrt{\frac{\beta }{2C_E}}\Big((\lambda_1-h_t)-(1-\frac2N)\frac1\beta\Big) Y_t'\tan \Big(\sqrt{\frac2{\beta C_E}}Y_t'\Big) dt  + \frac{2}{\sqrt{N}}Y_t' dW_t \, .
    \end{align}
    Note using $|\tan(x)| \ge |x|$, that 
    \[
    \sqrt{\frac{\beta}{2C_E}}\Big((\lambda_1-h_t)-(1-\frac2N)\frac1\beta\Big)Y'\tan\Big(\sqrt{\frac2{\beta C_E}}Y'\Big)\geq Y'^2 \, ,
    \]
    whenever $(\lambda_1-h_t)-(1-\frac1N)\frac1\beta > C_E$. 
    When this holds we may compare to the process $Y_t^2$ initialized from $Y_t^2 =0$, where $Y_t$ solves~\eqref{eq:Y_t-OU}. 

The comparison holds as long as $\abs{m(x)} \leq \epsilon$, and so by comparing the processes stopped at $\tau_{m,\epsilon}$,  Theorem \ref{thm:comparison} implies that
\begin{align} \label{eq:domination-until-hitting-time}
	\mathbb P((Y_t')^2  \ge Y_t^2 \text{ for all $t\le \tau_{m,\epsilon}$}) =1\,.
\end{align} 
Thus, taking $R(\epsilon)= \sqrt{\frac{\beta C_E}{2}} \arcsin (\epsilon)$ and $\tau_R$ the hitting time for $Y_t^2$ to $R^2$, we have that,
\[
\p(\tau_{m,\epsilon} \leq \tau_R ) =1 \,,
\]
Applying Lemma \ref{lem:OU-estimate} we have that 
\[
\p( \tau_{m,\epsilon} > \log( R\sqrt{N} ) + r) \leq 2e^{-r} \, ,
\]
proving the desired claim.

    The last statement is proved by taking $\epsilon=\mhit$, and $C_E=(\lambda_1-\lambda_2)(\me^2-\mhit^2)$. 
\end{proof}

\begin{remark}\label{rem:tmix-sym-lower-bound}
    With $Y_t''$ also defined by~\eqref{eq:Yt'-in-terms-of-m} using another $C_E'=(\lambda_1-\lambda_N-(1-\frac2N)\frac1\beta)$, we also have the comparison
    $(Y_t'')^2 \leq (Y_t)^2$ for all $t>0$.
    By doing so, we also obtain a lower bound for the escape time from the equator, that for any $\beta$, initialized from $m_0=0$, the probability that $m_t$ has exceeded correlation $\epsilon$ in time $c_\epsilon \log N$ for a suitably small $c_\epsilon$ is $o(1)$, as a result of the second bound in Lemma~\ref{lem:OU-estimate}.
    Since at $\beta>\frac{1}{\theta}$ the stationary probability of $\{|m(x)|\le \varepsilon\}$ is $o(1)$ by Lemma~\ref{lem:bottleneck-set}, this implies that $\tsym$ is at least $\Omega(\log N)$.
\end{remark}

We now provide a lower bound on the probability for $m_t$ to return to the region $|m_t|<\mtwo$ after hitting $\mhit$. The bound is established by the following two Lemmas.
\begin{lemma}\label{lem:OU-back-probability}
    Given $0<y_0<y_1$, let $Y_t$ be the solution to the SDE~\eqref{eq:Y_t-OU} 
    with $Y_0=\frac12(y_1+y_0)$. Let $\tau_1, \tau_0$ denote the first hitting times for $y_1$ and $y_0$. Then there are constants $c,C >0$ such that
    $
        \mathbb P[\tau_1>\tau_0] \leq C\exp(-cN) \, . 
    $
\end{lemma}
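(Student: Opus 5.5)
We would use the scale function of the one-dimensional diffusion $dY_t = Y_t\,dt + N^{-1/2}dW_t$. Its generator is $\mathcal L f = y f'(y) + \frac{1}{2N} f''(y)$. A scale function $s$ solves $\mathcal L s = 0$, so $s'(y) = \exp(-N y^2)$, and we may take
\[
s(y) = \int_0^y e^{-N u^2}\,du .
\]
Then $s(Y_t)$ is a local martingale. Both $\tau_0$ and $\tau_1$ are hitting times of the bounded interval $[y_0,y_1]$, and the diffusion exits this interval in finite time almost surely, since it is nondegenerate. Optional stopping at $\tau_0\wedge\tau_1$ therefore gives the exit probability
\[
\mathbb P[\tau_1>\tau_0] = \frac{s(y_1)-s(Y_0)}{s(y_1)-s(y_0)} = \frac{\int_{(y_0+y_1)/2}^{y_1} e^{-Nu^2}\,du}{\int_{y_0}^{y_1} e^{-Nu^2}\,du}.
\]

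Next we bound this ratio. Write $\bar y=(y_0+y_1)/2$.
\begin{itemize}
\item For the numerator, $u\ge \bar y$ on the range of integration, so the numerator is at most $(y_1-y_0)\,e^{-N\bar y^2}$.
\item For the denominator, keep only a piece near $y_0$. Integrating over $[y_0,y_0+\delta]$ with $\delta=\min(\frac{y_1-y_0}{2},1)$, and using $u^2\le y_0^2+2y_0\delta+\delta^2$, the denominator is at least $\delta\, e^{-N(y_0+\delta)^2}$.
\end{itemize}
Taking $\delta$ strictly smaller than $\bar y - y_0=(y_1-y_0)/2$ (say $\delta=(y_1-y_0)/4$) makes $(y_0+\delta)^2<\bar y^2$ strictly. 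The ratio is then at most
\[
C\exp\bigl(-N(\bar y^2-(y_0+\delta)^2)\bigr),
\]
with $c=\bar y^2-(y_0+\delta)^2>0$ depending only on $y_0,y_1$. This proves the claim.

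The only point needing care is the justification of optional stopping. On $[0,\tau_0\wedge\tau_1]$ the process stays in $[y_0,y_1]$, so $s(Y_t)$ is bounded there and bounded convergence applies. Finiteness of the exit time follows from standard nondegeneracy, for instance by comparing with Brownian motion leaving an interval. I do not expect a real obstacle here: the statement is an explicit gambler's-ruin computation, and the $e^{-cN}$ comes from the Gaussian-type scale density, which reflects that the drift $Y_t\,dt$ is of order one while the noise is of order $N^{-1/2}$.
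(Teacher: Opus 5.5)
Your proposal is correct and follows essentially the same route as the paper: the scale function $\int e^{-Nu^2}\,du$, optional stopping at $\tau_0\wedge\tau_1$, and an elementary bound on the resulting ratio of Gaussian integrals. The only difference is in the last estimate. The paper substitutes to move the numerator's range onto $[y_0,\bar y]$, shrinks the denominator to that same interval, and uses $(x+d)^2\ge x^2+d^2$ for $x\ge 0$, $d=\frac12(y_1-y_0)$, to get $C=1$ and $c=d^2$ directly. Your cruder pointwise bounds give $C=4$ and a different but still positive $c$; note that your first choice $\delta=\min(\frac{y_1-y_0}{2},1)$ can give $c=0$, so it should simply be replaced by $\delta=\frac{y_1-y_0}{4}$ from the start.
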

\begin{proof}
    Let $\phi(x)=\int_{-\infty}^xe^{-Nx^2} dx$, then
    an application of Ito's lemma implies that $\phi(Y_t)$ is a martingale. Using the stopping time $\tau = \tau_0 \wedge \tau_1$, the optional stopping time theorem implies that:
    \begin{align*}
        \phi(\frac12(y_0+y_1))=\phi(y_1)(1-\mathbb P[\tau_1>\tau_0])+\phi( y_0)\mathbb P[\tau_1> \tau_0]\,,
    \end{align*}
    and some simple algebra then gives:
    \begin{align*}
        \mathbb P[\tau_1>\tau_0]& =\frac{\phi( y_1)-\phi(\frac12(y_0+y_1))}{\phi( y_1)-\phi( y_0)}
        = \frac{\int_{\frac12(y_0+y_1)}^{y_1}e^{-Nx^2}\mathrm dx}{\int_{y_0}^{y_1}e^{-Nx^2}\mathrm dx}\nonumber\\&< \frac{\int_{y_0}^{\frac12(y_0+y_1)}e^{-N(x+\frac12(y_1-y_0))^2}\mathrm dx}{\int_{y_0}^{\frac12(y_0+y_1)}e^{-Nx^2}\mathrm dx}<e^{-N(\frac12(y_1-y_0))^2}\, , 
    \end{align*}
    which gives the result. 
\end{proof}

Combining Lemma \ref{lem:OU-back-probability} and \eqref{eq:domination-until-hitting-time} we obtain the following corollary. 

\begin{corollary}\label{coro:OU-back-probability}
    Fix $\alpha>1$ and $\theta>\theta_{0,L}$. If $X_t$ starts at a point with $|m_0|=\frac12(\mhit+\mtwo)$, then the probability of $|m_t|$ hitting $\mtwo$ before hitting $\mhit$ is at most $\exp(-cN)$.
\end{corollary}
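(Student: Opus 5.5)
We reduce to the one-dimensional comparison already set up in the proof of Lemma~\ref{lem:hitting-time}, and then apply Lemma~\ref{lem:OU-back-probability} to the comparison process. By the symmetry $e_1\mapsto -e_1$ of the dynamics we may assume $m_0=\frac12(\mhit+\mtwo)>0$. Until the exit time $\tau$ of $|m_t|$ from $(\mtwo,\mhit)$, the process stays in the region $\{|m|<\mhit\}$. Corollary~\ref{coro:m_easy} applies there, so the drift coefficient of $m_t$ is at least $C_E=(\lambda_1-\lambda_2)(\me^2-\mhit^2)>0$.

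First, we pass to the time-changed, variance-stabilized process $Y'_t=\sqrt{\beta C_E/2}\,\arcsin m_{t/C_E}$ from~\eqref{eq:Yt'-in-terms-of-m}. Its SDE, computed in the proof of Lemma~\ref{lem:hitting-time}, has constant volatility $N^{-1/2}$. Its drift is at least $Y'_t$ whenever $Y'_t>0$, using $\tan x\ge x$ for $x\ge 0$ and the lower bound on the coefficient by $C_E$.

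Second, we compare $Y'$ with the repelling OU process $Y_t$ of~\eqref{eq:Y_t-OU}, driven by the same Brownian motion and started at $Y_0=Y'_0$. Theorem~\ref{thm:comparison} applies because the volatilities are equal and constant, so $\rho$ can be taken linear; the strict drift inequality in condition (5) can be arranged by slightly decreasing $C_E$. Applied up to the stopping time $\tau$ (exactly as in~\eqref{eq:domination-until-hitting-time}, now without squaring since $Y'>0$ on this interval), it gives $Y'_t\ge Y_t$ for all $t\le \tau$ almost surely.

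Third, we set $y_0=\sqrt{\beta C_E/2}\arcsin\mtwo$ and $y_1=\sqrt{\beta C_E/2}\arcsin\mhit$. Then $|m|$ hits $\mtwo$ before $\mhit$ exactly when $Y'$ hits $y_0$ before $y_1$. On this event, the domination forces $Y$ to reach $y_0$ no later than $Y'$ does. Moreover $Y$ cannot reach $y_1$ before that time, since otherwise $Y'\ge Y$ would have reached $y_1$ first. Hence the event is contained in $\{\tau_0<\tau_1\}$ for $Y$.

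The one mismatch is that $Y_0=Y'_0=\sqrt{\beta C_E/2}\arcsin\!\big(\tfrac12(\mtwo+\mhit)\big)$, which is not exactly $\frac12(y_0+y_1)$ because $\arcsin$ is nonlinear. Since $\arcsin$ is convex on $[0,1]$, we get $Y_0\le\frac12(y_0+y_1)$, so the starting point sits on the unfavorable side. I would therefore not quote Lemma~\ref{lem:OU-back-probability} verbatim, but rerun its scale-function computation. Any start point $y\in(y_0,y_1)$ at distance $\delta>0$ from $y_0$ gives $\p[\tau_0<\tau_1]\le e^{-cN\delta^2}$ by the same integral comparison. Here $\delta=Y_0-y_0$ is a positive constant independent of $N$, since $\mtwo<\mhit$ are fixed, using $\lambda_i\to$ their limits and $\me>\mbe$ from $\theta>\theta_{0,L}$. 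This yields $\exp(-cN)$.

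The main obstacle is precisely this bookkeeping of the nonlinear transformation and time change. The time change $t\mapsto t/C_E$ does not affect hitting order. Beyond that, we must check that the comparison is legitimately applied only on the stochastic interval before $\tau$, for instance by freezing the drift after $\tau$, and that the endpoint constants are $N$-independent.
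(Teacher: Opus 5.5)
Your proposal is correct and takes the same route as the paper, which obtains this corollary by combining the comparison~\eqref{eq:domination-until-hitting-time} with the OU exit estimate of Lemma~\ref{lem:OU-back-probability}. You also note that $\sqrt{\beta C_E/2}\arcsin(\cdot)$ sends $\frac12(\mtwo+\mhit)$ below the midpoint of $[y_0,y_1]$, so the scale-function bound must be rerun from a general starting point $y_0+\delta$, giving $e^{-N\delta^2}$ with $\delta>0$ independent of $N$; the paper passes over this silently, and your fix is the right one.
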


Finally, to facilitate a union bound, we argue that it takes some time to transit from $\mhit$ and $\mtwo$. 
\begin{lemma}\label{lem:OU-back-time-used}
    Given $0<y_0<y_1$, let $Y_t$ be the solution to ~\eqref{eq:Y_t-OU} started from $Y_ 0= y_1$, 
    and let $\tau_{\frac12}=\inf\{t>0 \ : \ Y_t\leq \frac12(y_1+y_0)\}$.  Then 
    \begin{align*}
        \mathbb P[\tau_{\frac12}\leq \frac18(y_1-y_0)^2]<\exp(-N).
    \end{align*}
\end{lemma}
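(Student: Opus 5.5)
The plan is to compare $Y_t$ with a Brownian motion over the short window $[0,\frac18(y_1-y_0)^2]$. Since $Y_t$ is mean-repelling, the drift only pushes it upward while it stays positive, so the only way to reach $\frac12(y_0+y_1)$ is through the noise. Set $\delta = y_1 - y_0$, so the target level is $y_1 - \delta/2$.

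The first step is to write the solution explicitly:
\[
Y_t = e^{t}\Big(y_1 + \frac{1}{\sqrt N}\int_0^t e^{-s}\,dW_s\Big).
\]
Hence $Y_t \ge 0$ on $[0,t]$ implies $Y_t \ge y_1 + \frac{1}{\sqrt N}\int_0^t e^{-s}dW_s$. It is cleaner to argue pathwise. Up to the first time $Y$ falls to $y_1-\delta/2>0$, the drift $Y_s\,ds$ is nonnegative. Therefore on that interval
\[
Y_t \ge y_1 + \frac{1}{\sqrt N}W_t .
\]
So the event $\{\tau_{\frac12}\le T\}$, with $T=\frac18\delta^2$, is contained in
\[
\Big\{\inf_{t\le T} W_t \le -\tfrac{\delta}{2}\sqrt N\Big\}.
\]

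The second step is the reflection principle, which gives
\[
\mathbb P\Big[\inf_{t\le T}W_t\le -a\Big] = 2\,\mathbb P[W_T\le -a] \le 2e^{-a^2/(2T)} .
\]
Plugging in $a=\frac{\delta}{2}\sqrt N$ and $T=\delta^2/8$ yields $a^2/(2T) = \frac{N\delta^2/4}{\delta^2/4} = N$. This gives the bound $2e^{-N}$.

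\textbf{Main obstacle.} The only delicate point is the factor $2$ against the claimed strict bound $\exp(-N)$. I would remove it with the sharper Gaussian tail $\mathbb P[Z\ge x]\le \frac{1}{x\sqrt{2\pi}}e^{-x^2/2}$, applied with $x=\sqrt{2N}$. This gives $2\mathbb P[W_T\le -a]\le \frac{2}{\sqrt{4\pi N}}e^{-N}<e^{-N}$ for $N\ge 1$. (Alternatively, one can use Doob's inequality applied to the exponential martingale $e^{-\lambda W_t-\lambda^2t/2}$, which gives exactly $e^{-a^2/(2T)}=e^{-N}$ without the factor $2$.)
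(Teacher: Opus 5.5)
Your argument is correct and is essentially the paper's proof. Both compare $Y_t$ with $y_1+\frac{1}{\sqrt N}W_t$, using that the drift is nonnegative up to $\tau_{\frac12}$, and then apply the reflection-principle tail bound with $a=\frac{\delta}{2}\sqrt N$ and $T=\frac{\delta^2}{8}$. Two of your choices are slight improvements. Your Mills-ratio step removes the factor $2$ that the paper's bound as written leaves in (it only yields $2e^{-N}$). And you correctly restrict the comparison $Y_t\ge y_1+\frac{1}{\sqrt N}W_t$ to times before $\tau_{\frac12}$, whereas the paper asserts it for all $t$.
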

\begin{proof}
    By comparing $Y_t$ with $\bar Y_t=y_1 +\frac1{\sqrt N}W_t$, we have
    \begin{align*}
        \mathbb P[Y_t\geq\bar Y_t]=1\,, \,\,\text{so}\,\, \mathbb P[\tau_{\frac12}\geq \bar\tau_{\frac12}]=1\,,
    \end{align*}
    where $\bar\tau_{\frac12}=\inf\{t>0 \ : \ \bar Y_t\leq \frac12(y_1+y_0)\}$
    
    A simple bound for Brownian motion $\mathbb P[\,\inf_{t\in[0,T]}W_t\leq -a]\leq 2\exp(-\frac{a^2}2)$ then implies, 
    \begin{align*}
        \mathbb P[\bar\tau_{\frac12} \leq T ] \leq 2 \exp \left[-\frac{N}{8T} (y_1-y_0)^2  \right]\,.
    \end{align*}
    Combining the bounds above and plugging in $T=\frac18(y_1-y_0)^2$ gives the result. 
\end{proof}

We now note an immediate corollary which follows from the previous estimates.
\begin{corollary}\label{coro:OU-back-time-used}
     Suppose that $|m_0|=\mhit$, and define $T_0$ by:  
    \begin{align*}
        T_0&=\frac1{8C_E}(\sqrt{\frac{\beta C_E}2}\arcsin(\mhit)-\sqrt{\frac{\beta C_E}2}\arcsin(\mtwo))^2\nonumber\\
        &=\frac\beta{16}(\arcsin(\mhit)-\arcsin(\mtwo))^2\,.
    \end{align*}
    then the probability that $\abs{m_t} = \frac12(\mhit+\mtwo)$  for some $t<T_0$ is less than $e^{-cN}$. 
\end{corollary}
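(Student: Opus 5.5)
The plan is to transfer the question to the transformed process $Y_t'=\sqrt{\beta C_E/2}\,\arcsin m_{t/C_E}$ and then apply Lemma~\ref{lem:OU-back-time-used}. Without loss of generality $m_0=\mhit>0$; the case $m_0=-\mhit$ follows by the $e_1\mapsto -e_1$ symmetry of~\eqref{eq:langevin-dynamics}.

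\textbf{Setting up the comparison.} I set $y_1=\sqrt{\beta C_E/2}\arcsin(\mhit)$ and $y_0=\sqrt{\beta C_E/2}\arcsin(\mtwo)$. Since $\arcsin$ is increasing, $|m_t|$ reaching $\frac12(\mhit+\mtwo)$ forces $Y'$, in its own time scale, to reach
\[
\sqrt{\beta C_E/2}\,\arcsin\bigl(\tfrac12(\mhit+\mtwo)\bigr)\;\ge\;\tfrac12(y_0+y_1).
\]
This inequality holds by concavity of $\arcsin$ on $[0,1]$. If that level is never reached then $m$ stays positive, so up to this hitting time $Y'$ stays in $[\frac12(y_0+y_1),\,\sqrt{\beta C_E/2}\arcsin\mhit]$.

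On that range $|m|\le\mhit$, so Corollary~\ref{coro:m_easy} gives drift coefficient at least $C_E$. The computation in the proof of Lemma~\ref{lem:hitting-time}, together with $\tan x\ge x$ for $x\ge 0$, then shows that $Y'$ has drift at least $Y'$ and volatility exactly $N^{-1/2}$.

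\textbf{Applying the OU bound.} Theorem~\ref{thm:comparison} then gives $Y'_t\ge Y_t$ up to the exit time, where $Y$ is the OU process~\eqref{eq:Y_t-OU} started at $y_1$. Consequently the hitting time of $\frac12(y_0+y_1)$ for $Y'$ is at least the corresponding time $\tau_{1/2}$ for $Y$.

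Lemma~\ref{lem:OU-back-time-used} bounds $\p[\tau_{1/2}\le\frac18(y_1-y_0)^2]$ by $e^{-N}$. Alternatively, one can compare directly to $y_1+N^{-1/2}W_t$: the drift is nonnegative on the positive range, so the relevant event requires a Brownian dip of size $\frac12(y_1-y_0)$, which gives a bound $2\exp(-cN)$.

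\textbf{Undoing the time change.} Since $Y'_s$ corresponds to $m_{s/C_E}$, the constraint on $Y'$-time $s\le\frac18(y_1-y_0)^2$ becomes $t\le \frac{1}{8C_E}(y_1-y_0)^2$ for $m$. This equals $\frac{\beta}{16}(\arcsin\mhit-\arcsin\mtwo)^2=T_0$, as in the statement.

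\textbf{Main obstacle.} The delicate point is that the comparison is valid only while $|m_t|\le \mhit$, whereas $m_t$ may first wander above $\mhit$. I would handle this with a reflection/restart argument: the comparison process can be taken to dominate only on excursions below $\mhit$. Any path that reaches $\frac12(\mhit+\mtwo)$ before $T_0$ must make a final descent from $\mhit$ to $\frac12(\mhit+\mtwo)$ within time $T_0$. That descent is a downward Brownian fluctuation of size $\frac12(y_1-y_0)$ in time at most $s$, occurring while the drift of $Y'$ is nonnegative. Bounding it by the running-minimum estimate for $W$ over $[0,T_0 C_E]$ (via $\sup_{u\le v\le s}(W_u-W_v)$) keeps the bound of order $e^{-cN}$, with a possibly smaller constant $c$.
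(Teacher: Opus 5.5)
Your route is the paper's. The paper treats this corollary as immediate from Lemma~\ref{lem:OU-back-time-used}, applied through the transform $Y'_t=\sqrt{\beta C_E/2}\,\arcsin m_{t/C_E}$ and the comparison~\eqref{eq:domination-until-hitting-time}. It uses exactly your $y_1,y_0$, and your undoing of the time change correctly gives $T_0=\frac{1}{8C_E}(y_1-y_0)^2=\frac{\beta}{16}(\arcsin\mhit-\arcsin\mtwo)^2$.

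\textbf{A backwards step.} $\arcsin$ is convex, not concave, on $[0,1]$, since its derivative $(1-x^2)^{-1/2}$ is increasing. So in fact
\[
\sqrt{\beta C_E/2}\,\arcsin\bigl(\tfrac12(\mhit+\mtwo)\bigr)\le \tfrac12(y_0+y_1).
\]
This is the direction you need. $Y'$ starts at $y_1$ and must move down, so reaching a level at or below the midpoint forces a crossing of $\frac12(y_0+y_1)$. The event in the corollary is then contained in $\{\tau_{1/2}\le C_ET_0\}$ for $Y'$. With the inequality as you wrote it ($\ge$), this inclusion fails. Your later claim that the final descent has size $\frac12(y_1-y_0)$ also becomes an overstatement. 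After the correction, the descent has size $y_1-\sqrt{\beta C_E/2}\arcsin(\frac12(\mhit+\mtwo))\ge\frac12(y_1-y_0)$, and everything goes through. Even without the correction, $\arcsin$ is bi-Lipschitz on $[0,\mhit]$, so the descent size is comparable to $y_1-y_0$ and only the constant $c$ would change.

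\textbf{The excursion above $\mhit$.} Your ``main obstacle'' paragraph handles something the paper's one-line deduction silently skips. The OU comparison is valid only while $|m_t|\le\mhit$, and $m_t$ may first rise above $\mhit$, even above $\me$, where the drift of $m$ can be negative. Your final-descent argument is sound, and it actually makes the OU comparison unnecessary.

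Let $[u,v]$ be the final descent interval, from $m=\mhit$ down to $m=\frac12(\mhit+\mtwo)$. On it, $|m|<\mhit<\me$, so by Corollary~\ref{coro:m_easy} the drift of $Y'$ is nonnegative. The volatility of $Y'$ is exactly $N^{-1/2}$ everywhere, so pathwise $Y'_u-Y'_v\le N^{-1/2}(\tilde W_u-\tilde W_v)$, with $\tilde W$ the Brownian motion driving $Y'$. This is a bound by a functional of the whole Brownian path, so the fact that $u$ is not a stopping time causes no trouble.

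By L\'evy's theorem, the maximal drawdown $\sup_{v\le s}(\max_{w\le v}\tilde W_w-\tilde W_v)$ has the law of $\sup_{v\le s}|\tilde W_v|$. With $s=C_ET_0=\frac18(y_1-y_0)^2$ and drawdown at least $\frac{\sqrt N}{2}(y_1-y_0)$, this gives a bound of $2e^{-N}$.
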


To conclude the subsection we show that upon entering the region $\abs{m} > \mhit$ the process will remain in the region $\abs{m} > \mtwo$ for any polynomial amount of time with high probability. 

\begin{corollary}\label{coro:m(t)-polynomial-time-not-going-out}
    Let $X_t$ be initialized at any point with $|m(X_0)|\geq \mhit$, and define the escape time below $\mtwo$ as 
    $\tout=\inf\{t>0 \ : \,|m_t|<\mtwo\}$. For any time $T$ we have:  
    \begin{align*}
        \mathbb P[\tout \leq T] \leq C T \exp(-cN) \, . 
    \end{align*}
    for $C,c>0$ constants independent of $N$ and $T$, depending only on $\beta$ and $\theta$. 
\end{corollary}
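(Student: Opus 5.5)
The plan is to decompose the event $\{\tout\le T\}$ into excursions and apply a union bound, using Corollary~\ref{coro:OU-back-probability} to make each excursion unlikely and Corollary~\ref{coro:OU-back-time-used} to bound the number of excursions. Set $m_{1/2}:=\frac12(\mhit+\mtwo)$.

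We build stopping times recursively. Let $\sigma_0=\inf\{t\ge 0: |m_t|\le \mhit\}$; since $|m_0|\ge\mhit$ and paths are continuous, $|m_{\sigma_0}|=\mhit$ on $\{\sigma_0<\infty\}$. Next let $\rho_0=\inf\{t>\sigma_0: |m_t|= m_{1/2}\}$. Then let $\sigma_1=\inf\{t>\rho_0: |m_t|\in\{\mhit,\mtwo\}\}$. If $|m_{\sigma_1}|=\mtwo$, the process has escaped. Otherwise $|m_{\sigma_1}|=\mhit$, and we iterate. Any path with $\tout\le T$ must pass through $\mhit$ and then $m_{1/2}$ before reaching $\mtwo$. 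So on $\{\tout\le T\}$ there is a first index $k$ such that $\rho_k<T$ and, starting from $\rho_k$, $|m|$ hits $\mtwo$ before $\mhit$.

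To bound the number of candidate indices, apply the strong Markov property at each $\sigma_j$. There $|m_{\sigma_j}|=\mhit$, so by Corollary~\ref{coro:OU-back-time-used} we have $\rho_j-\sigma_j\ge T_0$ except on an event of probability at most $e^{-cN}$. Hence $\rho_k<T$ requires either $k\le T/T_0+1$, or that one of the first $\lceil T/T_0\rceil+1$ crossings was faster than $T_0$. On the other side, applying the strong Markov property at $\rho_j$, where $|m|=m_{1/2}$, Corollary~\ref{coro:OU-back-probability} shows that hitting $\mtwo$ before $\mhit$ has probability at most $e^{-cN}$. Combining the two estimates gives
\[
\mathbb P[\tout\le T]\le \Big(\tfrac{T}{T_0}+2\Big)\,2e^{-cN},
\]
and $T_0$ is a positive constant depending only on $\beta,\theta$ through $\mhit,\mtwo$. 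For $T\ge 1$ this is at most $CTe^{-cN}$. For $T<1$ it suffices to note that the bound $2e^{-cN}\cdot(1/T_0+2)$ is still of the stated form after adjusting constants, or to restrict to $T\ge1$ by monotonicity.

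The delicate point is justifying that both corollaries apply at the stopping times. Both are derived from the comparison \eqref{eq:domination-until-hitting-time}, which holds only while the drift coefficient exceeds $C_E$, that is, while $|m|\le\mhit$. This is precisely the window $[\mtwo,\mhit]$ in which each excursion lives, since the excursions are stopped upon exiting it. There are two things to check. First, the strong Markov property of the full diffusion $X_t$ lets us restart at $\sigma_j,\rho_j$ uniformly over the current position $X$, because both corollaries hold for every initial point with the given $|m|$. Second, the sign symmetry: if $|m|$ crosses through $0$ it must pass $\mtwo$ first, so working with $|m|$ and the positive branch loses nothing.

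I expect the main obstacle to be the first excursion when $|m_0|>\mhit$ strictly, together with the measurability bookkeeping of the iteration. Both are handled by the stopping-time construction above.
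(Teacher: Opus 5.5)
Your proposal is correct and follows the paper's proof. Both use the same decomposition into excursions between the levels $\frac12(\mhit+\mtwo)$ and $\mhit$: Corollary~\ref{coro:OU-back-probability}, via the strong Markov property, bounds each escape attempt by $e^{-cN}$, and Corollary~\ref{coro:OU-back-time-used} bounds how many attempts fit before time $T$. Your bookkeeping is a bit cleaner than the paper's: you truncate at the first $\lceil T/T_0\rceil+1$ downcrossings and need only one of them to be fast, whereas the paper's tail estimate $\mathbb P[\tau_s^{(i)}<T]\le \exp(-(i-T/T_0)cN)$ requires many fast crossings and implicitly drops a combinatorial factor.
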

\begin{proof}
    Define a sequence of hitting times
    \begin{align*}
        &\tau_s^{(1)}=\inf\{t>0 \ : \ |m_t| \leq\frac12(\mtwo+\mhit)\}\,,\\
        &\tau_l^{(i)}=\inf\{t>\tau_s^{(i)} \ : \ |m_t| \geq \mhit\}\,,&&i=1,2,...\nonumber\\
        &\tau_s^{(i)}=\inf\{t > \tau_l^{(i-1)} \ : \ |m_t|\leq \frac12(\mtwo+\mhit)\}\,,&&i=2,3,...
    \end{align*}
    (where in the subscripts, $l$ is for larger and $s$ is for smaller). 
    
    Since $m_t$ has to pass $\frac12(\mtwo+\mhit)$ before hitting $\mtwo$, we have the following 
    \begin{align*}
        \mathbb P[ \tout<T]\leq& \sum_{i=1}^\infty \mathbb P[\tau_s^{(i)}< \tout\leq \tau_s^{(i+1)},\tout <T]\nonumber\\
        \leq &\sum_{i=1}^\infty \mathbb P[\tau_s^{(i)}<\tout<\tau_l^{(i)},\tau_s^{(i)}<T]\,.
    \end{align*}

    In order to bound the event $\{\tau_s^{(i)}<\tau_{\text{out}}<\tau_l^{(i)}\}$, condition on the value at time $\tau_{s}^{(i)}$ and apply  Corollary \ref{coro:OU-back-probability}, to get that the probability is bounded by 
    \begin{align*}
        \mathbb P[\tau_s^{(i)}<\tau_{\text{out}}<\tau_l^{(i)}]  \leq  \exp(-cN)\,.
    \end{align*}

    Note also that for $i>\frac{T}{T_0}$ (with $T_0$ as in Corollary \ref{coro:OU-back-time-used}) one has that: 
    \begin{align*}
        \mathbb P[\tau_s^{(i)}<T] & <\mathbb P \bigg[\sum_{j=2}^i(\tau_s^{(j)}-\tau_l^{(j-1)})<T \bigg] 
        \\
        &\leq  \p\left[  |\{ j \leq i : \tau_s^{(j)} - \tau_l^{(j-1)} \leq T_0 \}| \geq \big(i-\frac{T}{T_0} \big)  \right] 
        \\
        &\leq\exp(-(i-\frac{T}{T_0})cN)\,.
    \end{align*}
    where the last inequality follows from iterated application of Corollary \ref{coro:OU-back-time-used}. 

    Combining these, we have
    \begin{align*}
        \mathbb P[\tout<T]
        &\leq  \sum_{i=1}^{\lfloor\frac{T}{T_0}\rfloor+2} \mathbb P[\tau_s^{(i)}<\tout<\tau_l^{(i)}]
        +\sum_{i=\lfloor\frac{T}{T_0}\rfloor+3}^\infty \mathbb P[\tau_s^{(i)}<T]
        \\
        &\leq CT\exp(-cN)+\exp(-cN) \, ,
    \end{align*}
    proving the claimed inequality. 
\end{proof}

\subsection{The Bakry-Emery condition on caps}

The Bakry-Emery criteria states that, if $X_t$ is a solution to the Langevin equation on a Riemannian manifold $(\mathcal M,g)$ with $H\in C^\infty(\mathcal M)$,
\begin{align*}
     d X_t={grad}_g\,H(X_t) d t+ d B^{g,\beta}_t\,,
\end{align*}
and the Hamiltonian $H$ satisfies
\begin{align*}
    -\nabla^2_gH+\frac1\beta{Ric}_g\succeq \kappa g\,,
\end{align*}
then a Log-Sobolev inequality holds with constant $\kappa$.  If $\rho_t$ denotes the density of $X_t$, with initialization from a density $\rho_0$, then one obtains the following entropy decay
\begin{align*}
    &\mathrm{Ent}(\rho_t)<\exp(-2\kappa t)\mathrm{Ent}(\rho_0)\,,\nonumber\\
    \text {where }&\mathrm{Ent}(\rho):=\int_{\mathcal M}\rho\log\frac{\rho}{\rho_*}\mathrm {dvol}_g=D_{\mathrm{KL}}(\rho_t||\rho_*)\,,\nonumber\\
    &\rho_*=\frac1{Z}\exp(\beta H)\,,\quad Z=\int_{\mathcal M}\exp(\beta H)\mathrm {dvol}_g\,.
\end{align*}
Thus by applying Pinsker's inequality, we have
\begin{align}\label{eq:BE-entropy-decay}
    \TV{\rho_t}{\rho_*}\leq \sqrt{\frac12D_{\mathrm{KL}}(\rho_t,\rho_*)}<\exp(-\kappa t)\sqrt{\frac12D_{\mathrm{KL}}(\rho_0,\rho_*)}\,.
\end{align}

\begin{remark}
From an initialization at a single point this bound is somewhat useless as the KL-divergence between the measures is infinite. We will circumvent this issue by first allowing a constant smoothing time $t =1$, and then applying the bound started from time $1$. See Section \ref{sec:langevin-fast-mixing-proof} and Corollary \ref{coro:entropy-initial-bound} for more details.  
\end{remark}

In this section, we first establish a ``local'' Bakry-Emery condition.

\begin{lemma}\label{lem:Bakry-Emery}
    For $H(x)=\frac12 \sum_{i=1}^N\lambda_i x_i^2$,
    \begin{align*}
        -\nabla_{\sph}^2 H\succeq ((2\lambda_1-\lambda_2-\lambda_N)m^2-(\lambda_1-\lambda_N)) I\,.
    \end{align*}
    In particular, if $\me>\mbe$, then there exists $\kappa_{\be}>0$ such that 
    \begin{align*}
        -\nabla_{\sph}^2 H+\frac1\beta \mathrm{Ric}_{\sph}\succeq\kappa_{\textsc{be}} I\,.
    \end{align*}
    in the region $\mathcal D=\mathbf{S}_N\cap \{|m(x)|>\mone\}$.
\end{lemma}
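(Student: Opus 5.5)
The plan is a direct computation of the Riemannian Hessian of the quadratic form on the sphere, followed by two elementary eigenvalue estimates: one for the normal (curvature) term and one for the tangential term. I keep the paper's normalization, in which $H=\frac1{2N}\sum_i\lambda_i x_i^2$ and the sphere has radius $\sqrt N$. Any overall factor of $1/N$ multiplies the Hessian and the Ricci term identically, so I suppress it and state inequalities for the quadratic form on unit tangent vectors.

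\textbf{Step 1: Hessian formula.} For $x\in\mathbf S_N$ and a tangent vector $u\perp x$, the Hessian of the restriction of an ambient function $F$ is
\[
\nabla^2_{\sph}F(u,u)=\langle u,\nabla^2F\,u\rangle-\frac{\langle x,\nabla F\rangle}{|x|^2}|u|^2 .
\]
This is the Euclidean Hessian minus the second fundamental form term. With $\nabla^2 H=\Lambda:=\mathrm{diag}(\lambda_i)$ up to normalization, and $\langle x,\nabla H\rangle/|x|^2=h(x)$ from \eqref{eq:h-t}, this gives
\[
-\nabla^2_{\sph}H(u,u)=h(x)|u|^2-\langle u,\Lambda u\rangle .
\]

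\textbf{Step 2: bound the two terms for $|u|=1$.} For the first term, $h(x)\ge \lambda_1m^2+\lambda_N(1-m^2)$, which is the reverse of the trivial inequality used in Corollary~\ref{coro:m_easy}. For the second term, $\langle u,\Lambda u\rangle\le \lambda_1u_1^2+\lambda_2(1-u_1^2)$, so I need an upper bound on $u_1^2$. This comes from tangency: $u_1x_1=-\sum_{i\ge2}u_ix_i$, and Cauchy--Schwarz gives $u_1^2 m^2\le (1-u_1^2)(1-m^2)$, hence $u_1^2\le 1-m^2$. Combining the two bounds,
\[
-\nabla^2_{\sph}H(u,u)\ \ge\ \lambda_1m^2+\lambda_N(1-m^2)-\lambda_2-(\lambda_1-\lambda_2)(1-m^2)=(2\lambda_1-\lambda_2-\lambda_N)m^2-(\lambda_1-\lambda_N),
\]
which is the first claim.

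\textbf{Step 3: add the curvature term on the cap.} The Ricci curvature of the sphere of radius $\sqrt N$ in the matching normalization is $(1-\frac2N)I$ (the same $1-\frac{2}{N}$ factor appearing in the $Y'$ computation). Therefore
\[
-\nabla^2_{\sph}H+\tfrac1\beta\mathrm{Ric}\succeq\Big[(2\lambda_1-\lambda_2-\lambda_N)(m^2-\mbe^2)-\tfrac{2}{\beta N}\Big]I ,
\]
by the definition of $\mbe$. On $\mathcal D=\{|m|>\mone\}$ we have $m^2-\mbe^2\ge \mone^2-\mbe^2>0$, since $\mone>\mbe$ whenever $\me>\mbe$. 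The prefactor $2\lambda_1-\lambda_2-\lambda_N$ tends to $2\theta+\frac2\theta>0$. So for large $N$ one can take
\[
\kappa_{\be}=\tfrac12(2\lambda_1-\lambda_2-\lambda_N)(\mone^2-\mbe^2)>0 .
\]

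\textbf{Main obstacle.} The only delicate point is Step 2. The worst case couples $h$ being small (mass sent to $\lambda_N$) with $u$ pointing as much as possible toward $e_1$ (mass on $\lambda_1$), and the tangency constraint $u_1^2\le 1-m^2$ is exactly what caps that alignment. The normalization bookkeeping between the Hessian and Ricci terms is routine.
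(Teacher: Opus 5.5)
Your argument is correct and is essentially the paper's proof: the tangency bound $u_1^2\le 1-m^2$, the bound $\langle u,\Lambda u\rangle\le\lambda_1u_1^2+\lambda_2(1-u_1^2)$, the lower bound $h\ge\lambda_1m^2+\lambda_N(1-m^2)$, and then adding the Ricci term and comparing with the definition of $\mbe$. Your Cauchy--Schwarz justification of $u_1^2\le 1-m^2$ and your Ricci constant $1-\frac{2}{N}$ are, if anything, more careful than the paper's; your remark that rescaling $H$ by $1/N$ rescales the Ricci term is false, but harmless, since the lemma fixes $H=\frac12\sum_i\lambda_i x_i^2$, which is exactly the normalization your Step 1 formula uses.
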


\begin{proof}
    Fix $v\in T_x \mathbf{S}_N$ of unit norm. Since $v\cdot x=0$ for any $v \in T_x \mathbf{S}_N $ we have the bound:
    \begin{align*}
        v_1^2\leq (1-m^2)\,.
    \end{align*}
    Now with  $H(x)=\frac12 \sum_{i=1}^N\lambda_i x_i^2$, note that its spherical Hessian satisfies:
    \begin{align*}
        \nabla_{\sph}^2 H(v,v)=\nabla^2H(v,v)-\frac1N(x\cdot \nabla H )(v\cdot v)\nonumber & =\sum_{i=1}^N\lambda_i v_i^2-\frac2NH \nonumber\\
        &\leq  \lambda_1 v_1^2+\lambda_2(1-v_1^2)-\frac2NH\nonumber
    \end{align*}
 and hence
    \begin{align*}
        -\nabla_{\sph}^2 H\succeq -\Big(\lambda_1 (1-m^2)+\lambda_2m^2-\frac2NH\Big) I\,.
    \end{align*}

    Using the inequality $\frac2NH\geq \lambda_1 m^2+\lambda_N(1-m^2)$ we find that
    \begin{align*}
        -\nabla_{\sph}^2 H\succeq((2\lambda_1-\lambda_2-\lambda_N)m^2-(\lambda_1-\lambda_N)) I\,.
    \end{align*}
    Consequently we have
    \begin{align*}
        -\nabla_{\sph}^2 H+\frac1\beta\mathrm{Ric}_{\sph}=&-\nabla_{\sph}^2 H+\frac1\beta(1-\frac1N)I\nonumber\\
        \succeq& \big((2\lambda_1-\lambda_2-\lambda_N)m^2 -(\lambda_1-\lambda_N )-\frac1{\beta }(1-\frac{1}{N})\big)I  \, .  
    \end{align*}  
    Then by definition of $\mbe$ (see \eqref{eq:m-thresholds}) the term on the right hand side is exactly $\tfrac{1}{\beta N} I$ when $m=\mbe$. Thus for any $m$ such that $\mbe + \epsilon < m$ for some $\epsilon>0$, we have a uniform (in $N$) constant $\kappa_{\be}(m) >0$ so that the right hand side above is strictly greater than $\kappa_{\be}$. 
    
    Now, since $\me > \mbe$, the quantity $\mone$ from~\eqref{eq:def-of-m-1-2-3}, 
    is strictly greater than $\mbe$, and hence in the region $\abs{m(x)} > \mone$ we have $
 -\nabla_{\sph}^2 H + \frac{1}{\beta} \text{Ric}_{\sph} \succeq \kappa_{\be} I $, as desired. 
    \end{proof}

We now wish to pass from this local Bakry-Emery criterion for $\pi$ to a global Bakry-Emery criterion for a measure $\tilde{\pi}$ that closely approximates $\pi$. Let $\mathcal{D}_+$ denote the spherical cap $\{ x\in \mathbf{S}_N: m(x) > \mone \}$ and define $\mathcal{D}_{-}=\{ x\in \mathbf{S}_N: m(x) <- \mone \}$ similarly. 

We define a perturbation $\tilde H$ of $H$ throughout $\mathcal{D}_+$ as follows:
\begin{align*}
    \tilde H^+(x)=&\begin{cases}
        H(x)\,,&m(x)\geq \mtwo\,,\\
        H(x) + F(\frac{\mtwo-m(x)}{\mtwo-\mone})\,,&\mtwo>m(x)>\mone\,,\\
        -\infty\,,&m(x)\leq \mone\,,\\
    \end{cases} \qquad \quad \text{ where }&F(x)=\frac{e^{-\frac2x}}{e^{-\frac2x}-e^{-2}}.
\end{align*}
Define $\tilde \pi^+$ as the Gibbs measure on $\mathcal D_+$ with energy function $\tilde H^+$ so that its corresponding density $\tilde \rho^+$ is zero outside of the cap $\mathcal D_+$.  We define $\tilde{H}^{-}$, $\tilde{\pi}^-$ and $\tilde{\rho}^-$ by symmetry. 

In what follows we will let $\tilde{\pi}$ denote a $1/2-1/2$ mixture of these two measures, i.e.,
\[
\tilde{\pi} = \frac{1}{2} \tilde \pi^+ + \frac{1}{2} \tilde \pi^- \, .
\]
which is the Gibbs measure with Hamiltonian $\tilde H = \mathbf 1\{m(x) > - \mone\} \tilde H^+ + \mathbf 1\{m(x) < - \mone\} \tilde H^-$, and let $\mathcal D = \mathcal D_+ \cup \mathcal D_-$ denote its support. The purpose of $\tilde H$ is that it is concave on each spherical cap, while the measure $\tilde \pi$ is close to the original $\pi$.

\begin{lemma}\label{lem:tildeH-convex}
    The modified Hamiltonian $\tilde H^+$ is concave in $\mathcal{D}_+$, and
    \begin{align*}
        -\nabla^2_{\sph} \tilde H^+ +\frac1\beta \mathrm{Ric}\succeq \kappa_\be I.
    \end{align*}
    for a constant $\kappa_{\be} >0$. The same holds for $\tilde H^-$ on $\mathcal D_-$. 
\end{lemma}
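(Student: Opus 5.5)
We split $\mathcal D_+$ into the region $\{m(x)\ge \mtwo\}$, where $\tilde H^+=H$, and the interpolation band $\{\mone<m(x)<\mtwo\}$, where $\tilde H^+=H+F\circ g$ with $g(x)=\frac{\mtwo-m(x)}{\mtwo-\mone}\in(0,1)$. On $\{m\ge \mtwo\}\subset\{m>\mone\}$ the bound is exactly Lemma~\ref{lem:Bakry-Emery}, with the same $\kappa_{\be}$. On the band we also have $|m|>\mone$, so Lemma~\ref{lem:Bakry-Emery} already gives $-\nabla^2_{\sph}H+\frac1\beta \mathrm{Ric}\succeq \kappa_{\be} I$ there. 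It therefore suffices to show that the correction term $\Phi:=F\circ g$ satisfies $-\nabla^2_{\sph}\Phi\succeq 0$ on the band, with the sign matching the paper's convention that the Gibbs weight is $e^{\beta\tilde H}$. Before doing so, I will check the orientation and normalization of $F$: the intent is that $\beta\tilde H^+\to-\infty$ as $m\downarrow\mone$. If $F$ as written does not give this, I would replace it by $-F$ (or shift it by a constant) so that it vanishes smoothly at $m=\mtwo$ and diverges to $-\infty$ at $m=\mone$.

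The main step is a Hessian computation for a function of $m$ alone. For $\phi(x)=f(m(x))$ on $\mathbf{S}_N$, use $\nabla_{\sph} m=\frac{1}{\sqrt N}(e_1-\frac{x_1}{N}x)$ together with the homogeneous-polynomial identity from the proof of Lemma~\ref{lem:m-SDE}, i.e.\ $\nabla^2_{\sph}m=-\frac{m}{N}I$ on the tangent space. This gives
\[
\nabla^2_{\sph}\phi(v,v)=f''(m)\,\langle\nabla_{\sph}m,v\rangle^2-\frac{f'(m)\,m}{N}|v|^2 .
\]
With $f=F\circ g$ we get $f'=-F'(g)/(\mtwo-\mone)$ and $f''=F''(g)/(\mtwo-\mone)^2$.

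It then suffices to understand the signs of $F'$ and $F''$ on $(0,1)$. Writing $u=e^{-2/x}$ and $a=e^{-2}$, we have $F=u/(u-a)$. Since $u<a$ on $(0,1)$, $F$ is negative there and diverges as $x\to1$. Taking the corrected orientation, the first term of the Hessian contributes $\mp f''\langle\nabla m,v\rangle^2$, and I need this to be nonnegative, i.e.\ the correction should be concave in $m$ in the Gibbs-weight convention. The second term is $O(1/N)\cdot|f'|\,m$; because its coefficient carries a factor $1/N$, I expect it to be absorbed into $\kappa_{\be}$ (e.g.\ replacing $\kappa_{\be}$ by $\kappa_{\be}/2$). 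The caveat is that $f'$ blows up at the singular endpoint. There I would use that the term has the favourable sign: with $m>0$ and $\tilde H^+$ decreasing toward $-\infty$ as $m\downarrow\mone$, one has $f'>0$, so $-(-f'm/N)\ge0$. I would therefore check the sign rather than rely on the size of the term.

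The main obstacle is verifying the one-dimensional convexity of $F$ on the whole interval, including near $x=0$, where $F$ and all its derivatives vanish. This ensures $\tilde H^+$ is smooth across $m=\mtwo$, so no distributional Hessian term arises at the gluing point. I would compute $F''$ explicitly in terms of $u$ and show it has a constant sign on $(0,1)$. If this fails on some subinterval, the fallback is to use the strict margin in Lemma~\ref{lem:Bakry-Emery} on $\{|m|>\mone\}$, which is uniform in $N$ because $\mone>\mbe$ strictly. That margin can absorb a bounded negative contribution from $f''\langle\nabla m,v\rangle^2\le f''\frac{1-m^2}{N}$ wherever $f''$ is bounded. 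Note that $|\nabla_{\sph} m|^2=\frac{1-m^2}{N}$ is $O(1/N)$, so the whole correction term is $O(1/N)$ away from the blow-up point. Near the blow-up, $F''$ has a definite sign (it dominates like $(1-x)^{-3}$ with $F\sim c/(1-x)$). Hence the bounded region is absorbed by the margin and the singular region has the correct sign. The statement for $\tilde H^-$ on $\mathcal D_-$ follows by the symmetry $x_1\mapsto -x_1$.
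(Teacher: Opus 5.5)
Your proposal is correct and is essentially the paper's proof. You reduce via Lemma~\ref{lem:Bakry-Emery} to showing $-\nabla^2_{\sph}(F\circ g)\succeq 0$ on the band, write the spherical Hessian of a function of $m$ as a rank-one $f''$ term plus an $f'(m)\,m/N$ multiple of the identity, and conclude from $m>0$ together with $F'<0$, $F''<0$ on $(0,1)$.

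The sign check you flagged as the main obstacle is a one-line computation. With $u=e^{-2/x}$ and $a=e^{-2}$ one has $F'=-2au/(x^2(a-u)^2)$ and $\frac{d}{dx}\log\frac{u}{x^2(a-u)^2}=\frac{2(1-x)}{x^2}+\frac{4u}{x^2(a-u)}>0$, so $F''<0$. Hence $F$ as written already has the right orientation, your $O(1/N)$ absorption fallback is unnecessary, and $\kappa_{\be}$ is the same constant as in Lemma~\ref{lem:Bakry-Emery}.
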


\begin{proof} We prove the result for $\mathcal{D}_+$, as the result for $\mathcal{D}_-$ is identical.

    By Lemma~\ref{lem:Bakry-Emery}, it suffices to prove that $\tilde H^+-H$ is concave. Note that $F$ is smooth decreasing and concave on $[0,1)$ with $F(0):=0$. This can be checked directly as: 
    \begin{align*}
        F'(x)&=-2\frac{e^{\frac2x+2}}{x^2(e^2-e^{\frac2x})^2}<0\,,\nonumber\\
        F''(x)&=-4\frac{e^{2+\frac4x}(1-x)+e^{4+\frac2x}(1+x)}{x^4(e^{\frac2x}-e^2)^3}<0\,.
    \end{align*}

    The spherical Hessian of any function $f\in C^\infty(\mathbb R^N)$ is given by
    \begin{align*}
        \nabla_{\sph}^2f=\nabla^2f-(x\cdot\nabla f) I\,,
    \end{align*}
    and so the spherical Hessian of $F(\frac{\mtwo-m(x)}{\mtwo-\mone})$ satisfies:
    \begin{align*}
        -\nabla^2_{\sph}[F(\frac{\mtwo-m(x)}{\mtwo-\mone})]
        =&-F''\frac{e_1e_1^T}{N(\mtwo-\mone)^2}-F'\frac{x_1}{\sqrt N(\mtwo-\mone)} I\succeq 0\,,
    \end{align*}
    since each term is positive. 
\end{proof}

\begin{lemma}\label{lem:equilibrium-comparison}
    If $\tilde Z$ denotes the partition function for $\tilde \pi$, then we have 
     \begin{align}
        0<\frac{Z-\tilde Z}{Z}&<\exp(-C {\beta} N) \label{eq:partition-function-comparison}
        \\
         d_{\tv}(\pi,\tilde\pi)&<\exp(-C {\beta} N) \, .\label{eq:density-tv-comparison}
   \end{align} 
\end{lemma}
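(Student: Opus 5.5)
Our plan is to compare $Z=\int e^{\beta H}\,dx$ with $\tilde Z=\int e^{\beta\tilde H}\,dx$ region by region. By construction $\tilde H^{\pm}=H$ on $\{\pm m\ge \mtwo\}$, and $\tilde H^{\pm}\le H$ elsewhere. Indeed, $F\le 0$ on $(0,1)$ (the numerator is positive and the denominator $e^{-2/x}-e^{-2}<0$), $F\to-\infty$ as its argument tends to $1$, and $\tilde H=-\infty$ on $\{|m|\le\mone\}$. We read $\tilde Z$ as $\int_{\mathcal D}e^{\beta\tilde H}\,dx$, and $\tilde\pi$ as that normalized measure; this is the measure $\frac12\tilde\pi^++\frac12\tilde\pi^-$, since $H$ is even in $x_1$. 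With this reading $\tilde Z\le Z$, and the inequality is strict because $\{|m|<\mone\}$ has positive Haar measure. This gives the left inequality in \eqref{eq:partition-function-comparison}.

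For the upper bound, write
\[
Z-\tilde Z=\int_{\{|m|<\mtwo\}}\bigl(e^{\beta H}-e^{\beta\tilde H}\bigr)\,dx\le \int_{\{|m|<\mtwo\}}e^{\beta H}\,dx = Z\,\pi_\beta(|m|<\mtwo).
\]
It therefore suffices to show $\pi_\beta(|m|<\mtwo)\le e^{-cN}$. Lemma~\ref{lem:bottleneck-set} only controls the thin band $\{|m|\le\epsilon\}$, so we need a companion bound on the wider band $\{|m|<\mtwo\}$, where $\mtwo<\me<\mpi$.

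The step we expect to be the main obstacle is this mass estimate. We would handle it with the same band free-energy computation that underlies Lemma~\ref{lem:bottleneck-set}. Conditioning on $m(x)=m$, the remaining coordinates lie on a sphere of radius $\sqrt{N(1-m^2)}$, and the band free energy is $\frac{\beta}{2}\lambda_1 m^2+\frac12\log(1-m^2)+\Phi\bigl(\beta(1-m^2)\bigr)$. Here $\Phi$ is the null-model free energy of the spectrum $\lambda_2,\dots,\lambda_N$, computed as in~\cite{baik2021spherical}. At large $\theta$ this function of $m^2$ is uniquely maximized at $m^2=\mpi^2+o(1)$ and is strictly smaller on $[0,\mtwo^2]$. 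Covering $[0,\mtwo]$ by finitely many $\epsilon$-bands and applying Laplace asymptotics then gives an exponential deficit $e^{-cN}$. A cruder alternative, avoiding the free-energy computation, is to bound $H\le \frac N2(\lambda_1m^2+\lambda_2(1-m^2))$ on the band and compare with a lower bound for $Z$ obtained from a small cap around $\mpi$. This suffices whenever $\theta>\theta_{0,L}$ makes the gap $\mpi-\mtwo$ large enough. We would first try the free-energy route, since it directly reuses Lemma~\ref{lem:bottleneck-set}'s machinery. The constant in the exponent can then be written as $C\beta$ by absorbing constants, because $\beta$ is fixed.

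For \eqref{eq:density-tv-comparison}, we compare the densities $\rho=e^{\beta H}/Z$ and $\tilde\rho=e^{\beta\tilde H}/\tilde Z$, which differ only through the normalization on $\{|m|\ge\mtwo\}$. Splitting $d_{\tv}(\pi,\tilde\pi)=\frac12\int|\rho-\tilde\rho|\,dx$ into this region and its complement gives
\[
\tfrac12\Bigl|\tfrac{Z}{\tilde Z}-1\Bigr|\,\pi(|m|\ge\mtwo)+\tfrac12\bigl(\pi(|m|<\mtwo)+\tilde\pi(|m|<\mtwo)\bigr).
\]
Each term is at most $e^{-cN}$ by the previous step, using $\tilde\pi(|m|<\mtwo)\le (Z/\tilde Z)\,\pi(|m|<\mtwo)$ and $Z/\tilde Z=1+O(e^{-cN})$.
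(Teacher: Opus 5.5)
Your proof is correct and follows the paper's skeleton. Since $\tilde H\le H$, with equality on a high-overlap region, you get $0<Z-\tilde Z\le Z\,\pi_\beta(\text{low overlap})$. The total-variation bound then comes from splitting the $L^1$ integral along that region. The paper uses $\mathcal D_{\text{hit}}=\{|m|>\mhit\}$ where you use $\{|m|\ge\mtwo\}$. Your split is slightly more careful: the bound $\tilde\pi(|m|<\mtwo)\le (Z/\tilde Z)\,\pi_\beta(|m|<\mtwo)$ accounts for the mass $\tilde\pi$ puts on $\{\mone<|m|<\mtwo\}$. The paper skips this by treating $\tilde\rho_*$ as vanishing off $\mathcal D_{\text{hit}}$.

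The genuine difference is the key input $\pi_\beta(|m|<\mtwo)\le e^{-cN}$. The paper just cites Theorem~\ref{thm:overlap-ground-state}, but that is a fluctuation result for the $\sqrt N$-rescaled overlap at fixed $\xi$. A Chernoff bound from it only gives tails of order $e^{-c\sqrt N}$; reaching $e^{-C\beta N}$ would require redoing the contour analysis with $\eta$ of order $N$. Your band free-energy argument is exactly the large-deviation estimate needed. It gives the exponential rate directly and identifies $c$ as $F(\alpha,\theta)-\sup_{q\le\mtwo}F(q)$. The price is that you need the band free energy on all of $[0,\mtwo]$, not only at $q=0$ as in Lemma~\ref{lem:bottleneck-set}.

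Some remarks for carrying it out:
\begin{itemize}
\item The unimodality you need holds for every $\alpha>1$, $\theta>1$, not just large $\theta$. Write $u=1-q^2$. On the branch $\beta u<1$ the derivative vanishes only at $u=1/\alpha$ and at $u=\theta/\beta$, which lies outside this branch. This gives the maximizer $q^2=1-1/\alpha=\mpi^2$, with value exactly $F(\alpha,\theta)$. On $\beta u\ge 1$ the band free energy is affine and decreasing in $u$.
\item Your effective inverse temperature $\beta(1-m^2)$ is the right one, since the orthogonal coordinates scale by $\sqrt{1-m^2}$ and $H$ is quadratic. Lemma~\ref{lem:free-energy-band} as printed uses $\beta\sqrt{1-q^2}$, which agrees with yours only at $q=0$. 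For $1<\alpha\le\theta$, its maximum over $q$ even exceeds $F(\alpha,\theta)$. So derive the band formula yourself as you proposed, rather than quoting that lemma for $q\neq 0$.
\item Do not rely on your cruder alternative. The bound $\frac{\beta}{2}(\lambda_1q^2+\lambda_2(1-q^2))+\frac12\log(1-q^2)$ is maximized at $q=\me$, where it exceeds $F(\alpha,\theta)$ by $\frac{1}{4\theta^2}-\log(1-\frac1\theta)>0$. It therefore fails whenever $\mtwo$ is close to $\me$, i.e.\ for $\theta$ near $\theta_{0,L}$.
\end{itemize}
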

\begin{proof}
    Let $\mathcal D_{\text{hit}}=\mathbf{S}_N \cap\{|m(x)|>\mhit\}\subset\mathcal D$, and note by definition of $\tilde{H}$ that
 $H(x)=\tilde H(x)$ for $x\in \mathcal D_{\text{hit}}$, and $H > \tilde{H}$ otherwise. 
    
    Define   $ \bar Z=\int_{\mathcal D_{\text{hit}}}e^{\beta H} dx $,   
    then from the inequality above we have that $Z>\tilde Z>\bar Z$. Furthermore, by the fact that we are in the low-temperature regime $\alpha >1$ and concentration of the overlap with~$e_1$ (see Theorem~\ref{thm:overlap-ground-state})
    \begin{align*}
        \frac{Z-\bar Z}{Z}<\exp(-C_{\beta} N)\,,
    \end{align*}
    proving \eqref{eq:partition-function-comparison}. 
    For \eqref{eq:density-tv-comparison}, by regarding the density $\tilde\rho_*$ of $\tilde{\pi}$ as a function on $\mathbf{S}_N$ with value zero in $\mathcal D_{\text{hit}}^c$, we have
    \begin{align*}
        d_{\tv}(\pi,\tilde\pi)<\int_{\mathbf{S}_N}|\rho_*-\tilde\rho_*| dx\nonumber
        <\int_{ \mathcal{D}_{\text{hit}}^c} \frac{e^{\beta H}}{Z}  dx+\int_{ \mathcal{D}_{\text{hit}}} \bigg|\frac{e^{ \beta H}}{Z}-\frac{e^{ \beta \tilde H}}{\tilde Z} \bigg| dx\nonumber
        &<\frac{Z-\bar Z}{Z} + \bigg|\frac{\tilde Z}{Z}-1\bigg| 
       \\
       & <\exp(-C_{\beta} N)\,. \qedhere 
    \end{align*}
    
\end{proof}

\subsection{Fast mixing after $T_{\text{hit}}$} \label{sec:langevin-fast-mixing-proof}

We now define two distinct processes $\tilde X_t^{\pm}$ on $\mathcal{D}_{\pm}$ which will approximately describe the evolution of $X_t$ for a sufficiently large $T > T_{\text{hit}}$. Each of these processes will evolve in $\mathcal{D}_{\pm}$ according to the Langevin dynamics associated to the perturbed Hamiltonian $\tilde{H}^{\pm}$. 
These will be coupled to $X_t$ as follows. We initialize  $\tilde X_0^\pm$ by examining the value of $m_{T_{\text{hit}}}$ as follows:

\begin{enumerate}
    \item If $\abs{m_{T_{\text{hit}}}} > \frac{1}{2} (\mhit + \mtwo) $, then $\tilde X_0^{\pm} = (\pm \abs{X_{T_{\text{hit}}}^1},X^2_{T_{\text{hit}}},..., X^N_{T_{\text{hit}} } )$, and we use the same Brownian motion to drive both processes. 
    \item If $\abs{m_{T_{\text{hit}}}} < \frac{1}{2} (\mhit + \mtwo) $, then $\tilde{X}_t^{\pm}$ is initialized from $\tilde{\pi}_{\beta,\pm}$, and the processes evolve independently. 
\end{enumerate}
By combining Lemma \ref{lem:hitting-time} and Corollary \ref{coro:m(t)-polynomial-time-not-going-out}, we may guarantee that for any $T>T_{\text{hit}}$, 
\begin{align} \label{eq:large-mt-value-whp}
    \mathbb P\bigg( \abs{m(X_t)} > \frac{1}{2}(\mhit+\mtwo), \ \text{for all} \ t \in [T_{\text{hit}},T] \bigg)  \geq 1- 2e^{-100}  -  T\exp(-cN)\,.
\end{align}

Furthermore, under the assumption that $X_0$ has a symmetric initialization $\mu$, we have that 
\begin{align} \label{eq:symmetric-mt-values}
\p_{\mu}( m_t <0 ) = \p_{\mu}(m_t > 0) = \frac{1}{2} \, ,
\end{align}
for any time $t>0$ (this follows as the law of $X_t$ is symmetric in $x_1$ and absolutely continuous with respect to the Haar measure on the sphere).

In particular the law of $X_{T_{\text{hit}}+t}$ for $t > 0$ is approximated by a stochastic process $\tilde{X}_t$ whose law is generated by first flipping a fair coin to determine if the process starts in $\mathcal{D}_+$ or $\mathcal{D}_-$, and then evolves according to the SDE of $\tilde{X}_t^{\pm}$. Letting $\tilde{X}_t$ denote this process, we see that the law of $\tilde{X}_t$ is exactly a $1/2-1/2$ mixture of the laws of $\tilde{X}_t^{\pm}$, and so that its density at time $t$ (denoted by $\tilde{\rho}_t$) is given by:
\[
\tilde{\rho}_t = \frac{1}{2} \tilde{\rho}_t^+ + \frac{1}{2} \tilde{\rho}_t^{-} \, .
\]
Furthermore by using the value of $m_{T_{\text{hit}}}$ to determine the initial choice of sign for $\tilde{X}_t$, and the initialization for $\tilde{X}_t^{\pm}$ from above,  \eqref{eq:large-mt-value-whp} and \eqref{eq:symmetric-mt-values} imply
\[
\p( X_{T_{\text{hit}}+t} = \tilde{X}_t \ \text{for all}  \ t \in [0,T] ) \geq 1- 2e^{-100} - T e^{-cN} \, .
\]
We have thus proved the following lemma:

\begin{lemma}\label{lem:dTV-rhot-tilderhot}  Fix $\alpha >1$,  let $\theta> \theta_{0,L}$, and suppose $X_0$ has a symmetric initialization $\mu$. Then for any $t>0$, the total variation distance between the law of $X_{T_{\text{hit}}+t}$ and $\tilde{X}_t$ satisfies:
\begin{align*}
    \TV{\rho_{T_{\text{hit}}+t} }{\tilde\rho_t} < 2e^{-100} +  (T_{\text{hit}} +t)\exp(-cN)\, . 
\end{align*}
\end{lemma}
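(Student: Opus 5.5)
The bound will follow from the coupling inequality: if $X$ and $\tilde X$ live on a common probability space, then $d_{\tv}(\mathrm{Law}(X_{T_{\text{hit}}+t}),\mathrm{Law}(\tilde X_t)) \le \p(X_{T_{\text{hit}}+t}\neq \tilde X_t)$. The plan is to build a coupling on the event that $|m_s|$ stays above $\frac12(\mhit+\mtwo)$ throughout $[T_{\text{hit}},T_{\text{hit}}+t]$, and then bound the probability of the complement.

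\textbf{Step 1 (coupling).} Drive $X$ and $\tilde X^\pm$ by the same Brownian motion. At time $T_{\text{hit}}$, let $\sigma=\mathrm{sign}(m_{T_{\text{hit}}})$ and set $\tilde X_0 = \tilde X^{\sigma}_0$. On the good event $G$ that $|m_{T_{\text{hit}}}|>\frac12(\mhit+\mtwo)$, we have $\tilde X^\sigma_0 = X_{T_{\text{hit}}}$. The two processes then solve the same SDE (same drift $\nabla_{\sph}H$, since $\tilde H^\sigma=H$ on $\{\sigma m\ge \mtwo\}$) for as long as $\sigma m$ stays above $\mtwo$. Pathwise uniqueness of strong solutions (the drift is Lipschitz there) gives $X_{T_{\text{hit}}+s}=\tilde X_s$ up to the exit time of $\{\sigma m>\mtwo\}$.

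\textbf{Step 2 (law of $\tilde X$).} We must check that $\tilde X$ really is the fair-coin mixture $\frac12\tilde\rho^+_t+\frac12\tilde\rho^-_t$. By \eqref{eq:symmetric-mt-values}, $\sigma$ is a fair coin. Moreover, by the $x_1\mapsto -x_1$ symmetry of the dynamics and of $\mu$, the conditional law of $X_{T_{\text{hit}}}$ given $\sigma=+$ is the reflection of its law given $\sigma=-$. Off $G$, $\tilde X^\pm$ are started from $\tilde\pi^\pm$, which are mirror images of each other. Hence the law of $\tilde X_0$ given $\sigma=\pm$ is the reflection of that given $\sigma=\mp$, and the marginal law of $\tilde X_t$ is the symmetric mixture of the two cap dynamics. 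This symmetry bookkeeping is the one place needing care, since the off-$G$ initialization must be chosen so as not to break it. The choice made in the text (independent draws from $\tilde\pi^\pm$) preserves it.

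\textbf{Step 3 (probability bound).} Compute
\[
\p(X_{T_{\text{hit}}+s}\ne\tilde X_s \text{ for some } s\le t) \le \p(G^c)+\p\Big(\inf_{s\in[T_{\text{hit}},T_{\text{hit}}+t]}|m_s|\le \tfrac12(\mhit+\mtwo)\Big).
\]
By \eqref{eq:large-mt-value-whp}, which combines Lemma~\ref{lem:hitting-time} (hitting $\mhit$ before $T_{\text{hit}}$ with probability at least $1-2e^{-100}$) with Corollary~\ref{coro:m(t)-polynomial-time-not-going-out} applied via the strong Markov property at the hitting time of $\mhit$, the right-hand side is at most $2e^{-100}+C(T_{\text{hit}}+t)e^{-cN}$. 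Absorbing $C$ into $c$ for large $N$ gives the claim.

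The main obstacle is in Corollary~\ref{coro:m(t)-polynomial-time-not-going-out}. As stated, it controls exits below $\mtwo$, whereas the coupling needs $|m|$ to stay above $\frac12(\mhit+\mtwo)$. I would rerun its proof with the thresholds shifted: replace the pair $(\mtwo,\mhit)$ by $(\frac12(\mhit+\mtwo),\mhit)$ and use midpoints. Alternatively, it suffices to define $G$ via $|m|>\mtwo$, since agreement of the drifts only requires $\sigma m\ge\mtwo$. Either way the constants change but the form of the bound $2e^{-100}+(T_{\text{hit}}+t)e^{-cN}$ does not.
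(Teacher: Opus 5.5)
Your proposal is correct and follows the paper's own argument. You couple $X_{T_{\text{hit}}+\cdot}$ with $\tilde X^{\sigma}$ through shared noise and $\sigma=\mathrm{sign}(m_{T_{\text{hit}}})$, read off the symmetric-mixture law of $\tilde X_t$ from \eqref{eq:symmetric-mt-values}, and bound the coupling failure by \eqref{eq:large-mt-value-whp}. Your remark is also a fair catch: Corollary~\ref{coro:m(t)-polynomial-time-not-going-out} only controls exits below $\mtwo$, not below $\frac12(\mhit+\mtwo)$ as \eqref{eq:large-mt-value-whp} asserts. Either of your fixes repairs this without changing the form of the bound: shift the thresholds, or define the good event with $\mtwo$, which is all that agreement of the drifts requires.
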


{
We now provide an upper bound on the density of $X_t$ at time $T_{\text{hit}}$.
}

\begin{lemma}\label{lem:density-upper-bound}
    For any initial condition $\rho_0$, we have for some constant $C_\beta$, that 
    \begin{align*}
        |\rho_{T_{\text{hit}}}(x)|<\exp(C_\beta N)\,.
    \end{align*}
\end{lemma}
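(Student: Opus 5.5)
Since $\rho_{T_{\text{hit}}} = \int \rho_0(y)\, p_{T_{\text{hit}}}(y,\cdot)\, dy$, where $p_t(y,x)$ is the transition density of the Langevin dynamics with respect to the Haar probability measure, it suffices to bound the heat kernel uniformly in the starting point:
\[
\sup_{x,y\in\mathbf{S}_N} p_{T_{\text{hit}}}(y,x)\le e^{C_\beta N}.
\]
Because $T_{\text{hit}}\ge 1$ for large $N$, the semigroup property gives $p_{T_{\text{hit}}}(y,x) = \int p_1(y,z)\,p_{T_{\text{hit}}-1}(z,x)\,dz$. I will therefore first bound $\sup_{y,z} p_1(y,z)$. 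Afterwards I will use that the semigroup contracts $L^\infty$ relative to the invariant measure, so that the bound persists at the later time $T_{\text{hit}}$.

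\textbf{Step 1: Girsanov comparison with spherical Brownian motion.} The dynamics is reversible with respect to $\pi_\beta \propto e^{\beta H}dx$. On the event $\|M\|_{op}\le C$, which holds with $\mathbb P_M$-probability one for large $N$ by the eigenvalue convergence we are working under, we have $\sup_x |H(x)|\le CN$. Hence $d\pi_\beta/dx \in [e^{-2C\beta N},e^{2C\beta N}]$. I will write the Doob $h$-transform ground-state representation of the Langevin generator,
\[
L = \tfrac1\beta \Delta_{\sph} + \nabla_{\sph} H\cdot\nabla_{\sph} = e^{-\beta H/2}\Big(\tfrac1\beta\Delta_{\sph} - V\Big)e^{\beta H/2},
\qquad
V = \tfrac{\beta}{4}|\nabla_{\sph} H|^2 + \tfrac12\Delta_{\sph} H .
\]
This gives
\[
p_t(y,x)= e^{\beta(H(x)-H(y))/2}\;\mathbb E^{\text{BM}}_{y\to x}\Big[e^{-\int_0^t V(B_s)ds}\Big]\, q_t(y,x),
\]
where $q_t$ is the heat kernel of spherical Brownian motion at rate $2/\beta$. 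Here $|\nabla_{\sph} H|^2\le C N$ and $|\Delta_{\sph} H|\le CN$, the latter because $\Delta_{\sph} H$ equals the trace of $M$ minus a multiple of $H$, hence is $O(N)$. So $|V|\le C'N$, and for $t=1$ the Feynman--Kac weight lies in $[e^{-C'N},e^{C'N}]$. The prefactor $e^{\beta(H(x)-H(y))/2}$ is at most $e^{C\beta N}$. It remains to bound $q_1$.

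\textbf{Step 2: The Brownian heat kernel.} For the sphere of radius $\sqrt N$ at time of order one (in natural units of order $1/N$ after rescaling to the unit sphere), I will bound $\sup_{x,y} q_1(y,x)$ relative to the Haar probability measure by $e^{CN}$. One route is Li--Yau / Gaussian upper bounds on manifolds with $\mathrm{Ric}\ge 0$: the kernel is at most $1/\mathrm{vol}(B(y,\sqrt{t}))$ relative to Riemannian volume. The normalized Haar measure of a geodesic ball of radius of order one on $\mathbf{S}_N$ is at least $e^{-CN}$, since it is a cap $\{\langle x,y\rangle/N\ge 1-c/N\}$ whose measure is roughly $(c/N)^{N/2}$. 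This gives only $e^{CN\log N}$, which is too weak. I would fix this by using time $1$ and the log-Sobolev/hypercontractivity of the sphere. Spherical Brownian motion on $\mathbf{S}_N$ has log-Sobolev constant of order $1$, coming from $\mathrm{Ric}=(1-1/N)$. Nash/ultracontractivity from the dimension-$N$ Sobolev inequality on the sphere gives $\|q_t\|_{\infty}\le (C/t)^{N/2}\cdot$ const for $t\le 1$, measured against the normalized measure in the right scaling; at $t=1$ this is $e^{CN}$. Concretely, I would use the explicit Gegenbauer expansion $q_t(y,x)=\sum_\ell e^{-\ell(\ell+N-2)t/(\beta N)}\dim(\mathcal H_\ell)\,\tilde P_\ell(\cdot)\le \sum_\ell e^{-c\ell^2 t/N - c\ell t}\binom{N+\ell}{\ell}$. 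This sum is $e^{O(N)}$ at $t=1$, since the terms with $\ell\le N$ contribute at most $2^{2N}$ and the tail decays.

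\textbf{Step 3: Propagation to $T_{\text{hit}}$.} For $t\ge 1$ I write $\rho_t/\pi_\beta = P_{t-1}(\rho_1/\pi_\beta)$, where $P$ is the Markov semigroup. Since $P$ is a contraction on $L^\infty$, $\|\rho_t/\pi_\beta\|_\infty\le \|\rho_1/\pi_\beta\|_\infty\le e^{C N}e^{2C\beta N}$. Multiplying back by $d\pi_\beta/dx\le e^{2C\beta N}$ gives the claim with $C_\beta$ absorbing all constants.

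\textbf{Main obstacle.} The crux is Step 2: obtaining $e^{O(N)}$ rather than $e^{O(N\log N)}$ for the Brownian kernel on the radius-$\sqrt N$ sphere. I expect the Gegenbauer/eigenvalue sum to handle it cleanly, using that the $\ell$-th eigenvalue is of order $\ell(\ell+N)/N \gtrsim \ell$. A secondary technical point is justifying the Feynman--Kac bridge representation, but a cruder Girsanov bound on $\sup_x$ densities, via $L^2$ duality with $V$ bounded, suffices.
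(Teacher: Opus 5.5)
\textbf{Verdict: correct, by a different route from the paper.} Your proof is correct, but you obtain the key time-one kernel bound differently.

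\textbf{How the paper does it.} The paper proves $|K(1,x,y)|\le e^{C_\beta N}$ in Theorem~\ref{thm:heat-kernel-bound} without ever comparing to Brownian motion:
\begin{itemize}
\item It runs a Nash-type $L^2$ energy estimate directly on the Fokker--Planck equation. The drift term $\int\rho_t\nabla H\cdot\nabla\rho_t$ is absorbed by Young's inequality together with $|\nabla H|^2\le c_1N$.
\item It feeds in the Sobolev inequality on $\mathbf S_N$ to obtain $\|P_t\|_{2,1}^2\lesssim t^{-(N-1)/2}e^{CN}$.
\item It passes to an $L^1\to L^\infty$ bound by duality, paying $e^{c\beta N}$ for the conjugation $\mathcal L=e^{-\beta H}\mathcal L^*e^{\beta H}$.
\end{itemize}

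\textbf{How you do it.} You use the ground-state transform plus Feynman--Kac domination to reduce to the spherical Brownian kernel. Only the lower bound $V\ge -C'N$ is actually needed there. You then bound that Brownian kernel exactly through the harmonic (Gegenbauer) expansion.

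\textbf{What each route buys.} Your route isolates the one genuinely dimension-sensitive estimate in an explicit spectral computation. It avoids tracking Sobolev constants and an ODE comparison, at the price of invoking Feynman--Kac or semigroup domination for Schr\"odinger operators. The paper's route is more self-contained at the PDE level and treats drift and diffusion in a single estimate.

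\textbf{Two minor remarks.}
\begin{itemize}
\item Your Step~3 is unnecessary. Ordering Chapman--Kolmogorov the other way gives $\rho_{T_{\text{hit}}}(x)=\int\rho_{T_{\text{hit}}-1}(y)\,p_1(y,x)\,dy\le\sup_{y,x}p_1(y,x)$ directly, since $\rho_{T_{\text{hit}}-1}$ is a probability density. This is exactly how the paper concludes. Your $L^\infty(\pi_\beta)$-contraction argument is nonetheless valid, only lossier by a factor $e^{O(\beta N)}$.
\item In the harmonic sum, the terms need not decay right after $\ell=N$. For large $\beta$ they keep growing until roughly $\ell\asymp\sqrt{\beta}\,N$. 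Split instead at $\ell=KN$ with $K$ a large multiple of $\beta$. For $\ell\ge KN$ you then have $e^{-\ell(\ell+N-2)/(\beta N)}\le e^{-K\ell/\beta}$, which beats $\dim\mathcal H_\ell\le\binom{N+\ell-1}{\ell}\le 4^\ell$. The sum is therefore still $e^{O_\beta(N)}$, and your conclusion stands.
\end{itemize}
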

\begin{proof}
    Denote by $K(t,x,y)$ the transition kernel for the Langevin dynamics. By Theorem~\ref{thm:heat-kernel-bound}, whose proof we defer to Appendix~\ref{sec:heat-kernel-bounds}, with $t=1$, we have
    \begin{align*}
        |K(1,x,y)|<\exp(C_\beta N)\, , 
    \end{align*}
and hence
    \begin{align*}
        |\rho_{T_{\text{hit}}}(x)|=& \bigg|\int_{\mathbf{S}_N} K(1,x,y)\rho_{T_{\text{hit}}-1}(y) dy \bigg|\nonumber
        <\exp(C_\beta N) \bigg|\int_{\mathbf{S}_N} \rho_{T_{\text{hit}}-1}(y) dy \bigg|\nonumber  =\exp(C_\beta N)\,. \qedhere 
    \end{align*}
\end{proof}
{
Now let $\tilde{\rho}_0^{\pm}$ denote the density for $\tilde{X}_0^{\pm}$, we have the following lemma bounding $\tilde{\rho_0}^{\pm}$. 

\begin{lemma} \label{lem:density-tilde-process-upper-bound}
Fix $\alpha >1$ and $\theta > \theta_{0,L}$. For any initialization of $X_0$, the density of $\tilde{X}_0$ satisfies:
    \[
\abs{ \tilde{\rho}_0^{\pm}(x)} \leq \exp(C_\beta N) \, ,
    \]
    for some $C_\beta>0$. 
\end{lemma}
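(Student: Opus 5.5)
The density $\tilde\rho_0^{\pm}$ is a mixture of two pieces, one for each branch of the coupling construction, and I will bound each piece separately. Let $A=\{\abs{m_{T_{\text{hit}}}} > \tfrac12(\mhit+\mtwo)\}$. On $A^c$, $\tilde X_0^\pm$ is drawn from $\tilde\pi^\pm$. On $A$, $\tilde X_0^\pm$ is the image of $X_{T_{\text{hit}}}$ under the map $\Phi_\pm(x)=(\pm\abs{x_1},x_2,\dots,x_N)$. Hence, as sub-probability densities,
\[
\tilde\rho_0^\pm = \p(A^c)\,\tilde\rho_*^\pm + (\Phi_\pm)_\#\big(\rho_{T_{\text{hit}}}\mathbf 1_A\big)\,,
\]
and it suffices to show that each summand is at most $\exp(C_\beta N)$.

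\emph{The equilibrium piece.} By Lemma~\ref{lem:Bakry-Emery} (or trivially), $\abs{H}\le \max_i\abs{\lambda_i}N/2\le CN$ on the typical event where $\lambda_1,\lambda_N$ converge. The perturbation $F\ge 0$ only lowers $\tilde H^+$ below $H$, apart from the $-\infty$ region where the density vanishes. So $e^{\beta\tilde H^+}\le e^{C\beta N}$. For the normalization, I lower bound $\tilde Z^+\ge\int_{\{m>\mtwo\}}e^{\beta H}dx\ge e^{-C\beta N}\,\mathrm{Haar}(m>\mtwo)$. The Haar measure of a cap of fixed angular radius is $e^{-O(N)}$, since $\mtwo<1$ is fixed. (Alternatively, use Lemma~\ref{lem:equilibrium-comparison}: $\tilde Z\ge Z/2$ together with $Z\ge e^{-C\beta N}$.) This gives $\tilde\rho_*^\pm\le e^{C'\beta N}$.

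\emph{The pushforward piece.} The map $\Phi_\pm$ restricted to $A$ is two-to-one: each $y$ in the image has preimages $y$ and its reflection $y'=(-y_1,y_2,\dots)$. Reflection preserves Haar measure, so
\[
(\Phi_\pm)_\#(\rho_{T_{\text{hit}}}\mathbf 1_A)(y)\le \rho_{T_{\text{hit}}}(y)+\rho_{T_{\text{hit}}}(y')\le 2\exp(C_\beta N)
\]
by Lemma~\ref{lem:density-upper-bound}. Lemma~\ref{lem:density-upper-bound} holds for any initial condition, so the bound on the pushforward piece is uniform in $X_0$. Summing the two pieces gives the claim after enlarging $C_\beta$.

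The only mildly delicate point is the lower bound on the normalizing constant of $\tilde\pi^\pm$, which requires the cap $\{m>\mtwo\}$ to have Haar measure at least $e^{-CN}$. This holds because $\mtwo<\me<1$ is bounded away from $1$ uniformly in $N$ on the event $\lambda_1-\lambda_2\to\theta+1/\theta-2$. Everything else follows from the pointwise heat-kernel bound already encoded in Lemma~\ref{lem:density-upper-bound}.
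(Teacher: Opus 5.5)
Your proof is correct and is essentially the paper's argument: you split $\tilde\rho_0^\pm$ into the $\tilde\pi^\pm$ piece, controlled by $\tilde H^\pm\le H$, $\abs{H}\le CN$ and a lower bound on the partition function, and the piece coming from $X_{T_{\text{hit}}}$, controlled by Lemma~\ref{lem:density-upper-bound}. Writing these as sub-probability densities rather than conditioning on $A$ (as the paper does) removes the need for $\p(A)\ge 1/4$, and you correctly account for the factor $2$ from the two-to-one folding map $\Phi_\pm$. One small slip: $F\le 0$ on $[0,1)$, since it decreases from $F(0)=0$ to $-\infty$ as $x\to 1$, and that is exactly why $\tilde H^+\le H$, so ``$F\ge 0$'' in your text should read ``$F\le 0$''.
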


\begin{proof} We will prove the result for $\tilde{\rho}^+$, as the case for $\tilde{\rho}^-$ is symmetric. 

    The law of $\tilde{X}_0^{+}$ is a mixture of the measure $\tilde{\pi}^{+}$ and the law of $X_{T_{\text{hit}}}$ conditional on being above the threshold $\tfrac{1}{2}(\mhit+ \mtwo)$. Since the probability of being above this threshold at time $T_{\text{hit}}$ is at least $1/4$, the conditional density satisfies the same upper bound from Lemma \ref{lem:density-upper-bound} up to a factor of $4$. The density of $\tilde{\pi}$ satisfies a bound of the same form as $\tilde{H} \leq H$ everywhere and the partition functions are within a constant factor of one another by Lemma~\ref{lem:equilibrium-comparison}.
\end{proof}
}
We now determine a rate of convergence for the law of $\tilde{X}_t$ to its limiting distribution $\tfrac{1}{2} \tilde{\pi}^+ + \tfrac{1}{2} \tilde{\pi}^-$. Letting $\tilde{\rho}_{\ast}$ denote the density of $\tfrac{1}{2} \tilde{\pi}^+ + \tfrac{1}{2} \tilde{\pi}^-$, we have the following. (Abusing notation slightly, we use $d_{\tv} (\rho, \rho')$ for densities $\rho, \rho'$ to mean the total-variation distance between the measures they induce.)

\begin{corollary}\label{coro:entropy-initial-bound} 
Fix $\alpha >1$ and suppose that $\theta> \theta_{0,L}(\alpha)$. Then, the density $\tilde\rho_t$ of $\tilde X_t$, starting at $\tilde\rho_0$
    satisfies 
    \begin{align*}
        d_{\tv}(\tilde\rho_t,\tilde\rho_*)<C_\beta Ne^{-\kappa_{\textsc{be}}t}\,.
    \end{align*}
\end{corollary}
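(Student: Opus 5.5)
The plan is to apply the Bakry--Emery entropy decay \eqref{eq:BE-entropy-decay} separately on each cap $\mathcal D_\pm$, and then combine the two halves using convexity of total variation. By construction, $\tilde X_t$ flips a fair coin and then runs the $\tilde H^\pm$-Langevin dynamics confined to $\mathcal D_\pm$. Hence $\tilde\rho_t=\frac12\tilde\rho_t^++\frac12\tilde\rho_t^-$ and $\tilde\rho_*=\frac12\tilde\rho_*^++\frac12\tilde\rho_*^-$, with $\tilde\rho_*^\pm$ the density of $\tilde\pi^\pm$. Since the two caps are disjoint,
\begin{align*}
d_{\tv}(\tilde\rho_t,\tilde\rho_*)\le \tfrac12 d_{\tv}(\tilde\rho^+_t,\tilde\rho^+_*)+\tfrac12 d_{\tv}(\tilde\rho^-_t,\tilde\rho^-_*)\,,
\end{align*}
so by symmetry it suffices to treat the $+$ process.

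On $\mathcal D_+$, Lemma~\ref{lem:tildeH-convex} gives $-\nabla^2_{\sph}\tilde H^++\frac1\beta\mathrm{Ric}\succeq\kappa_{\be}I$. The cap $\mathcal D_+$ is geodesically convex, being a cap of the sphere of angular radius less than $\pi/2$, since $\mone>0$. Moreover $\tilde H^+\to-\infty$ at $\partial\mathcal D_+$, because $F(x)\to\infty$ as $x\uparrow1$. So the diffusion never reaches the boundary, and the Bakry--Emery argument on this manifold with boundary yields a log-Sobolev inequality with constant $\kappa_{\be}$ for $\tilde\pi^+$. Alternatively one can view $\tilde\pi^+$ as a strongly log-concave measure on a convex domain. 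Either way we get $D_{\mathrm{KL}}(\tilde\rho_t^+\|\tilde\rho^+_*)\le e^{-2\kappa_{\be}t}D_{\mathrm{KL}}(\tilde\rho_0^+\|\tilde\rho_*^+)$, and Pinsker then gives $d_{\tv}\le e^{-\kappa_{\be}t}\sqrt{\tfrac12 D_{\mathrm{KL}}(\tilde\rho^+_0\|\tilde\rho^+_*)}$.

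It remains to bound the initial entropy by $O(N)$. Write
\begin{align*}
D_{\mathrm{KL}}(\tilde\rho_0^+\|\tilde\rho_*^+)=\int\tilde\rho_0^+\log\tilde\rho_0^+\,dx-\int\tilde\rho_0^+\log\tilde\rho^+_*\,dx\,.
\end{align*}
For the first term, Lemma~\ref{lem:density-tilde-process-upper-bound} gives $\log\tilde\rho_0^+\le C_\beta N$, so it is at most $C_\beta N$. For the second term we have $-\log\tilde\rho_*^+=\log\tilde Z^+-\beta\tilde H^+$. Here $\log\tilde Z^+\le\beta\sup|H|+O(1)=O(N)$, using $|H|\le\frac N2\max(|\lambda_1|,|\lambda_N|)$ together with Lemma~\ref{lem:equilibrium-comparison}. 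However, $-\beta\tilde H^+$ is unbounded near $m=\mone$, and this is the main obstacle.

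I would handle this term as follows. In case (1) of the coupling, $\tilde X_0^+$ lies in $\{m>\frac12(\mhit+\mtwo)\}$, where $\tilde H^+=H$ is $O(N)$. In case (2), $\tilde X_0^+\sim\tilde\pi^+$, and then $\int\tilde\rho_*^+\log\tilde\rho_*^+\le\log\sup\tilde\rho_*^+=O(N)$ by the same density bound. So the cross term is bounded by $\beta\sup_{m\ge\mtwo}|H|+\log\tilde Z^+=O(N)$ on the part of $\tilde\rho_0^+$ supported where $m\ge\mtwo$. The $\tilde\pi^+$ component is treated with the same computation. Altogether $D_{\mathrm{KL}}\le C_\beta N$, and therefore
\begin{align*}
d_{\tv}\le e^{-\kappa_{\be}t}\sqrt{C_\beta N/2}\le C_\beta N e^{-\kappa_{\be}t}\,,
\end{align*}
which gives the stated bound.
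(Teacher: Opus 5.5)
Your proposal is correct and follows the paper's proof. You split $d_{\tv}$ over the two caps by convexity, apply the Bakry--Emery decay \eqref{eq:BE-entropy-decay} with Lemma~\ref{lem:tildeH-convex} on each cap, and bound the initial KL divergence by $O(N)$ via Lemma~\ref{lem:density-tilde-process-upper-bound}. Your use of the mixture structure of $\tilde\rho_0^+$ (either supported in $\{m>\tfrac12(\mhit+\mtwo)\}$ or equal to $\tilde\pi^+$) spells out what the paper's ``logarithm of the ratio of densities'' remark leaves implicit, namely that $\tilde\rho_*^+$ is bounded below only away from $\partial\mathcal D_+$.

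There are two small slips. First, $F(x)\to-\infty$ (not $+\infty$) as $x\uparrow 1$; this is what makes $\tilde H^+\to-\infty$. Second, for the $\tilde\pi^+$ component of the cross term $-\int\tilde\rho_0^+\log\tilde\rho_*^+$, the bound you need is $\int\tilde\rho_*^+\log\tilde\rho_*^+\,dx\ge 0$, so that component's contribution is nonpositive; equivalently, use convexity of $D_{\mathrm{KL}}$ in its first argument. The upper bound by $\log\sup\tilde\rho_*^+$ that you wrote goes in the wrong direction for this term.
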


\begin{proof}
First note by $\tilde \rho_* = \frac{1}{2} \tilde \rho_*^+ +  \frac{1}{2} \tilde\rho_*^-$,  and the triangle inequality that, 
\[
\TV{ \tilde{\rho}_t}{\tilde{\rho}_{\ast} } \leq \frac{1}{2} \TV{\tilde{\rho}^+_t}{\tilde{\rho}_{*}^+} + \frac{1}{2} \TV{\tilde{\rho}^-_t}{\tilde{\rho}_{*}^-} \, .
\]
Both terms on the right hand side can then be bounded by applying~\eqref{eq:BE-entropy-decay} and Lemma~\ref{lem:tildeH-convex} for both $\mathcal{D}_+$ and $\mathcal{D}_-$. 
The factor $C_\beta N$ comes from bounding $D_{\mathrm{KL}}(\tilde \rho_0^+,\tilde \rho_*^+)$  by the logarithm of the ratio of densities via Lemma ~\ref{lem:density-tilde-process-upper-bound}.     
\end{proof}

We now have all the necessary ingredients to prove part (1) of Theorem~\ref{thm:mixing-time-langevin}.
\begin{proof}[Proof of symmetric mixing in item (2) of Theorem~\ref{thm:mixing-time-langevin}]
    For any $t>0$, we have by combining in Lemma~\ref{lem:dTV-rhot-tilderhot}, Lemma~\ref{lem:equilibrium-comparison}, Corollary~\ref{coro:entropy-initial-bound} and the triangle inequality that,
    \begin{align*}
        d_{\tv}(\rho_{T_{\text{hit}} +t},\rho_*) \leq 2e^{-100} +   (T_{\text{hit}} + t)e^{- cN} + C_{\beta} Ne^{-\kappa_{\textsc{be}}t } + e^{-C \beta N}\,. 
    \end{align*}
    Letting $t_0=T_{\text{hit}}+\frac{\log(C_\beta N)}{\kappa_\be}$ we then have 
    \begin{align*}
        d_{\tv}(\rho_{t_0},\rho_*) \leq 2 e^{-100} + \exp(-cN)\,,
    \end{align*}
    for some other constant $c>0$. Thus if $t> t_0$, the total variation distance above is less than $1/4$ and hence $\tsym<t_0 = O(\log N)$. The $\Omega(\log N)$ lower bound follows from Remark~\ref{rem:tmix-sym-lower-bound}.  \qedhere 
\end{proof}

\section{Slow worst-case mixing at low temperatures} \label{sec:work-case-mixing}

In this section, we will prove the worst-case mixing part of item (2), showing exponentially slow mixing at all $\theta>1$ and all $\beta>1/\theta$. We will in fact prove the slow mixing with the sharp exponential lower bound rate of Theorem~\ref{thm:metastable-mixing-rate}.  

In the case of Brownian motion on a compact manifold, one can show that a bottleneck set implies a small spectral gap via Cheeger's inequality. In our case however, the generator of the Langevin dynamics is not the Laplacian. We will determine an explicit bottleneck for $\pi$, showing it has a  free energy barrier of the equator induced by the symmetry of the Hamiltonian. We then show go from a bottleneck set for the Gibbs measure, to a bound on the spectral gap of the generator of the Langevin dynamics by lifting the Langevin dynamics to a Brownian motion on a higher manifold with metric given by the Gibbs weight.

\subsection{The free energy barrier}

For completeness, let us state the exact formulas for the free energy of the spiked matrix model, as well as the free energy of bands of overlap $q$ with $e_1$. 

\begin{lemma} \label{lem:free-energy-spiked} Fix $\theta>1$ and let $ \alpha >0$. The free energy of the spiked matrix model is given by:
\begin{align} \label{eq:free-energy-spike}
F(\alpha,\theta) := \lim_{N \to \infty}  \frac{1}{N} \log \int_{\mathbf{S}_N} e^{\beta H_N(x)} dx = 
\begin{cases}
    \frac{\alpha^2}{4\theta^2} &\text{if} \ \alpha <1 \\
 \frac{\alpha}{2\theta} (\theta + \frac{1}{\theta}) - \frac{1}{4\theta^2} - \frac{1}{2} \log(\alpha) - \frac{1}{2}         &\text{else} 
\end{cases} \, ,
\end{align}
where the limit exists $\p_M$ almost surely.
\end{lemma}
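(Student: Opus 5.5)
The plan is to compute the partition function exactly in the eigenbasis of $M$ and then evaluate it by a saddle point. By rotational invariance of the Haar measure, $Z_N=\int_{\mathbf{S}_N}\exp\bigl(\tfrac{\beta}{2}\sum_i\lambda_i x_i^2/N\bigr)\,dx$ depends on $M$ only through $\lambda_1,\dots,\lambda_N$. I will use the standard contour-integral representation from the spherical SK literature (Kosterlitz--Thouless--Jones, as developed in~\cite{baik2021spherical}). Write the uniform measure on $\mathbf{S}_N$ as a normalized Gaussian integral with a Fourier (Laplace) representation of $\delta(|x|^2-N)$. Doing the Gaussian integrals gives
\begin{align*}
Z_N = \frac{C_N}{2\pi i}\int_{\gamma-i\infty}^{\gamma+i\infty} \exp\Bigl(\tfrac{N}{2}\bigl(\beta z - \tfrac{1}{N}\textstyle\sum_{i}\log(z-\lambda_i)\bigr)\Bigr)\,dz ,
\end{align*}
valid for any $\gamma>\lambda_1$. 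The constant $C_N$ is explicit and satisfies $\tfrac1N\log C_N\to -\tfrac12\log\beta-\tfrac12+O(\tfrac{\log N}{N})$.

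Define $G_N(z)=\beta z-\tfrac1N\sum_i\log(z-\lambda_i)$. On the high-probability event of the paper, the empirical spectral measure converges to the semicircle $\sigma$, $\lambda_2\to 2$, and $\lambda_1\to\theta+\tfrac1\theta$. Hence $G_N\to G(z)=\beta z-\int\log(z-\lambda)\,d\sigma(\lambda)$ uniformly on compact subsets of $\{\Re z>\lambda_1+\delta\}$. The spike eigenvalue contributes only $\tfrac1N\log(z-\lambda_1)$ to $G_N$, but it restricts the admissible contour to $\Re z>\lambda_1$.

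The critical point solves the Stieltjes equation $\int\frac{d\sigma(\lambda)}{z-\lambda}=\beta$, whose solution is $z_*=\beta+\tfrac1\beta$. I will then split into two cases.

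\textbf{Case $\alpha<1$.} Here $\beta<1/\theta$, so $z_*>\theta+\tfrac1\theta>2$. The contour through $z_*$ is admissible, and a standard steepest-descent argument applies: along the vertical line, $\Re G_N(z_*+it)$ is maximized at $t=0$ because $|z-\lambda|$ increases in $|t|$. This gives $\tfrac1N\log Z_N\to\tfrac12 G(z_*)-\tfrac12\log\beta-\tfrac12$. Using $\int\log(z-\lambda)\,d\sigma=\tfrac{z^2}{4}-\tfrac12-\log\tfrac{1}{w}-\tfrac{w^2}{4}$, where $w$ is the smaller root of $w+w^{-1}=z$ (so $w=\beta$ here), this simplifies to $\beta^2/4=\alpha^2/(4\theta^2)$.

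\textbf{Case $\alpha\ge1$.} Now $z_*\le\lambda_1$, so the contour must be pinched to $\gamma=\lambda_1+N^{-1}$ or a similar choice. The exponent is then maximized at the boundary value $\lambda_1$, and I claim $\tfrac1N\log Z_N\to\tfrac12\bigl(\beta\lambda_1-\int\log(\lambda_1-\lambda)\,d\sigma\bigr)-\tfrac12\log\beta-\tfrac12$. With $\lambda_1=\theta+\tfrac1\theta$ and $w=1/\theta$, one has $\int\log(\lambda_1-\lambda)\,d\sigma=\log\theta+\tfrac{1}{2\theta^2}$. Since $\log\beta+\log\theta=\log\alpha$, this yields exactly $\frac{\alpha}{2\theta}(\theta+\frac1\theta)-\frac1{4\theta^2}-\frac12\log\alpha-\frac12$.

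I expect the main obstacle to be the rigorous asymptotics in the pinched case $\alpha\ge1$, where the saddle sits at an eigenvalue. For the upper bound I would avoid the contour altogether and use the elementary estimate $Z_N\le C_N\exp(\tfrac N2 G_N(\gamma))$, valid for any real $\gamma>\lambda_1$ (the modulus of the integrand is maximized on the real axis, together with a crude bound on the $dt$-integral). Taking $\gamma=\lambda_1+\epsilon$ and sending $\epsilon\to0$ after $N\to\infty$ handles it. This needs $\tfrac1N\sum_{i\ge2}\log(\gamma-\lambda_i)\to\int\log(\gamma-\lambda)\,d\sigma$ uniformly near $\lambda_1$, which holds because $\lambda_2\to2<\lambda_1$ stays a positive distance away.

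For the lower bound I would restrict the integral to a band $\{|m(x)-q|\le\epsilon\}$ with $q^2=\mpi^2=1-\tfrac1{\beta\theta}$. On this band the energy is $\tfrac12\lambda_1q^2N$ plus the free energy of the $(N-1)$-dimensional null model at effective temperature $\beta(1-q^2)$, which equals $1/\theta\le1$ (high temperature). That free energy is given by the $\alpha<1$ computation with the semicircle alone. Optimizing over $q$ reproduces the same value, which gives a matching lower bound and hence almost sure convergence.
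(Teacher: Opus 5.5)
Your proposal is correct. For $\alpha<1$ it is the paper's argument: the same contour representation, the same limit $-\frac12\log\beta-\frac12$ for $\frac1N\log C_N$, the saddle $\beta+\frac1\beta>\theta+\frac1\theta$, and then the steepest descent of \cite{baik2021spherical}.

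For $\alpha\ge 1$ you take a genuinely different route. The paper still does steepest descent. It pins the saddle at $\gamma=\lambda_1+s/N$ with $s=(\beta-\frac1\theta)^{-1}+o(1)$ and shows $\mathcal G(\gamma)\to\beta\lambda_\theta-s_0(\lambda_\theta)$. It then analyses the complex integrand in an $N^{-1}$-window around $\gamma$ to show the remaining contour integral is $e^{O(\log N)}$.

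You instead sandwich $Z_N$. Your upper bound uses only the modulus of the integrand on the fixed line $\Re z=\lambda_1+\epsilon$, so no cancellation or local analysis near the pinched saddle is needed. Your lower bound is a positive integral over the band at overlap $\mpi$, which reduces to the high-temperature null model at $\beta(1-q^2)=\frac1\theta<1$.

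What each route buys:
\begin{itemize}
\item Yours is more elementary. It uses only the global semicircle law and the convergence of $\lambda_1,\lambda_2,\lambda_N$, so the almost-sure statement is immediate. It also makes visible that the Gibbs measure sits at overlap $\mpi$.
\item The paper's gives finer information: the prefactor up to $e^{O(\log N)}$ and the exact saddle location $s$, which it reuses in the overlap computation of Theorem~\ref{thm:overlap-ground-state}.
\end{itemize}

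Two things to fix when you write it out.

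\textbf{The closed form for the log-potential.} As displayed it is wrong. With $w$ the smaller root of $w+w^{-1}=z$, the correct formula is $\int\log(z-\lambda)\,d\sigma(\lambda)=\frac{w^2}{2}+\log\frac1w$. The values you actually state, $\beta^2/4$ in the first case and $\log\theta+\frac{1}{2\theta^2}$ at $\lambda_\theta$, are what this correct formula gives.

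\textbf{The co-area Jacobian in the band lower bound.} You must keep the factor $(1-m^2)^{(N-3)/2}$. The band free energy is then
\[
\frac{\beta\lambda_\theta}{2}q^2+\frac{\beta^2(1-q^2)^2}{4}+\frac12\log(1-q^2).
\]
At $q^2=1-\frac{1}{\beta\theta}$ the entropy term equals $-\frac12\log\alpha$, and with it you get exactly the claimed value. Without it, your lower bound would exceed your upper bound by $\frac12\log\alpha$.

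Your effective inverse temperature $\beta(1-q^2)$ is the right one, since $H$ is quadratic. The paper's appendix band computation writes $\beta\sqrt{1-q^2}$ instead. That agrees at $q=0$, which is all the paper uses, but it is not consistent with this lemma for $q\neq0$, so do not import it.
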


To compute the quantity $\Delta_{\alpha,\theta}$ we are required to compute certain ``local" free energies at fixed overlap value $q$ with the ground state. Let $\text{Band}(q,\epsilon)$ denote the collection of points having inner product in the interval $(q-\epsilon,q+\epsilon)$ with $e_1$, and define the free energy of a band as the almost sure limit: 
\begin{align} \label{def:band-free-energy}
F(q;\alpha,\theta) = \lim_{\epsilon \to 0} \lim_{N \to \infty} \frac{1}{N} \log \int_{\text{Band}(q,\epsilon)} e^{\beta H(x)} dx \, .     
\end{align}
We have the following lemma which gives a closed form for $F(q;\alpha,\theta)$: 

\begin{lemma} \label{lem:free-energy-band} Fix $\alpha>\theta$, then the free energy of a band of overlap $q$ exists $\p_M$ almost surely, and is given by: 
    \begin{align} \label{eq:band-free-energy}
F(q;\alpha,\theta) = 
\begin{cases}
    \frac{\alpha}{2\theta}  (\theta + \frac{1}{\theta}) q^2 + \frac{\alpha^2}{4\theta^2}(1-q^2) + \frac{1}{2} \log(1-q^2) &\text{if} \ q^2> 1- \frac{\theta^2}{\alpha^2} \, ,
    \\
    \frac{\alpha}{2\theta} (\theta + \frac{1}{\theta})   q^2 + \frac{\alpha}{\theta} \sqrt{1-q^2} - \frac{3}{4} - \frac{1}{2} \log(\alpha/\theta) + \frac{1}{4} \log(1-q^2)  &\text{if} \ q^2 \leq 1-\frac{\theta^2}{\alpha^2} \, .
\end{cases}
    \end{align}
    For $1 < \alpha \leq \theta $ the free energy of the band of overlap $q$ is given by: 
    \[
F(q;\alpha,\theta) = \frac{\alpha}{2\theta} (\theta + \frac{1}{\theta}) q^2 + \frac{\alpha^2}{4\theta^2}(1-q^2) + \frac{1}{2} \log(1-q^2) \, .
    \]
\end{lemma}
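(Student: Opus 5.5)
We will compute the band free energy by first integrating out the first coordinate exactly, which reduces the problem to the free energy of a null (unspiked) spherical SK model on a lower-dimensional sphere.

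\textbf{Step 1: decomposition.} Fix $q$ and write $x = (\sqrt N m, \sqrt{1-m^2}\, y)$ with $y \in \mathbf{S}_{N-1}$ lying in the span of $e_2,\dots,e_N$. Then
\[
\beta H_N(x) = \tfrac{\beta}{2}\lambda_1 N m^2 + (1-m^2)\,\tfrac{\beta}{2}\textstyle\sum_{i\ge 2}\lambda_i y_i^2 .
\]
Under the Haar measure, the density of $m$ is proportional to $(1-m^2)^{(N-3)/2}$. Integrating over $|m-q|<\epsilon$ gives
\[
\tfrac1N\log\int_{\mathrm{Band}(q,\epsilon)} e^{\beta H}\,dx
= \tfrac{\beta\lambda_1}{2}q^2 + \tfrac12\log(1-q^2) + \tfrac1N\log\int_{\mathbf{S}_{N-1}} e^{\beta(1-q^2)\frac12\sum_{i\ge2}\lambda_i y_i^2}\,dy \;+\; O(\epsilon)+o(1).
\]
Here $\log$ of the band width is $o(N)$, and the dependence on $m\in(q-\epsilon,q+\epsilon)$ is Lipschitz with an $O(\epsilon)$ error uniformly on the event $\lambda_1,\lambda_N$ bounded. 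Using $\lambda_1\to\theta+\frac1\theta$ and $\beta=\alpha/\theta$, the first term becomes $\frac{\alpha}{2\theta}(\theta+\frac1\theta)q^2$, as in \eqref{eq:band-free-energy}.

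\textbf{Step 2: the remaining integral.} This is the spherical partition function of the matrix with eigenvalues $\lambda_2,\dots,\lambda_N$, at effective inverse temperature $\beta' = \beta(1-q^2)$. By eigenvalue interlacing, the empirical spectral distribution of $\lambda_2,\dots,\lambda_N$ converges to the semicircle law, and $\lambda_2\to 2$. So this is the null spherical SK free energy at $\beta'$, which we can take from \cite{baik2021spherical}, or derive directly by the Laplace/contour integral representation $\int e^{\frac{\beta'}{2}\sum\lambda_i y_i^2}dy \propto \oint e^{N z}\prod_i (z-\beta'\lambda_i/2)^{-1/2}dz$ followed by a saddle-point argument. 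The limit is
\[
\tfrac{\beta'^2}{4}\quad\text{if } \beta'\le 1, \qquad \beta' - \tfrac34 - \tfrac12\log\beta'\quad\text{if }\beta'>1 .
\]
At $\beta'=1$ both expressions equal $\frac14$, which is a useful sanity check.

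\textbf{Step 3: assembling the cases.} With $\beta'=\frac{\alpha}{\theta}(1-q^2)$:
\begin{itemize}
\item If $\beta'\le 1$, i.e.\ $q^2\ge 1-\theta/\alpha$, we get $\frac{\alpha^2}{4\theta^2}(1-q^2)^2+\frac12\log(1-q^2)$.
\item If $\beta'>1$, the $\log(1-q^2)$ terms combine with coefficient $\frac12-\frac12=0$.
\end{itemize}
These do not match \eqref{eq:band-free-energy} verbatim: the statement has $\frac{\alpha^2}{4\theta^2}(1-q^2)$, a $\sqrt{1-q^2}$ term, and the threshold $1-\theta^2/\alpha^2$. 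This suggests the paper's normalisation of $y$ or of the effective temperature scales with $\sqrt{1-q^2}$ rather than $1-q^2$. I would recheck the radial parametrisation, in particular whether the Haar density exponent and the rescaling of $H$ on the slice are $(1-q^2)$ or its square root. I would then redo Step 2 with whichever $\beta'$ the correct change of variables gives. The case split structure is the same either way:
\begin{itemize}
\item When $1<\alpha\le\theta$, we have $\beta'<1$ for all $q$, so only the high-temperature formula appears.
\item When $\alpha>\theta$, the slice near the equator is in the low-temperature phase of the null model, which produces the second branch.
\end{itemize}

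\textbf{Main obstacle.} The main difficulty is Step 2: justifying that removing the top eigenvalue and rescaling still gives the null free energy almost surely, uniformly in $m$ over the band. Two issues arise:
\begin{itemize}
\item In the low-temperature branch the free energy depends on the edge eigenvalue $\lambda_2\to 2$, not only on the bulk.
\item We need uniform convergence of the saddle-point approximation in $\beta'$ over a compact set. Continuity and convexity in $\beta'$ should give this, upgrading pointwise almost-sure limits to uniform ones, so that the $\epsilon\to0$ limit can be exchanged.
\end{itemize}
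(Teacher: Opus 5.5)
Your route is the paper's route. You disintegrate the Haar measure along $e_1$ with density proportional to $(1-m^2)^{(N-3)/2}$. You then reduce the slice integral to the null spherical SK free energy $F_{\emptyset}$ of the deflated spectrum $\lambda_2,\dots,\lambda_N$, using interlacing and $\lambda_2\to 2$.

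Your computation is correct. The mismatch you found is an error in the lemma as stated, not in your change of variables. On the slice $x=(\sqrt N m,\sqrt{1-m^2}\,y)$ the coordinates scale by $\sqrt{1-m^2}$, but $H$ is quadratic, so the restricted Hamiltonian scales by $1-m^2$. The effective inverse temperature is therefore $\beta(1-q^2)$.

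The paper's proof instead writes $e^{\beta\sqrt{1-q^2}\,\tilde H}$ and $F_{\emptyset}(\beta\sqrt{1-q^2})$. That one slip produces the terms $\frac{\alpha^2}{4\theta^2}(1-q^2)$, $\frac{\alpha}{\theta}\sqrt{1-q^2}$ and $\frac14\log(1-q^2)$, and the threshold $1-\theta^2/\alpha^2$. The same displayed line also drops the factor $\beta$ in front of $\frac{\lambda_\theta}{2}q^2$, and writes $\log(1-q^2)$ where $\frac12\log(1-q^2)$ is meant.

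So do not redo Step 2 to match the statement. The correct lemma is
\[
F(q;\alpha,\theta)=\frac{\alpha}{2\theta}\Big(\theta+\frac1\theta\Big)q^2+\frac12\log(1-q^2)+F_{\emptyset}\Big(\frac{\alpha}{\theta}(1-q^2)\Big).
\]
Explicitly:
\begin{itemize}
\item for $q^2\ge 1-\frac{\theta}{\alpha}$, it equals $\frac{\alpha}{2\theta}(\theta+\frac1\theta)q^2+\frac{\alpha^2}{4\theta^2}(1-q^2)^2+\frac12\log(1-q^2)$;
\item for $q^2<1-\frac{\theta}{\alpha}$, which can only happen when $\alpha>\theta$, it equals $\frac{\alpha}{2\theta}(\theta+\frac1\theta)q^2+\frac{\alpha}{\theta}(1-q^2)-\frac34-\frac12\log\frac{\alpha}{\theta}$.
\end{itemize}

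A check against the paper's own results settles which version is right. For $\alpha>1$, your formula is maximized at $1-q^2=1/\alpha$, i.e.\ $q^2=1-\frac{1}{\beta\theta}$, which matches Theorem~\ref{thm:overlap-ground-state}. Its maximum value is exactly $F(\alpha,\theta)$ from Lemma~\ref{lem:free-energy-spiked}.

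The stated formula fails this check. For $\alpha=2$, $\theta=3$ (the regime $1<\alpha\le\theta$), its supremum over $q$ is $\frac{11}{18}-\frac12\log 2$. That exceeds $F(2,3)=\frac{7}{12}-\frac12\log 2$, which is impossible because a band integral cannot exceed the full partition function.

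Both versions agree at $q=0$, and near $q=0$ yours satisfies $F(q)=F(0)+O(q^2)$. Hence $\Delta_{\alpha,\theta}=F(\alpha,\theta)-F(0;\alpha,\theta)$, Lemma~\ref{lem:bottleneck-set} and Theorem~\ref{thm:metastable-mixing-rate} are unaffected.

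Your uniformity concerns are legitimate and more careful than the paper's proof. The map $\beta'\mapsto\frac1N\log Z_N(\beta')$ is convex, so almost sure convergence on a countable dense set of $\beta'$ upgrades to locally uniform convergence. That justifies replacing $m$ by $q$ inside the band and exchanging the $\epsilon\to0$ limit.
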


The proofs of Lemmas~\ref{lem:free-energy-spiked}--\ref{lem:free-energy-band} are straightforward consequences of known techniques for dealing with the spherical Sherrington-Kirkpatrick model, and their proofs are deferred to Appendix \ref{AP:spike-stationary-estimates}. 
By combining Lemmas~\ref{lem:free-energy-spiked}--\ref{lem:free-energy-band} we see that $\Delta_{\alpha,\theta}$ is given exactly by the value $F(\alpha,\theta) - F(0;\alpha,\theta)$. 

\subsection{Classical isoperimetric inequalities}

Let $\Gap(-\mathcal L)$ be the first positive eigenvalue of $-\mathcal L$, where $\mathcal L=\frac1\beta\Delta_{\sph}+\nabla_{\sph}H\cdot\nabla_{\sph}$ is the generator of Langevin dynamics. We establish an upper bound for $\Gap(-\mathcal L)$ based on isoperimetric inequalities.

\begin{definition}\label{def:isoperimetric-constant}
    Let $(\mathcal M,g)$ be a compact Riemannian manifold of dimension $N$ without boundary, the isoperimetric constant is defined as:
    \begin{align*}
        h(\mathcal M)=\inf_{A\subset M}\frac{\mathrm{vol}(\partial A)}{\min\{\mathrm{vol}(A),\mathrm{vol}(A^c)\}}\,.
    \end{align*}
\end{definition}

Cheeger's inequality \cite{Cheeger1970} points out that the first positive eigenvalue  $\Gap(\mathcal M):=\Gap(-\Delta_{\mathcal M})$ of the Laplacian can be bounded below by $h(\mathcal M)$. 
When the Ricci curvature is bounded below, an upper bound of $\Gap(\mathcal M)$ was obtained by Buser in ~\cite{buser1982note}.
\begin{theorem}\label{thm:buser-isoperimetric-bound} 
    If the Ricci curvature of $\mathcal M$ is bounded below by $-(N-1)\delta^2$ for some $\delta>0$, then 
    \begin{align*}
        \Gap(\mathcal M)\leq 2\delta(N-1)h(\mathcal M)+10h(\mathcal M)^2\,.
    \end{align*}
\end{theorem}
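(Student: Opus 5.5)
The plan is to follow Ledoux's semigroup proof of Buser's inequality rather than Buser's original geometric one. The idea is to bound the $L^1$ displacement of the heat flow applied to an indicator function in two ways. From above we use the boundary measure together with a reverse gradient bound that relies on the Ricci lower bound. From below we use the spectral gap. Let $P_t = e^{t\Delta_{\mathcal M}}$, write $\lambda = \Gap(\mathcal M)$, and set $K = (N-1)\delta^2$, so that $\mathrm{Ric}\succeq -K g$. Normalize the volume to $1$ and fix a smooth set $A$ with $\mathrm{vol}(A)\le 1/2$; we may assume its ratio $\mathrm{vol}(\partial A)/\mathrm{vol}(A)$ is within $\epsilon$ of $h(\mathcal M)$.

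\emph{Step 1 (upper bound via curvature).} Bochner's formula with $\mathrm{Ric}\ge -K$ gives the Bakry--\'Emery reverse local Poincar\'e inequality
\[
|\nabla P_s f|^2 \le \frac{K}{e^{2Ks}-1}\,\|f\|_\infty^2 ,
\]
which in particular yields $\|\nabla P_s f\|_\infty \le C(K,s)\|f\|_\infty$ with $C(K,s)\sim (2s)^{-1/2}$ for $Ks\lesssim 1$. By duality and the coarea formula, for $g$ with $\|g\|_\infty\le 1$,
\[
\int (1_A - P_t1_A)\, g = -\int_0^t\!\!\int \nabla 1_A\cdot\nabla P_s g \, ds .
\]
Hence $\|1_A - P_t 1_A\|_1 \le \mathrm{vol}(\partial A)\int_0^t C(K,s)\,ds$. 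Integrating the explicit bound gives a quantity of order $\sqrt{t}$ when $Kt\le 1$; I would record it as $\|1_A-P_t1_A\|_1\le c_1\sqrt{t}\,(1+\sqrt{Kt})\,\mathrm{vol}(\partial A)$, or something similar.

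\emph{Step 2 (lower bound via the gap).} Write $a=\mathrm{vol}(A)$ and $f = 1_A - a$. Expanding $f$ in eigenfunctions,
\[
\int 1_A(1_A - P_t 1_A) = \int f(f-P_tf) \ge (1-e^{-\lambda t})\,a(1-a) \ge \tfrac12(1-e^{-\lambda t})\,a .
\]
The left-hand side is at most $\|1_A-P_t1_A\|_1$. Combining the two steps gives
\[
\tfrac12(1-e^{-\lambda t}) \le c_1 \sqrt{t}\,(1+\sqrt{Kt})\,\big(h(\mathcal M)+\epsilon\big)
\]
for every $t>0$.

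\emph{Step 3 (optimize $t$).} Choose $t = 1/\lambda$. The left-hand side is then $\tfrac12(1-e^{-1})$, so $1 \lesssim h\,\lambda^{-1/2}\big(1+\sqrt{K/\lambda}\big)$. Now split into cases. If $\lambda \ge K$, this gives $\lambda \lesssim h^2$. If $\lambda < K$, use the time $t=1/K$ instead; then $1-e^{-\lambda/K}\ge c\,\lambda/K$, which gives $\lambda \lesssim \sqrt{K}\,h$. Since $\sqrt K = \delta\sqrt{N-1}\le \delta(N-1)$, together the cases give $\lambda \le C_1\delta(N-1)h + C_2h^2$, which is the claimed shape.

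The main obstacle is recovering Buser's exact constants $2$ and $10$. The semigroup route produces absolute constants that are not obviously these. If sharp constants are really needed, I would instead follow Buser's original argument. That argument takes a near-extremal $A$ and builds the test function $\phi(x)=\min\{1,d(x,\partial A)/r\}$, or its reflection, and inserts it into the Rayleigh quotient. It uses Bishop--Gromov volume comparison, under the lower bound $-(N-1)\delta^2$, to control the volume of the $r$-tube around $\partial A$ by $r\,\mathrm{vol}(\partial A)\cdot e^{(N-1)\delta r}$-type factors. The final step chooses $r\asymp 1/(\delta(N-1)+h)$, and the constants $2$ and $10$ come from carrying out that optimization. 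For the application in this paper, any bound of the form $\Gap \le C(\delta N h + h^2)$ suffices, since $h$ will be exponentially small.
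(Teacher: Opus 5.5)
The paper does not prove this statement; it only cites Buser, whose original proof is geometric. Your main route is Ledoux's heat-semigroup argument instead, and it is sound. Steps 1--3 give $\Gap(\mathcal M)\le C(\sqrt K\,h+h^2)$ with $K=(N-1)\delta^2$, which is even better in $N$ than Buser's $\delta(N-1)h$ term. As you say, any bound of this shape suffices for the paper, since there $h$ is exponentially small.

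There is one slip in Step 1. Under $\mathrm{Ric}\succeq -Kg$ the reverse local Poincar\'e inequality is $|\nabla P_sf|^2\le \frac{K}{1-e^{-2Ks}}\big(P_s(f^2)-(P_sf)^2\big)$. The factor $\frac{K}{e^{2Ks}-1}$ you wrote is the one for $\mathrm{Ric}\succeq +Kg$. Your integrated bound $c_1\sqrt t\,(1+\sqrt{Kt})$ is what the correct factor gives, so nothing downstream breaks.

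You also do not need Buser's argument to get the constants $2$ and $10$; your own route produces them with careful bookkeeping. Two changes are needed.
\begin{enumerate}
\item Apply the gradient bound to $P_s1_A$ itself, through $\int_A(1-P_t1_A)=-\int_0^t\int_{\partial A}\partial_\nu P_s1_A\,d\sigma\,ds$. Since $0\le 1_A\le 1$, the variance term is then at most $\tfrac14$. Going through $\|1_A-P_t1_A\|_1$ with a general $|g|\le 1$ instead loses a factor $2$, and the constants cannot afford that loss.
\item Use $1-e^{-x}\ge x/(1+x)$ to get $\sqrt{K/(1-e^{-2Ks})}\le (2s)^{-1/2}+\sqrt K$.
\end{enumerate}
Step 2 then gives $1-e^{-\lambda t}\le h\,(\sqrt{2t}+\sqrt K\,t)$. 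Take $t=1/\lambda$ and write $x=\sqrt K h/\lambda$, $y=h^2/\lambda$; the bound becomes $1-e^{-1}\le\sqrt{2y}+x$. If $\lambda>2\sqrt K h+10h^2$, then $x<(1-10y)/2$. Hence $\sqrt{2y}+x<\tfrac12+\max_{y\ge 0}(\sqrt{2y}-5y)=0.6<1-e^{-1}$, a contradiction. So $\Gap(\mathcal M)\le 2\sqrt K\,h+10h^2\le 2\delta(N-1)h+10h^2$, because $\sqrt K=\delta\sqrt{N-1}$.

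Drop the geometric fallback as sketched, because it has a real gap. Bishop--Gromov compares volumes of balls about a point. It does not bound the $r$-tube around an arbitrary near-extremal $\partial A$ by $r\,\mathrm{vol}(\partial A)$ times a growth factor, and such a bound is false in general. Attaching many tiny bubbles to $A$ barely changes the isoperimetric ratio, yet each tiny sphere has an $r$-tube of volume comparable to $r^N$, far more than $r$ times its area. A tube bound of Heintze--Karcher type needs a mean-curvature bound on the separating hypersurface. That forces you to work with an actual isoperimetric minimizer, whose mean curvature the first variation ties to $h$, and to invoke existence and regularity from geometric measure theory. The semigroup argument avoids all of that machinery.
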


In order to apply these upper and lower bounds on the first eigenvalue of $-\mathcal{L}$, we introduce an equivalence between the Langevin operator and the Laplacian on a related covering space of $\mathbf{S}_N$.
Consider the warped product manifold $(\mathcal M,g)$, where the manifold is the set 
\begin{align*}
    \mathcal M=& \mathbf{S}_N \times \mathbb S^1\,.
\end{align*}
equipped with the product topology. For a point $(x,\theta) \in \mathbf{S}_N \times \mathbb S^1$, the metric is given by
\begin{align*}
    g=g_{\mathrm{round}}(x)+e^{2\beta H'(x)}\mathrm d\theta\mathrm d\theta\,,
\end{align*}
where $g_{\mathrm{round}}$ is the round metric on $\mathbf{S}_N$, and $H'(x)=H(x)-H_0$, with $H_0$ a constant to be determined later.

Let $p:\mathcal M\to \mathbf{S}_N$ denote the canonical projection from $\mathcal M$ to $\mathbf{S}_N$. The following lemma relating the Laplacian on $\mathcal{M}$ to the generator $\mathcal{L}$ follows by direct computation.

\begin{lemma}\label{lem:Langevin-to-Laplacian}
    For any $f\in C^\infty(\mathbf{S}_N)$, let $\tilde f=f\circ p \in C^{\infty}(\mathcal M)$ be the pullback of $f$ to $\mathcal M$. Then for any $(x,\theta)\in\mathcal M$, and any $H_0$, 
    \begin{align*}
        \Delta_g\tilde f((x,\theta))=\mathcal Lf(x)\,.
    \end{align*}
\end{lemma}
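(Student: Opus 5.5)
The plan is to compute the Laplace--Beltrami operator of the warped product directly in local coordinates and check that, on functions that do not depend on the fibre variable, it reduces to a weighted Laplacian on $\mathbf{S}_N$. Fix local coordinates $(u^1,\dots,u^{N-1})$ on $\mathbf{S}_N$ and add the angle coordinate $\theta$ on $\mathbb S^1$. In these coordinates $g$ is block diagonal: the spherical block is $(g_{\mathrm{round}})_{ij}$ and the fibre block is $g_{\theta\theta}=e^{2\beta H'(x)}$. Two consequences follow immediately. First, $\sqrt{\det g}=e^{\beta H'(x)}\sqrt{\det g_{\mathrm{round}}}$. Second, the inverse metric is block diagonal, with spherical block $g_{\mathrm{round}}^{ij}$ and fibre entry $g^{\theta\theta}=e^{-2\beta H'}$.

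Next we insert these into the coordinate formula
\[
\Delta_g u=\frac{1}{\sqrt{\det g}}\,\partial_a\big(\sqrt{\det g}\,g^{ab}\partial_b u\big).
\]
For $u=\tilde f=f\circ p$ we have $\partial_\theta\tilde f=0$. The metric coefficients also do not depend on $\theta$, so every term involving the fibre index vanishes. What remains is
\[
\Delta_g\tilde f=e^{-\beta H'}\,\mathrm{div}_{\mathrm{round}}\big(e^{\beta H'}\nabla_{\sph}f\big)=\Delta_{\sph}f+\beta\langle\nabla_{\sph}H',\nabla_{\sph}f\rangle .
\]
Since $H'=H-H_0$ and $H_0$ is constant, $\nabla_{\sph}H'=\nabla_{\sph}H$. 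This explains why the identity holds for every choice of $H_0$: the constant only rescales the fibre length and never reaches the operator.

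This computation gives $\Delta_g\tilde f=\beta\,\mathcal Lf$ rather than $\mathcal Lf$, because $\mathcal L=\frac1\beta\Delta_{\sph}+\nabla_{\sph}H\cdot\nabla_{\sph}$. Reconciling the normalisation is the only real obstacle, and I would handle it as follows. Take the base metric to be $\beta\,g_{\mathrm{round}}$, which is the round metric rescaled by $\beta$. This replaces $g^{ij}_{\mathrm{round}}$ by $\beta^{-1}g^{ij}_{\mathrm{round}}$ and $\sqrt{\det}$ by a constant multiple, and constants cancel in the divergence formula. The same computation then yields exactly $\frac1\beta\Delta_{\sph}f+\nabla_{\sph}H\cdot\nabla_{\sph}f=\mathcal Lf$. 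Equivalently, one can keep the round metric and read the lemma as $\Delta_g\tilde f=\beta\mathcal Lf$.

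Either way, the spectral gaps differ only by the fixed factor $\beta$, which is harmless for the intended use with Theorem~\ref{thm:buser-isoperimetric-bound}. If one rescales the base metric, the Ricci lower bound must be checked for the rescaled metric, but this changes $\delta$ only by a constant. Finally, note that the pullbacks $\tilde f$ are precisely the $\mathbb S^1$-invariant functions. Hence the spectrum of $-\mathcal L$ (up to that factor) is contained in the spectrum of $-\Delta_g$, which is the fact needed later.
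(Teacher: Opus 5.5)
Your computation is correct and is exactly the ``direct computation'' the paper invokes without giving details. You are also right about the normalisation: with $g=g_{\mathrm{round}}+e^{2\beta H'}d\theta\,d\theta$ one gets $\Delta_g\tilde f=\Delta_{\sph}f+\beta\,\nabla_{\sph}H\cdot\nabla_{\sph}f=\beta\,\mathcal Lf$. So the lemma as printed holds only up to the constant $\beta$, and the same is true of the identities $\int_{\mathcal M}\tilde f(-\Delta_g\tilde f)=2\pi\int f(-\mathcal Lf)e^{\beta H'}$ in Corollary~\ref{coro:Langevin-to-Laplacian-Rayleigh}. It holds exactly after replacing the base metric by $\beta g_{\mathrm{round}}$, and either way the constant is harmless for the exponential rates.

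One correction to your closing remark. The spectral containment you mention only gives $\Gap(\mathcal M)\le\beta\,\Gap(-\mathcal L)$. Combining with Theorem~\ref{thm:buser-isoperimetric-bound} to bound $\Gap(-\mathcal L)$ from above needs the reverse inequality. That is what the second half of Corollary~\ref{coro:Langevin-to-Laplacian-Rayleigh} supplies, by taking $H_0=CN$ so that non-$\mathbb S^1$-invariant modes cost at least $e^{-2\beta\max H'}$.
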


\begin{corollary}\label{coro:Langevin-to-Laplacian-Rayleigh}
    Let $\Gap(-\mathcal L)$ be the first positive eigenvalue of $-\mathcal L$ on $\mathbf{S}_N$; $\Gap(\mathcal M)$ be the first positive eigenvalue of $-\Delta_g$ on $\mathcal M$. For $H_0 =C N$ for a large enough constant $C$, we have that:  
    \begin{align*}
        \Gap(-\mathcal L)=\Gap(\mathcal M)\,.
    \end{align*}
\end{corollary}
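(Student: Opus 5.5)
We identify $\Gap(-\mathcal L)$ with the bottom of the spectrum of $-\Delta_g$ restricted to functions independent of the circle variable, and then show that the fibre-dependent modes cannot produce a smaller positive eigenvalue once $H_0$ is large.

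\textbf{Step 1: Fourier decomposition.} Since the metric $g=g_{\mathrm{round}}+e^{2\beta H'}d\theta^2$ is invariant under rotations of $\mathbb S^1$, $L^2(\mathcal M,\mathrm{dvol}_g)$ splits into Fourier modes $\tilde f(x,\theta)=f(x)e^{ik\theta}$, $k\in\mathbb Z$. The volume form is $\mathrm{dvol}_g=e^{\beta H'(x)}dx\,d\theta$. Since $H'$ differs from $H$ by a constant, for $k=0$ this is proportional to $\pi_\beta\otimes d\theta$.

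\textbf{Step 2: the $k=0$ mode.} On the $k=0$ mode, Lemma~\ref{lem:Langevin-to-Laplacian} gives $\Delta_g\tilde f=\mathcal L f$. The Dirichlet forms also agree:
\[
\int|\nabla_g\tilde f|^2\,\mathrm{dvol}_g=2\pi\int|\nabla_{\sph}f|^2e^{\beta H'}\,dx .
\]
So the $k=0$ spectrum of $-\Delta_g$ is exactly the spectrum of $-\mathcal L$ on $L^2(\pi_\beta)$, and the smallest positive eigenvalue on this mode is $\Gap(-\mathcal L)$.

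\textbf{Step 3: the $k\neq0$ modes.} For $k\ne 0$, a direct computation (or the Rayleigh quotient) gives
\[
-\Delta_g(fe^{ik\theta})=\big(-\mathcal L f+k^2e^{-2\beta H'}f\big)e^{ik\theta}.
\]
Hence the Rayleigh quotient on mode $k$ is
\[
\frac{\int\big(|\nabla_{\sph}f|^2+k^2e^{-2\beta H'}|f|^2\big)e^{\beta H'}\,dx}{\int|f|^2e^{\beta H'}\,dx}\ \ge\ \min_x e^{-2\beta H'(x)} .
\]
On the high-probability event $\|M\|_{op}\le C'$, we have $|H|\le C'N/2$ on $\mathbf S_N$. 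Taking $H_0=CN$ with $C>C'$ gives $H'\le -(C-C'/2)N$, so $e^{-2\beta H'}\ge e^{2\beta(C-C'/2)N}$ everywhere. Every eigenvalue on a nonzero mode is therefore at least $e^{cN}$.

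\textbf{Step 4: comparison with the $k=0$ gap.} We must check that $\Gap(-\mathcal L)$ lies below this threshold. The constant-$N$ Bakry--\'Emery-type bounds don't apply directly, so instead we plug the test function $f(x)=x_1$ into the $k=0$ Rayleigh quotient. Its mean under $\pi_\beta$ is zero by symmetry, and $|\nabla_{\sph}x_1|^2\le 1$. This gives $\Gap(-\mathcal L)\le 1/\E_\pi[x_1^2]$. Using $\E_\pi[x_1^2]=N\E_\pi[m^2]\ge c_0>0$ (from the free energy band computations, or crudely from $\E_\pi\|x\|^2=N$ together with a moment bound), we get $\Gap(-\mathcal L)=O(1)\ll e^{cN}$.

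Combining the steps, the first positive eigenvalue of $-\Delta_g$ is attained in the $k=0$ mode and equals $\Gap(-\mathcal L)$. The main obstacle is the rigorous justification of the mode decomposition, namely that eigenfunctions of $-\Delta_g$ can be taken to be single Fourier modes. This follows from the isometric circle action commuting with $\Delta_g$, which lets us diagonalize the two operators simultaneously.
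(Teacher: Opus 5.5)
Your proposal is correct and is essentially the paper's argument. The paper splits a mean-zero $F$ on $\mathcal M$ into its circle average $\bar F$ (a lift from $\mathbf S_N$, which sees $\Gap(-\mathcal L)$) and the remainder $F-\bar F$. It bounds the remainder's contribution via the Poincar\'e inequality on $\mathbb S^1$ (exactly your $|k|\ge 1$ estimate) times $e^{-2\beta(\max H-H_0)}$, and takes $H_0=CN$ so that this exceeds $\Gap(-\mathcal L)$; your Fourier decomposition is the same proof in finer form.

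Your test-function bound $\Gap(-\mathcal L)=O(1)$ is more explicit than the paper's appeal to comparability with $\Gap(-\Delta_{\sph})$. Your lower bound on $\E_\pi x_1^2$ is vague, though; to make it airtight, use a coordinate $x_i$ with $\E_\pi x_i^2\ge 1$, which exists because $\sum_i\E_\pi x_i^2=N$, and each $x_i$ has $\pi$-mean zero.

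You share one harmless slip with the paper. For $\mathcal L=\frac1\beta\Delta_{\sph}+\nabla_{\sph}H\cdot\nabla_{\sph}$ and this warped metric, one actually has $\Delta_g\tilde f=\beta\,\mathcal L f$, and your own Dirichlet-form identity shows the two forms differ by this factor $\beta$. So the identity is really $\Gap(\mathcal M)=\beta\,\Gap(-\mathcal L)$, which changes nothing at exponential scale.
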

\begin{proof}
    Using the Rayleigh quotient, 
    \begin{align*}
        \Gap(-\mathcal L)=&\inf_{ \substack{\int_{\mathbf{S}_N} f e^{\beta H'}=0 \\ f \neq 0}}\frac{\int_{\mathbf{S}_N} f(-\mathcal L f)e^{\beta H'}}{\int_{\mathbf{S}_N} f^2 e^{\beta H'}}\,, \qquad \text{and} \qquad  \Gap(\mathcal M)=\inf_{ \substack{\int_{\mathcal M} F=0 \\ F \neq 0 }}\frac{\int_{\mathcal M}F(-\Delta_g F)}{\int_{\mathcal M} F^2}\,,
    \end{align*}
    note for every $f\in C^\infty(\mathbf{S}_N)$ with $\int_{\mathbf{S}_N}fe^{\beta H'}=0$, the lift $\tilde f$  satisfies $\int_{\mathcal M} \tilde f=0$, and 
    \begin{align*}
        \int_{\mathcal M}\tilde f(-\Delta_g \tilde f)=&2\pi\int_{\mathbf{S}_N} f(-\mathcal L f)e^{\beta H'}\,,\nonumber\\
        \int_{\mathcal M}\tilde f^2=&2\pi\int_{\mathbf{S}_N} f^2e^{\beta H'}\,,
    \end{align*}
   and so $\Gap(-\mathcal L)\geq \Gap(\mathcal M)$.

    To prove the reverse, for every $F\in C^\infty(\mathcal M)$ with $\int_{\mathcal M}F=0$, let $\bar F(x,\theta)=\frac1{2\pi}\int_{\mathbb S^1}F(x,\theta') d\theta'$. Then there exists $f\in C^\infty(\mathbf{S}_N)$ such that $\bar F=f\circ p=\tilde f$, so
    \begin{align*}
        \int_{\mathcal M}F^2=&\int_{\mathcal M}\big(\bar F^2+(F-\bar F)^2\big)\,,\nonumber\\
        =&2\pi\int_{\mathbf{S}_N}f^2e^{\beta H'} dx+\int_{\mathbf{S}_N}\Big(\int_{\mathbb S^1}(F-\bar F)^2 d\theta\Big)e^{\beta H'} dx \, .
    \end{align*}    
    We may control the second term via an $\mathbb S^1$ Sobolev inequality to obtain
    \begin{align*}
        &\int_{\mathbf{S}_N}\Big(\int_{\mathbb S^1}(F(x,\theta)-\bar F(x,\theta))^2 d\theta\Big)e^{\beta H'(x)} dx
         \leq \int_{\mathbf{S}_N}\Big(\int_{\mathbb S^1}\big(\partial_\theta F(x,\theta)\big)^2 d\theta\Big)e^{\beta H'(x)} dx\,.
    \end{align*}

    On the other hand, let $\nabla_{\sph,x}F(x,\theta)$ denote the spherical gradient derivative part of $F(x,\theta)$ with respect to $x$,
    \begin{align*}
        \int_{\mathcal M}F(-\Delta_gF)=&\int_{\mathcal M}|\mathrm{grad}\,F|_g^2\nonumber\\
        =&\int_{\mathcal M}\Big(|\nabla_{\sph}F|^2+e^{-2\beta H'}(\partial_\theta F)^2\Big)\nonumber\\
        \geq&\int_{\mathcal M}\Big(|\nabla_{\sph}\bar F|^2+e^{-2\beta H'}(\partial_\theta F)^2\Big)\,.
    \end{align*}
    
    The first integral can be simplified to
    \begin{align*}
        \int_{\mathcal M}|\nabla_{\sph,x}\bar F|^2 =\int_{\mathcal M}|\mathrm{grad}\,\bar F|_g^2  &=\int_{\mathcal M}\bar F(-\Delta_g\bar F)\nonumber 
        \\
        & = 2\pi\int_{\mathbf{S}_N}f(-\mathcal Lf)e^{\beta H'} dx
        \\
        &\geq 2\pi\Gap(-\mathcal L) \int_{\mathbf{S}_N}f^2e^{\beta H'}  dx \, .
    \end{align*}

    The second integral satisfies
    \begin{align*}
        \int_{\mathcal M}e^{-2\beta H'}(\partial_\theta F)^2=&\int_{\mathbf{S}_N}\Big(\int_{\mathbb S^1}e^{-2\beta H'}(\partial_\theta F)^2 d\theta\Big)e^{\beta H'} dx\nonumber\\
        \geq &e^{-2\beta M}\int_{\mathbf{S}_N}\Big(\int_{\mathbb S^1}(\partial_\theta F)^2 d\theta\Big)e^{\beta H'} dx\,,
    \end{align*}
    where $M=\max_{\mathbf{S}_N} H(x)-H_0$. We let $H_0 = CN$ be such that $e^{-2\beta M}>\Gap(-\mathcal L)$ (where we are using that $\Gap(-\mathcal L)$ is comparable up to $e^{\Theta(N)}$ to $\Gap(- \Delta_{\sph})$), then we get
    \begin{align*}
        \frac{\int_{\mathcal M}F(-\Delta_gF)}{\int_{\mathcal M}F^2}\geq\Gap(-\mathcal L)\,,
    \end{align*}
    as desired.
\end{proof}

To use Theorem~\ref{thm:buser-isoperimetric-bound}, we still need a lower bound for the Ricci curvature of $(\mathcal M,g)$.
\begin{lemma}\label{lem:warp-ricci}
    The Ricci curvature of $(\mathcal M,g)$ is bounded below by $-C(N-1)$ for a constant $C$ independent of $N$.
\end{lemma}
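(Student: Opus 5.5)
We will compute the Ricci tensor of the warped product $\mathcal M = \mathbf{S}_N\times_f \mathbb S^1$ with warping function $f = e^{\beta H'}$. The plan is to reduce everything to the Hessian of $H$ and the size of $\nabla_{\sph} H$, both of which are controlled uniformly on the sphere.

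\textbf{The O'Neill formulas.} For a warped product $B\times_f F$ with one-dimensional fiber $F=\mathbb S^1$ and base $B=\mathbf{S}_N$ of dimension $n=N-1$, the standard O'Neill formulas give, for horizontal vectors $X,Y$ and the unit vertical vector $V$:
\begin{align*}
\mathrm{Ric}_g(X,Y) &= \mathrm{Ric}_{\sph}(X,Y) - \frac{1}{f}\nabla^2_{\sph} f(X,Y)\,,\\
\mathrm{Ric}_g(X,V) &= 0\,,\\
\mathrm{Ric}_g(V,V) &= -\frac{\Delta_{\sph} f}{f}\,.
\end{align*}
The fiber is flat, so there is no fiber Ricci term. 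With $f=e^{\beta H'}$ we have
\[
\frac{\nabla^2 f}{f} = \beta\nabla^2_{\sph} H + \beta^2\, \nabla_{\sph} H\otimes\nabla_{\sph} H\,.
\]
Taking the trace gives
\[
\frac{\Delta_{\sph} f}{f} = \beta\Delta_{\sph} H + \beta^2|\nabla_{\sph} H|^2\,.
\]
The additive constant $H_0$ drops out of both expressions, which is consistent with the claim holding for any $H_0$.

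\textbf{Bounding the geometric quantities.} On the sphere of radius $\sqrt N$ we have $\mathrm{Ric}_{\sph} = \frac{N-2}{N}g\succeq 0$, so this term only helps. We work on the event $\max_i|\lambda_i|\le C_0$, which holds with $\p_M$-probability $1-e^{-cN}$ and is the event the paper already assumes.
\begin{itemize}
\item From the formula in the proof of Lemma~\ref{lem:Bakry-Emery}, $\nabla^2_{\sph}H(v,v)=\sum_i\lambda_i v_i^2 - \frac{2}{N}H$. Hence $\|\nabla^2_{\sph} H\|_{op}\le 2C_0$.
\item Consequently $|\Delta_{\sph}H|\le 2C_0(N-1)$.
\item Since $\nabla_{\sph}H = (I - xx^T/N)Mx$, we get $|\nabla_{\sph} H|^2\le |Mx|^2\le C_0^2 N$.
\end{itemize}

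\textbf{Assembling the lower bound.} For a unit vector $W = aX + bV$ with $X$ horizontal, $|X|=1$ and $a^2+b^2=1$, the mixed terms vanish. Therefore
\[
\mathrm{Ric}_g(W,W)\ge -a^2\bigl(2\beta C_0 + \beta^2|\nabla_{\sph} H(X)|^2\bigr) - b^2\bigl(2\beta C_0(N-1) + \beta^2 C_0^2 N\bigr)\,.
\]
For the horizontal term, the rank-one piece satisfies $\beta^2\langle\nabla_{\sph} H, X\rangle^2\le\beta^2C_0^2N$. All contributions are therefore at least $-C(N-1)$ with $C=2\beta C_0+2\beta^2C_0^2$ for $N\ge2$, which gives the lemma with $\delta^2 = C$ in Theorem~\ref{thm:buser-isoperimetric-bound}.

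\textbf{Main obstacle.} The delicate point is the gradient-squared term $\beta^2|\nabla_{\sph}H|^2$. It is of order $N$ rather than order one, because $H$ is of order $N$ on a sphere of radius $\sqrt N$. One has to check that this is exactly the allowed $(N-1)\delta^2$ scale, and is not something worse. The rest is bookkeeping: getting the O'Neill sign conventions right, and confirming that the Hessian term is order one per direction while the Laplacian term is order $N$.
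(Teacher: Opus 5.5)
Your proposal is correct and follows essentially the same route as the paper. Both use the warped-product (O'Neill) Ricci formulas with a flat one-dimensional fiber, then the uniform bounds $\|\nabla^2_{\sph}H\|_{op}=O(1)$ and $|\nabla_{\sph}H|^2=O(N)$, which give horizontal and vertical Ricci terms bounded below by $-CN\ge -2C(N-1)$. Your version is slightly more explicit than the paper's about the rank-one term $\beta^2\,\nabla_{\sph}H\otimes\nabla_{\sph}H$ and about normalizing the unit vertical vector with the full metric $g$.
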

\begin{proof}
    By the formula for Ricci curvature of a warped product, for $\bm U$ horizontal and $\bm V$ vertical in $T\mathcal M$, with $\bm U$ also identified with a vector in $T \mathbf{S}_N$, and $\bm V$ identified with a vector in $T \mathbb S^1$, we have
    \begin{align*}
        \mathrm{Ric}_{\mathcal M}(\bm U,\bm U)=&\mathrm{Ric}_{\sph}(\bm U,\bm U)-d \frac{\mathrm{Hess}_{\sph}e^{\beta H'}(\bm U,\bm U)}{e^{\beta H'}}\,,\nonumber\\
        \mathrm{Ric}_{\mathcal M}(\bm U,\bm V)=&0\,,\nonumber\\
        \mathrm{Ric}_{\mathcal M}(\bm V,\bm V)=&\mathrm{Ric}_{S^1}(\bm V,\bm V)-(\frac{\Delta_{\sph}e^{\beta H'}}{e^{\beta H'}}+(d-1)|\frac{\nabla_{\sph}\,e^{\beta H'}}{e^{\beta H'}}|^2)g_{\mathbb S^1}(\bm V,\bm V)\,,
    \end{align*}
    where $g_{\mathbb S^1}=\mathrm d\theta\mathrm d\theta$, and $d=1$ is the dimension of $S^1$.

    Since $\abs{H}<cN, \norm{ \nabla_{\sph} H}<c\sqrt N$  and $\norm{\mathrm{Hess}_{\sph}H}<c$, with $\mathrm{Ric}_{\sph}= (1-\frac1N)I_{\sph}$ and $\mathrm{Ric}_{S^1}=0$, we obtain that
    \begin{align*}
        \mathrm{Ric}_{\mathcal M} \succeq -C(N-1)I \,,
    \end{align*}
    where $C$ is a constant independent of $N$.
\end{proof}

\subsection{Estimates for the first eigenvalue}
In this section we obtain an upper bound for the isoperimetric constant $h(M)$. If $h(M)<h_0$, then by Theorem~\ref{thm:buser-isoperimetric-bound}, Corollary~\ref{coro:Langevin-to-Laplacian-Rayleigh}, and Lemma~\ref{lem:warp-ricci} we obtain
\begin{align*}
    \Gap(-\mathcal L)\leq 2C(N-1)h_0+10h_0^2\,.
\end{align*}

To upper bound $h_0$ we consider the half-sphere of $\mathbf{S}_N$ in the direction of the eigenvector $e_1$ associated with the largest eigenvalue $\lambda_1$, i.e., 
\begin{align*}
    A=(\mathbf{S}_N\cap\{x_1>0\}) \times \mathbb S^1\subset \mathcal M\,.
\end{align*}

\begin{lemma}\label{lem:isoperimetry-computation}
    The isoperimetric constant satisfies $h(\mathcal{M}) \leq \exp[- \Delta_{\alpha,\theta}N (1+o(1) ) ]$. 
\end{lemma}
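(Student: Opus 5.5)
The plan is to use the test set $A=(\mathbf{S}_N\cap\{x_1>0\})\times\mathbb S^1$ in Definition~\ref{def:isoperimetric-constant}, which gives
\[
h(\mathcal M)\le \frac{\mathrm{vol}_g(\partial A)}{\min\{\mathrm{vol}_g(A),\mathrm{vol}_g(A^c)\}}.
\]
By the $x_1\mapsto -x_1$ symmetry of $H$, the sets $A$ and $A^c$ have equal volume, so the minimum is simply $\mathrm{vol}_g(A)$.

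\textbf{Computing the volumes.} The volume form of the warped product is $e^{\beta H'(x)}\,dx\,d\theta$, with $dx$ the round volume. Hence
\[
\mathrm{vol}_g(A)=\pi\int_{\mathbf{S}_N} e^{\beta H'}\,dx=\pi e^{-\beta H_0}Z\cdot \mathrm{vol}(\mathbf{S}_N).
\]
The boundary is $\partial A=\{x_1=0\}\times\mathbb S^1$. It is a hypersurface whose induced metric is again the warped metric, so
\[
\mathrm{vol}_g(\partial A)=2\pi\int_{\{x_1=0\}} e^{\beta H'(x)}\,d\sigma(x),
\]
where $d\sigma$ is the round $(N-2)$-dimensional volume on the equator $\mathbf{S}_N\cap e_1^\perp$. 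The factor $e^{-\beta H_0}$ cancels in the ratio. The ratio therefore equals, up to factors polynomial in $N$ (these come from the ratio of the equatorial area to the sphere area, $\asymp 1/\sqrt{\cdot}$ in the radius-$\sqrt N$ normalization),
\[
\frac{\int_{\text{equator}} e^{\beta H}\,d\sigma_{\mathrm{prob}}}{\int_{\mathbf{S}_N}e^{\beta H}\,dx}.
\]

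\textbf{Comparing with band free energies.} Next I relate the equatorial integral to band free energies. On the equator, $H(x)=\frac1{2N}\sum_{i\ge2}\lambda_i x_i^2$ is exactly the quadratic form of the $(N-1)\times(N-1)$ deflated matrix on a sphere of radius $\sqrt N$. Its exponential growth rate is the $q=0$ band free energy $F(0;\alpha,\theta)$ of Lemma~\ref{lem:free-energy-band}. Alternatively, one can use coarea. The band $\text{Band}(0,\epsilon)$ is foliated by slices $\{m=s\}$. For $|s|\le\epsilon$, $H$ on the slice $\{m=s\}$ differs from $H$ on the rescaled equator by at most $C\epsilon N$: write $x=(s\sqrt N,\sqrt{1-s^2}\,y)$ and use $|\lambda_i|\le C$. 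Hence
\[
\int_{\text{equator}} e^{\beta H}d\sigma_{\mathrm{prob}}\le e^{C\epsilon N}\,\frac{\int_{\text{Band}(0,\epsilon)}e^{\beta H}dx}{\mathrm{vol}(\text{Band}(0,\epsilon))/\mathrm{vol}(\mathbf{S}_N)},
\]
and the band's normalized Haar volume is only $e^{-o(N)}\cdot\mathrm{poly}(\epsilon)$. The ratio is thus at most
\[
\exp\bigl(N(F(0;\alpha,\theta)-F(\alpha,\theta)+C\epsilon)+o(N)\bigr)
\]
using Lemmas~\ref{lem:free-energy-spiked} and~\ref{lem:free-energy-band}. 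This is the same estimate that underlies Lemma~\ref{lem:bottleneck-set}: the ratio is $\le \pi_\beta(|m|\le\epsilon)$ times $e^{C\epsilon N+o(N)}$. Since $\Delta_{\alpha,\theta}=F(\alpha,\theta)-F(0;\alpha,\theta)$, sending $\epsilon\to0$ slowly with $N$ gives $h(\mathcal M)\le e^{-\Delta_{\alpha,\theta}N(1+o(1))}$.

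\textbf{Main obstacle.} The delicate step is this slice-to-band comparison. One must make sure the polynomial Jacobian factors from coarea, and the $(1-s^2)^{(N-3)/2}$ density of $m$ under Haar measure, contribute only $e^{o(N)}$. One must also control the $C\epsilon N$ energy error uniformly, which uses $\|M\|_{op}\le C$ on the high-probability event $\lambda_1\to\theta+1/\theta$, $\lambda_N\to-2$. The bound then holds $\p_M$-a.s.\ for large $N$, as in the earlier sections.
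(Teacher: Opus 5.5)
Your proposal is correct and follows the paper's proof. You use the same test set $A=(\mathbf{S}_N\cap\{x_1>0\})\times\mathbb S^1$ and the same warped volume form $e^{\beta H'}\,dx\,d\theta$, and you compare the $q=0$ band free energy with $F(\alpha,\theta)$ via Lemmas~\ref{lem:free-energy-spiked}--\ref{lem:free-energy-band}. Your coarea slice-to-band comparison, which bounds the ratio by $e^{C\epsilon N+o(N)}\pi_\beta(|m|\le\epsilon)$, usefully justifies a step the paper takes for granted when it applies the $\epsilon$-band free energy directly to the equator integral; it also gives the correct sign in the exponent, which is reversed in the paper's final display.
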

\begin{proof}
First note that the volume measure on $\mathcal{M}$ satisfies
\[ d\mathrm{vol}_{\mathcal{M}}(x,\theta) = 
e^{\beta H'(x)} d \mathrm{vol}_{\mathbf{S}_N}(x) \times d\mathrm{vol}_{\mathbb{S}^1} (\theta) \, ,
\]
   and thus, by taking $A$ to be the subset defined above, we find
    \begin{align*}
        h_0=&\frac{\mathrm{vol}_{N-1}(\partial A)}{\min\{\mathrm{vol}_{N}(A),\mathrm{vol}_N(A^c)\}}=\frac{\mathrm{vol}_{N-1}(\{x_1=0\}\cap\mathcal M)}{\frac12\mathrm{vol}_N(\mathcal M)}\,.
    \end{align*}

Let $\mathbf S_{N-1}=\mathbf S_N\cap\{x_1=0\}$, $\bar x=(0,x_2,x_3,...,x_N)$ be the coordinate on $\mathbf S_{N-1}$, and $\mathrm d\bar x$ be the volume element on $\mathbf S_{N-1}$, then the numerator
\begin{align*}
    \mathrm{vol}_{N-1}(\{x_1=0\}\cap\mathcal M)=\int_{\mathbf S_{N-1}\otimes \mathbb S^1}e^{\beta H'(\bar x)}\mathrm d\bar x\mathrm d \theta=2\pi e^{-\beta H_0+NF(0;\alpha,\theta)(1+o(1))}\,
\end{align*}
by Lemma~\ref{lem:free-energy-band}. Similarly for the denominator, applying Lemma~\ref{lem:free-energy-spiked},
\begin{align*}
    \mathrm{vol}_N(\mathcal M)=2\pi e^{-\beta H_0+NF(\alpha,\theta)(1+o(1))}\,,
\end{align*}
we obtained that $h(\mathcal M)\leq h_0=2e^{- (F(0;\alpha,\theta)-F(\alpha,\theta))N(1+o(1))}$.

\end{proof}

\subsection{Lower bounding the mixing time}

In this section we provide a proof of the lower bound for worst case mixing. We begin with a simple lemma: 

\begin{lemma} \label{lem:tmix-spectral-gap-lower-bound} Let $\Gap$ denote the spectral gap of the Langevin dynamics, then for any $\gamma \geq \Gap$ we have:
\[
\tmix(\epsilon) \geq \frac{2}{\gamma} \log(1/\epsilon) \, \, .
\]
\end{lemma}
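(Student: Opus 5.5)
The plan is the standard eigenfunction argument, run on the semigroup $P_t = e^{t\mathcal L}$. Since $\mathcal L = \frac1\beta\Delta_{\sph} + \nabla_{\sph}H\cdot\nabla_{\sph}$ is self-adjoint in $L^2(\pi_\beta)$ on the compact manifold $\mathbf S_N$ (this is the Rayleigh-quotient structure used in Corollary~\ref{coro:Langevin-to-Laplacian-Rayleigh}), $-\mathcal L$ has discrete spectrum. I would take a smooth eigenfunction $f$ with $-\mathcal L f = \Gap\, f$. It is orthogonal to constants, so $\pi_\beta(f)=0$. By continuity and compactness $|f|$ attains its maximum at some $x^\ast$, and I normalize so that $\|f\|_\infty = f(x^\ast) = 1$.

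Next, Kolmogorov's backward equation gives $P_t f = e^{-\Gap\, t} f$, so $\E_{x^\ast}[f(X_t)] = e^{-\Gap\, t}$. Since $\pi_\beta(f)=0$ and $\|f\|_\infty\le 1$,
\[
e^{-\Gap\,t} = \bigl|\E_{x^\ast} f(X_t) - \pi_\beta(f)\bigr| \le 2\, d_{\tv}(P^t_{x^\ast},\pi_\beta) \le 2\max_x d_{\tv}(P^t_x,\pi_\beta).
\]
At $t=\tmix(\epsilon)$ the right side is at most $2\epsilon$. Because $\gamma\ge\Gap$ implies $e^{-\gamma t}\le e^{-\Gap t}$, the gap may then be replaced by any $\gamma\ge\Gap$.

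To obtain the prefactor $2/\gamma$ exactly as stated, I would strengthen the bound by working in $L^2$ at a doubled time instead of only using the sup-norm comparison. By reversibility and Cauchy--Schwarz,
\[
\E_{x^\ast}f(X_{2t}) = \langle p_t(x^\ast,\cdot)/\pi_\beta - 1,\; P_t f\rangle_{\pi_\beta}.
\]
The left side equals $e^{-2\Gap\,t}$. The right side is controlled by the total-variation distance at time $t$ via $\|P_tf\|_\infty\le e^{-\Gap t}$, giving $e^{-2\Gap t}\le 2e^{-\Gap t}\,d(t)$. The doubling of time is where the factor $2$ in the statement is supposed to come from, paired with the exponential-decay convention of Corollary~\ref{cor:exponential-decay-mixing-time}. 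I would combine this with submultiplicativity, $\bar d(kt)\le \bar d(t)^k$ with $\bar d\le 2d$, and let $k\to\infty$ to remove the additive constant $\log 2$.

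The main obstacle, which I flag now, is precisely this last step. The clean argument above yields only
\[
\tmix(\epsilon)\ \ge\ \frac1\gamma\log\frac1{2\epsilon},
\]
and I do not yet see how the doubled-time computation avoids reproducing that same bound. Since the lemma is applied with $\gamma$ exponentially small, via Lemma~\ref{lem:isoperimetry-computation}, Theorem~\ref{thm:buser-isoperimetric-bound}, Lemma~\ref{lem:warp-ricci} and Corollary~\ref{coro:Langevin-to-Laplacian-Rayleigh}, any constant prefactor suffices for $\tmix\ge e^{\Delta_{\alpha,\theta}N(1+o(1))}$. I would therefore first try to nail the constant $2$ through the $L^2$/time-doubling argument. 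If that fails, I would fall back on the version with prefactor $1$, which is all the downstream use of the lemma needs.
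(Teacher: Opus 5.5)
Your core argument is the one the paper relies on: the Levin--Peres eigenfunction argument, with elliptic regularity supplying a smooth eigenfunction $f$ for $\Gap$ (the "additional input" the paper mentions). It is correct. From $P_tf=e^{-\Gap t}f$, $\pi_\beta(f)=0$ and $f(x^\ast)=\|f\|_\infty=1$ you get
\[
e^{-\Gap t}\le 2\max_x d_{\tv}(P^t_x,\pi_\beta),
\]
hence $\tmix(\epsilon)\ge \gamma^{-1}\log(1/(2\epsilon))$ for every $\gamma\ge\Gap$. This is the standard continuous-time form of the bound. Your proposal breaks down at the planned upgrade to $\frac{2}{\gamma}\log(1/\epsilon)$, as you suspected. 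The time-doubled identity gives $e^{-2\Gap t}\le 2e^{-\Gap t}d(t)$, which is the same inequality after dividing by $e^{-\Gap t}$. Submultiplicativity only gives $\bar d(t)\ge \lim_k 2^{-1/k}e^{-\Gap t}=e^{-\Gap t}$, and then $\bar d\le 2d$ reinstates the $\log 2$.

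No argument can close this, because the inequality as printed is false. For the symmetric two-state chain with gap $\gamma$, $d(t)=\frac12 e^{-\gamma t}$ exactly, so $\tmix(1/4)=\gamma^{-1}\log 2<2\gamma^{-1}\log 4$. This shows that neither the rate $2/\gamma$ nor the removal of $\log 2$ is possible in general. The Langevin dynamics itself is also a counterexample. Let $p_t(x,\cdot)$ be the density of $P^t_x$ with respect to $\pi_\beta$. Reversibility gives $\|p_t(x,\cdot)-1\|_{L^2(\pi_\beta)}\le e^{-\Gap(t-1)}\|p_1(x,\cdot)-1\|_{L^2(\pi_\beta)}$, and the last norm is bounded uniformly in $x$ (by compactness and smoothness of the heat kernel, or Theorem~\ref{thm:heat-kernel-bound}). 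Hence $\tmix(\epsilon)\le 1+\Gap^{-1}\log(C/\epsilon)$, which contradicts $\tmix(\epsilon)\ge 2\Gap^{-1}\log(1/\epsilon)$ once $\epsilon$ is small. In the metastable regime, the two-well picture suggests $\tmix(1/4)\approx \Gap^{-1}\log 2$ even at $\epsilon=1/4$. So drop the time-doubling step and state and prove the corrected bound $\tmix(\epsilon)\ge \gamma^{-1}\log(1/(2\epsilon))$. As you note, nothing downstream changes: with $\gamma=\frac12 N e^{-\Delta_{\alpha,\theta}N(1+o(1))}$ this still yields \eqref{eq:sharp-tmix-lower-bound}.
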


The proof Lemma \ref{lem:tmix-spectral-gap-lower-bound} is  the same as the proof for discrete space continuous time Markov chains in \cite[Lemma 20.11]{LP}, one simply requires the additional input that any eigenfunction of $-\mathcal{L}
$ is smooth. 

\begin{proof}[Proof of Theorem \ref{thm:mixing-time-langevin} Part (2)] Using the results of Theorem \ref{thm:buser-isoperimetric-bound}, and combined with {Lemma~\ref{lem:warp-ricci} and} Lemma~\ref{lem:isoperimetry-computation} we see that the spectral gap of the Langevin operator satisfies: 
\[
\Gap \leq \frac{1}{2} N \exp( - \Delta_{\alpha,\theta} N  (1+o(1) ) \, ,
\]
and so by combining with Lemma \ref{lem:tmix-spectral-gap-lower-bound} we may conclude that
\begin{align}\label{eq:sharp-tmix-lower-bound}
\tmix \geq   \exp \left[ \Delta_{\alpha,\theta}  N (1+o(1) ) \right]\, ,
\end{align}
proving the result.
\end{proof}

\section{The exact metastability rate} \label{sec:metastability-rate}

In the previous section,~\eqref{eq:sharp-tmix-lower-bound} showed the exponential rate for the mixing time is at least $\Delta_{\alpha,\theta}$ from~\eqref{eq:delta-alpha-theta}, and thus to complete the proof of Theorem \ref{thm:metastable-mixing-rate} it suffices to prove an upper bound on the mixing time of matching exponential order. Recall $\Delta_{\alpha,\theta}$ from~\eqref{eq:delta-alpha-theta}.

\begin{proposition} \label{prop:sharp-exponential-rate-upper-bound}
    Fix $\alpha>1$ and suppose that $\theta>\theta_{0,L}(\alpha)$. The mixing time satisfies
    \begin{align*}
     \tmix \le \exp[ \Delta_{\alpha,\theta} N(1+o(1))]
    \end{align*}
\end{proposition}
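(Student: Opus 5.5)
The plan follows the outline in the introduction: reduce the worst-case mixing problem to the symmetric one through a hitting time of the equator. Let $\tau_0=\inf\{t: m_t=0\}$. By the reflection symmetry $x_1\mapsto -x_1$ of \eqref{eq:langevin-dynamics}, reflect the path after $\tau_0$ and couple. This shows that the law of $X_{\tau_0+s}$, conditionally on $\mathcal F_{\tau_0}$, is $e_1$-symmetric. Concretely, the process started at $X_{\tau_0}$ and its reflection have the same law. So for any $x$ and $t=T+s$,
\[
d_{\tv}(P^t_x,\pi)\le \p_x(\tau_0>T)+\sup_{\mu\in\mathcal M_E} d_{\tv}(P^s_\mu,\pi).
\]
The part of item (2) of Theorem~\ref{thm:mixing-time-langevin} already proved makes the second term at most $1/8$ once $s\ge C\log N$. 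It therefore suffices to show that $\sup_x\p_x(\tau_0>T)\le 1/8$ for $T=\exp[\Delta_{\alpha,\theta}N(1+o(1))]$.

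\textbf{Reduction to starting from $\pi^+=\pi(\cdot\mid m\ge 0)$.} Take $x$ with $m(x)>0$ (the other sign is symmetric). Lemma~\ref{lem:hitting-time} puts $|m_t|$ above $\mhit$ by time $T_{\text{hit}}=O(\log N)$ with probability $1-2e^{-100}$; if $m$ hits zero before then, we are done. Otherwise the process sits in $\mathcal D_+$. Using the coupling of Section~\ref{sec:langevin-fast-mixing-proof} with $\tilde X^+$, together with Corollary~\ref{coro:entropy-initial-bound} and Lemma~\ref{lem:equilibrium-comparison}, after a further $O(\log N)$ time the law is within $o(1)+2e^{-100}$ of $\pi^+$ in total variation. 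The event $\{\tau_0>T\}$ is measurable with respect to the future path, so it is enough to bound $\p_{\pi^+}(\tau_0>T)$. To avoid the $2e^{-100}$ loss, I would boost with independent retries via the Markov property: each $O(\log N)$ window gives success with probability $\ge 1/2$.

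\textbf{Trials from $\pi^+$.} Fix blocks of length $L$ with $\log N\ll L\ll e^{\epsilon N}$. On $\{\tau_0>kL\}$, the law at time $kL$ of the process started from $\pi^+$ is exactly the conditioned law of the dynamics killed at $m=0$. I would argue this is close to $\pi^+$: the killed semigroup started at $\pi^+$ has law $\pi^+$-sub-invariant, and its density stays bounded by $\pi^+/\p(\tau_0>kL)$. Combined with the fast mixing on $\mathcal D_+$, this makes the conditioned law within $o(1)$ of $\pi^+$ in TV at each block time. It then suffices to show a per-block success bound
\[
p:=\p_{\pi^+}(\tau_0\le L)\ge \exp[-\Delta_{\alpha,\theta}N(1+o(1))].
\]
Iterating gives $\p(\tau_0>kL)\le (1-p/2)^k$, which is at most $1/8$ for $k\approx C/p$. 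This yields $T=kL=e^{\Delta N(1+o(1))}$.

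\textbf{Lower bound on $p$; the main obstacle.} By stationarity of $\pi$ restricted in time, $\p_{\pi^+}(|m_{L/2}|\le\epsilon)\ge 2\pi(0\le m\le \epsilon)-o(\cdot)$. This is where the band free energy enters: by Lemmas~\ref{lem:free-energy-spiked}--\ref{lem:free-energy-band}, $\pi(|m|\le\epsilon)=\exp[-\Delta_{\alpha,\theta}N+O(\epsilon N)]$, the matching lower bound to Lemma~\ref{lem:bottleneck-set}. Care is needed because the process might have been killed earlier, but that only helps. From a point with $m\le\epsilon$, I would show that $m$ hits $0$ within time $\delta$ with probability $\ge e^{-C(\epsilon^2/\delta+\delta)N}$. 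The argument is a Girsanov/Brownian-tail comparison for \eqref{eq:m-SDE}: the drift is $O(1)\cdot m$ and the noise is $\sqrt{2/(\beta N)}$, so the cost is the large deviation of a diffusion with $1/N$ variance moving distance $\epsilon$, at most $e^{-C\epsilon^2N/\delta}$ including the drift penalty. Choosing $\delta=\epsilon$ and then letting $\epsilon\to0$ gives $p\ge e^{-\Delta N-O(\epsilon)N}$. I expect this step to be the main obstacle: the stationarity/killing bookkeeping and making the conditioned-law approximation rigorous uniformly over exponentially many blocks. I would handle it with the density-ratio bound above, which is monotone under the killed semigroup.
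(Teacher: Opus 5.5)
Your outer structure matches the paper's. You reflect at $\tau_0$ so the symmetric mixing bound finishes the argument, which is a more explicit version of the paper's last step. You reduce an arbitrary start to $\pi^+$ in $O(\log N)$ time, where the cap coupling with $\tilde X^+$ is fine because only $O(1)$ accuracy is needed. And you lower-bound one trial by $\pi^+(m\le\epsilon)\cdot\inf_{m(x)\le\epsilon}\p_x(\tau_0<\epsilon)$, as in Lemma~\ref{lem:hitting-probability-lower-bound}.

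\textbf{The gap is in the trial step.} The per-block success probability is $p\approx e^{-\Delta_{\alpha,\theta}N}$. So knowing that the survival-conditioned law $\mu_k$ at time $kL$ is within $o(1)$ of $\pi^+$ in total variation gives only $\p_{\mu_k}(\tau_0\le L)\ge p-o(1)$, which is vacuous. You need TV errors $\ll e^{-\Delta_{\alpha,\theta}N}$, and neither of your tools supplies them.

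\emph{The density-ratio bound.} The bound $\mu_k\le \pi^+/\p(\tau_0>kL)$ is correct, but it is an upper bound. It cannot force $\mu_k$ to put mass of order $\pi^+(m\le\epsilon)$ near the equator.

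\emph{Fast mixing on $\mathcal D_+$.} This is the Bakry--\'Emery process $\tilde X^+$, which lives on $\{m>\mone\}$ and never sees the region $\{m\le\epsilon\}$ that produces $p$. The coupling $X=\tilde X^+$ breaks as soon as $m$ dips below $\mtwo$. The stationary probability of that is at least $\pi^+(m<\mtwo)\ge\pi^+(m\le\epsilon)$. Likewise $\TV{\tilde\pi^+}{\pi^+}\ge\pi^+(m\le\mone)\ge\pi^+(m\le\epsilon)$. In both cases the approximation error is at least as large as the probability you are trying to detect. So the step you flagged as the main obstacle does not go through with these ingredients.

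\textbf{The fix is to use the reflected dynamics $\bar X$ on $\{m\ge 0\}$ instead of the killed one.}
\begin{itemize}
\item It agrees with $X$ up to $\tau_0$.
\item It is exactly stationary under $\pi^+$, so no conditioning on survival and no quasi-stationary law appears.
\item Its law from $x_0$ is the fold of $X$ started from the symmetric measure $\tfrac12(\delta_{x_0}+\delta_{Tx_0})$. Hence its worst-case mixing time on the whole half-sphere, equator included, is $\tsym=O(\log N)$.
\end{itemize}
Submultiplicativity (Lemma~\ref{lem:TV-distance-submultiplicative}, Corollary~\ref{cor:exponential-decay-mixing-time}) then gives $\sup_{x_0}\TV{\bar P^{N^{100}}_{x_0}}{\pi^+}\le e^{-N^{10}}$.

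Now take as trials the events that $\bar X$ hits $\{m=0\}$ during windows $[T_i,T_i+1]$ separated by $N^{100}$. Then $\{\tau_0>T\}$ is contained in the intersection of the failure events. By the Markov property the failure probabilities multiply up to an additive $k e^{-N^{10}}$, which is negligible against $e^{-\Delta_{\alpha,\theta}N(1+o(1))}$. Each trial succeeds with probability at least $\pi^+(m\le\epsilon)\inf_{m(x)\le\epsilon}\p_x(\tau_0<1)$, by exact stationarity. This is the paper's Lemma~\ref{lem:sharp-hitting-magnetization-zero}.

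Alternatively, your density-ratio bound could be combined with an $L^2$ spectral-gap estimate for $\bar X$. Either way, what is needed is fast mixing of $\bar X$ on the full half-sphere, not on the cap.
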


The main technical input for the proof of Proposition \ref{prop:sharp-exponential-rate-upper-bound} will be obtained from the following lemma, showing the escape time from the metastable phase $\pi^+= \pi(\cdot \mid m\ge 0)$ is exactly captured by the bottleneck probability above. 

 \begin{lemma}\label{lem:sharp-hitting-magnetization-zero}
     Initialized from $\pi^+$, the hitting time to the set $\{m=0\}$, denoted $\tau_{m=0}$ satisfies 
     \begin{align*}
         \mathbb P( \tau_{m=0} \ge \exp( (1+\varepsilon) \Delta_{\alpha,\theta} N ) \le e^{- \Omega(N)}\,
     \end{align*}
    for every $\epsilon>0$.
 \end{lemma}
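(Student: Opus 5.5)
We follow the strategy sketched in the outline: split $[0,\exp((1+\varepsilon)\Delta_{\alpha,\theta}N)]$ into many short blocks, lower bound the chance of hitting $\{m=0\}$ in each block by $e^{-(1+\varepsilon/2)\Delta_{\alpha,\theta}N}$, and multiply. Concretely, fix a block length $L = N^2$ (any choice with $\log N \ll L \ll e^{o(N)}$ works) and times $T_k = kL$. Let $Y_t$ denote the Langevin dynamics reflected (or conditioned) on the half sphere $\{m\ge 0\}$, so that up to $\tau_{m=0}$ it agrees with $X_t$. By the Markov property,
\[
\mathbb P(\tau_{m=0} > T_K) = \prod_{k<K}\mathbb P(\tau_{m=0}>T_{k+1}\mid \tau_{m=0}>T_k)\,,
\]
and it suffices to show that each factor is at most $1 - e^{-(1+\varepsilon/2)\Delta_{\alpha,\theta}N}$ uniformly in the law of $X_{T_k}$ conditioned on survival. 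Taking $K = e^{(1+\varepsilon)\Delta N}/L$ then gives $\exp(-e^{\varepsilon \Delta N/2}/L)$, which is far smaller than $e^{-\Omega(N)}$.

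\textbf{Per-block hitting bound.} Given any law on $\{m\ge \mhit\}$, or more generally any surviving law, we run first for time $L/2$. By Lemma~\ref{lem:hitting-time}, Corollary~\ref{coro:m(t)-polynomial-time-not-going-out}, and the Bakry--Emery argument for $\tilde H^+$ (Lemma~\ref{lem:tildeH-convex}, Corollary~\ref{coro:entropy-initial-bound}, Lemma~\ref{lem:equilibrium-comparison}), the law of $X_{T_k+L/2}$ is within total variation $e^{-cN}+2e^{-100}$ of $\pi^+$. Here we handle the $e^{-100}$ loss by iterating the hitting estimate of Lemma~\ref{lem:hitting-time} so that it becomes $e^{-\Omega(N)}$ with $O(N)$ time. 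The $\pi^+$-probability of $\{0<m\le \delta\}$ is, by Lemma~\ref{lem:free-energy-band} together with Lemma~\ref{lem:free-energy-spiked} and continuity of $F(q;\alpha,\theta)$ at $q=0$, at least $e^{-(\Delta+\eta(\delta))N}$ with $\eta(\delta)\to0$. Since the TV error $e^{-cN}$ might exceed this, we instead use the entropy bound: a KL divergence of $e^{-cN}$ controls events of probability $p$ only if $p\gg$ the error. To avoid this, we run for time $O(N)$ so that $D_{\mathrm{KL}}$ decays to $e^{-CN}$ with $C>\Delta$, using the linear-in-time decay from Lemma~\ref{lem:tildeH-convex}, and then the total variation beats $e^{-\Delta N}$.

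\textbf{Pushing from a thin band to the equator.} From a point with $m\le \delta$, we show that with probability at least $e^{-o(N)}$ (as $\delta\to0$), $m_t$ hits $0$ within time $1$. By Lemma~\ref{lem:m-SDE}, the drift of $m$ is bounded by $C\delta$ in absolute value while $m\le 2\delta$, and the volatility is $\sqrt{2(1-m^2)/(\beta N)}$. A Girsanov/comparison with $m_0 + C\delta t + \sqrt{2/(\beta N)}\,(1-4\delta^2)^{1/2}W_t$ via Theorem~\ref{thm:comparison}, after the arcsine transformation as in Lemma~\ref{lem:hitting-time} to make the volatility constant, gives a probability at least $\mathbb P(\inf_{t\le1}W_t\le -C'\delta\sqrt N)\ge e^{-C''\delta^2N}$. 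Combining the two steps, each block hits $\{m=0\}$ with probability at least $e^{-(\Delta+\eta(\delta)+C''\delta^2)N}$, and choosing $\delta$ small relative to $\varepsilon$ completes the argument.

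\textbf{Main obstacle.} The main difficulty is the quantitative equilibration in the first half of each block. It must be accurate to precision $e^{-(\Delta+\eta)N}$ on the thin band, whereas the earlier results give only $O(1)$ or $e^{-cN}$ accuracy with an unspecified $c$. The fix is to exploit exponential entropy decay over time $CN/\kappa_{\be}$, which is still subexponential. In addition, the discrepancy between $\tilde\pi^+$ and $\pi^+$ near the band, where $\tilde H^+ = -\infty$, must be handled. We handle it by not measuring the band under $\tilde\pi^+$ at all. Instead, we apply the band estimate to the true dynamics started from an $e^{-CN}$-close approximation of $\pi^+$, using stationarity of $\pi^+$ for the half-sphere process reflected at $m=0$ up to the killing time.
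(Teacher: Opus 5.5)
Your block structure is sound and close to the paper's, which runs the reflected half-sphere process for time $N^{100}$ between unit-time trials. The band estimate and the push from $\{m\le\delta\}$ to the equator are also fine. The gap is in the equilibration step, which is exactly where you place the main obstacle, and your proposed remedy does not work. The KL decay from Lemma~\ref{lem:tildeH-convex} and Corollary~\ref{coro:entropy-initial-bound} only drives the auxiliary process $\tilde X$ toward $\tilde\pi^+$. Running for time $O(N)$ does not shrink the two remaining error terms. The first is the failure of the coupling between the true process and $\tilde X$: this is the $Te^{-cN}$ term from Corollary~\ref{coro:m(t)-polynomial-time-not-going-out}, whose rate $c$ is uncontrolled and which grows with time. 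The second is $\TV{\tilde\pi^+}{\pi^+}$, and it is fatal. Since $\tilde\pi^+$ gives zero mass to $\{m\le\mone\}$, we have $\TV{\tilde\pi^+}{\pi^+}\ge\pi^+(m\le\mone)\ge\pi^+(m\le\delta)$. So the error is at least as large as the very probability you need to detect, and the resulting lower bound $\pi^+(m\le\delta)-(\text{error})$ is vacuous. The ``$e^{-CN}$-close approximation of $\pi^+$ with $C>\Delta_{\alpha,\theta}$'' invoked in your last paragraph is therefore never actually produced.

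The missing idea is that no sharpening of the coupling is needed. Apply submultiplicativity of the worst-case distance (Lemma~\ref{lem:TV-distance-submultiplicative}, Corollary~\ref{cor:exponential-decay-mixing-time}) directly to the half-sphere process reflected at $\{m=0\}$. Use the reflected process, not the conditioned one, so that $\pi^+$ is stationary. Its worst-case mixing time to precision $1/4$ is $O(\log N)$. To see this, start from the symmetrized initialization $\frac12(\delta_{x_0}+\delta_{x_0'})$, where $x_0'$ is the reflection of $x_0$ across $\{m=0\}$, and fold; this identifies $\tsym$ with the mixing time of the reflected process from $x_0$. Hence $\sup_{x_0}\TV{P_{x_0}(Y_s\in\cdot)}{\pi^+}\le 2^{-\lfloor s/\tsym\rfloor}$. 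At $s=L/2=N^2/2$ this is $e^{-\Omega(N^2/\log N)}$, far below $e^{-\Delta_{\alpha,\theta}N}$. This is exactly what the paper does, obtaining precision $e^{-N^{10}}$ after time $N^{100}$. With this substitution your per-block bound $\inf_{x_0}\mathbb P_{x_0}(\tau_{m=0}\le L)\ge \tfrac12\pi^+(m\le\delta)\,e^{-C''\delta^2N}$ holds, and your product over blocks closes the argument. The $e^{-100}$-iteration and the KL-decay discussion then become unnecessary.
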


 \begin{proof}
     Let $\bar X_t$ be the reflected Langevin dynamics on the half-sphere $\{m\ge 0\}$. By locally coupling, it is evident that the above probability is the same for $X_t$ as it is for $\bar X_t$ since the two processes will agree until $\tau_{m=0}$. We now will generate an (asymptotically) independent set of trials for the process $\bar X_t$, initialized from its stationary distribution $\pi^+$, to hit the set $\{m=0\}$.  This proof closely follows an argument of~\cite[Lemma 15]{Gheissari-Grant-Metastability}. 
     
     Fix $\varepsilon>0$. Let $T_1,T_1',T_2,T_2',...$ be a sequence of increasing times defined by $T_0' =0$ and for $i\ge 1$
     \begin{align*}
          T_{i}= T_{i-1}' + N^{100}\,, \qquad \text{and} \qquad T_i' = T_i + 1\,.
     \end{align*}
     We begin with the claim that if $\bar X_0 \sim \pi^+$, then for every event $E$ on continuous paths in the half-sphere of time interval $[0,1]$, 
     \begin{align}\label{eq:close-to-independent-excursions}
         \big|\mathbb P\Big( \bigcap _{i\le k}\{(\bar X_{t})_{t\in [T_i,T_i']} \in E\}\Big) - \mathbb P_{\pi^+}((\bar X_t)_{t\in [0,1]}\in E)^k\big| \le (k-1) e^{ - N^{10}}\,.
     \end{align}

     We prove~\eqref{eq:close-to-independent-excursions} by induction over $k$, with the base case of $k=1$ being immediate by stationarity of $\pi^+$ for $\bar X_t$. For the inductive step, let $\mathcal F_t$ be the filtration generated by $(\bar X_s)_{s\le t}$; by the triangle inequality, it suffices to prove that, 
     \begin{align}
         |\mathbb P((\bar X_t)_{t\in [T_i,T_i']}\in E \mid \mathcal F_{T_{i-1}'}) - \mathbb P_{\pi^+}((\bar X_t)_{t\in [0,1]}\in E)| \le e^{ - N^{10}}\,.
     \end{align}
     But in turn, by the Markov property and time-homogeneity of the Langevin dynamics, it suffices to show that 
     \begin{align*}
         \TV{ \mathbb P( \bar X_{T_i} \in \cdot  \mid \mathcal F_{T_{i}'} )}{  \pi^+} \le e^{- N^{10}} \quad \text{or equivalently } \quad \sup_{x_0: m(x_0)>0}\ \TV{P_{x_0}(\bar X_{N^{100}} \in \cdot)}{ \pi^+} \le e^{ - N^{10}}\,.
     \end{align*}
     But this latter equivalent statement is a consequence of the $O(\log N)$ mixing time bound for $\bar X_t$ of Theorem~\ref{thm:mixing-time-langevin}, combined with exponential decay of total-variation distance after mixing (see Lemma \ref{lem:TV-distance-submultiplicative}). 

     We take the event $E = \{ \exists \ t\in [0,1]: m(\bar X_t) =0\}^c$. Our next claim is that 
     \begin{align*}
         \mathbb P_{\pi^+}((\bar X_{t})_{t\in [0,1]} \notin E) \ge e^{ - \Delta_{\beta,\theta} N(1+C\epsilon )}\, ,  
     \end{align*}
     for some $C>0$.      This lower bound goes in two steps  via 
     \begin{align*}
         \mathbb P_{\pi^+}((\bar X_{t})_{t\in [0,1]} \notin E) \ge \pi^+(m\le \varepsilon) \inf_{x_0: m(x_0)\le \varepsilon } \mathbb P_{x_0}( \tau_{m=0} < 1)\,.
     \end{align*}
     For the probability that the initialization has $|m|\le \varepsilon$, this is at least $ e^{ - (1+o_{\varepsilon}(1) + o_N(1)) \Delta_{\alpha,\theta} N}$ by definition of $\Delta_{\alpha,\theta}$. We claim that the latter probability can be shown to be at least $e^{ - \eta(\epsilon) N}$  for an $\eta(\epsilon)$ that goes to $0$ with $\epsilon$, by forcing the Brownian motion to beat the maximal drift that $m$ feels. We provide further details for this bound in Lemma \ref{lem:hitting-probability-lower-bound} at the end of the section. 

    Using the above we may arrive at the desired bound as follows: Fix $\xi>0$ to be taken small with $\epsilon$, and define $k$ to be the floor of 
    \[
     \exp\left[ \Delta_{\alpha,\theta}(1+ \eta(\epsilon) ) N +\xi N\right]  \, ,
    \]
     we then have by~\eqref{eq:close-to-independent-excursions} that 
    \begin{align*}
        \p_{\pi^+}( \tau_{m=0} > \exp[ \Delta_{\alpha,\theta} (1+\eta(\epsilon))N + 2\xi N] ) &\leq \mathbb P_{\pi^+}\Big( \bigcap _{i\le k}\{(\bar X_{t})_{t\in [T_i,T_i']} \in E\}\Big)
        \\
        &\leq \p_{\pi^+} (  \{ (\ol{X}_t)_{t \in [0,1]} \in E)^k + k e^{-N^{10}} \,.
    \end{align*}
  To bound the remaining probability we have the following: 
    \begin{align*}
        \p_{\pi^+} (  \{ (\ol{X}_t)_{t \in [0,1]} \in E)^k &= ( 1- \p_{\pi^+} (  (\ol{X}_t)_{t \in [0,1]} \not \in E))^k
        \\
       & \leq ( 1- \exp(-\Delta_{\alpha,\theta}(1 + o_\epsilon(1) + o_N(1)) N +\eta(\epsilon) N) )^k
        \\
        &\leq \exp(-\xi N/2) ) \, ,
    \end{align*}
    so long as $\xi$ is bigger than $\eta(\epsilon)$ and $\epsilon$ is sufficiently small. 
Finally, taking $\xi$ small after $\epsilon$ small gives the claim (up to a change of $\epsilon$). 
\end{proof}

 \begin{lemma}\label{lem:any-initialization-to-mixture-of-phases}
     Initialized from any $x_0\in \mathbf{S}_N$, for some $t=O(\log N)$ time, the dynamics reaches a mixture of $\pi^+$ and $\pi^-$; namely, there exists a $p= p(x_0)$ such that 
     \begin{align*}
          \TV{\rho_t}{ p\pi^+ + (1-p)\pi^- } \le \frac{1}{10} \, ,
     \end{align*}
 \end{lemma}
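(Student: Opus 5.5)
We reuse the coupling machinery of Section~\ref{sec:langevin-fast-mixing-proof}, but without the symmetry assumption on the initialization. Fix $x_0\in\mathbf{S}_N$ and run $X_t$ up to $T_{\text{hit}}$ from \eqref{eq:def-T-hit}. Lemma~\ref{lem:hitting-time} holds for \emph{any} initialization, and Corollary~\ref{coro:m(t)-polynomial-time-not-going-out} then applies. Together they give \eqref{eq:large-mt-value-whp}: with probability at least $1-2e^{-100}-Te^{-cN}$ we have $|m_t|>\frac12(\mhit+\mtwo)$ throughout $[T_{\text{hit}},T]$. On this event, $\mathrm{sign}(m_t)$ is constant on $[T_{\text{hit}},T]$, since $|m_t|$ would have to pass through $0$ to change sign. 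Set
\[
p=p(x_0):=\p_{x_0}\big(m_{T_{\text{hit}}}>0\big).
\]

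Next we build the coupled process $\tilde X_t$ exactly as before. If $m_{T_{\text{hit}}}>\frac12(\mhit+\mtwo)$, we start $\tilde X^+$ from $X_{T_{\text{hit}}}$ and drive it with the same Brownian motion. If $m_{T_{\text{hit}}}<-\frac12(\mhit+\mtwo)$, we do the same with $\tilde X^-$. Otherwise we start from $\tilde\pi^{\pm}$ (with either sign choice of weight). The argument for Lemma~\ref{lem:dTV-rhot-tilderhot} never uses symmetry except to identify the weights as $1/2$, so it gives
\[
d_{\tv}\big(\rho_{T_{\text{hit}}+t},\; p'\tilde\rho^+_t+(1-p')\tilde\rho^-_t\big)\le 2e^{-100}+(T_{\text{hit}}+t)e^{-cN},
\]
where $p'$ differs from $p$ by at most $2e^{-100}$. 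Lemma~\ref{lem:density-tilde-process-upper-bound} bounds the initial densities, with one caveat discussed below. Given that bound, the Bakry--Emery decay of Corollary~\ref{coro:entropy-initial-bound}, applied to each cap separately via Lemma~\ref{lem:tildeH-convex}, yields $d_{\tv}(\tilde\rho^\pm_t,\tilde\pi^\pm)\le C_\beta N e^{-\kappa_{\be}t}$. Choosing $t=\kappa_{\be}^{-1}\log(C_\beta N)+O(1)$ makes this negligible.

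It remains to replace $\tilde\pi^\pm$ by $\pi^\pm=\pi(\cdot\mid \pm m\ge0)$. Lemma~\ref{lem:equilibrium-comparison} controls $d_{\tv}(\pi,\tilde\pi)$ for the symmetric mixture. Because $\tilde\pi^+$ and $\pi^+$ both put mass essentially only on $\{m>0\}$, the same computation, restricted to the half sphere and using Lemma~\ref{lem:bottleneck-set} / Theorem~\ref{thm:overlap-ground-state}, gives $d_{\tv}(\tilde\pi^\pm,\pi^\pm)\le e^{-cN}$. Summing the errors, at $t=T_{\text{hit}}+O(\log N)$ the distance to $p\pi^++(1-p)\pi^-$ is at most $4e^{-100}+o(1)<1/10$.

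The main obstacle is the density bound for $\tilde\rho^\pm_0$. In Lemma~\ref{lem:density-tilde-process-upper-bound}, conditioning on the event $\{m_{T_{\text{hit}}}>\frac12(\mhit+\mtwo)\}$ used that this event has probability at least $1/4$, which came from symmetry. Here $p$ may be tiny, for example when $x_0$ is deep in one cap. I would handle this by splitting into cases.
\begin{itemize}
\item If $\min(p,1-p)\ge e^{-N}$, conditioning costs at most a factor $e^{N}$ in density. The KL divergence is then still $O(N)$, so only the logarithmic factor changes.
\item If, say, $1-p<e^{-N}$, that component carries negligible mass. In that case we replace it by $\tilde\pi^-$ (or simply drop it and adjust $p$), at a total-variation cost of $e^{-N}$.
\end{itemize}
In both cases the KL divergence is $O(N)$, which is all that Corollary~\ref{coro:entropy-initial-bound} requires.
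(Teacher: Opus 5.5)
Your proposal is correct and follows the paper's route: the paper also reruns the coupling of Section~\ref{sec:langevin-fast-mixing-proof}, with a $p$--$(1-p)$ mixture of $\tilde\rho^{\pm}$ in place of the $\frac12$--$\frac12$ one, and you are more careful than the paper, which silently reuses Lemma~\ref{lem:density-tilde-process-upper-bound} (whose factor-$4$ bound came from symmetry) and never spells out the half-sphere comparison $d_{\tv}(\tilde\pi^{\pm},\pi^{\pm})\le e^{-cN}$. One small slip: the case split must be on the actual weight $p'$ of each component, essentially $\p(\pm m_{T_{\text{hit}}}>\frac12(\mhit+\mtwo))$, not on $p=\p(m_{T_{\text{hit}}}>0)$, since these can differ by up to $2e^{-100}\gg e^{-N}$; alternatively you can drop the split entirely, because the $+$ component enters with weight $p'$ and $p'\sqrt{CN+\log(1/p')}=O(\sqrt N)$ uniformly in $p'$.
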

 \begin{proof}
    Firstly, recall that by Lemma~\ref{lem:hitting-time}, for any $X_0=x_0$, the hitting time to $\mhit$ is at most $T_{\text{hit}}$ from~\eqref{eq:def-T-hit} except with probability $2e^{ - 100}$. 

    Define $\tau_{\mhit},\tau_{-\mhit}$ to be the hitting time of $m_t$ to $\{m\ge \mhit\}$ and $\{m\le -\mhit\}$ respectively. For fixed $X_0$, let  
    \begin{align*}
        p = \mathbb P( \tau_{\mhit} < \tau_{-\mhit} \mid \mhit < T_{\text{hit}})\,.
    \end{align*}
    From there, we can apply the coupling in Section~\ref{sec:langevin-fast-mixing-proof}, and Lemma~\ref{lem:equilibrium-comparison}, Lemma~\ref{lem:dTV-rhot-tilderhot}, and Corollary~\ref{coro:entropy-initial-bound} with the $\tilde \rho$ being given by the $p,(1-p)$ mixture of $\tilde\rho^+, \tilde \rho^-$ instead of $\frac{1}{2}$-$\frac{1}{2}$ there. That gives us that in time $T_{\text{hit}} + T$ for $T = O(\log N)$, which in total is $O(\log N)$, satisfies 
    \begin{align*}
        d_{\tv}(\rho_t, p \pi^+ +  (1-p)\pi^-) \le 2e^{ - 100}  + e^{ - cN}
    \end{align*}
    concluding the proof. 
 \end{proof}

 Combining the above with the symmetric mixing time bound of Theorem ~\ref{thm:mixing-time-langevin}, we can conclude the proof. 

 \begin{proof}[\textbf{\emph{Proof of Proposition~\ref{prop:sharp-exponential-rate-upper-bound}}}] 
    By Lemma~\ref{lem:any-initialization-to-mixture-of-phases}, from any initialization $x_0$, after some $O(\log N)$ time, the Markov chain $X_t$ is within total-variation distance $1/10$ to some mixture $p \pi^+ + (1-p)\pi^-$. By the Markov property, up to such error, it suffices to bound the total-variation distance to $\pi$ initialized from $\pi^+$ (or symmetrically from $\pi^-$). Thus, by Lemma~\ref{lem:sharp-hitting-magnetization-zero}, in a further $e^{ \Delta_{\alpha,\theta} N(1+o(1))}$ time, except with probability $e^{- \Omega(N)}$ the Markov chain hits the balanced state $m(X_t)=0$. By the strong Markov property, we can then start from an arbitrary configuration having $m=0$, and finally observe that from any such initialization, by Theorem~\ref{thm:mixing-time-langevin}, the mixing time to the full Gibbs measure $\pi$ is at most $O(\log N)$. 
 \end{proof}

Theorem \ref{thm:metastable-mixing-rate} now follows immediately by combining Proposition \ref{prop:sharp-exponential-rate-upper-bound} and~\eqref{eq:sharp-tmix-lower-bound}.  \\

\subsection{Lower bounding the hitting probability to the equator}
We conclude this section by proving a lower bound on the hitting probability to $m=0$ from any initialization having $\abs{m(x)} < \varepsilon$.

\begin{lemma}\label{lem:hitting-probability-lower-bound}
    For fixed $\varepsilon>0$ small, we have that 
    $$\inf_{x_0: m(x_0)\le \varepsilon } \mathbb P_{x_0}( \tau_{m=0} <1)>e^{-5\beta CN\varepsilon}\,,$$
    where $C=\lambda_1-\lambda_N-\frac1\beta(1-\frac2N)$.
\end{lemma}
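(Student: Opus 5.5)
The plan is to reduce the statement to a one-dimensional problem for $m_t$ via Lemma~\ref{lem:m-SDE}, using the same arcsine change of variables as in \eqref{eq:Yt'-in-terms-of-m}. The point of that change is to make the volatility constant. Set $Z_t = \sqrt{\beta/2}\,\arcsin m_t$. By It\^o's lemma, as in the proof of Lemma~\ref{lem:hitting-time},
\[
dZ_t = b_t\,dt + \tfrac{1}{\sqrt N}\, dW_t ,
\]
where the drift is $b_t = \sqrt{\beta/2}\,\big((\lambda_1-h_t)-(1-\tfrac2N)\tfrac1\beta\big)\tan(\sqrt{2/\beta}\,Z_t)$. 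Since $\lambda_N\le h_t\le\lambda_1$, the bracket is at most $C=\lambda_1-\lambda_N-\frac1\beta(1-\frac2N)$ in absolute value. Because $\tan(\arcsin m) = m/\sqrt{1-m^2}$, while $|m_t|\le 2\varepsilon$ (with $\varepsilon$ small) we get $|b_t|\le \sqrt{\beta/2}\,C\cdot 3\varepsilon$. By symmetry I may assume $m(x_0)\in[0,\varepsilon]$. The case $m(x_0)<0$ with $|m(x_0)|\le\varepsilon$ is handled identically, and the hitting event for $m=0$ is symmetric.

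Next I compare with a Brownian motion with constant drift, using Theorem~\ref{thm:comparison}, and stop at the exit time from $\{|m|\le 2\varepsilon\}$. The upper process is $\bar Z_t = Z_0 + B t + \frac1{\sqrt N}W_t$ with $B = 3\sqrt{\beta/2}\,C\varepsilon$. Up to that exit time, $Z_t\le \bar Z_t$ almost surely. Hence it suffices to show that $\bar Z$ drops below $0$ before time $1$, without first exceeding $\sqrt{\beta/2}\arcsin(2\varepsilon)$, with probability at least $e^{-5\beta C N\varepsilon}$. Note that $\bar Z_0 = Z_0\le \sqrt{\beta/2}\arcsin\varepsilon \le \sqrt{\beta/2}\,1.1\varepsilon$.

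Then I estimate this Gaussian event directly. Require $\frac1{\sqrt N}W_1 \le -(Z_0+B+\delta)$ for a small margin $\delta$, and require the path to stay below $\sqrt{\beta/2}\,(\arcsin 2\varepsilon - \arcsin\varepsilon) - B$ on $[0,1]$. The second requirement fails only with probability $e^{-c\varepsilon^2 N}$. A cleaner route is Girsanov: tilt $W$ by the constant drift $-a\sqrt N$ with $a = Z_0 + B + \delta$. This costs a factor of order $e^{-Na^2/2}$ (up to the event's probability under the tilted law, which is bounded below by a constant). Under the tilted law the path is a drift-down line plus $O(N^{-1/2})$ fluctuations, so it hits $0$ before time $1$ and stays below the upper barrier with probability $\ge 1/2$.

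This yields a lower bound of order $e^{-N a^2}$, with $a = O(\sqrt\beta(1+C)\varepsilon)$. That is $e^{-O(\varepsilon^2 N)}$, which is much better than $e^{-5\beta C N\varepsilon}$ for small $\varepsilon$. The only place to check is the constant bookkeeping, confirming $a^2\le 5\beta C\varepsilon$ once $\varepsilon$ is small, since $a^2 = O(\varepsilon^2)$. If $C$ is tiny, one can bound the drift by a fixed constant instead, and the claim follows as long as $C$ is bounded below, which holds since $\lambda_1-\lambda_N\to\theta+1/\theta+2$.

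The main technical obstacle is the comparison step. Theorem~\ref{thm:comparison} requires the strict sandwich condition $\beta_1\le b_1<b_2\le\beta_2$ with continuous $b_i$. This is satisfied by taking $b_2\equiv B$ and the actual drift $b_t\le B-\eta$ on the stopped interval, since $B$ was chosen with slack. The stopping at the exit of the band $|m|\le 2\varepsilon$ must be handled by freezing the drift after the stopping time.
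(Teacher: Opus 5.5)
Your argument is correct, and its skeleton matches the paper's. Both pass to $\arcsin m_t$ to get constant volatility (you rescale by $\sqrt{\beta/2}$). Both dominate the result pathwise by a Brownian motion with constant drift, localized to a band where $\tan$ is controlled.

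The difference is in how the Gaussian event is estimated.
\begin{itemize}
\item The paper runs the dynamics only for the short time $T=\varepsilon/C$, so the drift can move the process by at most $CT=\varepsilon$. The endpoint tail $\Phi(-\sqrt{\beta N/(2T)}\,(\theta_0+CT))$ then already gives $e^{-\frac92\beta CN\varepsilon}$. The paper uses a wide safety band $|m|\le 1/\sqrt2$, whose exit probability $2e^{-\beta NC/\varepsilon}$ is much smaller and can simply be subtracted.
\item You keep the full unit time and a narrow band $|m|\le 2\varepsilon$. Then the band-exit cost and the main cost are both $e^{-\Theta(\varepsilon^2 N)}$, so subtraction fails, and you correctly switch to a Girsanov tilt that makes the barrier event typical.
\end{itemize}
Your route buys a sharper $e^{-O(\varepsilon^2N)}$. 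The price is a subexponential correction: the density is $e^{a\sqrt N\tilde W_1-Na^2/2}$, not just $e^{-Na^2/2}$. You should restrict to $\{\tilde W_1\ge -K\}$, which costs $e^{-O(\sqrt N)}$, and take $N$ large. The paper's choice of $T$ yields exactly the stated constant using only an endpoint Gaussian tail. Either suffices for Lemma~\ref{lem:sharp-hitting-magnetization-zero}, which only needs a rate $\eta(\varepsilon)\to 0$.

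Two small corrections.
\begin{itemize}
\item The bracket $(\lambda_1-h_t)-(1-\frac2N)\frac1\beta$ is not bounded by $C$ in absolute value. Its lower end is $-(1-\frac2N)\frac1\beta$, which can exceed $C$ in magnitude, e.g.\ for $\alpha$ near $1$ and $\theta$ large. You only need the one-sided bound $b_t\le\sqrt{\beta/2}\,C\tan(\sqrt{2/\beta}\,Z_t)$ while $m_t\ge 0$. So stop at $\tau_{m=0}\wedge(\text{band exit})$, not at the band exit alone.
\item Since $Z$ and $\bar Z$ share the same constant volatility and driving noise, you do not need Theorem~\ref{thm:comparison}. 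Pathwise, $\bar Z_t-Z_t=\int_0^t(B-b_s)\,ds\ge 0$ up to that stopping time, so the strict-sandwich and drift-freezing issues you flag disappear.
\end{itemize}
Also, positivity of $C$ uses $\alpha>1$, not just the limit of $\lambda_1-\lambda_N$: indeed $C\to\theta(1-\frac1\alpha)+2+\frac1\theta$.
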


\begin{proof}
   Let us set $T=\frac{\varepsilon}{C}\in[0,1]$, then  starting from any initialization $x_0$ satisfying $|m(x_0)|<\epsilon$ we have,
    \begin{align*}
        \mathbb P_{x_0}( \tau_{m=0}<1)\geq&\,\,\mathbb P_{x_0}( \tau_{m=0}<T)\,.
    \end{align*}

    Notice that $X_t=\bar X_t$ for $t<\tau_{m=0}$, we will analyze $\tau_{m=0}$ using the process $\bar X_t$. Consider the process $\theta_t=\arcsin(m(X_t))$, it satisfies that
    \begin{align*}
        \theta_t=\theta_0+\int_0^t\bigg[(\lambda_1-\frac{2H(X_s)}N-\frac1\beta(1-\frac2N))\tan\theta_s\mathrm ds+\sqrt{\frac2{\beta N}}\mathrm dW_s\bigg]\,.
    \end{align*}
    Define an auxiliary process $\theta_t$ 
    \begin{align*}
        \theta'_t=\theta_0+\int_0^t\bigg[(\lambda_1-\frac{2H(X_s)}N-\frac1\beta(1-\frac2N))\min\{\tan\theta'_s,1\}\mathrm ds+\sqrt{\frac2{\beta N}}\mathrm dW_s\bigg]\,,
    \end{align*}
    and a stopping time $\tau_{\frac1{\sqrt{2}}}=\inf\{t>0:|m(X_t)|>\frac1{\sqrt 2}\}$. Then clearly $\theta_t=\theta'_t$ for $t\leq\tau_{\frac1{\sqrt{2}}}$, so $\tau_{\frac1{\sqrt{2}}}=\tau'_{\frac1{\sqrt{2}}}:=\inf\{t>0:|\theta'_t|>\frac\pi4\}$.
    
    Conditional on $t<\tau_{\frac1{\sqrt{2}}}$, we also have that $\tau_{m=0}=\tau'_0:=\inf\{t>0:\theta'_t<0\}$.
    Now \begin{align*}
        \mathbb P(\tau_{m=0}<T)\geq&\mathbb P(\tau_{m=0}<T,T\leq\tau_{\frac1{\sqrt{2}}})\nonumber\\
        =&\mathbb P(\tau'_{0}<T,T\leq\tau'_{\frac1{\sqrt{2}}})\nonumber\\
        \geq&\mathbb P(\tau'_0<T)-\mathbb P(T>\tau'_{\frac1{\sqrt 2}})\nonumber\\
        \geq&\mathbb P(\theta'_T<0)-\mathbb P(T>\tau'_{\frac1{\sqrt 2}})\,.
    \end{align*}

    For $\theta'_t$, we can upper bound it by
    \begin{align*}
        \theta'_t\leq\theta_0+Ct+\sqrt{\frac{2}{\beta N}}W_t\,.
    \end{align*}
    So we have the probability bound
    \begin{align*}
        \mathbb P(\theta'_T<0)\geq& \mathbb P(\theta_0+CT+\sqrt{\frac{2}{\beta N}}W_T<0)\nonumber\\
        =&\Phi(-\sqrt{\frac{\beta N}{2T}}(\theta_0+CT))\,,
    \end{align*}
    where $\Phi$ is the c.d.f of $\mathcal N(0,1)$. As $\theta_0=\arcsin(\epsilon)<2\epsilon$, with $T=\frac{\epsilon}C$ and a bound $\Phi(-a)>\exp(-a^2)$ for $a$ large, we have
    \begin{align*}
        \mathbb P(\theta'_T<0)>\exp(-\frac92\beta NC\epsilon)\,.
    \end{align*}

    We can also bound $\mathbb P(T>\tau'_{\frac1{\sqrt 2}})$ by
    \begin{align*}
        \mathbb P(T>\tau'_{\frac1{\sqrt 2}})=&\mathbb P(\sup_{t\in[0,T]}\theta'_t>\frac\pi4)\nonumber\\
        \leq & \mathbb P(\sup_{t\in[0,T]}(\theta_0+Ct+\sqrt{\frac{2}{\beta N}}W_t)>\frac\pi4)\nonumber\\
        \leq & \mathbb P(\sup_{t\in[0,T]}(\sqrt{\frac{2}{\beta N}}W_t)>\frac\pi4-(\theta_0+CT))\nonumber\\
        \leq& 2\exp(-\frac{\beta N}T(\frac\pi4-(\theta_0+CT))^2)\,.
    \end{align*}
    When $\epsilon<\frac{\frac\pi4-1}3$, using that $\theta_0<2\epsilon$ and $T=\frac\epsilon C$, we find
    \begin{align*}
        \mathbb P(T>\tau'_{\frac1{\sqrt 2}})\leq 2\exp(-\frac{\beta NC}{\epsilon})\,.
    \end{align*}

    We summarize that
    \begin{align*}
        \mathbb P_{x_0}( \tau_{m=0} <T)>\exp(-\frac92\beta NC\epsilon)-2\exp(-\frac{\beta N C}\epsilon)\,.
    \end{align*}
    and thus this probability is bounded below by $\exp(-5\beta NC\epsilon)$ for $\epsilon$ small enough.
\end{proof}

\section{Fast mixing and cutoff at high temperatures} 
\label{sec:high-temp}
We conclude this section by computing the optimal range of values of $\theta$ in terms of $\alpha$ for which the Bakry-Emery condition holds over the entire sphere. 

\begin{lemma} \label{lem:Bakry-Emery-whole-sphere} Let $\alpha <1$ be fixed, and recall $\theta_{0,H}(\alpha)$ is given by~\eqref{eq:theta-fast}. 
    Then if $\theta > \theta(\alpha)$ the Bakry-Emery criterion is satisfied for some constant $\kappa_{\beta} >0$. 
    \[
 -\nabla_{\sph}^2 H + \frac{1}{\beta} \text{Ric} \succeq \kappa_{\beta} I \, .
    \]
\end{lemma}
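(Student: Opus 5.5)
The plan is to bound the spherical Hessian pointwise by the crudest available estimate and then check that the resulting condition on $(\alpha,\theta)$ is exactly $\theta>\theta_{0,H}(\alpha)$.

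First, the Hessian bound. As in the proof of Lemma~\ref{lem:Bakry-Emery}, for $x\in\mathbf{S}_N$ and a unit vector $v\in T_x\mathbf{S}_N$ we have
\begin{align*}
-\nabla^2_{\sph}H(v,v) = -\sum_i \lambda_i v_i^2 + \frac{2}{N}H(x) .
\end{align*}
Bound the first term using $\sum_i\lambda_i v_i^2\le \lambda_1$. Bound the second using $\frac{2}{N}H(x)=\frac1N\sum_i\lambda_i x_i^2\ge \lambda_N$, since $|x|^2=N$. This gives $-\nabla^2_{\sph}H\succeq -(\lambda_1-\lambda_N) I$ uniformly over the sphere, with no restriction on $m(x)$.

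Second, add the curvature term $\frac1\beta\mathrm{Ric}_{\sph}=\frac1\beta(1-\frac1N)I$. This yields
\begin{align*}
-\nabla^2_{\sph}H+\tfrac1\beta\mathrm{Ric}_{\sph}\succeq \Big(\tfrac1\beta\big(1-\tfrac1N\big)-(\lambda_1-\lambda_N)\Big)I .
\end{align*}
So it suffices that $\frac1\beta>\lambda_1-\lambda_N$ with an $N$-independent margin.

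Third, pass to the limiting spectrum. We work on the almost sure event (used throughout the paper) that $\lambda_1\to\theta+\frac1\theta$ and $\lambda_N\to-2$, so that $\lambda_1-\lambda_N\to\frac{(\theta+1)^2}{\theta}$. With $\beta=\alpha/\theta$ the condition $\frac1\beta>\frac{(\theta+1)^2}{\theta}$ reads $\theta^2/\alpha>(\theta+1)^2$. Equivalently $\sqrt\alpha<\frac{\theta}{\theta+1}$, i.e. $\alpha^{-1/2}>1+\frac1\theta$, i.e. $\theta>(\alpha^{-1/2}-1)^{-1}=\theta_{0,H}(\alpha)$; this is meaningful because $\alpha<1$.

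Finally, fix $\theta>\theta_{0,H}(\alpha)$. Then
\begin{align*}
2\kappa_\beta:=\frac{\theta}{\alpha}-\frac{(\theta+1)^2}{\theta}>0 .
\end{align*}
For $N$ large the finite-$N$ quantities $\lambda_1-\lambda_N$ and $\frac1\beta(1-\frac1N)$ are within $\kappa_\beta/2$ of their limits. Hence the bracket is at least $\kappa_\beta$, giving the claim.

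There is no real obstacle. The only point needing care is keeping a strict margin so that the constant $\kappa_\beta$ is uniform in $N$ despite the $1/N$ corrections and the eigenvalue fluctuations. The crude bound is sharp enough because it reproduces exactly the stated threshold $\theta_{0,H}$. Note that this computation gives the equivalent form $\alpha<\theta^2/(1+\theta)^2$ (rather than $\theta/(1+\theta)^2$ as written in Remark~\ref{rem:theta-thresholds}), consistent with~\eqref{eq:theta-fast}.
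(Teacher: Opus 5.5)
Your proposal is correct and follows essentially the same route as the paper: bound $-\nabla^2_{\sph}H \succeq -(\lambda_1-\lambda_N)I$ uniformly on the sphere, add $\frac1\beta\mathrm{Ric}$, and pass to the limits $\lambda_1\to\theta+\frac1\theta$, $\lambda_N\to-2$. This gives the quadratic condition $\theta^2(\alpha^{-1}-1)-2\theta-1>0$, whose positive root $\frac{1+\alpha^{-1/2}}{\alpha^{-1}-1}=(\alpha^{-1/2}-1)^{-1}$ is $\theta_{0,H}(\alpha)$. Your explicit strict margin makes the $N$-uniformity of $\kappa_\beta$ a bit more careful than the paper's, and you are right that the equivalent condition is $\alpha<\theta^2/(1+\theta)^2$, so the form stated in Remark~\ref{rem:theta-thresholds} is a typo.
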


\begin{proof}Recall that the spherical hessian of $H$ at a point $x \in \mathbf{S}_N$ satisfies
\[
\nabla_{\sph}^2 H(v,v) = \nabla^2 H(v,v) - \frac{2H(x)}{N} v \cdot v \, , 
\]
for any vector $v \in T_x \mathbf{S}_N$. Consequently we may bound the spectral radius of the Hessian at any point by $\lambda_1 -\lambda_N$. 

Thus for the Bakry-Emery condition to hold, it suffices to find $\theta$ large enough so that:
\[
- (\lambda_1 -\lambda_N) + \frac{\theta}{\alpha} (1-\frac1N)  > 0 \, ,
\]
replacing the factor of $1-\frac{1}{N}$ by $1$ and taking $\lambda_1$ to be its deterministic limit $\lambda_{\theta}$ and $\lambda_N$ to be $-2$, the inequality above reduce to a quadratic with positive leading coefficient. The positive root of this quadratic in terms of $\theta$ is given by:
\[
\theta_{0,H} (\alpha) = \frac{1+\sqrt{\alpha^{-1}}}{\alpha^{-1} -1 } \, ,
\]
and so if $\theta > \theta_{0,H}(\alpha)$, then the Bakry-Emery condition holds uniformly over the entire sphere. 
\end{proof}

We now complete the proof of Theorem \ref{thm:mixing-time-langevin}.

\begin{proof}[Proof of Theorem \ref{thm:mixing-time-langevin} part (1)] Given any initialization $x \in \mathbf{S}_N$ we may apply Theorem \ref{thm:heat-kernel-bound} to guarantee the density $\rho_{1}$ of the law of $\p(X_{1} \in \cdot \mid X_0 =x)$ satisfies a uniform upper bound of $\exp(C_{\beta} N)$.  Starting from the law at time $1$ we then have by Pinsker's inequality and the Bakry-Emery condition that:
\[
\TV{\rho_{t} }{ \rho_{\ast}} \leq C_{\beta} N e^{-2 \kappa_{\beta} (t-1)} \, . 
\]
This bound holds uniformly over all initial points in space, and hence $\tmix = O(\log N)$. The proof is complete with the following $\Omega(\log N)$ lower bound that holds at all temperatures and all $\theta$. \qedhere 
\end{proof}

To deduce cutoff using~\cite{salez2025cutoff} as remarked immediately after Theorem~\ref{thm:mixing-time-langevin}, we need to show a diverging lower bound on the $\epsilon$-mixing time for some $N$-independent $\epsilon$. 

\begin{lemma} \label{lem:mixing-all-beta-lower-bound} 
    For all $\beta= \frac{\alpha}{\theta}$ with $\alpha>0$ and all $\theta\ge 0$, the mixing time is at least $\Omega(\log N)$.  
\end{lemma}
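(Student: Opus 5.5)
We will exhibit a single initialization and a single observable whose law at time $t$ differs macroscopically from its stationary law for all $t\le c\log N$. The natural choice is to start from a point $x_0$ with $m(x_0)=0$ (say $x_0=\sqrt N e_N$ or any point on the equator), and to track the overlap with a direction that is strongly correlated with the Gibbs measure but starts at zero, or conversely starts at an atypical value. The cleanest observable that works uniformly in $\alpha,\theta$ is not $m$ itself (since for $\alpha<1$ the stationary value of $m$ is also $o(1)$), but a coordinate that is \emph{initially maximal}. We therefore initialize at $x_0=\sqrt N e_1$, so $m_0=1$, and track $m_t^2$.

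\textbf{Stationary side.} For $\alpha<1$ (and $\theta$ arbitrary), Lemma~\ref{lem:free-energy-band} and Lemma~\ref{lem:free-energy-spiked} (or directly Theorem~\ref{thm:overlap-ground-state}) give $\pi(m^2\le \delta)\to 1$ for every $\delta>0$. For $\alpha>1$, the stationary value of $m^2$ concentrates at $\mpi^2=1-\frac{1}{\beta\theta}<1$. In either case there is $\delta=\delta(\alpha,\theta)>0$ with $\pi(m^2\ge 1-\delta)\to 0$. (For $\theta$ below $1$ or $\theta=0$, $\lambda_1\to 2$ and the Gibbs overlap with $e_1$ is again bounded away from $1$.)

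\textbf{Dynamical side.} We show that from $m_0=1$, the quantity $1-m_t^2$ stays below $\delta$ up to time $c\log N$ with high probability. By Lemma~\ref{lem:m-SDE} and Itô, $u_t:=1-m_t^2$ satisfies
\[
du_t = \Big(-2(\lambda_1-h_t)m_t^2 + \tfrac{2}{\beta}(1-\tfrac1N)m_t^2 - \tfrac{2}{\beta N}(1-m_t^2)\Big)dt - 2m_t\sqrt{\tfrac{2}{\beta N}(1-m_t^2)}\,dW_t .
\]
Since $\lambda_1-h_t\ge 0$, the drift is at most $\frac{2}{\beta}u_t\cdot 0+\frac2\beta\le K u_t+\frac{2}{\beta}$... this crude bound is too weak, so instead we take expectations: $\frac{d}{dt}\E u_t\le \frac{2}{\beta}\E m_t^2\cdot$ — this is $O(1)$, not small. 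The correct mechanism is that near $m=1$ the noise is tiny: the volatility is $O(\sqrt{u_t/N})$, and the drift toward the equator is $\tfrac2\beta(1-\tfrac1N)$ minus the positive $2(\lambda_1-h_t)$; but $\lambda_1-h_t\le(\lambda_1-\lambda_N)u_t$, so the drift of $u$ is at least $-C u_t$ and at most $\frac2\beta$. This shows $u$ moves at unit speed, giving only $O(1)$ escape. Hence the better plan is the reverse: initialize on the equator $\{m=0\}$ and use Remark~\ref{rem:tmix-sym-lower-bound}, which for all $\beta$ dominates $(Y''_t)^2\le Y_t^2$ and via the second bound of Lemma~\ref{lem:OU-estimate} shows $|m_t|\le\epsilon$ up to time $c_\epsilon\log N$ with probability $1-o(1)$. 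For $\alpha>1$ this finishes since $\pi(|m|\le\epsilon)=o(1)$.

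\textbf{Main obstacle: $\alpha<1$.} Here $m\approx 0$ is typical, so we instead use a bulk-type observable: $\pi$ has $|m|\le N^{-1/2+o(1)}$-scale fluctuations, i.e.\ $\E_\pi[N m^2]=O(1)$. Starting from $m_0=\epsilon$ fixed, the same comparison with $C_E'$ shows $|m_t|\ge \epsilon e^{-C't}/2$ with high probability (the drift of $m$ is bounded by $C'|m|$ and the noise is $O(N^{-1/2})$, so the Gaussian-martingale error over time $c\log N$ is $N^{-1/2+Cc}\ll \epsilon N^{-Cc}$ for $c$ small). Thus at $t=c\log N$, $N m_t^2\ge N^{1-2C'c}\epsilon^2/4\to\infty$, whereas under $\pi$, $Nm^2$ is tight; this yields $d_{\tv}\ge 1-o(1)$ and hence $\tmix(\epsilon')\ge c\log N$ for any fixed $\epsilon'$. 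The delicate step is the pathwise lower bound on $|m_t|$ (a Grönwall argument on $\log|m_t|$ or on $\arcsin m_t$ with the Doob bound for the martingale part), which we expect to be routine given Theorem~\ref{thm:comparison}.
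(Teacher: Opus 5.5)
\textbf{There is a gap: the case split does not prove the lemma as stated.} After you correctly abandon the $m_t^2$ observable from $m_0=1$ (it does relax in $O(1)$ time), you split into $\alpha>1$ and $\alpha<1$. Each branch then needs a regime-specific fact about $\pi$.
\begin{itemize}
\item[(i)] For $\alpha>1$ you need $\pi(|m|\le\epsilon)=o(1)$. Lemma~\ref{lem:bottleneck-set} gives this only for $\theta>1$.
\item[(ii)] For $\alpha<1$ you need tightness of $Nm^2$ under $\pi$. The paper never proves this: Theorem~\ref{thm:overlap-ground-state} and Lemma~\ref{lem:free-energy-band} concern $\alpha>1$ only.
\end{itemize}
Consequently $\alpha=1$ is not covered at all. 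Worse, the $\alpha<1$ branch is false in part of the stated range. Take $\theta<1$ and $1<\beta<1/\theta$: then $\alpha<1$, but the model is a low-temperature spherical SK model whose Gibbs measure condenses onto the top of the spectrum, so $Nm^2$ is not tight. The lemma is asserted for all $\alpha>0$ and $\theta\ge 0$, so the proposal does not prove it as stated.

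\textbf{The missing idea is to track the sign of $m$ and use symmetry.} The exact symmetry $\pi(m>0)=\pi(m<0)=\tfrac12$ holds for every $\beta,\theta$. Your own dynamical estimate already does all the work. Write $m_t=e^{A_t}\big(m_0+\int_0^t e^{-A_s}\sqrt{2(1-m_s^2)/(\beta N)}\,dW_s\big)$ with $|A_t|\le C't$. Your Doob bound on the martingale then shows $m_t>0$ for all $t\le c\log N$ with high probability. This is a signed statement, not just a bound on $|m_t|$. It gives $d_{\tv}(P^t_{x_0},\pi)\ge \tfrac12-o(1)$ with no equilibrium input at all.

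\textbf{This is exactly the paper's proof.} It starts at $\sqrt N e_1$ and multiplies $m_t$ by $e^{ct}$ with $c=(1-\frac1N)\frac1\beta$. It drops the nonnegative drift $(\lambda_1-h_s)m_s$ up to the first time $m$ becomes negative. It then controls the martingale with an exponential-moment/Doob bound.

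\textbf{Where your inputs do hold, your route is valid but heavier.} This covers two regimes:
\begin{itemize}
\item[(a)] $\theta>1$ and $\alpha>1$.
\item[(b)] $\alpha<1$ and $\theta>\theta_{0,H}(\alpha)$. Here $\E_\pi[Nm^2]=O(1)$ follows from the Poincar\'e inequality implied by Lemma~\ref{lem:Bakry-Emery-whole-sphere}, since $\E_\pi m=0$ and $|\nabla_{\sph}m|^2\le 1/N$.
\end{itemize}
In these regimes your route gives the stronger conclusion $d_{\tv}\to 1$ at time $c\log N$. But it is regime-dependent and needs equilibrium estimates that the symmetry argument avoids.
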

\begin{proof}
    We prove that when initializing with the top eigenvector $\hat{\mathbf{n}}=\sqrt{N} e_1$, the dynamics cannot escape the region $\{ m(x) > 0 \}$ until at least time of order $\Omega(\log N)$. More specifically, for $t_0  = \Omega(\log N)$ to be chosen later, the probability that $X_{t_0}$ lies in the region $A=\{x\in\mathbf S_N:  m(x)<0\}$  will be less than $\frac14$. By symmetry of the Hamiltonian, this implies that 
    \[
    d_{\tv}(P^t_{\hat{\mathbf{n}}},\pi)\geq |P^t_{\hat{\mathbf{n}}}(A)-\pi(A)|=\frac14 \, . 
    \]

    Notice that $P^t_{\hat{\mathbf{n}}}(A)\le\mathbb P[\tau_0<t_0]$, where $\tau_0:=\inf\{t>0:m_t<0\}$ is the first hitting time of $\partial A$, and so it suffices to prove that $\mathbb P[\tau_0<t_0]<\frac14$.

    By Lemma~\ref{lem:m-SDE}, if we let $c=(1-\frac1N)\frac1\beta$, we have
    \begin{align}
        \exp(ct)m_t=m_0+\int_0^t\Big(\exp(cs)(\lambda_1-h_s)m_s\mathrm ds+\exp(cs)\sqrt{\frac{2}{\beta N}(1-m_s^2)}\mathrm dW_s\Big)\,.
    \end{align}
    Therefore,  
    \begin{align}
        \mathbb P[\tau_0<t_0]&=\mathbb P[\inf_{t\in[0,t_0]} m_t<0]\nonumber\\
        &=\mathbb P[\inf_{t\in[0,t_0]}\int_0^t\Big(\exp(cs)(\lambda_1-h_s)m_s\mathrm ds+\exp(cs)\sqrt{\frac{2}{\beta N}(1-m_s^2)}\mathrm dW_s\Big)<-m_0]\nonumber\\
        &\leq \mathbb P[\inf_{t\in[0,t_0]}\int_0^t\exp(cs)\sqrt{\frac{2}{\beta N}(1-m_s^2)}\mathrm dW_s<-1]\,.
    \end{align}

   Let $Q_t$ be the martingale defined by $Q_t=\int_0^t\exp(cs)\sqrt{\frac{2}{\beta N}(1-m_s^2)}\mathrm dW_s$. Then its quadratic variation satisfies 
    \[
    \langle Q\rangle_t\leq \frac1{c\beta N}(\exp(2ct)-1)<\frac{\exp(2ct)}{N-1} \, ,
    \] 
    and so we obtain the following bound on its moment generating function for all $\lambda \in \mathbb R$:
    \[ \mathbb E[\exp(-\lambda Q_t)]\leq \exp\left(\frac{\lambda^2}2\frac{\exp(2ct)}{N-1}\right) \, .
    \]

    Now by Doob's maximal inequality, for any $\lambda>0$,
    \begin{align}
        \mathbb P[\inf_{t\in[0,t_0]}Q_t<-1]&
        \leq\frac{\mathbb E[\exp(-\lambda Q_{t_0})]}{\mathbb \exp(\lambda)}\leq \exp(-\lambda)\exp(\frac{\lambda^2}2\frac{\exp(2ct_0)}{N-1})\,.
    \end{align}
    Taking $\lambda=(N-1)\exp(-2ct)$ and $t_0=\frac1{4c}\log{\frac{N-1}2}=\Omega(\log N)$, we obtain the desired bound,
\end{proof}

\begin{corollary} \label{cor:cutoff}
    For any $\beta=\frac{\alpha}{\theta}$ with $\alpha<1$, and $\theta>\theta_{0,H}(\alpha)$, the mixing time is $\Theta(\log N)$ and it exhibits cutoff with a $O(1)$-sized window. 
\end{corollary}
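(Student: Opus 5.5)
The plan is to invoke the cutoff criterion of~\cite{salez2025cutoff}. For reversible (or, more precisely, nonnegatively curved / Bakry--Émery) diffusions, that criterion gives cutoff as soon as the product of the mixing time and a curvature-type (or spectral gap) constant diverges. In that case the window is of order $O(\sqrt{\tmix/\kappa})$, or even $O(1/\kappa)$, depending on the exact form of the result we use. So the task reduces to checking three ingredients.

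\textbf{Uniform curvature lower bound.} First, we need a lower bound on the curvature that is uniform in $N$. This is exactly Lemma~\ref{lem:Bakry-Emery-whole-sphere}: for $\alpha<1$ and $\theta>\theta_{0,H}(\alpha)$ we have
\[
-\nabla_{\sph}^2 H+\tfrac1\beta \mathrm{Ric}\succeq \kappa_\beta I
\]
on all of $\mathbf{S}_N$, with $\kappa_\beta>0$ independent of $N$. We work on the $\p_M$-almost-sure event that $\lambda_1\to\theta+\frac1\theta$ and $\lambda_N\to -2$. The Langevin dynamics is reversible with respect to $\pi_\beta$, so the Bakry--Émery curvature-dimension condition $CD(\kappa_\beta,\infty)$ holds. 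This is the hypothesis of the Salez-type result.

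\textbf{Order of the mixing time.} Second, we need $\tmix=\Theta(\log N)$. The upper bound $O(\log N)$ is the proof of Theorem~\ref{thm:mixing-time-langevin} part (1): the heat kernel regularization of Theorem~\ref{thm:heat-kernel-bound} for unit time, followed by entropy decay and Pinsker. The lower bound is Lemma~\ref{lem:mixing-all-beta-lower-bound}, which in fact gives the stronger statement $\tmix(\epsilon)\ge c\log N$ for a fixed $\epsilon$ (there $\epsilon=1/4$). That is precisely the diverging $\epsilon$-mixing time that the criterion asks for.

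\textbf{Applying the criterion.} Third, we combine the two. Since $\kappa_\beta\,\tmix(\epsilon)\to\infty$, the result of~\cite{salez2025cutoff} yields
\[
\tmix(\epsilon)-\tmix(1-\epsilon)=O_\epsilon(1/\kappa_\beta)=O_\epsilon(1),
\]
which is cutoff with a window of order one. To get this for every $\epsilon$ we also need the lower bound $\tmix(\epsilon)\to\infty$ for every fixed $\epsilon$, not just $\epsilon=1/4$. This follows from the argument of Lemma~\ref{lem:mixing-all-beta-lower-bound} by adjusting constants: the escape probability from $\{m>0\}$ by time $c_\epsilon\log N$ can be made arbitrarily small. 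Alternatively, it follows from monotonicity of $\tmix(\epsilon)$ in $\epsilon$ together with the statement of the criterion.

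\textbf{Main obstacle.} The step I expect to need the most care is matching the exact hypotheses of~\cite{salez2025cutoff} to the spherical setting. That result is stated for diffusions on manifolds (or Markov processes) with nonnegative Bakry--Émery curvature, and it may be phrased via the varentropy or the spectral gap. So I would verify that the sphere with generator $\mathcal L$ is covered. This should be fine: compact manifold, smooth potential, reversible, and $\Gap\ge\kappa_\beta$ by the log-Sobolev inequality. I would then check that the window bound depends only on $\kappa_\beta$ and $\epsilon$, not on the dimension $N$. If the cited window is instead $O(\sqrt{\tmix/\kappa})=O(\sqrt{\log N})$, then I would sharpen it using the positive curvature bound $\kappa_\beta>0$, which in that paper upgrades the window to $O(1/\kappa_\beta)$.
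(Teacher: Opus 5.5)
Your proposal is correct and follows the paper's proof essentially verbatim. The $\Theta(\log N)$ bound comes from Theorem~\ref{thm:mixing-time-langevin}(1) together with Lemma~\ref{lem:mixing-all-beta-lower-bound}, and the cutoff with an $O(1)$ window comes from feeding the $N$-uniform curvature bound of Lemma~\ref{lem:Bakry-Emery-whole-sphere} into \cite[Theorem 2]{salez2025cutoff}. Your extra check that $\tmix(\epsilon)$ diverges for every fixed $\epsilon$ is correct and harmless, since it follows from monotonicity combined with the $O(1)$ window bound.
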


\begin{proof}[Proof of Corollary \ref{cor:cutoff}] The mixing time bound is immediately implied by combining Theorem \ref{thm:mixing-time-langevin} with Lemma \ref{lem:mixing-all-beta-lower-bound}. To see that the chain exhibits cutoff with an order $1$ window, we apply Lemma \ref{lem:Bakry-Emery-whole-sphere} in combination with \cite[Theorem 2]{salez2025cutoff}. 
\end{proof}

\bibliographystyle{plain}
\bibliography{bib}

\begin{appendix}

\section{Deferred equilibrium estimates for the spiked matrix model}  \label{AP:spike-stationary-estimates}

In this section we shall prove Lemmas~\ref{lem:free-energy-spiked} and \ref{lem:free-energy-band}, as well as Lemma \ref{lem:bottleneck-set}. Throughout we will use the shorthand $\beta = \frac{\alpha}{\theta}$ to simplify notation.

The calculation of the free-energy follows by standard techniques used for the spherical Sherrington-Kirkpatrick model. We point the reader to \cite[Section 5]{baik2021spherical} for more precise details. Below we shall simply describe where the differences come in their argument when the largest eigenvalue is asymptotically $\lambda_{\theta}= \theta + \frac{1}{\theta}$ instead of $2$. 

Let us recall some standard notation from random matrix theory. Define three functions $s_0,s_1,s_2$ 

\begin{align} 
    s_0(z) &:= \frac{1}{4} z (z- \sqrt{z^2-4} ) + \log (z +\sqrt{z^2-4}) - \log 2 - \frac{2}{2} \label{eq:log-potential} \\ 
    s_1(z)&:= \frac{z-\sqrt{z^2-4}}{4} \label{eq:stieltjes-transform} \\
    s_2(z)&:= \frac{z- \sqrt{z^2-4}}{2 \sqrt{z^2-4}} \label{eq:deriv-stieltjes-transform}\, .
\end{align}

These functions correspond to the logarithmic potential, the Stieltjes transform, and the derivative of the Stieltjes transform for the semi-circular law. We now prove the limiting formula for the free energy of the spiked models. 

\begin{proof}[Proof of Lemma~\ref{lem:free-energy-spiked}] The result follows by analyzing the asymptotics of the following representation of the partition function, 
\begin{align} \label{eq:partition-function-contour-rep} 
Z_N =  C_N \int_{\gamma - i\infty}^{\gamma + i \infty} e^{\frac{N}{2} \mathcal{G}(z)} dz \, , 
\end{align}
where
\begin{align} \label{eq:def-of-g}
C_N &= \frac{\Gamma(N/2)}{2 \pi i (N \beta/2)^{N/2-1} }    \\ 
\mathcal{G}(z) &= \beta z - \frac{1}{N} \sum_{i=1}^{N} \log (z- \lambda_i ) \, , 
\end{align}
and the integral in equation \eqref{eq:partition-function-contour-rep} is for any $\gamma > \lambda_1$, the largest eigenvalue of $M$. In order to determine the free energy we employ the method of steepest descent. We begin by determining the critical points of $\cg(z)$ in the cases where $\beta \leq \frac{1}{\theta}$ and $\beta > \frac{1}{\theta}$.

\smallskip
\noindent \textbf{Case 1} ($\beta < \frac{1}{\theta}$):  
In this case we may use the following approximation: 
\[
\cg (z) = \beta z - s_0(z) - O(1/N), \qquad s_0(z) = \int \log(z-x) d\sigma_{sc}(x) \, ,
\]
for any fixed value of $z>\lambda_{\theta}$.  Taking $\cg_0(z) = \beta z - s_0(z)$ provides us with a sufficient estimate of $\cg$ to complete the analysis when $\beta < \frac{1}{\theta}$. 

First let us compute $\cg_0'$ to obtain: 
\[
\cg_0'(z) = \beta - s_1(z) \, ,
\]
where $s_1(z)$ is as in \eqref{eq:stieltjes-transform}. Furthermore $\cg_0''(z) > 0$ for any $z>2$. We may thus conclude that $\min_{z \geq \lambda_{\theta} } \cg_0'(z)$ is given by:
\[
\cg_0'(\lambda_{\theta} ) = \beta - \frac{\lambda_{\theta} - \sqrt{ \lambda_{\theta}^2 -4} }{2} = \beta - \frac{1}{\theta} \, ,
\]
and so the critical point exists and lies above $\lambda_{1}$ only when $\beta < \frac{1}{\theta}$. 

From here the analysis follows exactly as in the proof of \cite[Theorem 5.1]{baik2021spherical}.

\smallskip
\noindent \textbf{Case 2} ($\beta < \frac{1}{\theta}$): In this case we cannot use the approximation from case $1$ as the function $\mathcal{G}_0(z)$ is no longer a good approximation to $\mathcal{G}(z)$. Instead the critical point lies near the value $\lambda_{1}$, and is of the form $\gamma = \lambda_{1} + sN^{-1}$ for some value of $s= O(1)$. By differentiating and plugging in $\gamma$ we have: 
\[
0 = \mathcal{G}'(\gamma) = \beta - \frac{1}{s} - \frac{1}{N} \sum_{i=2}^{N} \frac{1}{ \lambda_{1} - \lambda_i + sN^{-1}} \, ,
\]
the last term may be approximated via the Stieltjes transform of the semi-circle law evaluated at $\lambda_{\theta}$ up to an error term of order $N^{-\epsilon}$ for $\epsilon <1/2$. Plugging in $\lambda_1= \lambda_{\theta}$ we obtain:
\[
0 = \beta - \frac{1}{s} - \frac{1}{2} (\lambda_{\theta} - \sqrt{ \lambda_{\theta}^2 -4} ) = \beta - \frac{1}{s} - \frac{1}{\theta} \, ,
\]
and hence $s= (\beta- \frac{1}{\theta})^{-1} + O(N^{-\epsilon} ) $.  Plugging in to $\mathcal{G}(z)$ we then have: 
\[
\mathcal{G}(\gamma) =  \beta \lambda_{\theta}  - s_0 ( \lambda_{\theta}) + O(N^{-\epsilon} ) \, ,
\]
where $s_0(z)$ is as in \eqref{eq:stieltjes-transform}. Some simple algebra reveals that $s_0(\lambda_{\theta})$ is given by: 
\[
s_0(\lambda_{\theta}) = \frac{1}{2\theta^2} + \log \theta  \, .
\]
From here using asymptotics for $\frac{1}{N} \log (C_N) $ we obtain the free energy formula. It remains to show that the integral: 
\[
\frac{1}{2\pi i } \int_{\gamma -i\infty}^{\gamma+i\infty} e^{\frac{N}{2} (\mathcal{G}(z) - \mathcal{G}(\gamma))} dz \, ,
\]
does not grow too quickly as $N \to \infty$. The analysis is similar to \cite[Theorem 5.2]{baik2021spherical}, but the error estimates are a bit easier as the top eigenvalue $\lambda_1$ lies above the bulk of a GOE matrix. We provide the difference in the error estimates here for completeness. 

First, note that $\cg^{(k)}( \gamma) = O(N^{k-1})$ for each $\gamma$ with high probability. In particular, the main contribution to the integral above comes from a neighborhood of size $N^{-1}$ around the critical point. Furthermore let us note that:
\[
N(\mathcal{G}(z)- \cg(\gamma) ) = - \sum_{i=1}^{N} \left( \log(1+ \frac{z-\gamma}{\gamma-\lambda_i} ) - \frac{z-\gamma}{\gamma-\lambda_i} \right) \, ,
\]
and so for $z= \gamma + uN^{-1}$ we have that: 
\[
N(\mathcal{G}(z)- \cg(\gamma) ) = - \log(1+\frac{u}s) + \frac{u}{s} + O\left( \frac{u^2}{N^2} \sum_{i=2}^{N} \frac{1}{(\lambda_1-\lambda_i)^2} \right) \, .
\]
By local eigenvalue rigidity \cite{erdHos2012rigidity}, the last term is $O(N^{-1+\epsilon})$ with high probability for finite $u$ for any $\epsilon>0$. We thus obtain the estimate
\[
\frac{1}{2\pi i} \int_{\gamma-i\infty}^{\gamma+i\infty} e^{\frac{N}{2} (\cg(z)-\cg(\gamma) }dz =  (1+o(1))\frac{1}{2\pi iN} \int_{-i\infty}^{i\infty} \frac{e^{ \frac{u}{s} }}{1+\frac{u}{s}} \, . 
\]
Consequently the logarithm of integral above is $O(\log N)$ with high probability, and so the result follows. 
\end{proof}

We now discuss the overlap of a replica with the ground state. We  will appeal to the following formula for the moment generating function of $\mathcal{R}= m(x)^2 = \frac{1}{N} \langle x , e_1 \rangle^2$: 
\[
 \E_{\pi} e^{\beta \eta \mathcal{R} }  =  \frac{ \int e^{ \frac{N}{2} \mathcal{G}_{\mathcal{R} }(z) }  dz}{\int e^{ \frac{N}{2} \mathcal{G}(z) }} \, ,
\]
where $\mathcal{G}_{\mathcal{R} } (z)$ is given by:
\[
\mathcal{G}_{\mathcal{R}} = \beta z - \frac{1}{N} \log\left(z- (\lambda_1 - \tfrac{2\eta }{N} ) \right) - \frac{1}{N} \sum_{i=2}^{N} \log ( z - \lambda_i ) \, ,
\]
see \cite[Equation 3.4]{baik2021spherical}.

\begin{theorem} \label{thm:overlap-ground-state} Assume that $\beta > \frac{1}{\theta}$. Then for any $\eta >0$ the moment generating function satisfies: 
\[
 \E_{\pi} \exp\left( \sqrt{N} \beta \xi  \left[ \mathcal{R} - \left(1- \frac{1}{\beta \theta} \right)  \right] \right)  \to \exp\left( \xi^2 s_2(\lambda_{\theta} ) \right) \, ,
\]
almost surely as $N \to \infty$.    
\end{theorem}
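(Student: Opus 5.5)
We prove Theorem~\ref{thm:overlap-ground-state} with the contour representation: the denominator is the one already analyzed in the proof of Lemma~\ref{lem:free-energy-spiked} (Case 2, $\beta>\frac1\theta$), and the numerator is the same integral with $\lambda_1$ replaced by $\lambda_1'=\lambda_1-\frac{2\eta}{N}$. Writing $\E_\pi e^{\beta\eta\mathcal R}$ with $\eta=\xi\sqrt N$, we get
\[
\E_\pi \exp\Big(\sqrt N\beta\xi\,\mathcal R\Big)=\frac{\int e^{\frac N2\cg_{\mathcal R}(z)}dz}{\int e^{\frac N2\cg(z)}dz}\,,
\]
and we multiply by $e^{-\sqrt N\beta\xi(1-\frac1{\beta\theta})}$. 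The perturbation $2\eta/N=2\xi/\sqrt N$ is much larger than the $N^{-1}$ scale of the critical point near $\lambda_1$. Hence in the numerator the saddle point should no longer sit at $\lambda_1'+O(N^{-1})$. Instead it should lie on a mesoscopic/macroscopic scale above $\lambda_1$, where the isolated eigenvalue term $\frac1N\log(z-\lambda_1')$ competes with the bulk.

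\textbf{Step 1: locate the new saddle.} For $z=\lambda_1+w$ with $w\gg N^{-1}$, set
\[
\frac N2\cg_{\mathcal R}(z)=\frac N2\big(\beta z - s_0^{(N)}(z)\big)-\tfrac12\log(z-\lambda_1')\,,
\]
where $s_0^{(N)}$ is the bulk log-potential from $\lambda_2,\dots,\lambda_N$. By rigidity this is $s_0$ up to $O(N^{-1+\epsilon})$, uniformly for $z$ at distance $\gg N^{-2/3}$ from $2$. I would not solve for the saddle directly. Instead I would write the partition function on the sphere as a one-dimensional integral over $q=m^2$ (the band decomposition underlying Lemma~\ref{lem:free-energy-band}):
\[
\int e^{\beta H}dx \propto \int_0^1 e^{\frac{N\beta}2\lambda_1 q}\,(1-q)^{\frac{N-3}2}\,q^{-1/2}\,\tilde Z_{N-1}\big(\beta(1-q)\big)\,dq\,,
\]
where $\tilde Z_{N-1}(\beta')$ is the partition function of the null model built from $\lambda_2,\dots,\lambda_N$ on the unit-scale sphere. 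Tilting by $e^{\sqrt N\beta\xi q}$ adds a linear term to the exponent.

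\textbf{Step 2: Laplace expansion in $q$.} Using Lemma~\ref{lem:free-energy-spiked}/\ref{lem:free-energy-band}, the exponent is $N\,\phi(q)$ with
\[
\phi(q)=F(q;\alpha,\theta)\,.
\]
It has a unique nondegenerate maximum at $q_*=1-\frac1{\beta\theta}=\mpi^2$, since $\beta(1-q_*)=\frac1\theta<1$ puts the equator restriction at high temperature. The rigidity-controlled corrections are $o(N^{-1/2})$ in the first derivative near $q_*$. Then the Gaussian Laplace method gives
\[
\E\exp\big(\sqrt N\beta\xi(q-q_*)\big)\to\exp\Big(\frac{\beta^2\xi^2}{2|\phi''(q_*)|}\Big)\,.
\]

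\textbf{Step 3: match the constant.} It remains to check that $\frac{\beta^2}{2|\phi''(q_*)|}=s_2(\lambda_\theta)$. Differentiating $F(q)=\frac\beta2\lambda_\theta q+\frac{\beta^2}4(1-q)^2\cdot$(appropriately, the high-temperature equator free energy $\frac{\beta^2(1-q)^2}{4}$ adjusted by the bulk) $+\frac12\log(1-q)$ twice at $q_*$ is a short computation with $s_1(\lambda_\theta)=\frac1{2\theta}$ and $s_2$. It is safest to do this via the contour picture, where the variance is $1/\cg''$-type and produces exactly $s_2$.

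The main obstacle is the uniformity in Step 2. The random bulk free energy $\frac1N\log\tilde Z_{N-1}(\beta(1-q))$ must equal its deterministic limit up to $o(N^{-1/2})$ in a neighbourhood of $q_*$, including its $q$-derivative, so that the $\sqrt N$-scale linear tilt is not shifted. I would obtain this from the contour representation of $\tilde Z_{N-1}$ at high temperature, where the saddle stays at distance $\Theta(1)$ from the spectrum. Local semicircle rigidity then gives errors $O(N^{-1+\epsilon})$ uniformly in a neighborhood; here the linear eigenvalue statistic fluctuation is $O(1/N)$. Large deviations away from $q_*$ are handled by the strict concavity of $\phi$ from Lemma~\ref{lem:free-energy-band}.
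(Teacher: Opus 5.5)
\textbf{Gap in Step 2: the outlier term.} The step fails at your claim that the corrections to the first derivative of the exponent near $q_*$ are $o(N^{-1/2})$. You only check the bulk term $\frac1N\log\tilde Z_{N-1}(\beta(1-q))$, and that term is fine: at $q_*$ its saddle sits at $\lambda_\theta$, at distance $\Theta(1)$ from the bulk, so rigidity gives $O(N^{-1+\epsilon})$ errors. But the exponent also contains $\frac{\beta}{2}\lambda_1 q$, and $\lambda_1$ is the outlier, which rigidity does not control at this scale. For a supercritical rank-one deformation of GOE, $\sqrt N(\lambda_1-\lambda_\theta)$ converges in distribution to a nondegenerate centered Gaussian (variance $2(1-\theta^{-2})$). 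This comes from the $N^{-1/2}$ fluctuations of $u^T(z-G)^{-1}u$ with $u=v/\sqrt N$.

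If you keep the true $\lambda_1$, the maximizer of the exponent becomes $q_*+\frac{\lambda_1-\lambda_\theta}{\beta(\theta^2-1)}+O(N^{-1+\epsilon})$. Your own Laplace computation then gives
\[
\E_\pi \exp\big(\sqrt N\beta\xi(\mathcal R-q_*)\big)=\exp\Big(\xi^2 s_2(\lambda_\theta)+\xi\, s_2(\lambda_\theta)\sqrt N(\lambda_1-\lambda_\theta)+o(1)\Big)\,,
\]
and the second term is random and of order one. So with the deterministic centering $1-\frac1{\beta\theta}$ the convergence fails, and not just almost surely but even in probability. This step cannot be repaired, because the statement as written is false.

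What is true, and what your argument proves once $\lambda_1$ is kept random, is the statement centered at $1-\frac{1}{\beta N}\sum_{i\ge 2}(\lambda_1-\lambda_i)^{-1}$. That centering agrees with the Gibbs mean of $\mathcal R$ to within $o(N^{-1/2})$. The paper's contour proof has the same defect. Its evaluation $D_1=2N^{1/2}\xi\beta(1-\frac1{\beta\theta})+o(1)$ replaces $\frac1N\sum_{i\ge2}(\lambda_1-\lambda_i)^{-1}$ by $\frac1\theta$. The paper itself only controls that replacement to $O(N^{-\epsilon})$ with $\epsilon<\frac12$, and the error is then multiplied by $\eta=\xi\sqrt N$.

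\textbf{Your route, modulo the centering.} Laplace's method in the one-dimensional variable $q=m^2$, taken from the coarea decomposition, differs from the paper's steepest descent with a shifted isolated eigenvalue. It is sound and makes the origin of the variance more transparent, but two points need fixing.

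First, do not take $\phi$ from Lemma~\ref{lem:free-energy-band}. That lemma is parametrized by $m$ rather than $m^2$, and its formula corresponds to an equatorial inverse temperature $\beta\sqrt{1-q^2}$, so it is not maximized at $m^2=1-\frac1{\beta\theta}$. Your decomposition puts the equatorial part at inverse temperature $\beta(1-q)$. This gives $\phi(q)=\frac{\beta\lambda_\theta}{2}q+\frac12\log(1-q)+\frac{\beta^2(1-q)^2}{4}$ when $\beta(1-q)\le 1$, and $\phi$ is linear and increasing when $\beta(1-q)\ge 1$. This is the $\phi$ you actually use when you note $\beta(1-q_*)=\frac1\theta$.

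Second, Step 3 closes directly, without the contour picture. We have $\phi''(q_*)=\frac{\beta^2}{2}-\frac{1}{2(1-q_*)^2}=-\frac{\beta^2(\theta^2-1)}{2}$, so $\frac{\beta^2}{2|\phi''(q_*)|}=\frac{1}{\theta^2-1}=s_2(\lambda_\theta)$.
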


\begin{proof}
As in the proof of Lemma~\ref{lem:free-energy-spiked} the critical point for $G_{\mathcal{R}}$ is of the form $\gamma_{\mathcal{R}} = \lambda_{\theta} + w$ for $w = o(1)$. Set $w= S/N$ then we may compute that: 
\[
S= \frac{\theta}{\beta \theta -1} + 2 \eta + o(1) \, .
\]
We now use the following decomposition for $N(\cg_{\mathcal{R}}(\gamma_{\mathcal{R}}) - \cg (\gamma) ) = D_1 + D_2$, where
\begin{align*}
D_1 &= - \log( 1+ \frac{\gamma_{\mathcal{R}}-\gamma -\frac{2\eta}{N} }{\gamma- \lambda_1} ) + \frac{\gamma_{\mathcal{R}}-\gamma}{\gamma-\lambda_1 } \\ 
D_2 &= - \sum_{i=2}^{N} \left[ \log( 1+ \frac{\gamma_{\mathcal{R}}-\gamma  }{\gamma_{\mathcal{R}}-\lambda_i} ) + \frac{\gamma_{\mathcal{R}}-\gamma}{\gamma-\lambda_i } \right] \, .
\end{align*}
Set $\eta = N^{1/2} \xi$, then plugging in $\gamma_{\mathcal{R}}$ and $\gamma$ in the above we have: 
\[
D_1 = 2 N^{1/2}\xi \beta  (1 - \frac{1}{ \beta \theta} ) +o(1) \, ,
\]
and 
\[
D_2 = \sum_{i=2}^{N} \left[ -\log(1 + \frac{2\xi}{\sqrt{N}(\lambda_1-\lambda_i)}  ) - \frac{2\xi }{\sqrt{N}(\lambda_1 - \lambda_i) } \right] \, .
\]
$D_1$ when divided by $2$ gives exactly the contribution subtracted off in the moment generating function. For $D_2$ let us Taylor expand $\log(1+x)$ around $x=0$ to get: 
\[
D_2 = \frac{2\xi^2}{N} \sum_{i=2}^{N} \frac{1}{(\lambda_1-\lambda_i)^2 } + O(N^{-1/2} ) \, .
\]
The first term converges to $s_2(\lambda_1)$, and hence: 
\[
\exp\left[ \frac{N}{2} (\cg_{\cd}(\gamma_{\cd}) - \cg(\gamma) ) - \sqrt{N} \xi \beta (1-\frac{1}{\beta \theta} ) \right] \to \exp( \xi^2 s_2(\lambda_{\theta} ) ) \, .
\]
One can then check the ratio of the remaining integrals tends to $1$ as $N$ tends to infinity almost surely. 
    \end{proof}

We now turn to deriving an exact formula for the free energy of the spiked matrix model conditional on the model having overlap value exactly $q$ with the top eigenvector $e_1$ of $M$. We first recall the following result for the free energy of the spike free spherical models: 

\begin{theorem}
       (Free energy of spherical SK-model)  The free energy is given by: 
    \[
F_{\emptyset} (\beta) =  
\begin{cases}
    \frac{\beta^2}{4} &\text{if} \ \beta <1 \\
    \beta - \frac{3}{4} - \frac{1}{2} \log(\beta) &\text{if} \ \beta \geq 1 
\end{cases}
    \]
\end{theorem}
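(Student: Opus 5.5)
The plan is to reuse the contour-integral representation \eqref{eq:partition-function-contour-rep} with $\theta=0$, i.e.\ with $\lambda_1,\dots,\lambda_N$ the eigenvalues of a GOE matrix $G$, for which $\lambda_1\to 2$ almost surely. Writing $Z_N = C_N\int_{\gamma-i\infty}^{\gamma+i\infty} e^{\frac N2 \mathcal G(z)}dz$ with $\mathcal G(z)=\beta z-\frac1N\sum_i\log(z-\lambda_i)$, the free energy is read off from $\frac1N\log C_N$ together with $\frac12\mathcal G$ evaluated at the steepest-descent point $\gamma$. Stirling's formula gives $\frac1N\log C_N = -\frac12\log\beta - \frac12 + o(1)$, since $\Gamma(N/2)\approx (N/2)^{N/2}e^{-N/2}$. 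The argument then splits according to whether $\mathcal G_0(z)=\beta z - s_0(z)$ has a critical point to the right of the spectral edge $2$.

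\textbf{Case $\beta<1$.} We have $\mathcal G_0'(z)=\beta-s_1(z)$, which I read as $\beta-\frac{z-\sqrt{z^2-4}}{2}$, the semicircle Stieltjes transform (the $\frac14$ normalization in \eqref{eq:stieltjes-transform} should be checked against this). Since $\frac{z-\sqrt{z^2-4}}{2}$ decreases from $1$ at $z=2$, the equation $\mathcal G_0'(z)=0$ has the unique solution $z_*=\beta+\beta^{-1}>2$, and $\mathcal G_0''>0$ there. Evaluating,
\[
\tfrac12\mathcal G_0(z_*) = \tfrac12\bigl(\beta z_* - s_0(z_*)\bigr), \qquad s_0(\beta+\beta^{-1}) = \tfrac{1}{2\beta^2}\cdot\beta^2\text{-type terms} + \log(1/\beta),
\]
and using $s_0(\theta+\theta^{-1})=\frac1{2\theta^2}+\log\theta$ from the previous proof with $\theta=1/\beta$, we get $s_0(z_*)=\frac{\beta^2}{2}-\log\beta$. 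Hence
\[
\tfrac12\mathcal G_0(z_*) = \tfrac12\Bigl(\beta^2+1-\tfrac{\beta^2}{2}+\log\beta\Bigr)=\tfrac{\beta^2}{4}+\tfrac12+\tfrac12\log\beta .
\]
Adding $\frac1N\log C_N$ yields $\frac{\beta^2}{4}$. The errors in $\mathcal G-\mathcal G_0$ are $O(N^{-1+\epsilon})$ at fixed $z>2$ by rigidity, and the steepest-descent integral contributes only $O(\log N)/N$, exactly as in Case 1 of Lemma~\ref{lem:free-energy-spiked}.

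\textbf{Case $\beta\ge1$.} Now $\mathcal G_0'>0$ on $(2,\infty)$, so the saddle sticks to the edge and we take $\gamma=\lambda_1+sN^{-2/3}$, or more crudely $\gamma=\lambda_1+N^{-c}$. Then
\[
\mathcal G(\gamma)=\beta\cdot 2 - s_0(2)+o(1), \qquad s_0(2)=\tfrac12\quad\text{(from \eqref{eq:log-potential} with the constant equal to $-\tfrac12$)}.
\]
The free energy is therefore $\frac12(2\beta-\frac12)-\frac12\log\beta-\frac12=\beta-\frac34-\frac12\log\beta$.

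For the rigorous upper bound it suffices to take any $\gamma>\lambda_1$ and bound $|e^{\frac N2\mathcal G(z)}|\le e^{\frac N2\mathcal G(\gamma)}\prod_i|1+it/(\gamma-\lambda_i)|^{-1/2}$ along the vertical line, which gives $Z_N\le C_N e^{\frac N2\mathcal G(\gamma)}\mathrm{poly}(N)$. For the lower bound, the most direct route is to restrict $x$ to a cap near $e_1$ or to use Laplace asymptotics on the spherical integral. Alternatively, one can simply cite \cite[Theorem 5.2]{baik2021spherical}.

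The main obstacle is the $\beta\ge1$ lower bound, where the saddle merges with the top eigenvalues and the contribution must be controlled at edge scale $N^{-2/3}$, using edge rigidity. Since only the exponential order is needed, I would take the saddle at distance $N^{-1/3}$ from the edge and accept polynomial losses; this is the approach of \cite{baik2021spherical}, which is what the paper invokes.
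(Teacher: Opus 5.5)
Your proposal is correct and follows essentially the paper's route. The paper gives no argument of its own here and simply cites \cite{baik2021spherical}; your saddle-point evaluations (using $s_1(z)=\frac{z-\sqrt{z^2-4}}{2}$ and the constant $-\frac12$ in $s_0$) correctly reproduce that reference's contour-integral steepest-descent computation, and deferring the $\beta\ge1$ lower bound to it is no weaker than what the paper does.

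One small correction: for $\beta>1$ the saddle sits at $\gamma\approx\lambda_1+\frac{1}{(\beta-1)N}$, because $\frac1N\sum_{i\ge2}(\lambda_1-\lambda_i)^{-1}\to1$, not at scale $N^{-2/3}$. Also, putting the contour at distance $N^{-1/3}$ costs subexponential rather than polynomial factors, and a lower bound from there needs a positivity or stationary-phase argument, not a bound on the integrand's modulus. Neither point affects the leading-order value or your upper bound.
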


This result is well known, a proof can be found in \cite{baik2021spherical}, where the fluctuations were studied. 

\begin{remark} \label{rem:eigenvalue-interlacing}
The eigenvalues $\lambda_N \leq ,,, \leq \lambda_1$ of $M$ interlace themselves with $\mu_N \leq ... \leq \mu_1$ of the matrix $G$, and hence the bulk-statistics of $\lambda_N,...,\lambda_2$ will correspond to the bulk statistics of a GOE matrix. Additionally the edge eigenvalues $\lambda_2,\lambda_N$ of $M$ satisfy the same properties as those of a GOE matrix. In particular we have by \cite{bai1988necessary} that $\lambda_2 \to 2$ and $\lambda_N \to -2$ $\p_M$ almost surely.
\end{remark}

With these observations in hand we may now prove Lemma \ref{lem:free-energy-band}.

\begin{proof}[Proof of Lemma \ref{lem:free-energy-band}] Recall that $\text{Band}(q,\epsilon)$ denotes the collection of points with $m$ value in the interval $(q-\epsilon,q+\epsilon)$, i.e. 
\[
\text{Band}(q,\epsilon) = \left \{ x \in \mathbf{S}_N :  \frac{m(x)}{\sqrt{N}} \in [q-\epsilon, q + \epsilon] \right\} \, . 
\]
Writing the shorthand $B = \text{Band}(q,\epsilon)$ and applying the co-area formula we have: 
\begin{align*}
    \frac{1}{N} \log \int_{B} e^{\beta H(x)} dx &= \frac{\lambda_{\theta}}{2} q^2 + \frac{1}{N} \log \left[ \frac{\omega_{N-2}}{\omega_{N-1} } \int_{q-\epsilon}^{q+\epsilon} (1-q^2)^{\frac{N-3}{2} } \int_{S^{N-2}(\sqrt{N-1})} e^{\beta \sqrt{1-q^2} \tilde{H} (x)} dx_{N-2} (x) dq \right]
    \\
    &+ O(\epsilon) 
\end{align*}
where $\tilde{H}(x) = \frac{1}{2} \sum_{i=2}^{N} \lambda_i x_i^2 $ with $x_i \in S^{N-2}(\sqrt{N-1} )$, and where $\omega_N$ is the volume $\mathbf{S}_N$. 
The term $\omega_{N-2}/\omega_{N-1}$ has logarithm $o(N)$, and so we may discard it. For the remaining terms we have that: 
\begin{align*} 
&\frac{1}{N} \log \left[ \int_{q-\epsilon}^{q+\epsilon} (1-q^2)^{\frac{N-3}{2} } \int_{S^{N-2}(\sqrt{N-1})} e^{\beta \sqrt{1-q^2} \tilde{H} (x)} dx_{N-2} (x) dq \right]  \\&=\frac{1}{N} \log \left[ \int_{q-\epsilon}^{q +\epsilon} \exp \Big( (N-1) F_{\emptyset}(\beta \sqrt{1-q^2} ) + \frac{N-3}{2} \log(1-q^2)\Big) dq \right] + o(1)
\\
&= F_{\emptyset}(\beta\sqrt{1-q^2} ) + \log(1-q^2) + O(\epsilon) + o(1) \, . 
\end{align*}
Combining and using the formula for the free energy in the non-spiked case we obtain the desired result. 
\end{proof}

\section{Log-Sobolev Inequalities and Total-variation Mixing in Continuous Spaces} \label{ap:log-sob-to-tv}

\begin{definition}
    Let $\mathcal{M}_E$ denote the collection of probability measures on $\mathbf{S}_N$ that are symmetric in $e_1$, and define two functions:
    \begin{align*}
        d(t) &= \max_{\mu \in \mathcal{M}_E } d_{\tv} (P^t_{\mu}, \pi ) 
        \\
        \ol{d}(t) &= \max_{\mu,\nu \in \mathcal{M}_E } d_{\tv} (P^t_{\mu}, P^t_{\nu}) \, .
    \end{align*}
    The half-sphere mixing time is defined as:
    \[
 \tsym (\epsilon) := \inf \{ t >0 : d(t) \leq \epsilon \}
    \]
\end{definition}

We establish the property that half-sphere mixing decays exponentially once we have $\epsilon = \frac{1}{4}$.

\begin{lemma} \label{lem:TV-distance-submultiplicative}
    We have the string of inequalities: 
    \[
 d(t) \leq \ol{d}(t) \leq 2 {d}(t) \, .
    \]
    Furthermore the function $\ol{d}$ is sub-multiplicative, i.e. 
    \[
\ol{d}(t+s) \leq \ol{d}(t) \ol{d}(s) \, .
    \]
\end{lemma}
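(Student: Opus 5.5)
The plan is to follow the standard Markov chain argument (as in \cite[Chapter 4]{LP}), adapted to the class $\mathcal{M}_E$ of symmetric initial laws. The key structural fact is that $\mathcal{M}_E$ is preserved by the dynamics. Since $H$ is invariant under $x_1\mapsto -x_1$, the generator $\mathcal L$ commutes with this reflection. Hence if $\mu\in\mathcal{M}_E$, then $P^t_\mu\in\mathcal{M}_E$ for every $t\ge 0$. Moreover $\pi$ itself lies in $\mathcal{M}_E$ and is stationary, so $P^t_\pi=\pi$.

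For the inequalities, the upper bound $\ol d(t)\le 2d(t)$ is immediate from the triangle inequality through $\pi$:
\[
d_{\tv}(P^t_\mu,P^t_\nu)\le d_{\tv}(P^t_\mu,\pi)+d_{\tv}(\pi,P^t_\nu).
\]
For the lower bound $d(t)\le \ol d(t)$, I would simply take $\nu=\pi\in\mathcal{M}_E$ in the definition of $\ol d$. Then $P^t_\nu=\pi$, so $d_{\tv}(P^t_\mu,\pi)\le \ol d(t)$ for every $\mu\in\mathcal{M}_E$. (In the discrete setting one instead writes $\pi=\int P^t_y\,d\pi(y)$ and uses convexity; here that step is unnecessary because $\pi$ is itself admissible.)

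For sub-multiplicativity, fix $\mu,\nu\in\mathcal{M}_E$. By the Markov property, $P^{t+s}_\mu=(P^t_\mu)P^s$ and $P^{t+s}_\nu=(P^t_\nu)P^s$, where $\mu_t:=P^t_\mu$ and $\nu_t:=P^t_\nu$ are both in $\mathcal{M}_E$. Write $\delta=d_{\tv}(\mu_t,\nu_t)\le \ol d(t)$. If $\delta=0$ there is nothing to prove. Otherwise, decompose
\[
\mu_t-\nu_t=\delta(\alpha-\gamma),
\]
where $\alpha=(\mu_t-\nu_t)_+/\delta$ and $\gamma=(\mu_t-\nu_t)_-/\delta$ are probability measures.

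The step needing care, and the main obstacle, is checking that $\alpha,\gamma\in\mathcal{M}_E$, since the supremum defining $\ol d$ only ranges over symmetric laws. This holds because the reflection $R:x_1\mapsto -x_1$ is a measurable bijection. Since $\mu_t-\nu_t$ is $R$-invariant, its Jordan decomposition is too: $R_*$ of the positive part is the positive part of $R_*(\mu_t-\nu_t)=\mu_t-\nu_t$, by uniqueness of the Jordan decomposition.

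Then
\[
d_{\tv}(P^{t+s}_\mu,P^{t+s}_\nu)=\delta\, d_{\tv}(\alpha P^s,\gamma P^s)\le \ol d(t)\,\ol d(s).
\]
Taking the supremum over $\mu,\nu\in\mathcal{M}_E$ gives $\ol d(t+s)\le \ol d(t)\,\ol d(s)$.
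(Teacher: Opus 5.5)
Your proof is correct and follows the paper's argument: both write the difference of the laws at the intermediate time as $\delta$ times a difference of two symmetric probability measures, then apply the definition of $\ol{d}$ over the remaining time. The paper gets these measures as the conditional laws on $\{X_s\neq Y_s\}$ of a symmetrized optimal coupling, while you take the normalized Jordan parts of $\mu_t-\nu_t$. These are the same measures, since under any optimal coupling the law of $X_s$ on $\{X_s\neq Y_s\}$ is $(P^s_\mu-P^s_\nu)_+$, so your uniqueness argument shows the paper's symmetrization step is automatic; you also spell out the invariance of $\mathcal{M}_E$ under the dynamics, which the paper's proof uses without comment.
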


\begin{proof}
    The first inequality follows by setting $\nu = \pi$, and the second  follows from the triangle inequality.

    To establish the sub-multiplicative property we will need to establish the existence of optimal coupling $(X_s,Y_s)$  of the laws $P_{\mu}^s$ and $P_{\nu}^s$ such that conditional on the event $X_s \neq Y_s$ the law remains symmetric with respect to reflection along $e_1$. To establish such a coupling exists let us first fix any optimal coupling $(X_s,Y_s)$ of these measures, and let $\Gamma_s$ denote the law of $(X_s,Y_s)$. If we let $T: \mathbf{S}_N \to \mathbf{S}_N$ denote the reflection map along $e_1$, then the optimal coupling satisfying our desired property is given by
    \[
\tilde{\Gamma}_s = \frac{1}{2} \Gamma_s  + \frac{1}{2} (T \times T)_{\ast} \Gamma_s \, ,
    \]
    i.e. $\tilde{\Gamma}_s$ is the symmetrization with respect to $T$ of $\Gamma_s$. By abuse of notation we denote the coupling of $P^s_{\mu}$ and $P^s_{\nu}$ from $\tilde{\Gamma}_s$ by $(X_s,Y_s)$.     
    This coupling remains optimal as can be verified via a direct calculation, and  conditionally on $X_s \neq Y_s$ the laws of $X_s, Y_s$ are symmetric under $T$. 
    
    Next, let us note that the Markov property implies:
    \[
P^{t+s}_{\mu} (A) = \E P^t_{\mu_s}(A) \, ,
    \]
    and so using the coupling $(X_s,Y_s)$ from above, we have for any set $A$ that:
    \begin{align*}
        P^{t+s}_{\mu}(A) - P_{\nu}^{t+s}(A) &= \E ( P^t_{\mu_s}(A)-P_{\nu_s}^t(A) ) \\
        &\leq \E(\ol{d}(t) \mathbf{1} (X_s \neq Y_s) )
        \leq \ol{d}(t) \ol{d}(s) \, ,
    \end{align*}
    where the last inequality follows from the construction of $\tilde{\Gamma}_s$. 
   This completes the proof.
\end{proof}

\begin{corollary} \label{cor:exponential-decay-mixing-time}
 With $\tsym = \tsym(1/4)$, we have exponential decay in multiplies of the mixing time: 
    \[
 d(l \tsym) \leq 2^{-l} \, .
    \]
\end{corollary}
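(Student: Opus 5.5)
Since $\tsym = \tsym(1/4)$ means $d(\tsym)\le 1/4$, the argument iterates the sub-multiplicativity of $\ol{d}$ from Lemma~\ref{lem:TV-distance-submultiplicative} and compares back to $d$ using the sandwich $d\le \ol{d}\le 2d$ from the same lemma.

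The steps are as follows. First, at time $\tsym$ we have $\ol{d}(\tsym)\le 2d(\tsym)\le 1/2$. Second, by induction on $l$, sub-multiplicativity gives $\ol{d}(l\tsym)\le \ol{d}((l-1)\tsym)\,\ol{d}(\tsym)\le \ol{d}(\tsym)^l\le 2^{-l}$. Finally, $d(l\tsym)\le \ol{d}(l\tsym)\le 2^{-l}$, which is the claim.

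The only substantive point is that sub-multiplicativity really holds within the class $\mathcal{M}_E$. This needs the symmetrized optimal coupling: the laws $\mu_s,\nu_s$ appearing after conditioning on $X_s\neq Y_s$ must remain symmetric under the reflection $T$. That property was established in the proof of Lemma~\ref{lem:TV-distance-submultiplicative}, so the bound $\ol{d}(t)$ applies to them. I would also note that the Langevin dynamics commutes with $T$, since $H$ is invariant under $x_1\mapsto -x_1$. Hence $P^t_\mu\in\mathcal{M}_E$ whenever $\mu\in\mathcal{M}_E$, and the class is preserved along the iteration. With these checks in place the corollary follows directly.
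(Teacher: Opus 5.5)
Your proposal is correct and is exactly the derivation the paper leaves implicit after Lemma~\ref{lem:TV-distance-submultiplicative}: $d(l\tsym)\le \ol{d}(l\tsym)\le \ol{d}(\tsym)^l\le (2d(\tsym))^l\le 2^{-l}$, where the symmetrized optimal coupling is what makes sub-multiplicativity hold inside $\mathcal{M}_E$. One detail that neither you nor the paper spells out is that $d(\tsym)\le 1/4$ holds at the infimum itself; this follows because $t\mapsto d(t)$ is a supremum of functions continuous on $(0,\infty)$, hence lower semicontinuous there.
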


\section{Upper bounds for the heat kernel} \label{sec:heat-kernel-bounds}
In this section, we provide a proof of the $L^\infty$ bound of the density $\rho_t$ of $X_t$ for small $t$, initialized at a delta mass. The precise bound is given in the following theorem:
\begin{theorem}\label{thm:heat-kernel-bound}
    For any $X_0=x_0\in\mathbf{S}_N$, and any $t\le 1$, we have
    \begin{align*}
        |\rho_t(x)|<  t^{-\frac {N-1}2}\exp\left(C N\right)\,,
    \end{align*}
    for some constants $C(\beta)$ independent of $N$ and $x_0,x$.

    Thus, the heat kernel $K(t,x,y)$ is bounded by the same quantity. 
\end{theorem}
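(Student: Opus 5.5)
We will compare the Langevin semigroup with the heat semigroup of spherical Brownian motion (run at speed $1/\beta$) via a Girsanov / Feynman--Kac argument, and then use standard Gaussian-type bounds for the sphere heat kernel. Write the generator as $\mathcal L = \frac1\beta\Delta_{\sph} + \nabla_{\sph}H\cdot\nabla_{\sph}$. Conjugating by $e^{\beta H/2}$ turns $\mathcal L$ into the Schr\"odinger operator
\begin{align*}
e^{\beta H/2}\,\mathcal L\, e^{-\beta H/2} = \frac1\beta\Delta_{\sph} - V\,, \qquad V = \frac{\beta}{4}\norm{\nabla_{\sph}H}^2 + \frac12\Delta_{\sph}H\,.
\end{align*}
By the Feynman--Kac formula the transition density with respect to $dx$ is then
\begin{align*}
K(t,x_0,x) = e^{\frac\beta2 (H(x)-H(x_0))}\, \E^{\mathrm{BB}}_{x_0\to x,t}\Big[e^{-\int_0^t V(B_s)\,ds}\Big]\, p_t(x_0,x)\,,
\end{align*}
where $p_t$ is the heat kernel of $\frac1\beta\Delta_{\sph}$ and the expectation is over the Brownian bridge.

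The three factors are estimated separately. On the a.a.s.\ event where $\norm{M}_{op}\le C$ (already assumed throughout), we have $\abs{H}\le CN$, $\norm{\nabla_{\sph}H}^2\le CN$, and $\abs{\Delta_{\sph}H}\le CN$ (the Laplacian of a quadratic form is a trace of size $O(N)$). Hence $e^{\frac\beta2\abs{H(x)-H(x_0)}}\le e^{C\beta N}$. Moreover $V\ge -CN$, so for $t\le 1$ the Feynman--Kac weight is at most $e^{CN}$ uniformly over bridges. These are all $N$-uniform in the constants, since only operator-norm bounds on $M$ enter.

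It remains to bound the free heat kernel $p_t(x_0,x)$ on $\mathbf{S}_N$ relative to the Haar probability measure, uniformly in $x_0,x$ and $t\le 1$, by $t^{-(N-1)/2}e^{CN}$. I would obtain this from the Li--Yau estimate on a manifold of nonnegative Ricci curvature, $p_t(x,y)\le C^{N}\,\mathrm{vol}(B(x,\sqrt{t/\beta}))^{-1}$. Here the normalizing volume is that of the Haar probability measure and the dimension constant is $C^N$. One then observes that for the sphere of radius $\sqrt N$ a geodesic ball of radius $r\le 1$ has Haar probability at least $(c r/\sqrt N)^{N-1}\cdot \omega_{N-1}$-type ratios, which is $\ge r^{N-1}e^{-CN}$ after Stirling. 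Alternatively, one can use the explicit Gegenbauer expansion or the comparison $p_t\le (\text{Euclidean kernel in } \mathbb R^{N-1})\times e^{CN}$ in normal coordinates. Either route gives $p_t\le t^{-(N-1)/2}e^{CN}$.

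The main obstacle is this last step: making the dimension dependence explicit, so that the constant is genuinely $e^{CN}$ rather than something like $N^{N}$. I would first try Li--Yau with the precise Bishop--Gromov volume comparison on the radius-$\sqrt N$ sphere, carefully tracking the $\Gamma$-function normalizations of Haar measure. Combining the three bounds gives $K(t,x_0,x)\le t^{-(N-1)/2}e^{CN}$. Integrating against any initial law then yields the same bound for $\rho_t$.
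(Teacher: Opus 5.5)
Your reduction to the free heat kernel is correct and is a genuinely different route from the paper's. The ground-state conjugation gives exactly $V=\frac\beta4\norm{\nabla_{\sph}H}^2+\frac12\Delta_{\sph}H$, and $\Delta_{\sph}H=\mathrm{tr}\,M-\langle x,Mx\rangle=O(N)$. The prefactor $e^{\frac\beta2(H(x)-H(x_0))}$ and the Feynman--Kac weight each cost only $e^{O(N)}$ for $t\le 1$. (The paper instead does an $L^2$ energy estimate on the Fokker--Planck equation, absorbs the drift by Young's inequality, and closes with a Nash argument built on the sharp Sobolev inequality on the sphere.)

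The gap is the last step, and it is exactly the obstacle you flagged. Your volume claim is false. A geodesic ball of radius $r\le 1$ in $\mathbf S_N$ is a cap of angular radius $r/\sqrt N$, so its Haar probability is $(r/\sqrt N)^{N-1}$ up to factors $e^{O(N)}$. Equivalently, its Riemannian volume is about $(2\pi e r^2/N)^{(N-1)/2}$, while $\mathrm{vol}(\mathbf S_N)\approx(2\pi e)^{N/2}$. The factor $N^{-(N-1)/2}$ is not a Stirling artifact and cannot be absorbed into $e^{-CN}$. So Li--Yau at scale $\sqrt{t/\beta}$ only gives $p_t\le C^N(N\beta/t)^{(N-1)/2}$, which is off by $N^{N/2}$. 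The reason is that in dimension $n$ the kernel at time $s$ spreads over distance $\sqrt{ns}$, not $\sqrt s$. Already in $\mathbb R^n$ one has $\mathrm{vol}(B(x,\sqrt s))\,(4\pi s)^{-n/2}\approx (e/2n)^{n/2}$. The Gegenbauer and normal-coordinate alternatives are only named, not carried out, so they do not close this.

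The repair is to apply the Li--Yau Harnack inequality at the diffusive scale. Take $s=t/\beta$ and $n=N-1$; since $\mathrm{Ric}\ge 0$, you have $p_s(x,x)\le 2^{n/2}e^{d(x,y)^2/(4s)}p_{2s}(x,y)$. Integrate in $y$ over the ball of radius $\sqrt{Ns}$, which is the cap of angular radius $\sqrt s$. This gives $p_s(x,x)\cdot\mathrm{Haar}(\mathrm{cap})\le 2^{n/2}e^{N/4}$. Using $\sin\psi\ge 2\psi/\pi$, the cap has Haar probability at least $e^{-CN}s^{(N-1)/2}$ when $\sqrt s\le\pi/2$. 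Larger $s\le 1/\beta$ follow because $s\mapsto p_s(x,x)$ is nonincreasing. Then $p_s(x,y)\le\sqrt{p_s(x,x)p_s(y,y)}$ finishes, and your argument is complete.

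Compared with the paper, your route isolates the drift into three sup-norm bounds and otherwise uses only Ricci-curvature geometry of the sphere. The paper's Nash argument reaches the same $e^{CN}$ because the sharp spherical Sobolev constant $\frac{q-2}{N-1}$ it uses has the correct dimension dependence. For the paper's applications the weaker $N^{O(N)}$ bound would actually suffice, since it only inflates the KL bound to $O(N\log N)$. It does not suffice for the theorem as stated.
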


Let $P_t=\exp(\mathcal L^*t)$ be the Markov semi-group generated by the Langevin dynamics on $\mathbf{S}_N$, where $\mathcal Lf=\frac1\beta\Delta f+\nabla H\cdot \nabla f$, and $\mathcal L^*f=\frac1\beta\Delta f-\nabla\cdot(f\,\nabla H)$.

We will prove the theorem by regarding $P_t$ as a map from $L^1$ to $L^\infty$, and showing that operator norm $\norm{P_t}_{\infty,1}<C(N)t^{-\frac N2}$ for an appropriate function $C(N)$. First, write 
\begin{align*}
    \norm{P_t}_{\infty,1}=&\norm{P_{\frac t2}P_{\frac t2}}_{\infty,1}
    \leq \norm{P_{\frac t2}}_{\infty,2} \norm{P_{\frac t2}}_{2,1}
    \leq \norm{P^*_{\frac t2}}_{2,1} \norm{ P_{\frac t2}}_{2,1}\,.
\end{align*}

Notice that $\mathcal Lf=\exp(-\beta H) \mathcal L^*(\exp(\beta H) f)$, so the operator $P_t$ satisfies 
\begin{align*}
    \norm{P^*_t}_{2,1}=\norm{\exp(-\beta H)P_t\exp(\beta H)}_{2,1}\leq \exp(c_4\beta N)\norm{P_t}_{2,1}\,,
\end{align*}
where $c_4$ is a $O(1)$ constant that only depends on $H$.

So for any $f\in L^1(\mathbf{S}_N)$, 
\begin{align} \label{eq:L-infty-L-2-bound}
    \norm{P_tf}_\infty\leq \norm{P_t}_{\infty,1}\norm{f}_1\leq \exp(c_4\beta N)\norm{P_{\frac t2}}^2_{2,1} \norm{f}_1\,.
\end{align}

We now recall some basic lemmas in analysis.
\begin{lemma}\label{lem:sobolev}
    On the unit sphere $\mathbb S^{N-1}$, for any $g \in W^1_2(\mathbb S^{N-1})$, we have the Sobolev inequality
    \begin{align*}
        \norm{g}_q^2\leq \frac{q-2}{N-1} \norm{\nabla g}_2^2+\norm{g}_2^2\,,
    \end{align*}
    where $q=\frac{2(N-1)}{N-3}$.    
    Switching to $\mathbf{S}_N$, for any $f\in W^1_2(\mathbf{S}_N)$, 
    \begin{align*}
        \norm{f}_q^2\leq \frac{N(q-2)}{N-1} \norm{\nabla f}_2^2+\norm{f}_2^2\,.
    \end{align*}
\end{lemma}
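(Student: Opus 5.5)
The plan is to derive the $\mathbf{S}_N$ inequality from the unit-sphere one by a direct rescaling, taking the unit-sphere statement as the classical sharp Sobolev inequality on $\mathbb S^{N-1}$. That inequality is Beckner's form of the Sobolev inequality on the sphere (equivalently, it follows from the Bakry--Émery curvature-dimension condition $CD(N-2,N-1)$ satisfied by the round sphere, whose Ricci curvature is $N-2$ and dimension $N-1$). The statement is made with respect to normalized (probability) measure, and the constant $\frac{q-2}{N-1}$ is exactly $\frac{q-2}{\text{dim}}$ with $\text{dim}=N-1$ and critical exponent $q=\frac{2\,\text{dim}}{\text{dim}-2}=\frac{2(N-1)}{N-3}$. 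I would cite this rather than reprove it, as it is standard (e.g.\ Beckner, or Bakry--Gentil--Ledoux).

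For the second claim, I would fix $f\in W^1_2(\mathbf{S}_N)$ and define $g(y)=f(\sqrt N y)$ for $y\in\mathbb S^{N-1}$. Since $dx$ on $\mathbf{S}_N$ is the Haar probability measure, the dilation $y\mapsto \sqrt N y$ pushes normalized measure forward to normalized measure. Hence $\norm{g}_p=\norm{f}_p$ for every $p$, so no volume factors appear. For gradients, the chain rule gives $\nabla g(y)=\sqrt N\,(\nabla f)(\sqrt N y)$ as tangent vectors, because distances on $\mathbf{S}_N$ are $\sqrt N$ times distances on the unit sphere. Therefore $\norm{\nabla g}_2^2=N\norm{\nabla f}_2^2$.

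Substituting these identities into the unit-sphere inequality gives
\[
\norm{f}_q^2=\norm{g}_q^2\le \frac{q-2}{N-1}\,N\norm{\nabla f}_2^2+\norm{f}_2^2 .
\]
This is the stated bound. The case $N\le 3$ does not arise, since $q$ is undefined there, and $N$ is large throughout the paper.

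The only genuine content is the sharp constant on the unit sphere. If a self-contained argument were wanted, I would invoke the curvature-dimension route: the round $\mathbb S^{N-1}$ satisfies $CD(\rho,n)$ with $\rho=N-2$ and $n=N-1$, and the general Sobolev inequality under $CD(\rho,n)$ reads $\norm{g}_q^2\le \norm{g}_2^2+\frac{4(n-1)}{\rho n(n-2)}\norm{\nabla g}_2^2$. Checking that $\frac{4(n-1)}{\rho n(n-2)}=\frac{q-2}{n}$ when $\rho=n-1$ is a one-line algebraic verification. That identification is where I would expect any care to be needed, and it is the step I would double-check.
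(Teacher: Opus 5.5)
Your proposal is correct and takes the same approach as the paper. The paper gives no argument: it recalls the sharp Sobolev inequality on $\mathbb S^{N-1}$ (normalized measure, constant $\frac{q-2}{N-1}$) as a standard fact and passes to $\mathbf{S}_N$ by the dilation $y\mapsto\sqrt N y$, under which Haar-probability $L^p$ norms are preserved and $\norm{\nabla g}_2^2=N\norm{\nabla f}_2^2$, exactly as you do. Your check that $\frac{4(n-1)}{\rho n(n-2)}=\frac{q-2}{n}$ when $\rho=n-1$ is also right.
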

Combine the above with H\"older inequality 
\begin{align*}
    \norm{f}_q^\theta \norm{f}_1^{1-\theta}\geq\norm{f}_2\,,&&\text{ where }\theta= \frac{N-1}{N+1}\,,
\end{align*}
we obtain the following inequality. 
\begin{lemma}\label{lem:Nash}
For $f\in W^1_2(\mathbf S_N)\cap L^1(\mathbf S_N)$, we have
    \begin{align*}
        \frac{N(q-2)}{N-1} \norm{\nabla f}_2^2+\norm{f}_2^2\geq \norm{f}_q^2\geq \norm{f}_2^{\frac{2N+2}{N-1}}\norm{f}_1^{-\frac{4}{N-1}}\,.
    \end{align*}
    Equivalently
    \begin{align*}
        \norm{\nabla f}_2^2\geq &\,\frac{N-1}{N(q-2)}\Big(-\norm{f}_2^2+\norm{f}_2^{\frac{2N+2}{N-1}}\norm{f}_1^{-\frac{4}{N-1}}\Big)\nonumber\\
        >&\,c_2N\Big(-\norm{f}_2^2+\norm{f}_2^{\frac{2N+2}{N-1}}\norm{f}_1^{-\frac{4}{N-1}}\Big)\,,
    \end{align*}
    for $c_2$ a constant independent of $N$.
\end{lemma}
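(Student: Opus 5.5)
The statement to prove is Lemma~\ref{lem:Nash}. The plan is to combine the Sobolev inequality of Lemma~\ref{lem:sobolev} with the H\"older (interpolation) inequality stated just before it.

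\textbf{Step 1 (interpolation exponent).} Take $q=\frac{2(N-1)}{N-3}$ and choose $\theta\in(0,1)$ so that $\frac12=\frac{\theta}{q}+\frac{1-\theta}{1}$. This gives $\theta=\frac{1/2}{1-1/q}=\frac{q}{2(q-1)}$. Since $q-1=\frac{N+1}{N-3}$, we get $\theta=\frac{N-1}{N+1}$, matching the stated value. Log-convexity of $L^p$ norms (H\"older) then yields $\norm{f}_2\le \norm{f}_q^{\theta}\norm{f}_1^{1-\theta}$.

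\textbf{Step 2 (lower bound on $\norm{f}_q^2$).} Raise Step 1 to the power $2/\theta=\frac{2(N+1)}{N-1}$ and rearrange:
\[
\norm{f}_q^2\ge \norm{f}_2^{\frac{2N+2}{N-1}}\norm{f}_1^{-\frac{2(1-\theta)}{\theta}} .
\]
Since $\frac{1-\theta}{\theta}=\frac{2}{N-1}$, the exponent on $\norm{f}_1$ is $-\frac{4}{N-1}$, as claimed. If $\norm{f}_1=0$ the inequality is trivial, so this case can be excluded.

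\textbf{Step 3 (chain with Sobolev).} Chaining Step 2 with the $\mathbf S_N$ version of Lemma~\ref{lem:sobolev} gives the displayed two-sided inequality. Subtract $\norm{f}_2^2$ and divide by $\frac{N(q-2)}{N-1}$ to get the equivalent form.

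\textbf{Step 4 (size of the constant).} For the final strict inequality, compute $q-2=\frac{4}{N-3}$. Then
\[
\frac{N-1}{N(q-2)}=\frac{(N-1)(N-3)}{4N},
\]
which is at least $c_2N$ for, e.g., $c_2=1/8$ and $N\ge 6$. One subtlety is that the bracket may be negative. In that case the claimed inequality follows trivially whenever the bracket is nonpositive, because $\norm{\nabla f}_2^2\ge 0$ and a larger positive multiplier only makes the right side more negative. So the comparison of constants is only needed when the bracket is positive, and there $\frac{(N-1)(N-3)}{4N}> c_2N$ suffices.

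The main obstacle is only exponent bookkeeping. The one genuine point is the sign issue handled in Step 4, which must be dealt with when replacing the constant by $c_2N$.
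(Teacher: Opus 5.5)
Your proposal is correct and is exactly the paper's argument: chain the $\mathbf S_N$ Sobolev inequality of Lemma~\ref{lem:sobolev} with the H\"older interpolation $\norm{f}_2\le\norm{f}_q^{\theta}\norm{f}_1^{1-\theta}$ at $\theta=\frac{N-1}{N+1}$, and your exponent bookkeeping checks out. There are two cosmetic points. First, $\frac{(N-1)(N-3)}{4N}>\frac N8$ needs $N\ge 8$ rather than $N\ge 6$ (at $N=6$ the left side is $\frac58<\frac34$); this is harmless, since a smaller $c_2$ works for smaller $N$. Second, your sign discussion is moot, because with the Haar probability measure $\norm{f}_1\le\norm{f}_2$, so the bracket is always nonnegative.
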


For the $L^1$ norm, we have the following standard fact.
\begin{lemma}\label{lem:L1-contraction}
    For $f\in L^1(\mathbf{S}_N)$, let $f_t=P_tf$, then $P_t$ is contractive in $L^1$, i.e. 
    \begin{align*}
        \norm{f_t}_1\leq \norm{f}_1\,.
    \end{align*}
\end{lemma}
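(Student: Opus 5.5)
This is the standard fact that the adjoint (Fokker--Planck) semigroup preserves mass and contracts $L^1$. I will show $\frac{d}{dt}\norm{f_t}_1 \le 0$ for smooth initial data and then extend by density.

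First take $f \in C^\infty(\mathbf{S}_N)$. Since $H$ is smooth and $\mathbf{S}_N$ is compact, $f_t = P_t f$ is smooth and solves $\partial_t f_t = \mathcal L^* f_t$, where $\mathcal L^* f = \frac1\beta \Delta f - \nabla\cdot(f\nabla H)$ is in divergence form. Integrating over the closed manifold gives
\[
\frac{d}{dt}\int f_t = \int \mathcal L^* f_t = 0,
\]
so mass is conserved.

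For the absolute value, I will use a smooth convex approximation $\phi_\delta(s)=\sqrt{s^2+\delta^2}$ and compute $\frac{d}{dt}\int \phi_\delta(f_t) = \int \phi_\delta'(f_t)\,\mathcal L^* f_t$. Integrating by parts gives
\[
-\frac1\beta\int \phi_\delta''(f_t)|\nabla f_t|^2 + \int \phi_\delta''(f_t)\, f_t\, \nabla f_t\cdot\nabla H .
\]
The first term is nonpositive. I rewrite the second term as $\int \nabla\psi_\delta(f_t)\cdot\nabla H = -\int \psi_\delta(f_t)\,\Delta H$, where $\psi_\delta(s)=\int_0^s r\phi_\delta''(r)\,dr$.

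The only slightly delicate point is this drift term. One has $|\psi_\delta(s)| = \left|\,s\phi_\delta'(s)-\phi_\delta(s)+\delta\,\right| \le \delta$, while $\Delta H$ is bounded uniformly. The drift term is therefore at most $\delta\,\norm{\Delta H}_\infty\,\mathrm{vol}(\mathbf{S}_N)$, which vanishes as $\delta\to 0$.

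Integrating in time and letting $\delta\to0$ then gives $\norm{f_t}_1\le\norm{f}_1$. Finally, I extend from $C^\infty$ to all of $L^1$ by density: $P_t$ is defined on $L^1$ as the limit of its action on smooth approximants, and the inequality passes to the limit.

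An alternative route is probabilistic, via the positivity-preservation argument. Decompose $f=f^+-f^-$. Since $P_t f^\pm \ge 0$ (these are laws of the diffusion started from nonnegative densities) and mass is preserved,
\[
\norm{P_tf}_1 \le \norm{P_tf^+}_1+\norm{P_tf^-}_1 = \norm{f^+}_1+\norm{f^-}_1 = \norm{f}_1 .
\]
I would present this as the main proof, with positivity of the heat kernel coming from its interpretation as the transition density, and keep the PDE computation as the justification of mass conservation.
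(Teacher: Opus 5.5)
Your proposal is correct. The paper gives no proof of this lemma; it only invokes it as a ``standard fact'' inside the Nash-inequality argument of Appendix~\ref{sec:heat-kernel-bounds}. So your write-up supplies the missing justification rather than competing with an existing one.

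Of your two arguments, the positivity route is almost certainly what the authors have in mind. Write $P_tf(x)=\int p_t(y,x)f(y)\,dy$, where $p_t\ge 0$ is the transition density and $\int p_t(y,x)\,dx=1$ because the diffusion on the compact sphere is conservative. Then $\int|P_tf|\le\int\!\!\int p_t(y,x)|f(y)|\,dy\,dx=\norm{f}_1$, which is your $f^+-f^-$ splitting in one line. On this route mass conservation comes for free from $p_t$ being a transition density, so you do not actually need the divergence-form computation there.

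Your Kato-type computation with $\phi_\delta(s)=\sqrt{s^2+\delta^2}$ is also right. In particular $\psi_\delta(s)=\delta-\delta^2/\sqrt{s^2+\delta^2}\in[0,\delta)$, so the drift contribution is $O(\delta)$ and disappears in the limit. Its advantage is that it uses nothing about the kernel, only smoothness of solutions for smooth data. The price is the density step. There, the inequality on $C^\infty$ is exactly what lets $P_t$ extend to a bounded operator on $L^1$. If $P_t$ on $L^1$ is instead taken to be the kernel operator, you should note that the two agree on smooth functions.
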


We may now prove the main result of this section. 
\begin{proof}[Proof of Theorem~\ref{thm:heat-kernel-bound}]
    For $\rho_0\in L^1(\mathbf{S}_N)$ with $\norm{\rho_0}_1=1$, then $\rho_t$ solves the following Fokker-Planck equation
    \begin{align*}
        \partial_t\rho_t=\nabla\cdot (\frac1\beta\nabla \rho_t-\rho_t\nabla H)\,.
    \end{align*}
    Multiplying both sides by $\rho_t$ and integrating over $\mathbf{S}_N$, we obtain
    \begin{align*}
        \frac12\partial_t\Big(\int_{\mathbf{S}_N}\rho_t^2\mathrm dx\Big)=&-\Big(\frac1\beta\int_{\mathbf{S}_N} \norm{\nabla \rho_t}^2\mathrm dx\Big)+\Big(\int_{\mathbf{S}_N}\rho_t\nabla H\cdot \nabla \rho_t \mathrm dx\Big)\,.
    \end{align*}
    Using the bounds
    \begin{align*}
        \rho_t\,\nabla H\cdot \nabla \rho_t\leq \frac12(\frac1\beta \norm{\nabla \rho_t}^2+\beta \norm{\rho_t\,\nabla H}^2) \, ,
    \end{align*}
    and that $\norm{\nabla H}^2<c_1N$, with $c_1$ independent of $N$, we obtain
    \begin{align*}
        \frac12\partial_t\Big(\int_{\mathbf{S}_N}\rho_t^2\mathrm dx\Big)\leq&-\frac1{2\beta}\Big(\int_{\mathbf{S}_N} \norm{\nabla \rho_t}^2\mathrm dx\Big)+\frac{c_1\beta N}2\Big(\int_{\mathbf{S}_N}\rho_t^2 \mathrm dx\Big)\,,\nonumber\\
        \Leftrightarrow \partial_t \norm{\rho_t}_2^2\leq& -\frac1\beta \norm{\nabla \rho_t}_2^2+c_1\beta N \norm{\rho_t}_2^2\,.
    \end{align*}

    Using Lemma~\ref{lem:Nash} with $y(t)=\norm{\rho_t}^2_2$, we have
    \begin{align*}
        \dot y\leq& -\frac{c_2N}\beta\Big(-y+y^{\frac{N+1}{N-1}} \norm{\rho_t}_1^{-\frac{4}{N-1}}\Big)+c_1\beta Ny\,.
    \end{align*}
    By Lemma~\ref{lem:L1-contraction}, $\norm{\rho_t}_1\leq \norm{\rho_0}_1=1$, and so
    \begin{align*}
        \dot y\leq&\, (\frac{c_2}{\beta}+c_1\beta )N y- \frac{c_2N}{\beta} y^\frac{N+1}{N-1}\,.
    \end{align*}
    Solving this ODE we obtain
    \begin{align*}
        y\leq (\frac{c_3t}\beta)^{-\frac{N-1}2}\exp((\frac{c_2}{\beta}+c_1\beta )Nt)\,,
    \end{align*}
    from which we conclude that
    \begin{align*}
        \norm{P_t}_{2,1}^2\leq (\frac{c_3t}\beta)^{-\frac{N-1}2}\exp((\frac{c_2}{\beta}+c_1\beta )Nt)\,. 
    \end{align*}
    The desired result now follows from \eqref{eq:L-infty-L-2-bound}. 
\end{proof}

\section{Thresholds for $\theta_{0,L}$} \label{ap:theta-slow} In this section we provide a short calculation to  determine the regime of $(\alpha,\theta)$ parameters for which the low-temperature mixing results hold.

\begin{lemma} \label{lem:theta-slow-upper-bound}
    Suppose that $\beta = \frac{\alpha}{\theta}$ for $\alpha >1$, and $\theta> \theta_{0,L}$ from~\eqref{eq:theta-slow}, then $\me > \mbe$  $\mathbb P_M$ eventually almost surely. Furthermore $\theta_{0,L} <  \max\{1+ \frac{24}{\alpha-1}, 4 \}$. 
\end{lemma}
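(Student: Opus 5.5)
The plan is to pass to the deterministic limits of the eigenvalues and reduce $\me>\mbe$ to an explicit algebraic inequality in $(\alpha,\theta)$. We work on the $\mathbb P_M$-almost sure event of Remark~\ref{rem:eigenvalue-interlacing} and \cite{Peche06}, on which $\lambda_1\to\lambda_\theta=\theta+\frac1\theta$, $\lambda_2\to 2$ and $\lambda_N\to-2$. Both $\me^2$ and $\mbe^2$ from~\eqref{eq:m-thresholds} are continuous functions of $(\lambda_1,\lambda_2,\lambda_N)$ near these limits, since the denominators $\lambda_1-\lambda_2\to(\theta-1)^2/\theta$ and $2\lambda_1-\lambda_2-\lambda_N\to 2(\theta^2+1)/\theta$ are bounded away from zero for $\theta>1$. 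It therefore suffices to prove the strict inequality between the limiting values; strictness then transfers to all large $N$, eventually almost surely.

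The first step is to compute the limits, writing $\beta=\alpha/\theta$ and $u:=\theta^2/\alpha$:
\begin{align*}
\me^2\to 1-\frac{u}{(\theta-1)^2}\,,\qquad \mbe^2\to\frac{(\theta+1)^2-u}{2(\theta^2+1)}\,.
\end{align*}
For $\mbe$ this uses $\lambda_\theta+2=(\theta+1)^2/\theta$.

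The second step is to clear denominators and use the identities $2(\theta^2+1)-(\theta-1)^2=(\theta+1)^2$ and $2(\theta^2+1)-(\theta+1)^2=(\theta-1)^2$. These collapse the inequality $\me^2>\mbe^2$ to
\begin{align*}
(\theta-1)^4> u\,(\theta+1)^2 \quad\Longleftrightarrow\quad \alpha>\frac{\theta^2(\theta+1)^2}{(\theta-1)^4}\,.
\end{align*}
We also need $\me^2>0$, which the same inequality implies, since it gives $u<(\theta-1)^4/(\theta+1)^2<(\theta-1)^2$. The right-hand side $g(\theta):=\theta^2(\theta+1)^2/(\theta-1)^4$ is strictly decreasing on $(1,\infty)$, with $g\to\infty$ as $\theta\downarrow1$ and $g\to1$ as $\theta\to\infty$. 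Hence for each $\alpha>1$ the set where the inequality holds is exactly $\theta>\theta^*$, where $\theta^*$ is the unique (hence largest) root of $\alpha=g(\theta)$, which I would identify with $\theta_{0,L}$ of~\eqref{eq:theta-slow}.

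The point I expect to be the main obstacle is reconciling this with the displayed formula~\eqref{eq:theta-slow}. As written, $\alpha=(\theta-1)^2\theta^2/(\theta+1)^4$ does not tend to $1$ as $\theta\to\infty$, whereas my computation gives $g(\theta)\to1$. So I would recheck the algebra, in particular the identity for $\mbe^2$ and the factor $(1-\frac1N)$, which is irrelevant in the limit. I would then state the lemma with whichever root equation survives that check; the expected answer is that the threshold is the root of $\sqrt\alpha=\theta(\theta+1)/(\theta-1)^2$.

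For the explicit bound on $\theta_{0,L}$, set $s=\theta-1$. Then $\sqrt{g}=(s+1)(s+2)/s^2=1+\frac3s+\frac2{s^2}$, and it suffices to show $1+\frac3s+\frac2{s^2}<\sqrt\alpha$ whenever $s\ge\max\{24/(\alpha-1),3\}$. I would split into two cases. For $\alpha$ near $1$ the binding constraint is $s\ge 24/(\alpha-1)$, and I would use $\sqrt\alpha-1=(\alpha-1)/(\sqrt\alpha+1)$ together with $\frac3s+\frac2{s^2}\le\frac{11}{3s}$ for $s\ge3$. For $\alpha$ large the binding constraint is $s\ge3$, where $\sqrt g\le 2+\frac29$, which beats $\sqrt\alpha$ once $\alpha\gtrsim 5$. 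The intermediate range of $\alpha$ needs a direct check, and I expect the constants $24$ and $4$ were chosen to cover it; if they do not, I would adjust them.
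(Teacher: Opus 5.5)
\textbf{Part one.} Your reduction of $\me>\mbe$ to $\alpha>g(\theta):=\theta^2(\theta+1)^2/(\theta-1)^4$, using the limits $\lambda_1\to\theta+\frac1\theta$, $\lambda_2\to2$, $\lambda_N\to-2$, is exactly the paper's argument. Your continuity remark and your check that $\me^2>0$ are harmless additions.

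You are right that \eqref{eq:theta-slow} is a misprint, and right that the intended threshold is the root of $\alpha=g(\theta)$; the paper's own proof derives precisely this inequality. But your stated reason is wrong: $(\theta-1)^2\theta^2/(\theta+1)^4$ does tend to $1$ as $\theta\to\infty$. The real problem is that $(\theta-1)\theta<(\theta+1)^2$, so this expression stays strictly below $1$ for every $\theta>1$. It therefore has no root in $(1,\infty)$ when $\alpha>1$.

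\textbf{Part two.} For the explicit bound on $\theta_{0,L}$ your route differs from the paper's. The paper expands $g$ itself in $y=\theta-1$ (misprinted there as $\theta+1$) as $1+\frac6y+\frac{13}{y^2}+\frac{12}{y^3}+\frac4{y^4}$. It then forces each of the four tail terms to be at most $\frac{\alpha-1}{4}$ and splits at $\alpha=12$ according to which constraint dominates. Your factorization $\sqrt g=1+\frac3s+\frac2{s^2}$ leaves only two terms and is cleaner.

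Your ``intermediate range'' is in fact empty, so nothing needs to be checked or adjusted. Adjusting the constants would not prove the lemma as stated anyway. Set $s_0=\max\{\frac{24}{\alpha-1},3\}$.
\begin{itemize}
\item Your first case gives $\frac3{s_0}+\frac2{s_0^2}\le\frac{11}{3s_0}\le\frac{11(\alpha-1)}{72}<\frac{\alpha-1}{\sqrt\alpha+1}=\sqrt\alpha-1$ whenever $\sqrt\alpha<\frac{61}{11}$, i.e.\ for all $\alpha<30.7$.
\item Your second case gives $\sqrt{g(1+s_0)}\le\frac{20}{9}<\sqrt\alpha$ whenever $\alpha>\frac{400}{81}\approx4.94$.
\end{itemize}
These two ranges already cover all $\alpha>1$. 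Hence $g(1+s_0)<\alpha=g(\theta_{0,L})$, and strict monotonicity of $g$ on $(1,\infty)$ gives $\theta_{0,L}<1+s_0=\max\{1+\frac{24}{\alpha-1},4\}$, as claimed.
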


\begin{proof} Recall from Remark \ref{rem:eigenvalue-interlacing} that $\lambda_2 \to 2$ and $\lambda_N \to -2$ $\p_M$ almost surely. Additionally \cite{Peche06} implies that $\lambda_1 \to \lambda_{\theta}$, $\p_M$ almost surely as well. 

    We may thus work with the limiting expressions for $\mbe$ and $\me$. With those substitutions, recall 
    \begin{align*}
 \me^2 &= 1- \frac{\theta}{\alpha (\theta + \frac{1}{\theta} -2)} = 1- \frac{\theta^2}{\alpha (\theta -1)^2 } \\ 
 \mbe^2&= \frac{ \theta + \frac{1}{\theta} +2 - \frac{\theta}{\alpha} }{2(\theta + \frac{1}{\theta}) } = \frac{1}{2} + \frac{\theta}{\theta^2+1} - \frac{\theta^2}{2\alpha (\theta^2 +1)} \,.
    \end{align*}
    Thus $\me^2> \mbe^2$ when 
    \[
 \frac{1}{2} >  \frac{\theta^2}{\alpha (\theta-1)^2} + \frac{\theta}{\theta^2+1} - \frac{\theta^2}{2\alpha (\theta^2+1)} \, ,
    \]
    and so by clearing denominators we are left with the polynomial inequality
    \[
\alpha (\theta^2+1)(\theta-1)^2>   2\theta^2 (\theta^2+1) + 2\alpha \theta (\theta- 1)^2 -  \theta^2 (\theta-1)^2 \, , 
    \]
Grouping the terms with $\alpha$ gives us 
\[
\alpha (\theta-1)^4 > 2 \theta^2 (\theta^2 +1 ) - \theta^2 (\theta-1)^2 = \theta^2(\theta +1)^2 \, ,
\]
from which we determined that $\me > \mbe$ if and only if
\begin{align} \label{eq:theta-alpha-threshold}
\alpha > \frac{\theta^2(\theta+1)^2}{(\theta-1)^4} \, . 
\end{align}
Since the right hand side of the above is decreasing for $\theta >1$, the inequality will hold for any $\theta> \theta_{0,L}(\alpha)$.

Now let us turn to upper bounding $\theta_{0,L}$. Let  $f(\theta)$ denote the expression on the right hand side in \eqref{eq:theta-alpha-threshold}, and set $y=\theta +1$. Expanding $f(y)$ gives: 
\[
f(y) = 1+ \frac{6}{y} + \frac{13}{y^2} +\frac{12}{y^3} + \frac{4}{y^4} \, , 
\]
and our bound will hold provided that each of the terms in $y$ is at most $\frac{\alpha-1}{4}$. This gives the upper bound for $\theta(\alpha)$ as: 
\[
\theta_{0,L} (\alpha) < 1+ \max \left\{ \frac{24}{\alpha-1},  \left(\frac{52}{\alpha-1}\right)^{1/2}, \left(\frac{48}{\alpha-1} \right)^{1/3},\left( \frac{2}{\alpha-1}\right)^{1/4}  \right\} \, .
\]
The linear term maximizes provided $\alpha < 12$. For $\alpha> 12$, we can obtain a trivial upper bound of $\theta_{0,L}(\alpha) < 4$. 
\end{proof}

\end{appendix} 

\end{document}